\documentclass[a4paper]{article}
\usepackage[utf8]{inputenc}
\usepackage[left=2cm,right=2cm,top=2cm,bottom=2cm]{geometry}
\title{On the passage from nonlinear to linearized viscoelastodynamics}
\author{Barbora Bene\v{s}ov\'a, Malte Kampschulte, Martin Kru\v{z}\'ik}
\date{\today}
\usepackage{comment}
\usepackage{amsmath}
\usepackage{amsthm}
\usepackage{amssymb}
\usepackage{tikz-cd}
\usepackage{todonotes}
\usepackage{esint}
\usepackage{mathrsfs}
\usepackage{stackengine}
\usepackage{hyperref}

\allowdisplaybreaks

\theoremstyle{definition}
\newtheorem{definition}{Definition}[section]
\newtheoremstyle{remark2}{}{}{}{}{\bfseries}{.}{ }{}
\theoremstyle{remark2}

\newtheorem{remarkx}[definition]{Remark}
\newenvironment{remark}
  {\pushQED{\qed}\remarkx}
  {\popQED\endremarkx}

\theoremstyle{plain}
\newtheorem{lemma}[definition]{Lemma}
\newtheorem{theorem}[definition]{Theorem}
\newtheorem{corollary}[definition]{Corollary}
\newtheorem{proposition}[definition]{Proposition}

\newcommand{\R}{\mathbb{R}}
\newcommand{\Rsym}{\mathbb{R}_{\rm sym}}

\newcommand{\abs}[1]{\left|#1\right|}
\newcommand{\norm}[2][]{\left\|#2\right\|_{#1}}
\newcommand{\inner}[3][]{\left\langle #2, #3\right\rangle_{#1}}

\newcommand{\noteMalte}[2][]{\todo[color=red!50,{#1}]{#2}}

\newcommand{\N}{\mathbb{N}}

\renewcommand{\R}{\mathbb{R}}
\newcommand{\C}{\mathbb{C}}

\newcommand{\stress}{{T^E}}
\newcommand{\vstress}{{S}}

\def\Id{\mathbf{Id}}
\def\id{\mathbf{id}}

\def\eps{\varepsilon}

\def\dist{\operatorname{dist}}

\def\XXint#1#2#3{{\setbox0=\hbox{$#1{#2#3}{\int}$}
     \vcenter{\hbox{$#2#3$}}\kern-.5\wd0}}

\newcommand{\EEE}{\color{black}}

\newcommand{\MMM}{\color{blue}}
\newcommand{\E}{\mathcal{E}}
\newcommand{\Ry}{\mathcal{R}}
\begin{document}
 \maketitle
 
 \begin{abstract}

 The equations of linearized viscoelastodynamics in Kelvin-Voigt rheology are rigorously derived from a nonlinear model that satisfies the time-dependent frame indifference in the sense of Antman \cite{Antm98PUVS}. Besides showing the convergence of corresponding solutions of both systems, we also prove the convergence of time-discrete solutions on various scales and of continuous solutions of nonlinear problems to linearized ones.
 \end{abstract}

 \section{Introduction}

 Nonlinear viscoelastodynamics seeks to predict the deformation and internal stresses of a solid body under the action of applied forces  over a given time interval $[0,T]$, where $T>0$.   Let  $ d=2$ or $d=3$ and $\Omega\subset\R^d$ be a bounded Lipschitz domain  called a reference configuration of the solid body. Then, in the absence of non-mechanical effects, the state of the specimen is described by  a  \emph{deformation} $y: [0,T]\times \Omega \to \mathbb{R}^d$ such  that  $y(t,\cdot):\Omega\to\R^d$ is one-to-one and continuous. 
 Denoting by  $\varrho: \Omega \to \mathbb{R}$  the density of the body in the reference configuration, the deformation fulfills the following force balance. 
\begin{align}
 \varrho \partial_{tt} y - \mathrm{div}\, \mathcal{T} = \tilde{f}, 
 \label{basic-balLaw}
\end{align}
 in the time-space cylinder $ [0,T] \times \Omega$; here $\mathcal{T}$ is a stress tensor and $\tilde{f}$ is the volume density of applied body forces. In addition to the force balance \eqref{basic-balLaw}, we also prescribe initial and boundary conditions in the form
 \begin{align*}
y &= \id \,  \text{ in $ [0,T] \times  \partial \Omega$,}\\
 \partial_t y(0)&= v_0 \,  \text{ in $\Omega$,}\\
 y(0)&=y_0  \text{ in $\Omega$,}
 \end{align*}
 where $\partial \Omega$ is the boundary of $\Omega$ and $\id$ is the identity deformation. Therefore, in this work, we choose to load the sample only through body forces,  the initial deformation $y_0$,  and the initial velocity $v_0$.
  
The stress tensor $\mathcal{T}$ in \eqref{basic-balLaw} must be defined constitutively. We will  consider \emph{generalized standard materials} (see \cite{HalNgu75MSG}) for which it is assumed that the stress tensor can be determined by taking an appropriate functional derivative of two underlying functionals: The so-called elastic energy functional $y\mapsto \mathcal{E}(y)$ and the dissipation functional $y\mapsto \mathcal{R}(y, \partial_t y)$. Written abstractly, the stress tensor is assumed to be obtained via the additive rule
\begin{align}
-\mathrm{div}\mathcal{T} = D \mathcal{E} (y) + D_2 \mathcal{R} (y, \partial_t y),
\end{align}
which is also referred to as the \emph{Kelvin-Voigt rheology}.
We shall specify the precise form of $\mathcal{E}$ and $\mathcal{R}$ in Section \ref{sec:model}, but throughout this introduction, we aim to keep the presentation as simple as possible so as to highlight the advantages of the abstract formulation.

If the loading  is rather small and thus $y$ is expected to change slowly, the inertial term $\varrho \partial_{tt} y$ is often omitted in \eqref{basic-balLaw} and the following \emph{quasi-static} problem is considered instead:
\begin{equation}
 - \mathrm{div}\, \mathcal{T} = \tilde{f}.
 \label{basic-quasiStat}
\end{equation}
This setting has been widely studied, and many works on it have appeared in the literature; we refer, e.g.,  to \cite{kruvzik2019mathematical} for a review.
If only a static load is applied, then after some time — once the system reaches equilibrium — it is expected that $\partial_t y = 0$, and consequently $D_2 \mathcal{R}(y, 0) = 0$, since dissipative forces can arise only in the presence of motion.

 In this case, \eqref{basic-quasiStat} reduces even further to what is called the \emph{static} problem
\begin{equation}
 D \mathcal{E} (y) = \tilde{f},
 \label{basic-Stat}
\end{equation}
which can be understood as the (formal) Euler-Lagrange equation of the minimization problem
\begin{equation}
\mathrm{minimize } \ y\mapsto  \mathcal{E}(y)- \int_\Omega \tilde{f} y \mathrm{d}x 
\label{basic-Stable}
\end{equation}
The last minimization problem \eqref{basic-Stable} is probably the one that is studied most in non-linear elasticity, starting with the pioneering work by Ball \cite{Ball:77} where a physically sound notion of polyconvexity was introduced, allowing for the application of the direct method of the calculus of variations (see e.g. \cite{Daco89DMCV}) to show the existence of a minimizer/solution to \eqref{basic-Stable}.

Problem \eqref{basic-Stable} has also been extensively studied in the past years in the context of model reduction, i.e. passing to a simpler model in solid mechanics. Prominent examples include linearization \cite{DalMasoNegriPercivale:02} when a model reduction is sought for very small strains; homogenization \cite{Tart09GTH} where we seek an effective  model for a material that contains fine microstructure; or dimension reduction \cite{FJM} where a model is found for a lower-dimensional specimen. The mathematical method used in these works is that of $\Gamma$-convergence (see Section \ref{Sec:Gamma_conv_energy} for a formal definition) which is a notion of convergence of functionals that is perfectly tailored to minimization problems. Thus, one might wonder whether these results have the potential to be extended to a situation beyond searching for stable states. This is indeed possible, we refer to \cite{MiRoSt08GLRR} for $\Gamma$-convergence methods in rate-independent systems or to  \cite{Mielke2016} for gradient systems.  In this work,  we extend this approach  to hyperbolic problems for viscoelastic solids.

In our approach, an important observation is that the problems \eqref{basic-balLaw}, \eqref{basic-quasiStat} and \eqref{basic-Stat} do not only correspond to different timescales, but can, in a sense, \emph{be seen as approximations of one another}. In order to explain this idea, we return to \eqref{basic-balLaw} and restate it as 
\begin{align}\label{eq:viscoel-nonsimple-recalled}
\varrho\partial_{tt} y + D_1 \mathcal{E}(y) + D_2 \mathcal{R}(y, \partial_t y) &=  \tilde{f} \, \text{ in $ [0,T] \times  \Omega$,} 
\end{align}
and we seek an \emph{approximation of \eqref{eq:viscoel-nonsimple-recalled} via time discretization}. To do so, we will follow the ideas of \cite{benesovaVariationalApproachHyperbolic2020} and introduce a   suitable two time-scale approximation. This will allow us to understand quasi-static problems as approximations of inertial ones and, in turn, static problems as approximations of quasi-static problems; the latter observation actually going back to DeGiorgi \cite{Degi77GCGC}.

To explain the idea in more detail, we first introduce the so-called $h$-scale, or the so-called \emph{time-delayed} problem. In doing so, we replace the second partial derivative $\partial_{tt} y$ in \eqref{eq:viscoel-nonsimple-recalled} with a difference quotient as 
 \begin{align} \tag{NL-TD}\label{eq:introTimeDelayed}
\varrho \tfrac{\partial_t y(t) - \partial_t y(t-h)}{h}  + D_1 \mathcal{E}(y) + D_2 \mathcal{R}(y, \partial_t y) &=  \tilde{f} \quad \text{ on }  [0,T] \times \Omega
 \end{align}
 for some fixed $h$.

 We then solve this problem by splitting $[0,T]$ into individual subintervals $[nh,(n+1)h]$ of length $h$. It is important to realize that on each of these subintervals, the second half of the difference quotient, namely $-\frac{\partial_t y(t-h)}{h}$ depends completely on the solution on the previous subinterval. We can thus solve the problem iteratively on each interval and use the result as data for the next.
 
 Notice also that once this term is reinterpreted as given data, the equation takes the structure of a \emph{quasi-static} force balance as outlined in \eqref{basic-quasiStat}. In fact, understanding $\partial_t y(t-h)$ as being fixed and given, we can set $$ \mathcal{R}^{(h)}(y, \partial_t y) = \mathcal{R}(y, \partial_t y) + \frac{\rho h}{2}\int_\Omega \left| \frac{\partial_t y(t) - \partial_t y(t-h)}{h}\right|^2,
 $$
 which is now an \emph{altered} dissipation functional. Using this functional, the weak formulation of \eqref{eq:introTimeDelayed} can be abstractly written as 
 $$
 D \mathcal{E}(y) + D_2 \mathcal{R}^{(h)}(y,\partial_t y) = \tilde{f},
 $$
 the structure of which corresponds to \eqref{basic-quasiStat}. Note that $\mathcal{R}^{(h)}$ no longer satisfies the physical assumptions on a dissipation functional, in particular we have that $\mathcal{R}^{(h)}(y,0) \neq 0$ in general. This is not an issue for our purposes because it is more the \emph{structure} of the equation that is important. However, let us point out that there is an equivalent alternative interpretation by seeing $\partial_t y(t-h)$ as part of a force instead, which would not suffer from this problem.
 
 It has been shown in \cite{benesovaVariationalApproachHyperbolic2020} (and we also provide a proof in this paper) that as $h\to 0$ solutions of the time-delayed problem indeed converge (in an appropriate sense) to solutions of \eqref{eq:viscoel-nonsimple-recalled}. This shows that quasi-static problems indeed serve as an approximation of inertial problems.

 This view-point can be exploited even further once again. Introducing a \emph{second time-scale}, the so-called $\tau$-scale (with $\tau \ll h$), we may replace the partial derivatives $\partial_t y$ once again, now with a fully discrete difference quotient of the form $\frac{y_k-y_{k-1}}{\tau}$, where $y_k$ is thought to correspond to $y(k\tau)$. We thus approximate \eqref{eq:introTimeDelayed} with
  \begin{align} \tag{NL-disc}\label{eq:introdisceteEq}
\varrho \tfrac{\frac{y_k-y_{k-1}}{\tau} - w_k}{h} + D_1 \mathcal{E}(y_k) + D_2 \mathcal{R}\left(y_{k-1}, \tfrac{y_k-y_{k-1}}{\tau}\right) &=  \tilde{f}_k \quad \text{ in }  \Omega,
 \end{align}
 where $\tilde{f}_k$ is a discretization of $\tilde{f}$ and $w_k$ of $\partial_t y(t-h)$.

Notice that in introducing the second discretization, we used an implicit explicit discretization in $\mathcal{R}$ whose first variable is explicitly discretized. This allows us to understand \eqref{eq:introdisceteEq}, as the Euler-Lagrange equation of a variational problem corresponding to an \emph{appropriately altered} energy functional, namely we find $y_k$ as minimizer of the following functional:
 \begin{align}
\mathcal{I}_\tau(y) := \mathcal{E}(y) + \tau \mathcal{R}\left( y_{k-1},\tfrac{ y- y_{k-1}}{\tau}\right) - \tau \int_\Omega \tilde{f}_{k}\cdot {\tfrac{y-y_{k-1}}{\tau}}+ \frac{\varrho}{2h} \left| \tfrac{y-y_{k-1}}{\tau} - w_k\right|^2 \mathrm{d} x.
\label{Eq:Altered-functional}
\end{align}

In other words, iteratively minimizing $\mathcal{I}_\tau$ (always with the knowledge of a previous minimizer) yields an approximation of the solution of \eqref{eq:introTimeDelayed} in discrete time-points. This approximation has been first found by De Giorgi and later applied to a plethora of problems under the name \emph{minimizing movements} or \emph{time-incremental minimization}, see e.g., \cite{kruvzik2019mathematical,MieRou15RIST}.
 
Now, the minimizing movements approximation can provide us with a solution of \eqref{eq:introTimeDelayed} on $[0,h]$. Using the solution obtained and the same algorithm again, we can construct a solution of \eqref{eq:introTimeDelayed} on $[h,2h]$. Continuing iteratively and gluing the solutions on each subinterval, we ultimately obtain a solution on $[0,T]$. As already mentioned, it has been realized in \cite{benesovaVariationalApproachHyperbolic2020} that the functions obtained in this way are approximants of solutions to \eqref{eq:viscoel-nonsimple-recalled}; in fact, they converge to a solution of \eqref{eq:viscoel-nonsimple-recalled} in an appropriate sense. 

In summary, we may schematically depict the approximating procedure as follows:
 \begin{equation*}
  \begin{tikzcd}[row sep=huge,column sep=huge]
   (\text{NL-disc})  \arrow[r, ""]   & (\text{NL-TD})  \arrow[r, ""]  & (\text{NL})
  \end{tikzcd}
 \end{equation*}

It is this approximation scheme that suggests that results on model reduction obtained on the static level can be transferred to the quasi-static and dynamic settings as well. In fact, it seems that the following \emph{principle} is valid: The $\Gamma$-convergence of stored energy functionals is fundamental and underlying to convergence of the fully dynamic equations in the case of generalized standard materials. Thus, it seems plausible that many of the already obtained results of the convergence of $\Gamma$ can be transferred to dynamic situations.

The aim of this work is to demonstrate this principle for the case of the \emph{linearization} limit of \cite{DalMasoNegriPercivale:02}; i.e., we assume that the forces applied
to the solid body are vanishingly small and want to derive a system of effective equations. In our case, it will be the body force that is vanishingly small, with an asymptotic limit; i.e.:
\begin{equation}\label{smallness-f}
\tilde{f} \approx \delta f,
\end{equation}
for some $f \in L^2([0,T] \times \Omega)$ fixed. Thus, in this situation, it is expected that the deviation of the deformation of the body from a (relaxed) reference configuration, corresponding to the unloaded equilibrium, is small in a suitable sense, so that we write
$$
y(t,x) = x +  \delta u (t,x),
$$
with $u$ the displacement of the body. We describe this idea in more detail in Section \ref{sec:model}.

The goal of this paper is then to study the limit $\delta \to 0$ in \emph{all} the frequency regimes mentioned above, that is, in problems \eqref{basic-balLaw}, \eqref{basic-quasiStat}, \eqref{basic-Stat} both independently and also while establishing a connection between them via the approximation scheme described above. The outline of this endeavor and of this paper is summarized in the following diagram: 

 \begin{equation*}
  \begin{tikzcd}[row sep=huge,column sep=huge]
   (\text{NL-disc}) \arrow[rd, "\text{Prop. \ref{prop:TDDiagonal}}"  {xshift=-8pt, yshift=3pt}] \arrow[r, "\text{Prop. \ref{prop:NLTDexistence}}"] \arrow[d, "\text{Prop. \ref{prop:discDelta}}"] \arrow[d, blue, shift right = 1.5ex, "\text{Thm. \ref{maintheorem3}}"']  & (\text{NL-TD}) \arrow[rd, "\text{Thm. \ref{thm:Diagonal}}"{xshift=-8pt, yshift=3pt}] \arrow[r, "\text{Thm. \ref{thm:existence}}"] \arrow[d,"\text{Prop. \ref{prop:TDDelta}} "] \arrow[d, blue, shift right = 1.5ex, "\text{Cor. \ref{cor:tilt-EDP}}"']  & (\text{NL}) \arrow[d,"\text{Thm. \ref{thm:Delta}}"] \arrow[d, blue, shift right = 1.5ex, "\text{Cor. \ref{cor:dynamicEDP}}"']  \\ (\text{L-disc}) \arrow[r, "\text{Cor. \ref{cor:LTDexistence}}"] & (\text{L-TD}) \arrow[r, "\text{Thm. \ref{thm:existence}}"] & (\text{L})
  \end{tikzcd}
 \end{equation*}

In this diagram, the upper line shows the already described process of approximating inertial problems by quasi-static and, in turn, by static ones. However, at each level of approximation, we may pass to the linearized setting, too. We can do this in two ways: by passing to the limit with the corresponding equation (black line) or via variational convergence of the underlying functionals (blue lines). Our results show that, roughly speaking, the linearization limit commutes with approximation procedure described above. In other words, the variational convergence of the energy functional is underlying to the linearization at the other frequency regimes.

\section{Modeling and background on linearization}
\label{sec:model}

In this section, we review the concept of linearization in elasticity from both a modeling point of view as well as a more analytical point of view.

\subsection{Viscoelastic nonsimple materials}

The case of a simple elastic material is illustrative, but is not amenable for the analysis in this paper. In fact, to this date, it is an open problem  whether a \emph{weak} (not measure-valued) solution to \eqref{basic-balLaw} can be found. 

Therefore, within this work,  we will restrict our attention to a more regularized setting of so-called nonsimple \emph{generalized standard materials} (see \cite{HalNgu75MSG}) that may also dissipate energy. For generalized standard materials, it is assumed that there exists an underlying elastic energy functional together with a viscous dissipation functional so that the stress can be calculated by taking appropriate functional derivatives. The term {\it nonsimple} refers to the fact that the elastic properties of the material depend also on the second deformation gradient. This idea was first introduced by Toupin  \cite{Toupin:62,Toupin:64} and proved to be useful in mathematical elasticity, see e.g.~\cite{BCO,Batra,MielkeRoubicek:16,MielkeRoubicek,Podio}.   

Since the elastic energy functional and the viscous dissipation functional play a primary role in setting up the appropriate generalized standard materials model, we start by stating them first:
 \begin{align}
 \label{energies}
\E(y) &:=\int_\Omega W(\nabla y) + P(\nabla^2 y) \, \mathrm{d} x  \\
\Ry(\nabla y,\partial_t \nabla y) &:= \int_\Omega R(\nabla y,\partial_t\nabla y)\, \mathrm{d}x,
\end{align}  
where $W: \mathbb{R}^{d\times d} \to \mathbb{R}$ and $P: \mathbb{R}^{d \times d\times d} \to \mathbb{R}$ are two given functions that represent the elastic energy  densities corresponding to the contributions of simple and nonsimple materials, respectively. 
 The function $\Ry$ addresses the rate of dissipation due to viscosity.

Given \eqref{energies}, the elastic part or the first Piola-Kirchhoff stress reads for all $i,j\in\{1,\ldots, d\}$
\begin{align*}
\mathcal{T}^\text{el}_{ij}(F,G):=  \partial_{F_{ij}} W(F) + \big(\mathcal{L}_{P}(G)\big)_{ij} = \partial_{F_{ij}} W(F) -\sum_{k=1}^d \partial_k \big(\partial_{G_{ijk}}P(G)\big),
\end{align*}
 where $F\in\R^{d\times d}$, and $G\in\R^{d\times d\times d}$  are  the placeholders for the first and second gradient of $y$.  The term 
$\partial_{G}P(G)$ is usually called hyperstress and it is its divergence that contributes to the first Piola-Kirchoff stress. Let us notice that this form reduces to classical hyperelasticity if the nonsimple material contribution is not present.

Let us consider  the Kelvin-Voigt rheology, i.e.\ that the total stress is the sum of its elastic and viscous part so that the total stress tensor reads
$$
\mathcal{T}(F, \dot{F},G) =  \partial_F W(F) + \mathcal{L}_{P}(G)  + \partial_{\dot{F}}R(F,\dot{F}),
$$
with $F\in\R^{d\times d}$, and $G\in\R^{d\times d\times d}$  as above placeholders for the first and second gradient of $y$ and $\dot{F}$ a placeholder for $\partial_t \nabla y$.  Additionally,  $\mathcal{L}_{P}(G)=-{\rm div}\, \partial_G P(G)$ represents the contribution of hyperstress.  In this case, equation \eqref{basic-balLaw} reads as 

\begin{align}\label{eq:viscoel-nonsimple-basic}
\varrho\partial_{tt}u -{\rm div}\Big( \partial_F W(\nabla y) + \tilde{\eps}\mathcal{L}_{P}(\nabla^2 y)  + \partial_{\dot{F}}R(\nabla y, \partial_t \nabla y)  \Big) = f.\end{align}

We will now discuss linearization in the framework of generalized standard materials. Therefore, we introduce a smallness parameter $\delta >0$ that measures the smallness of the deformation. We first rescale the underlying functionals as follows:
\begin{align}
\mathcal{E}_\delta(y_\delta) &:=\int_\Omega \frac{1}{\delta^2} W(\nabla y_\delta)) + \frac{1}{\delta^{\alpha p} } P(\nabla^2 y_\delta) \, \mathrm{d} x \label{scaled-energ} \\
\Ry_\delta(y_\delta,\partial_t y_\delta) &:= \int_\Omega \frac{1}{\delta^2} R(\nabla y_\delta,\partial_t\nabla y_\delta)\, \mathrm{d}x.
\label{scale:diss}  
\end{align}
Here,  $p>d, \alpha >  \frac{p-1}{2p(p-2)}$ are selected in such a way that the contribution of the elastic energy corresponding to the second-order, nonsimple material terms is not present in the linearization. Indeed, in this work, we follow the idea that this term is mostly included for its regularizing effect, which is no longer needed once we linearize. Therefore, the scaling is chosen in such a way that it vanishes. However, for materials that form microstructure or feature localized phenomena (\cite{frost2021thermomechanical}) one may want to choose a different scaling for which this term does not vanish in the limit, which can be done by a straightforward adaptation of the argument.

For the viscous contribution, we chose its scaling in such a way that it has a viscous contribution to the stress tensor in the linearized system. However, similarly to the nonsimple material contribution, we could have chosen it in a different way also here so that this term would vanish upon linearization.

The rescaling in \eqref{scaled-energ}-\eqref{scale:diss} is introduced because we are interested in the case of small strains, i.e., when  $\nabla u:=\nabla y - \Id$ is of order $\delta$ for the $\delta$ used in \eqref{scaled-energ}-\eqref{scale:diss}. Here, $u:=y-\id$ is the displacement corresponding to $y$ with  $\id$ and $\Id$ standing for the identity map and identity matrix, respectively. Such a property is certainly meaningful if one considers initial values $y_0$ with $\Vert \nabla y_0 - \Id \Vert_{L^2(\Omega)} \le \delta$. Therefore, it is convenient to define the rescaled displacement $u = \delta^{-1}(y - \id)$. Introducing a proper scaling in the above equation we get
\begin{align}\label{eq:viscoel-nonsimple-scaled}
\delta^{-1}\varrho\partial_{tt}u -{\rm div}\Big( \delta^{-1}\partial_F W(\Id + \delta \nabla u) + \tilde{\eps}\mathcal{L}_{P}(\delta\nabla^2 u)  + \delta^{-1}\partial_{\dot{F}}R(\Id + \delta \nabla u, \delta \partial_t \nabla u)  \Big) = f\end{align}
for $\tilde{\eps}=\tilde{\eps}(\delta)$ appropriate.  Note that to obtain \eqref{eq:viscoel-nonsimple-scaled} from \eqref{eq:viscoel-nonsimple-basic} we write  $f:=\delta f$ and then divide the whole equation by $\delta$. 
 Formally, we can pass to the limit and obtain the equation (for $\tilde{\eps} \to 0$ as $\delta \to 0$, e.g.,  $\tilde\eps=\delta^{-\alpha p}$)
\begin{align}\label{eq:viscoel-small}\varrho\partial_{tt}u -{\rm div}\Big( \C_W e(u) + \C_D  e(\partial_t u)   \Big) = \tilde f,
 \end{align}
 where $\C_W:=\partial^2_{F^2}W(\Id)$ is the tensor of elastic constants, $\C_D:= \partial^2_{\dot F^2}R(\Id,0)$ is the tensor of viscosity coefficients,  and $e(u):=(\nabla u+(\nabla u)^\top)/2$ denotes the linear strain tensor. 

The convergence of this system to its linearized variant has been first studied in \cite{friedrichPassageNonlinearLinearized2018} and then for
thermoviscoelasticity in \cite{BaFrKr23NLMTVE,BaFrKrMa24PTTVE}. In this work we rely at several points on the results obtained in these papers, in particular with respect to the scaling of the nonsimple material contribution.

\subsection{Assumptions on the underlying potentials}\label{Sec:potentials}

Before embarking into further discussion, we state the assumptions on the energy and dissipation potentials that we are going to use within the paper. For the contributions to the elastic energy we assume that $W \in C^3(\R^{d\times d})$ satisfies the following:
\begin{align}\label{assumptions-W}
\begin{split}
\text{(i)}& \ \ \text{Frame indifference: } W(QF) = W(F) \text{ for all } F \in \R^{d \times d}, Q \in SO(d),\\
\text{(ii)}& \ \ \text{Smoothness: }W \text{ continuous and $C^3$ in a neighborhood of $\mathrm{SO}(d)$},\\
\text{(iii)}& \ \ \text{Coercivity: }W(F) \ge c\dist^2(F,\mathrm{SO}(d)), \  W(F) = 0 \text{ iff } F \in \mathrm{SO}(d), \\
\text{(iv)}& \ \ \text{Stress free reference configuration: } \partial_F W(\mathbf{Id}) =0   \\
\text{(v)}& \ \ \text{Orientation preservation: } W(F) = + \infty \text{ iff } \det(F) \leq 0 \\
\text{(vi)} & \ \ \text{Penalized compression: } W(F)\ge (\det F)^{-pd/(p-d)} \text{ for all $F \in \mathbb{R}^{d \times d}$ with $\det F \leq 1/2$,}
\end{split}
\end{align}
where $\mathrm{SO}(d) = \lbrace Q\in \R^{d \times d}: Q^\top Q = \Id, \, \det Q=1 \rbrace$.
These assumptions will be needed for not only our analytical study, but are also sound from a physical point of view. In fact, the independence of the observer is a basic physical requirement, so it is imposed in (i). Requirements (iii) and (iv) yield that the material is of minimal energy only in rotations. Requirements (v), (vi) ensure that any deformation preserves orientation and that there is sufficient resistance of the material to extremely large compression (point (vi)).

Here, notice that in (vi), the choice of $p$ corresponds to that in \eqref{assumptions-P}(iii) below. Moreover, we note that the choice that (vi) should be satisfied in matrices with $\det F \leq 1/2$ is essentially arbitrary; any other number smaller than 1 (in order not to conflict with assumption (iii)) would also be possible. We only need a sufficiently fast blow-up of $W(F)$ once $\det F \to 0$.

Finally, the second derivative of $W$ at the identity should represent the tensor of elastic constants
$$
\mathbb{C} := \partial^2_{F} W (\mathbf{Id})
$$
which, as a consequence of this and the frame invariance, satisfies the following symmetry relations for all $i,j,k,l\in\{1,2,\ldots, d\}$
$$
\mathbb{C}_{i j k l}=\mathbb{C}_{j i k l} \quad \text { and } \quad \mathbb{C}_{i j k l}=\mathbb{C}_{i j l k}.
$$

Moreover, we assume that the higher order contribution $P \in C^2(\R^{d\times d \times d})$ satisfies the following:
\begin{align}\label{assumptions-P}
\begin{split}
\text{(i)}& \ \ \text{Frame indifference: } P(QG) = P(G) \text{ for all } G \in \R^{d \times d \times d}, Q \in \mathrm{SO}(d),\\
\text{(ii)}& \ \ \text{$P$ is convex and continuously differentiable on  $\R^{d\times d \times d}$},\\
\text{(iii)}& \ \ \text{there exists constants $0<c_1<c_2$ such that for all $G \in \R^{d \times d \times d}$ we have } \\&   \ \ \ \ \ \    c_1 |G|^p \le P(G) \le c_2 |G|^p, \ \ \ \ \ \    |\partial_G P(G)|  \le c_2 |G|^{p-1} \
\end{split}
\end{align}

 We wish to emphasize that, thanks to the fact that $P$ is  convex and coercive,  no convexity properties of $W$ need to be imposed, which is physically desirable as these would conflict with the frame invariance.

It remains to specify the dissipation potential. Naturally, we need to require that $R(F,\dot F)\ge R(F,0)=0$.  The viscosity stress tensor must also comply with the time-continuous frame-indifference principle  meaning that for all $F$
\begin{align*}
\vstress(F,\dot F)=\tilde\vstress(C,\dot C)
\end{align*}
where $C=F^\top F$ is the right Cauchy-Green tensor and $\tilde\vstress$ is a symmetric matrix-valued function.  This condition constrains
$R$  because $\partial_{\dot F}R(F.\dot F )=S(F,\dot F)$, so that (see \cite{Antm98PUVS,MOS} and also \cite{Demo00WSCN})
\begin{align}\label{eq:frame indifference-R}
R(F,\dot F)=\tilde R(C,\dot C) =\hat R(E,\dot E), 
\end{align}
for some nonnegative functions $\tilde R$ and $\hat R$. Here again $E$ is the Green-Lagrange strain tensor,  for which $2E=C-\Id$ and consequently $\dot C=2\dot E$. In other words, \EEE $R$ must depend on the right Cauchy-Green strain tensor $C$ and its time derivative $\dot C$  or, equivalently,  on the Green-Lagrange strain  tensor  $E$ and its time derivative $\dot E$. Altogether,
\begin{align*}
    \mathcal{T}(F,G,\dot F)= \stress(F,G)+\vstress(F,\dot F)\ .
\end{align*}

To comply with this requirements, we set for simplicity
\begin{align} \label{eq:dissipationIntegrand}
    R(F,\dot F)= \frac12 \dot E: \mathbb{D}(C)\dot E.
\end{align}

We assume $\mathbb{D}$ to be continuous, uniformly positive definite, i.e.,
there are positive constants $\alpha_1$ and $\alpha_2$ such that  for all $C,A\in\Rsym^{d\times d}$ 
it holds 
\begin{align}
   \alpha_1|A|^2\le A:\mathbb{D}(C)A \le \alpha_2|A|^2
\end{align}

We assume the same symmetry as for the tensor of elastic constants
$$
\mathbb{D}_{i j k l}=\mathbb{D}_{j i k l} \quad \text { and } \quad \mathbb{D}_{i j k i}=\mathbb{D}_{i j l k},
$$
too. In this form, the dissipation potential is frame-indifferent and no further requirements for physical consistency are necessary.

\section{Auxiliary results}

\subsection{Properties of the elastic energy}

In this section, we summarize some results that will be used throughout the later sections.

\begin{lemma}[Rigidity and uniform bounds] \label{lem:rigidity}
 Fix a number $E_{\max}>0$. Then, there exists a constant $C>0$, such that for all $\delta>0$ small enough and all $y_{\delta} \in  W^{2,p}(\Omega;\R^d)$ with $y_{\delta}|_{\partial\Omega} = \id$ and
 $$
 \mathcal{E}_{\delta}(y_{\delta}) < E_{\max}
 $$
 we have
 \begin{align*}
  \norm[W^{1,2}(\Omega)]{u_{\delta}} &\leq C \quad \text{ and } \quad
  \norm[L^{\infty}(\Omega)]{\delta \nabla u_{\delta}} \leq C \delta^\alpha
 \end{align*} 
  where $u_{\delta}:= \delta^{-1}(y_{\delta}- \id)$.
\end{lemma}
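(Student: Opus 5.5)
The plan is to extract two pieces of information from the energy bound $\mathcal{E}_\delta(y_\delta)<E_{\max}$: an $L^2$ bound on $\dist(\nabla y_\delta,\mathrm{SO}(d))$ and an $L^p$ bound on $\nabla^2 y_\delta$. The first gives the $W^{1,2}$ bound through geometric rigidity. The second gives the $L^\infty$ bound through a Sobolev embedding combined with interpolation.

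\emph{Step 1 (basic bounds).} Since $W\ge 0$ and $P\ge 0$, each term of $\mathcal{E}_\delta$ is bounded by $E_{\max}$. Assumption \eqref{assumptions-W}(iii) then yields
\[
\norm[L^2(\Omega)]{\dist(\nabla y_\delta,\mathrm{SO}(d))}^2\le C\delta^2 E_{\max},
\]
and \eqref{assumptions-P}(iii) yields
\[
\norm[L^p(\Omega)]{\nabla^2 y_\delta}^p\le c_1^{-1}\delta^{\alpha p}E_{\max}, \qquad\text{that is,}\qquad \norm[L^p(\Omega)]{\nabla^2 y_\delta}\le C\delta^\alpha .
\]

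\emph{Step 2 ($W^{1,2}$ bound).} I would apply the Friesecke--James--M\"uller rigidity estimate. It provides $Q_\delta\in\mathrm{SO}(d)$ with
\[
\norm[L^2(\Omega)]{\nabla y_\delta-Q_\delta}\le C\norm[L^2(\Omega)]{\dist(\nabla y_\delta,\mathrm{SO}(d))}\le C\delta .
\]
To replace $Q_\delta$ by $\Id$, I would use the boundary condition $y_\delta=\id$ on $\partial\Omega$, following \cite{DalMasoNegriPercivale:02} or \cite{friedrichPassageNonlinearLinearized2018}. The affine map $x\mapsto Q_\delta x$ stays $L^2$-close to $\id$ only if $|Q_\delta-\Id|\le C\delta$. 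One way to see this: integration by parts gives
\[
\int_\Omega(\nabla y_\delta-\Id)\,\mathrm{d}x=0,
\]
hence $|\Omega|\,|Q_\delta-\Id|\le \norm[L^1(\Omega)]{\nabla y_\delta-Q_\delta}\le C\delta$. Consequently $\norm[L^2(\Omega)]{\nabla u_\delta}\le C$. Poincar\'e's inequality, valid since $u_\delta=0$ on $\partial\Omega$, then gives $\norm[W^{1,2}(\Omega)]{u_\delta}\le C$.

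\emph{Step 3 ($L^\infty$ bound).} Set $G_\delta:=\nabla y_\delta-\Id=\delta\nabla u_\delta$. Then $\norm[L^2(\Omega)]{G_\delta}\le C\delta$ and $\norm[L^p(\Omega)]{\nabla G_\delta}\le C\delta^\alpha$. Because $p>d$, Morrey's embedding together with a Gagliardo--Nirenberg interpolation on the Lipschitz domain $\Omega$ gives
\[
\norm[L^\infty(\Omega)]{G_\delta}\le C\norm[L^2(\Omega)]{G_\delta}^{1-\theta}\norm[W^{1,p}(\Omega)]{G_\delta}^{\theta}+C\norm[L^2(\Omega)]{G_\delta}
\]
for some $\theta\in(0,1)$. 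With $\delta$ small and $\alpha<1$, this is bounded by $C\delta^{1-\theta}\delta^{\alpha\theta}+C\delta\le C\delta^\alpha$. If $\alpha\ge1$, the bound $C\delta$ is still at most $C\delta^{\min(\alpha,1)}$.

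A simpler alternative is the embedding $\norm[L^\infty]{G}\le C(\norm[L^2]{G}+\norm[L^p]{\nabla G})$, which follows from $W^{1,p}\hookrightarrow L^\infty$ and a Poincar\'e-type estimate with the mean controlled by the $L^2$ norm. It directly gives $C(\delta+\delta^\alpha)\le C\delta^{\alpha}$ when $\alpha\le 1$, and I would use it.

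\emph{Main obstacle.} The delicate part is the $\alpha>1$ case, where the claimed rate $\delta^\alpha$ exceeds what the $L^2$ term alone provides. I would first check whether the paper's choice implicitly has $\alpha<1$; for instance, $\frac{p-1}{2p(p-2)}<1$ for large $p$. Failing that, I would argue that the bound should be read as $C\delta^{\min\{\alpha,1\}}$. Apart from this, the argument is routine once rigidity and the boundary-condition fixing of the rotation in Step 2 are in place.
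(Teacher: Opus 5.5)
Your proof is correct and is essentially the paper's argument: Friesecke--James--M\"uller rigidity, the boundary condition to force $|Q_\delta-\Id|\le C\delta$, and Poincar\'e, followed by the $L^p$ bound on $\nabla^2 y_\delta$ combined with Morrey/Poincar\'e and the mean controlled via the $L^2$ estimate. The only variation is that you pin the rotation through $\int_\Omega(\nabla y_\delta-\Id)\,\mathrm{d}x=0$ instead of the paper's trace argument. That same identity removes your ``main obstacle'': since $\nabla y_\delta-\Id$ has zero mean, Poincar\'e--Wirtinger plus Morrey give $\norm[L^\infty(\Omega)]{\nabla y_\delta-\Id}\le C\norm[L^p(\Omega)]{\nabla^2 y_\delta}\le C\delta^\alpha$ for every $\alpha>0$, whereas both your Step 3 and the paper's proof pass through $C(\delta+\delta^\alpha)$ and hence tacitly require $\alpha\le 1$.
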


The proof of this statement is based on the, by now classic, Friesecke-James-M\"uller rigidity estimate \cite{FJM}; the first part has been shown, for the first time, in \cite{DalMasoNegriPercivale:02} while the second can be found in \cite{friedrichPassageNonlinearLinearized2018}. We reprove it here for the reader's convenience.

\begin{proof}
 From the growth of the elastic energy near the set of rotations, it follows that
\begin{align*}
 \int_\Omega \frac{c}{{\delta}^2} \mathrm{dist}^2(&\Id+{\delta} \nabla u_{\delta}, \mathrm{SO}(d)) \mathrm{d} x + \frac{1}{{\delta}^{p\alpha}} \| \nabla^2 y_{\delta}\|^p_{L^p(\Omega; \mathbb{R}^{d \times d \times d})}\\ &\leq \left(\int_\Omega \frac{1} {{\delta}^2} W(\id+{\delta} \nabla u_{\delta}) \mathrm{d} x + \frac{1}{{\delta}^{p\alpha}} P({\delta} \nabla^2 y_{\delta}) \mathrm{d} x \right) \\ &\leq \mathcal{E}_{\delta}(y_{\delta}) \leq E_{\max}.
\end{align*}
Appealing to  \cite[Thm. 3.1.]{FJM}, the boundedness of the first term implies that there exists a fixed rotation $Q_{\delta} \in \mathrm{SO}(d)$ such that
\begin{align} \label{eq:rigidity}
\frac{1}{\delta^2} \|\Id+{\delta} \nabla u_{\delta}(t,x)-Q_{\delta}\|^2_{L^2(\Omega)} \leq C.
\end{align}
By the prescribed boundary condition, using a trace theorem we then have
\begin{align*}
 \norm[L^2(\partial \Omega)]{\id - Q_{\delta} \cdot + b} \leq \delta C
\end{align*}
which implies $\abs{Q_{\delta}-\Id} \leq C \delta$. Thus we can, without loss of generality, choose $Q_{\delta} = \Id$ in \eqref{eq:rigidity} so that by means of the Poincar\'e-inequality, we conclude that
$$
\|u_{\delta}\|_{W^{1,2}(\Omega)} \leq C.
$$
From the second-gradient term in the energy, we see that
\begin{equation*}
\|\nabla^2 y_{\delta}\|_{L^p(\Omega)} \leq C \delta^{\alpha},
\end{equation*}
and hence by the Poincaré inequality, we can find a constant matrix $F$ such that
\begin{equation}
 \|\nabla y_{\delta}-F\|_{L^\infty(\Omega)} \leq C \delta^{\alpha} .
 \label{Aux-Poincare}
\end{equation}
We want to estimate the distance of $F$ to $\Id$ and and therefore calculate
$$
\|F-\Id\|_{L^2(\Omega)} \leq \|F-y_{\delta}\|_{L^2(\Omega)} + \|\nabla y_{\delta}-\Id\|_{L^2(\Omega)} \leq C \delta^\alpha + C \delta,
$$
where the second inequality follows from the already proved estimate $\|u_{\delta}\|_{W^{1,2}(\Omega)} \leq C$. As $F$ is a constant matrix, we conclude that
\begin{align*}
 &\|\delta \nabla u_{\delta}-F\|_{L^\infty(\Omega)} \leq \|\nabla y_{\delta}-\Id\|_{L^\infty(\Omega) }\leq C \delta^{\alpha}.\qedhere
\end{align*}
\end{proof}

\begin{lemma}[Quantified linerization of the energy] \label{lem:linEnergy}
Fix the number $E_\mathrm{max}$. Then, there exists a constant $C>0$ such that for any $\delta >0$ small enough and any $y \in W^{2,p}(\Omega;\R^d)$ satisfying $y|_{\partial\Omega} = \id$ and
 $$
\mathcal{E}_{\delta}(y) < E_\mathrm{max}
 $$ 
it holds that
\begin{align*}
 \abs{ \int_\Omega \left( \delta^{-1}\partial_F W(\nabla y) - \mathbb{C} e(u)\right) : \nabla \varphi \mathrm{d}x } \leq C \delta \norm[L^2(\Omega)]{\nabla u}^2 \norm[L^\infty(\Omega)]{\nabla \varphi},
\end{align*}
for any $\varphi \in W^{2,p}(\Omega;\R^d)$. Here, $u = \delta^{-1}(y - \id)$ and $\mathbb{C} = \partial^2_{F} W(\Id)$.
\end{lemma}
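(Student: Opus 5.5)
Pointwise Taylor expansion of $\partial_F W$ around $\Id$ reduces the statement to an estimate on a quadratic remainder; the two bounds of Lemma \ref{lem:rigidity} make that remainder controllable. Write $\nabla y = \Id + \delta \nabla u$. Since $\mathcal{E}_\delta(y)<E_{\max}$, Lemma \ref{lem:rigidity} gives $\norm[L^\infty(\Omega)]{\delta\nabla u}\le C\delta^\alpha$. Hence for $\delta$ small, $\nabla y(x)$ lies for a.e.\ $x$ in a fixed compact neighborhood $U$ of $\Id$ on which $W$ is $C^3$ by \eqref{assumptions-W}(ii). This uniform $L^\infty$ closeness is the key point. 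Without it, $\partial_F W$ could not be expanded pointwise with a uniform constant, since $W$ is only regular near $\mathrm{SO}(d)$ and blows up as $\det F\to 0$.

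On $U$, Taylor's theorem with \eqref{assumptions-W}(iv) $\partial_F W(\Id)=0$ gives
\begin{align*}
\partial_F W(\Id+\delta\nabla u) = \delta\, \partial^2_F W(\Id)\nabla u + r_\delta, \qquad |r_\delta| \le \tfrac12 \sup_{U}|\partial^3_F W|\, \delta^2 |\nabla u|^2 .
\end{align*}
Dividing by $\delta$, we get pointwise $\delta^{-1}\partial_F W(\nabla y) = \mathbb{C}\nabla u + \delta^{-1} r_\delta$ with $|\delta^{-1}r_\delta|\le C\delta|\nabla u|^2$.

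It remains to replace $\mathbb{C}\nabla u$ by $\mathbb{C}e(u)$. Frame indifference \eqref{assumptions-W}(i) applied to $F=\exp(tA)$ with $A$ skew, differentiated twice at $t=0$ and combined with $\partial_F W(\Id)=0$, gives $\mathbb{C}A=0$ for all skew $A$. This is the stated symmetry $\mathbb{C}_{ijkl}=\mathbb{C}_{ijlk}$ in the last pair, so $\mathbb{C}\nabla u=\mathbb{C}e(u)$ pointwise.

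Integrating against $\nabla\varphi$ then gives
\begin{align*}
\abs{\int_\Omega (\delta^{-1}\partial_F W(\nabla y) - \mathbb{C}e(u)):\nabla\varphi\,\mathrm{d}x} \le C\delta \int_\Omega |\nabla u|^2|\nabla\varphi|\,\mathrm{d}x \le C\delta\norm[L^2(\Omega)]{\nabla u}^2\norm[L^\infty(\Omega)]{\nabla\varphi},
\end{align*}
where $\nabla\varphi\in L^\infty$ because $W^{2,p}\hookrightarrow W^{1,\infty}$ for $p>d$. The constant depends only on $\sup_U|\partial^3_F W|$, and $U$ is determined by $E_{\max}$ through Lemma \ref{lem:rigidity}.
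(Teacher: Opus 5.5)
Your proof is correct and follows the paper's argument: you Taylor-expand $\partial_F W$ about $\Id$ with a $\partial^3_F W$-remainder, make it uniform via the $L^\infty$ bound on $\delta\nabla u$ from Lemma \ref{lem:rigidity}, then use $\int_\Omega|\nabla u|^2|\nabla\varphi|\,\mathrm{d}x\le\norm[L^2(\Omega)]{\nabla u}^2\norm[L^\infty(\Omega)]{\nabla\varphi}$ and divide by $\delta$. Your explicit treatment of $\mathbb{C}\nabla u=\mathbb{C}e(u)$ is more careful than the paper's, with one small fix: differentiating $t\mapsto W(\exp(tA))$ twice yields only $A:\mathbb{C}A=0$, so you also need $\mathbb{C}\ge 0$ (because $\Id$ minimizes $W\ge 0$), or else differentiate the identity $\partial_F W(F):(AF)=0$ in $F$ at $\Id$ to get $\mathbb{C}A=0$ directly.
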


\begin{proof}
 Since $W$ is a $C^3$-function, we can write it in terms of its Taylor expansion around the identity:
\begin{align}
\partial_F W(F) &= \partial_F W(\Id)+  \partial_F^2 W(\Id) (F-\Id)+  \int_0^1\big(\partial^2_F W((1-t)\Id + tF)-\partial_F^2 W(\Id)\big) (F-\Id) \mathrm{d} t \nonumber \\ 
&= \partial_F W(\Id)+  \partial_F^2 W(\Id) F+  \int_0^1\int_0^1 t \partial^3_F W((1-ts) \Id + tsF) (F-\Id) (F-\Id) \mathrm{d}t  \mathrm{d}s
\label{Taylor-E}
\end{align}
Now, plugging in $F=\nabla y(x)$ and additionally multiplying the identity by $\nabla \varphi$, integrating over $\Omega$ and realizing that $\partial_F W(\Id) = 0$ as well as $\partial_F^2W(\Id) (\nabla y - \Id) = \delta \partial_F^2 W(\Id) \nabla u = \mathbb{C} e(u)$ from our assumptions, we have that
\begin{align*}
 \abs{ \int_\Omega \left(\partial_F W(\nabla y) - \delta \mathbb{C} e(u)\right) \cdot \nabla \varphi \mathrm{d}x }  \leq \delta^2 \int_\Omega \int_0^1\int_0^1 t \left | \partial^3_F W(ts \delta \nabla u)\right| |\nabla u|^2 |\nabla \varphi| \mathrm{d}t  \mathrm{d}s,
\end{align*}
Now as we know from Lemma \ref{lem:rigidity}, as $\delta \in (0,1)$ and $\mathcal{E}_\delta(y) < E_\text{max}$ that also $\|\delta \nabla u\|_{L^\infty(\Omega)} $ is bounded uniformly in $\delta$ a hence there is a constant depending on $W$ only that allows us to estimate
\begin{align*}
 \abs{ \int_\Omega \left(\partial_F W(\nabla y) - \delta \mathbb{C} e(u)\right) \cdot \nabla \varphi \mathrm{d}x }  \leq C \delta^2   \int_\Omega \abs{\nabla u}^2 \abs{\nabla \varphi} \mathrm{d}x \leq C \delta^2 \norm[L^2(\Omega)]{\nabla u}^2 \norm[L^\infty(\Omega)]{\nabla \varphi}.
\end{align*}
Now, dividing by $\delta$ gives the desired estimate.
\end{proof}

We will also need the following result which can be found in \cite{HealeyKroemer:09}.
\begin{lemma}[Healey-Krömer estimate] \label{lem:HK}
Let $y\in W^{2,p}(\Omega;\R^d)$ for $p>d$ and $\Omega\subset\R^d$ be a bounded Lipschitz domain.  If $\det\nabla y>0$ a.e.~in $\Omega$ and $ 1/(\mathrm{det} \nabla y)^q\in L^1(\Omega)$ for some $q\ge pd/(p-d)$ then there is $\varepsilon>0$ depending only on $p,q,d $, and  $\|(\det\nabla y)^{-q}\|_{L^1(\Omega)}$ such that $\det\nabla y\ge\varepsilon$ in $\bar\Omega$.
\end{lemma}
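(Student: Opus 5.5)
The plan is to combine the Hölder continuity of $\det\nabla y$, coming from the Sobolev embedding $W^{2,p}\hookrightarrow C^{1,1-d/p}$ for $p>d$, with the integrability of $(\det\nabla y)^{-q}$. The point is that a Hölder function with exponent $\beta:=1-d/p$ that touches zero makes $(\det\nabla y)^{-q}$ non-integrable exactly when $q\beta\ge d$, i.e.\ when $q\ge pd/(p-d)$. A quantitative version of this divergence gives a lower bound $\varepsilon$.

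One caveat: the Hölder constant depends on $\norm[W^{2,p}(\Omega)]{y}$. I therefore expect $\varepsilon$ to depend on $\norm[W^{2,p}(\Omega)]{y}$ as well, and also on $\Omega$. This is how the estimate is used later, where the energy bounds $\norm[W^{2,p}]{y}$ via Lemma \ref{lem:rigidity}. I would state this dependence explicitly.

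\textbf{Step 1 (regularity of the determinant).} Since $\Omega$ is a bounded Lipschitz domain and $p>d$, Morrey's embedding gives $\nabla y\in C^{0,\beta}(\bar\Omega)$ with norm $\le C\norm[W^{2,p}]{y}$; in particular $\nabla y$ is bounded. The determinant is a polynomial of degree $d$ in the entries, hence locally Lipschitz on bounded sets. Therefore $D:=\det\nabla y$ extends continuously to $\bar\Omega$ and satisfies
\begin{align*}
|D(x)-D(x_0)|\le K|x-x_0|^\beta \quad\text{for all } x,x_0\in\bar\Omega,
\end{align*}
where $K$ depends on $d,p,\Omega$ and $\norm[W^{2,p}]{y}$. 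It follows that $D\ge 0$ on $\bar\Omega$, since $D>0$ a.e. Set $m:=\min_{\bar\Omega}D=D(x_0)$.

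\textbf{Step 2 (lower bound on the integral near the minimum).} Because $\Omega$ is Lipschitz, it satisfies a uniform interior cone condition. Hence there are $r_0>0$ and $\theta>0$, depending only on $\Omega$, such that $|\Omega\cap B(x_0,s)\setminus B(x_0,s/2)|\ge\theta s^d$ for $s\le r_0$; working with the cone in polar coordinates is the cleanest way to do this. On $B(x_0,s)$ we have $D\le m+Ks^\beta$, so
\begin{align*}
M:=\norm[L^1(\Omega)]{D^{-q}}\ \ge\ c\int_0^{r_0} s^{d-1}\,(m+Ks^\beta)^{-q}\,\mathrm{d}s .
\end{align*}
Let $s_m:=(m/K)^{1/\beta}$. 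If $m$ is small enough that $s_m<r_0$, we restrict to $s\in(s_m,r_0)$, where $m+Ks^\beta\le 2Ks^\beta$. This gives
\begin{align*}
M\ \ge\ c(2K)^{-q}\int_{s_m}^{r_0}s^{d-1-q\beta}\,\mathrm{d}s .
\end{align*}
Since $q\beta\ge d$, the exponent satisfies $d-1-q\beta\le -1$. The right-hand side therefore behaves like $\log(r_0/s_m)$ when $q\beta=d$, and like $s_m^{d-q\beta}$ when $q\beta>d$. In both cases it tends to $+\infty$ as $m\to0$.

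\textbf{Step 3 (conclusion).} Inverting this explicit monotone bound yields $m\ge\varepsilon(M,K,q,d,p,\Omega)>0$. In the borderline case, for instance, $s_m\ge r_0\exp(-C(2K)^qM)$, which translates into a lower bound on $m$. If instead $s_m\ge r_0$, then $m\ge Kr_0^\beta$ directly. In either case $D\ge\varepsilon$ on $\bar\Omega$.

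The main obstacle is the borderline case $q=pd/(p-d)$, where the divergence is only logarithmic. A crude choice of a single radius $r\sim s_m$ gives a bound that is merely $O(1)$ and yields nothing there. This is why I would integrate over the full dyadic range $(s_m,r_0)$ rather than over one ball. The boundary case $x_0\in\partial\Omega$ is handled by the cone condition in Step 2.
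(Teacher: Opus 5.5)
Your proof is correct. The paper gives no argument of its own here; it quotes the estimate from Healey--Kr\"omer. The mechanism behind that result is the same as yours. Morrey's embedding makes $\det\nabla y$ H\"older continuous with exponent $\beta=1-d/p$. Near a minimiser $x_0$ this gives $\int_\Omega(\det\nabla y)^{-q}\,\mathrm{d}x\gtrsim\int_{\Omega\cap B(x_0,r)}(m+K|x-x_0|^{\beta})^{-q}\,\mathrm{d}x$, which blows up as $m\to0$ precisely when $q\beta\ge d$.

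The usual presentation is qualitative. A zero of $\det\nabla y$ would make $\int|x-x_0|^{-q\beta}$ infinite, and positivity on the compact set $\bar\Omega$ then gives some $\varepsilon$. Uniformity is recovered afterwards, either by a compactness argument or by tracking constants. Your radial integration over $(s_m,r_0)$ tracks the constants directly. It produces an explicit $\varepsilon$ and shows clearly that the critical exponent $q=pd/(p-d)$ only costs a logarithm.

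Your caveat about the constants is also correct and worth keeping: $\varepsilon$ must depend on $\|y\|_{W^{2,p}(\Omega)}$, through $K$. For example, take $y(x)=(g(x_1),x_2,\dots,x_d)$ on $(-1,1)^d$, with $g'$ smooth, equal to $1$ outside an interval of length $m^{q}$, and with minimum $m$. This keeps $\|(\det\nabla y)^{-q}\|_{L^1}$ bounded while $\min_{\bar\Omega}\det\nabla y=m\to0$.

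So the lemma as printed omits a necessary dependence. This is harmless for the paper, since wherever the lemma is used the energy bound (together with the boundary condition) controls $\|y\|_{W^{2,p}}$.
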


\subsection{Properties of the dissipation functional}

For the dissipation functional the calculation corresponding to the previous lemma is a bit more technical.

\begin{lemma}[Quantified linearization of the dissipation] \label{lem:linDissip}
Fix $E_\mathrm{max} >0$. Then there exists a constant $C>0$ such that for any $\delta > 0$ small enough and all $y \in W^{2,p}(\Omega;\R^d)$ satisfying $y|_{\partial\Omega} = \id$ and
 $$
\mathcal{E}_{\delta}(y) < E_\mathrm{max}
 $$ 
 and any $v \in W^{1,2}(\Omega;\R^d)$ it holds that
\begin{align*}
 \abs{\int_\Omega \delta^{-1}\partial_{\dot{F}} R(\nabla y,\nabla v) \cdot \nabla \varphi -  (\mathbb{D} e(\delta^{-1}v))\cdot e( \varphi) \mathrm{d} x} \leq C \delta  \norm[L^2(\Omega)]{ \delta^{-1}\nabla v} \norm[L^2(\Omega)]{\nabla u} \norm[L^\infty(\Omega)]{\nabla \varphi}.
\end{align*}
Here again $u = \delta^{-1}(y - \id)$.
\end{lemma}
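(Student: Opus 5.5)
We compute $\partial_{\dot F}R$ explicitly from \eqref{eq:dissipationIntegrand}, write it as the linearized viscous stress plus an error, and bound the error using the $L^\infty$ control of Lemma \ref{lem:rigidity}.

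\emph{Step 1: explicit form of the viscous stress.} With $F=\nabla y$ and $\dot F=\nabla v$ we have $C=F^\top F$ and $\dot E=\tfrac12(F^\top\dot F+\dot F^\top F)=\operatorname{sym}(F^\top\dot F)$. Since $R(F,\dot F)=\tfrac12\dot E:\mathbb{D}(C)\dot E$ and $\mathbb{D}$ has the minor symmetries, the chain rule gives
\begin{align*}
\partial_{\dot F}R(F,\dot F):G=\big(\mathbb{D}(C)\dot E\big):\operatorname{sym}(F^\top G)\quad\text{for all } G\in\R^{d\times d}.
\end{align*}
Hence the integrand on the left-hand side of the lemma equals
\begin{align*}
\delta^{-1}\big(\mathbb{D}(C)\operatorname{sym}(F^\top\nabla v)\big):\operatorname{sym}(F^\top\nabla\varphi).
\end{align*}
Here $\mathbb{D}$ in the statement is read as $\mathbb{D}(\Id)$.

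\emph{Step 2: expansion around the identity.} Write $F=\Id+\delta\nabla u$. Then
\begin{align*}
\operatorname{sym}(F^\top\nabla v)&=e(v)+\delta\operatorname{sym}(\nabla u^\top\nabla v),\\
\operatorname{sym}(F^\top\nabla\varphi)&=e(\varphi)+\delta\operatorname{sym}(\nabla u^\top\nabla\varphi),
\end{align*}
and $C-\Id=\delta(\nabla u+\nabla u^\top)+\delta^2\nabla u^\top\nabla u$. Subtracting the main term $\mathbb{D}(\Id)\,e(\delta^{-1}v):e(\varphi)$ leaves three kinds of remainder.

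(a) The term $\big((\mathbb{D}(C)-\mathbb{D}(\Id))e(\delta^{-1}v)\big):e(\varphi)$.

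(b) Terms containing $\delta\nabla u^\top\nabla v$ multiplied by $\delta^{-1}$. These equal $\delta\,\nabla u^\top(\delta^{-1}\nabla v)$, paired with $\mathbb{D}(C)$ and with $\operatorname{sym}(F^\top\nabla\varphi)$.

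(c) Terms containing $\delta\nabla u^\top\nabla\varphi$, paired with $\mathbb{D}(C)\,\delta^{-1}\operatorname{sym}(F^\top\nabla v)$.

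Lemma \ref{lem:rigidity} gives $\|\delta\nabla u\|_{L^\infty}\le C\delta^\alpha\le C$. Therefore $|F|$ is uniformly bounded and $\mathbb{D}(C)$ stays bounded by $\alpha_2$.

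For (b) and (c), the pointwise bound is $C\delta|\nabla u|\,|\delta^{-1}\nabla v|\,|\nabla\varphi|$. Integrating, using Cauchy--Schwarz in the first two factors and pulling out $\|\nabla\varphi\|_{L^\infty}$, gives exactly the claimed right-hand side.

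\emph{Step 3: the main obstacle, term (a).} Mere continuity of $\mathbb{D}$ gives only smallness of $|\mathbb{D}(C)-\mathbb{D}(\Id)|$, not the factor $\delta|\nabla u|$. I would therefore assume (or use) that $\mathbb{D}$ is Lipschitz near $\Id$; in that case I expect the paper's intended hypothesis to be implicitly $C^1$. Then
\begin{align*}
|\mathbb{D}(C)-\mathbb{D}(\Id)|\le L|C-\Id|\le L\big(2\delta|\nabla u|+\delta^2|\nabla u|^2\big)\le C\delta|\nabla u|,
\end{align*}
where the last inequality again uses the $L^\infty$ bound on $\delta\nabla u$. This yields the pointwise bound $C\delta|\nabla u|\,|\delta^{-1}\nabla v|\,|\nabla\varphi|$, and integrating as before finishes the proof.

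If only continuity were available, the same argument would give an $o(1)$ bound instead of $C\delta$, which would still suffice for the convergence arguments later on.
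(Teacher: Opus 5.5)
Your proof is correct and follows essentially the same route as the paper. The paper expands $\mathbb{D}(C)$ about $\mathbb{D}(\Id)$ and replaces $\operatorname{sym}(F^\top\nabla v)$ and $\operatorname{sym}(F^\top\nabla\varphi)$ by $e(v)$ and $e(\varphi)$, with pointwise errors of order $\delta|\nabla u|$. It then integrates using the $L^\infty$ bound on $\delta\nabla u$ from Lemma \ref{lem:rigidity}, just as you do. The extra hypothesis you flag is also implicit in the paper, whose Taylor expansion uses $\partial_C\mathbb{D}$ and so tacitly takes $\mathbb{D}$ to be $C^1$; your Lipschitz-near-$\Id$ assumption is enough.
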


Note that later we will use Lemma \ref{lem:linDissip} for $v = \dot{y}$ or a discretized variant thereof. So, if $y = \id+\delta u$, then $\dot{u} = \delta^{-1}v$. Thus, the scaling here is deliberate, even if the factors $\delta^{-1}$ and $\delta$ could be canceled.

\begin{proof}
\MMM We again start with a pointwise estimate. For any fixed matrices $F, \dot{F}$ and $H$ we find that
\begin{align*}
\phantom{{}={}}\partial_{\dot{F}} R\left(F, \dot{F}\right):  H  &= \frac{1}{4}  \left( \dot{F}^\top F + F^\top \dot{F} \right) \mathbb{D}(F^\top F)\left(H^\top F + F^\top H\right)  \\ &=
\frac{1}{4}  \left( \dot{F}^\top F + F^\top \dot{F} \right) \mathbb{D}(\Id)\left(H^\top F + F^\top H\right)\\
&+ \int_0^1  \left( \dot{F}^\top F + F^\top \dot{F} \right) \partial_C \mathbb{D}(t \Id + (1-t) F^\top F)(\Id - F^\top F)\left(H^\top F + F^\top H\right) \mathrm{d} t
\end{align*}
\EEE 
Now plugging in $F=\nabla y$, so that $F-\mathrm{
Id} = \delta \nabla u$ as well as $\dot{F}= \nabla v$ and $T=\nabla \varphi$ and integrating over $\Omega$, we have that
\begin{align*}
&\left| \int_\Omega \partial_{\dot{F}} R\left(\nabla y, \nabla v \right)\cdot  \nabla \varphi - \frac{1}{4}  \left(\nabla v^\top \nabla y + \nabla y^\top \ \nabla v \right) \mathbb{D}(\Id)\left(\nabla \varphi^\top \nabla y + \nabla y^\top \nabla \varphi \right) \mathrm{d} x \right| \\ & \quad \leq C \int_\Omega \int_0^1 |\nabla y|^2 |\nabla v| |\nabla \varphi|  |\partial_C \mathbb{D}(t \Id + (1-t) \nabla y^\top \nabla y)| (|\delta \nabla u|^2+2|\delta \nabla u|) \mathrm{d} t \mathrm{d} x.
\end{align*}
Now, because we have $\delta \in (0,1)$ and $\mathcal{E}_\delta(y) < E_\text{max}$ we know that $\|\delta \nabla u\|_{L^\infty(\Omega)}$ is uniformly bounded in $\delta$  and thus also $\|\nabla y\|_{L^\infty(\Omega)} = \|\Id + \delta \nabla u\|_{L^\infty(\Omega)}$ \MMM  is uniformly bounded in $\delta$. \EEE  This allows us to find a constant $C$ independent of $\delta$ such that we can further estimate this as
\begin{align} \nonumber
 &\left| \int_\Omega \partial_{\dot{F}} R\left(\nabla y, \nabla v \right)\cdot  \nabla \varphi - \frac{1}{4}  \left(\nabla v^\top \nabla y + \nabla y^\top \ \nabla v \right) \mathbb{D}(\Id)\left(\nabla \varphi^\top \nabla y + \nabla y^\top \nabla \varphi \right) \mathrm{d} x \right| \\ &\leq C  \norm[L^2(\Omega)]{ \delta^{-1}\nabla v} \norm[L^2(\Omega)]{\nabla u} \norm[L^\infty(\Omega)]{\nabla \varphi}.\label{eq:estDissipDistorted}
\end{align}

Next we note that since $\nabla y=\Id + \delta\nabla v$ we have
\begin{align*}
 \abs{\nabla v^\top \nabla y + \nabla y^\top \nabla v - 2e(v)} = \delta \abs{\nabla v^\top \nabla u+\nabla u^\top \nabla v} \leq C\delta \abs{\nabla v}\abs{\nabla u}
\end{align*}
Applying the same with $\nabla \varphi$ in place of $\nabla v$, we obtain after some trivial expansion and using again the uniform boundedness of $\norm[L^\infty(\Omega)]{\delta \nabla u}$
\begin{align*}
 &\left| \int_\Omega e(v) \mathbb{D}(\Id) e(\varphi) - \frac{1}{4}  \left(\nabla v^\top \nabla y + \nabla y^\top \ \nabla v \right) \mathbb{D}(\Id)\left(\nabla \varphi^\top \nabla y + \nabla y^\top \nabla \varphi \right) \mathrm{d} x \right| \\
 &\leq C \left(\delta \abs{\nabla v}\abs{\nabla u}\abs{\nabla \varphi}\abs{\nabla y} + \abs{\nabla v}\abs{\nabla u}\abs{\nabla \varphi} \right) \leq  C  \norm[L^2(\Omega)]{ \delta^{-1}\nabla v} \norm[L^2(\Omega)]{\nabla u} \norm[L^\infty(\Omega)]{\nabla \varphi}
\end{align*}
Combining this with \eqref{eq:estDissipDistorted} and multipliying by $\delta$ finishes the proof.
\end{proof}

At last we mention the types of Korn inequalities that we will need.

\begin{lemma}[Generalized Korn's inquality \cite{Neff02KFIN,Pomp03KFIV}] \label{lem:KornGeneral}
 Let $y \in W^{2,p}(\Omega;\R^d)$, $p>d$,  with $y|_{\partial \Omega} = \id$, $\det \nabla y > 0$ everywhere in $\bar\Omega$,  and $v \in W^{1,2}(\Omega;\R^d)$ with $v|_{\partial \Omega} = 0$. Then there exists a constant $C$ depending only on $\norm[W^{2,p}]{y}$ and $\inf_{x\in \bar\Omega} \det \nabla y$ such that
 \begin{align*}
  \norm[W^{1,2}(\Omega)]{v}^2 \leq C R(\nabla y, \nabla v).
 \end{align*}
\end{lemma}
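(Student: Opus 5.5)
The plan is to reduce the statement to the classical generalized Korn inequality of Neff and Pompe. Those results say: if $F \in C(\bar\Omega;\R^{d\times d})$ with $\det F \geq \mu > 0$ on $\bar\Omega$, then
\[ \norm[L^2(\Omega)]{\nabla v}^2 \leq C_F \norm[L^2(\Omega)]{\nabla v^\top F + F^\top \nabla v}^2 \]
for all $v \in W^{1,2}_0(\Omega;\R^d)$. The constant $C_F$ depends only on $\mu$, the $L^\infty$ bound of $F$, and the modulus of continuity of $F$. I would apply this with $F = \nabla y$. Since $p > d$, Morrey's embedding gives $\nabla y \in C^{0,1-d/p}(\bar\Omega)$ with Hölder norm controlled by $\norm[W^{2,p}]{y}$. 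So the $L^\infty$ bound and the modulus of continuity of $\nabla y$ are both quantified by $\norm[W^{2,p}]{y}$. The lower bound on $\det \nabla y$ is the second quantity in the statement.

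The second step connects this with $R$, where I read $R(\nabla y,\nabla v)$ as the integrated dissipation $\Ry$. The time derivative of the Green–Lagrange strain is $\dot E = \tfrac12(\nabla v^\top \nabla y + \nabla y^\top \nabla v)$. By uniform positive definiteness of $\mathbb{D}$,
\[ \int_\Omega R(\nabla y,\nabla v)\,\mathrm{d}x \geq \frac{\alpha_1}{8}\norm[L^2(\Omega)]{\nabla v^\top\nabla y + \nabla y^\top \nabla v}^2 . \]
Combining this with the Korn inequality above bounds $\norm[L^2]{\nabla v}^2$. Poincaré's inequality, using $v|_{\partial\Omega}=0$, then upgrades this to the full $W^{1,2}$ norm.

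The main obstacle is making the dependence of $C_F$ explicit. The cited references give existence of a constant for each fixed continuous $F$, not necessarily uniform over a class. I would argue by contradiction and compactness. Suppose there are $y_n$ with uniformly bounded $W^{2,p}$ norms and $\det\nabla y_n\ge\mu$, and $v_n$ with $\norm[W^{1,2}]{v_n}=1$, but $\norm[L^2]{\mathrm{sym}(\nabla y_n^\top\nabla v_n)}\to0$. By Arzelà–Ascoli, $\nabla y_n\to F$ uniformly, with $\det F\ge\mu$ and $F$ continuous. The weak limit $v$ of $v_n$ satisfies $\mathrm{sym}(F^\top\nabla v)=0$. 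By the fixed-$F$ result this forces $v=0$.

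Uniform convergence of $\nabla y_n$ also gives $\mathrm{sym}(F^\top\nabla v_n)\to0$ in $L^2$. To get strong convergence $\nabla v_n \to 0$, I would use the sharper fixed-$F$ inequality $\norm{\nabla w}\le C(\norm{\mathrm{sym}(F^\top\nabla w)}+\norm{w}_{L^2})$, which is the actual intermediate estimate in Neff's proof. Applied to $w=v_n$, together with Rellich compactness ($v_n\to 0$ in $L^2$), it yields $\nabla v_n\to0$. This contradicts $\norm[W^{1,2}]{v_n}=1$.
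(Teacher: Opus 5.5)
Your proof is correct and follows the route the paper intends. The paper gives no argument beyond citing Neff and Pompe, and what you supply is exactly what that citation leaves implicit: the lower bound $\int_\Omega R \geq \frac{\alpha_1}{8}\|\nabla v^\top\nabla y+\nabla y^\top\nabla v\|_{L^2}^2$, Poincar\'e's inequality, and a compactness argument for the uniform dependence of the constant on $\|y\|_{W^{2,p}}$ and $\inf\det\nabla y$.

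Two small remarks. First, if the cited theorem is stated with the coefficient on the right, i.e.\ for $\mathrm{sym}(\nabla v\,G)$, then the left form you use follows from the pointwise identity $\mathrm{sym}(F^\top\nabla v)=F^\top\,\mathrm{sym}(\nabla v\,F^{-1})\,F$ with $G=F^{-1}$. Second, your final step can be shortened: apply the fixed-$F$ inequality directly to $v_n\in W^{1,2}_0$. Then $\|\mathrm{sym}(F^\top\nabla v_n)\|_{L^2}\to 0$ already forces $\nabla v_n\to 0$, so you need neither the weak limit nor the lower-order estimate.
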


For the rescaled deformation this then directly implies:

\begin{corollary}[Discrete Korn-type rigidity estimate] \label{lem:KornDiscrete}
 Fix $E_{\max} > 0$. There exists a constant $C>0$ such that for any $\delta > 0$ small enough and any $(y_{k,\delta})_k \subset W^{2,p}(\Omega)$ with $\sup_{k} \mathcal{E}_{\delta}(y_{k,\delta}) < E_{\max}$ we have
 \begin{align*}
  \sum_{k=1}^N \norm[W^{1,2}(\Omega)]{\tfrac{ \nabla u_{k,\delta} - \nabla u_{k-1,\delta}}{\tau}} \leq   C \sum_{k=1}^N \mathcal{R}_\delta\left(y_{k-1,\delta},\tfrac{ y_{k,\delta} - y_{k-1,\delta}}{\tau}\right) = C \sum_{k=1}^N \mathcal{R}_\delta\left(\id+\delta u_{k-1,\delta},\delta \tfrac{ u_{k,\delta} - u_{k-1,\delta}}{\tau}\right).
 \end{align*}
 where we let $u_{k,\delta} = \delta^{-1}(y_{k,\delta}-\id)$.
\end{corollary}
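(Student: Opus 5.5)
The plan is to prove the inequality termwise in $k$ and then sum. For a fixed $k$, I would set $w_k := \tfrac{y_{k,\delta}-y_{k-1,\delta}}{\tau} = \delta\,\tfrac{u_{k,\delta}-u_{k-1,\delta}}{\tau}$. Since both $y_{k,\delta}$ and $y_{k-1,\delta}$ equal $\id$ on $\partial\Omega$, we have $w_k|_{\partial\Omega}=0$, and $w_k \in W^{1,2}(\Omega;\R^d)$. I would then apply Lemma \ref{lem:KornGeneral} with base deformation $y=y_{k-1,\delta}$ and velocity $v=w_k$. Because $R(F,\cdot)$ is a quadratic form, the rescaling in \eqref{scale:diss} gives
\begin{align*}
 \mathcal{R}_\delta\left(y_{k-1,\delta},\tfrac{y_{k,\delta}-y_{k-1,\delta}}{\tau}\right) = \delta^{-2}\int_\Omega R(\nabla y_{k-1,\delta},\nabla w_k)\,\mathrm{d}x = \int_\Omega R\left(\nabla y_{k-1,\delta},\nabla \tfrac{u_{k,\delta}-u_{k-1,\delta}}{\tau}\right)\mathrm{d}x .
\end{align*}
So Korn applied to $\tfrac{u_{k,\delta}-u_{k-1,\delta}}{\tau}$ directly controls the $W^{1,2}$-size of the displacement difference quotient, which is what the left-hand side measures through its gradient $\nabla \tfrac{u_{k,\delta}-u_{k-1,\delta}}{\tau}$.

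The main obstacle is that the constant in Lemma \ref{lem:KornGeneral} depends on $\norm[W^{2,p}]{y_{k-1,\delta}}$ and on $\inf_{\bar\Omega}\det\nabla y_{k-1,\delta}$. It therefore has to be made uniform in $k$ and in $\delta$, and I would do this in two steps.
\begin{itemize}
 \item \emph{Bound on the $W^{2,p}$-norm.} From $\mathcal{E}_\delta(y_{k-1,\delta})<E_{\max}$ and \eqref{assumptions-P}(iii) we get $\norm[L^p]{\nabla^2 y_{k-1,\delta}}\le C\delta^\alpha\le C$. Lemma \ref{lem:rigidity} gives $\norm[L^\infty]{\nabla y_{k-1,\delta}-\Id}\le C\delta^\alpha$. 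Together with the boundary condition, this bounds the $W^{2,p}$-norm uniformly.
 \item \emph{Lower bound on the determinant.} For $\delta$ small enough, the $L^\infty$ closeness to $\Id$ already gives $\det\nabla y_{k-1,\delta}\ge 1/2$. Independently of smallness, the same conclusion follows from \eqref{assumptions-W}(vi) together with $\int_\Omega W\le \delta^2E_{\max}$: this bounds $\norm[L^1]{(\det\nabla y_{k-1,\delta})^{-pd/(p-d)}}$, and Lemma \ref{lem:HK} then yields a uniform positive lower bound on $\det\nabla y_{k-1,\delta}$.
\end{itemize}
Hence a single constant $C$, depending only on $E_{\max}$, works for all $k$ and all small $\delta$.

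With this uniform constant, summing the per-step Korn bounds over $k=1,\dots,N$ gives the stated inequality between the sum of the $W^{1,2}$-quantities of the difference quotients and $C\sum_k\mathcal{R}_\delta$. The final equality in the statement is the identity $y_{k,\delta}=\id+\delta u_{k,\delta}$ substituted into both arguments of $\mathcal{R}_\delta$.

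One caveat concerns homogeneity. Lemma \ref{lem:KornGeneral} produces the \emph{square} of the norm on the left, because $R$ is quadratic in its second argument. The summand on the left must therefore be understood in this squared, quadratic sense for the estimate to be scale-consistent. Note that multiplying $\tfrac{u_{k,\delta}-u_{k-1,\delta}}{\tau}$ by $\lambda$ multiplies the right-hand side by $\lambda^2$, so a genuinely unsquared left side cannot be bounded this way. This is the form in which the estimate is used later, namely as a bound on $\tau\sum_k\norm[W^{1,2}]{\cdot}^2$.
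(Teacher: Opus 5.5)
Your argument is correct and follows the paper's route. The paper gives no separate proof; it says the estimate follows directly from Lemma \ref{lem:KornGeneral}, applied step by step with base deformation $y_{k-1,\delta}$. As you do, the constant is made uniform through the energy bound (via \eqref{assumptions-P}(iii) and Lemma \ref{lem:rigidity}, which give the $W^{2,p}$-bound and $\det\nabla y_{k-1,\delta}\ge 1/2$ for small $\delta$), and the $2$-homogeneity of $R$ in its second argument absorbs the factor $\delta^{-2}$.

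Your homogeneity caveat is also right. The printed left-hand side is a typo and must be read as $\sum_{k}\norm[W^{1,2}(\Omega)]{\tfrac{u_{k,\delta}-u_{k-1,\delta}}{\tau}}^2$: squared, and without the extra $\nabla$ inside the norm, since $\mathcal{R}_\delta$ controls no second derivatives. This is the form consistent with Corollary \ref{lem:KornCont} and with how the estimate is used in Lemma \ref{lem:interpolantBounds}.
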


\begin{corollary}[Continuous Korn-type rigidity estimate] \label{lem:KornCont}
 Fix $E_{\max} > 0$. There exists a constant $C>0$ such that for any $\delta > 0$ small enough and $y_\delta \in W^{1,2}(I;W^{1,2}(\Omega;\R^d)) \cap L^\infty(I;W^{2,p}(\Omega;\R^d))$ with $\sup_{t\in I} \mathcal{E}_{\delta}(y_\delta(t)) < E_{\max}$ we have
 \begin{align*}
  \int_I \norm[W^{1,2}(\Omega)]{\partial_t u_\delta}^2 \mathrm{d}t \leq C \int_I \mathcal{R}_\delta(\nabla y_\delta, \partial_t \nabla y_\delta) \mathrm{d}t = C \int_I \mathcal{R}_\delta(\id + \delta \nabla u_\delta, \delta \partial_t \nabla u_\delta) \mathrm{d}t.
 \end{align*}
 where we let $u_\delta = \delta^{-1}(y-\id)$.
\end{corollary}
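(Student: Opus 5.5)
The plan is to apply Lemma \ref{lem:KornGeneral} pointwise in time to $y = y_\delta(t)$ and $v = \partial_t y_\delta(t)$, and then rescale by $\delta$. Since $y_\delta(t)|_{\partial\Omega} = \id$ for all $t$, we have $\partial_t y_\delta(t)|_{\partial\Omega} = 0$ for a.e.\ $t$, so $v$ is admissible. The only issue is that the constant in Lemma \ref{lem:KornGeneral} depends on $\norm[W^{2,p}]{y}$ and on $\inf_{\bar\Omega}\det\nabla y$. The core of the proof is therefore to show that both quantities are controlled uniformly in $t\in I$ and in small $\delta$, using only the energy bound $\sup_t \mathcal{E}_\delta(y_\delta(t)) < E_{\max}$.

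\emph{Step 1: uniform $W^{2,p}$ bound.} For each $t$ we have $P(\nabla^2 y_\delta)\ge c_1|\nabla^2 y_\delta|^p$ and the prefactor $\delta^{-\alpha p}\ge 1$ for $\delta<1$. Hence $\norm[L^p]{\nabla^2 y_\delta(t)}^p\le C\delta^{\alpha p}E_{\max}\le C$. Lemma \ref{lem:rigidity} gives $\norm[L^\infty]{\nabla y_\delta(t)-\Id}\le C\delta^\alpha$, and the boundary condition together with Poincar\'e's inequality controls $y_\delta(t)$ in $L^p$. Altogether $\sup_t\norm[W^{2,p}]{y_\delta(t)}\le C$, independently of $\delta$.

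\emph{Step 2: uniform lower bound on $\det\nabla y_\delta$.} This is the step that needs the most care, and there are two routes. The first is direct: since $\norm[L^\infty]{\nabla y_\delta(t)-\Id}\le C\delta^\alpha$, continuity of the determinant gives $\det\nabla y_\delta(t)\ge 1/2$ on $\bar\Omega$ for all $\delta$ small enough. This is precisely why the statement restricts to small $\delta$. The second route, which avoids smallness, uses assumption (vi) on $W$: it yields $\int_\Omega(\det\nabla y_\delta)^{-pd/(p-d)}\le C + \delta^2 E_{\max}$, and then Lemma \ref{lem:HK} gives a positive lower bound $\varepsilon$. I will use the first route and mention the second as a remark.

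\emph{Step 3: conclusion.} By Steps 1 and 2, Lemma \ref{lem:KornGeneral} applies with one constant $C$ for all $t$ and all small $\delta$:
\[
\norm[W^{1,2}]{\partial_t y_\delta(t)}^2\le C\,\mathcal{R}(\nabla y_\delta(t),\partial_t\nabla y_\delta(t)),
\]
where the right-hand side is the integrated dissipation $\int_\Omega R$. Now substitute $\partial_t y_\delta=\delta\,\partial_t u_\delta$ and divide by $\delta^2$. Because $\mathcal{R}_\delta=\delta^{-2}\mathcal{R}$, this gives $\norm[W^{1,2}]{\partial_t u_\delta(t)}^2\le C\,\mathcal{R}_\delta(\nabla y_\delta,\partial_t\nabla y_\delta)$. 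Integrating over $I$, which is meaningful since $y_\delta\in W^{1,2}(I;W^{1,2})$ makes the quantities measurable and integrable in time, proves the claim. The final equality in the statement is just the rewriting $y_\delta=\id+\delta u_\delta$.
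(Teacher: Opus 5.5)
Your proposal is correct and follows the paper's intended argument. The paper gives no separate proof; it only says the estimate follows directly from Lemma \ref{lem:KornGeneral}, with the constant made uniform in $t$ and $\delta$ through the energy bound and Lemma \ref{lem:rigidity} (the $W^{2,p}$ bound and $\det\nabla y_\delta\geq 1/2$ for small $\delta$), followed by rescaling with $\partial_t y_\delta=\delta\,\partial_t u_\delta$ and $\mathcal{R}_\delta=\delta^{-2}\mathcal{R}$. You supply exactly these details, including the boundary condition $y_\delta(t)|_{\partial\Omega}=\id$, which the statement leaves implicit, so that $\partial_t y_\delta(t)$ is an admissible test function.
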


\subsection{Dual dissipation term}

The final ingredient we will need is the Legendre dual of the dissipation term, as this will play a role both in the energy estimate, as well as in the dynamic convergence on the level of the functional. For this we recall the following definition:
\begin{definition}
 Let $X$ be a Banach space and let $f:X\to \R$ convex. Then the Legendre transform of $f$ is defined as
 \begin{align*}
  f^*:X^* \to \R \cup \{\infty\}: \xi \mapsto \sup_{x\in X} \inner[X^*\times X]{\xi}{x} - f(x).
 \end{align*}
\end{definition}
In particular if $f$ has some sort of coercivity (e.g.\ quadratic growth), then $f^*(\xi) <\infty$ and the supremum is uniquely defined.

We also need the following classical inequality, including its equality case:
\begin{theorem}[Fenchel-Young inequality] \label{thm:Fenchel}
 Let $X$ be a Banach space and let $f:X\to \R$ convex and Frech\'et-differentiable. Let $x \in X$, $\xi \in X^*$. Then
 \begin{align} \label{eq:FenchelEquality}
  \inner{\xi}{x} \leq f(x) + f^*(\xi)
 \end{align}
 where equality holds precisely if $Df(x) = \xi$.
\end{theorem}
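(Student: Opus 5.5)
The statement to prove is the Fenchel--Young inequality (Theorem \ref{thm:Fenchel}), and I will derive it straight from the definition of the Legendre transform. For fixed $x\in X$ and $\xi\in X^*$, the supremum defining $f^*(\xi)$ is at least the value of its argument at the point $x$. Hence
\[
f^*(\xi) \geq \inner{\xi}{x} - f(x),
\]
which rearranges to \eqref{eq:FenchelEquality}. This holds trivially when $f^*(\xi)=\infty$. Since $f$ is real-valued, no indeterminate forms can appear.

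For the equality case I will characterize when $x$ attains the supremum. Set $g(z) := f(z) - \inner{\xi}{z}$, which is convex and Fréchet-differentiable with $Dg(z) = Df(z) - \xi$. Equality in \eqref{eq:FenchelEquality} means precisely that
\[
\sup_z \bigl(\inner{\xi}{z} - f(z)\bigr) = \inner{\xi}{x} - f(x),
\]
that is, that $x$ is a global minimizer of $g$.

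\emph{If $x$ minimizes $g$, then $Dg(x)=0$.} Fix a direction $h$. The map $t\mapsto g(x+th)$ attains its minimum at $t=0$ and is differentiable there, so its derivative $\inner{Dg(x)}{h}$ at $t=0$ vanishes. As $h$ was arbitrary, $Df(x)=\xi$.

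\emph{If $Df(x)=\xi$, then $x$ minimizes $g$.} This is the one step that needs convexity, and it is where I would be most careful. For any $z$ and $t\in(0,1]$, convexity gives
\[
g(x+t(z-x)) \le (1-t)g(x) + t g(z),
\]
and therefore
\[
\frac{g(x+t(z-x))-g(x)}{t} \le g(z)-g(x).
\]
Letting $t\downarrow 0$, the left side tends to $\inner{Dg(x)}{z-x}=0$ by differentiability. Thus $g(z)\ge g(x)$ for all $z$, so $x$ is a minimizer.

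Combining the two directions gives: equality holds if and only if $Df(x)=\xi$. I expect no real obstacle. The only point to watch is that Fréchet (indeed Gâteaux) differentiability suffices for the directional-derivative arguments, and that the supremum equals $\inner{\xi}{x}-f(x)$ exactly when $x$ is a maximizer, which the argument above handles directly.
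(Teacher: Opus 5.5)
Your proposal is correct. The inequality part is exactly the paper's argument: evaluate the supremum defining $f^*(\xi)$ at the point $z=x$.

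For the equality case the paper gives no argument of its own and only cites Rockafellar's Theorem 23.5. That theorem characterizes equality by $\xi\in\partial f(x)$, or equivalently by $x$ maximizing $z\mapsto \langle \xi, z\rangle - f(z)$. You instead prove it directly, using first-order optimality for $g(z)=f(z)-\langle \xi, z\rangle$ together with the convexity bound on difference quotients, which gives the converse direction. This is the reasoning behind the cited result, specialized to differentiable $f$, where $\partial f(x)=\{Df(x)\}$.

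Your version has two advantages. It is self-contained. It also works verbatim in an arbitrary Banach space and needs only G\^ateaux differentiability, whereas Rockafellar's book is set in $\R^n$. So your argument actually covers the generality in which the theorem is stated in the paper.
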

\begin{proof}
 The inequality itself is a direct consequence of the definition of $f^*$. For a discussion of the equality case see e.g.\ \cite[Thm 23.5]{rockafellar}.
\end{proof}

As a direct consequence $f^*(\xi)$ is finite whenever $\xi \in \operatorname{Range}(Df)$.

We will use the dual representative for the dissipation functional; most importantly in its linearized form $\int_\Omega e(u_t) \cdot \left(\mathbb{D} e(u_t) \right) \mathrm{d}x$. Therefore, a particularly important setting for us is $X=W^{1,2}_0(\Omega;\R^d)$ and thus $X^*=W^{-1,2}(\Omega;\R^d) = (W^{1,2}_0(\Omega;\R^d))^*$. 

We also recall that since $\Omega$ is bounded, any element $\xi \in W^{-1,2}(\Omega;\R^d)$ can be represented by a function $A_\xi \in L^2(\Omega;\R^d)$ such that
\begin{align*}
 \inner{\xi}{u} = \int_\Omega A_\xi:\nabla u \, \mathrm{d}x.
\end{align*}
From the definition it is also immediately clear that such an $A_\xi$ is only well defined up to a weakly divergence-free field. This turns out to be crucial, in our case, as \emph{density} of the dissipation functional does not have full coercivity - it is only coercive in symmetric matrices. This means that its Legendre transform (understood as a function of $\mathbb{R}^{3 \times 3}$) takes infinite values. However, the actual integral functional does not suffer from this problem. We then get the following:

\begin{lemma}
 Let $f:\Omega \times \R^{d\times d} \to [0,\infty)$ be measurable with $f(x,\cdot)$ continuously differentiable and convex for almost all $x\in \Omega$ and assume that there exists a $c > 0$ such that $f(x,A) \leq c \abs{A}^2$ for almost all $x\in \Omega$ and $A\in \R^{d\times d}$. Define
 \begin{align*}
  \mathcal{F}:W^{1,2}_0(\Omega;\R^d) \to \R; \quad 
  u \mapsto \int_\Omega f(x, \nabla u(x)) \mathrm{d}x
 \end{align*}
 Assume that additionally there is a Korn-type inequality, i.e.\ there exists $C>0$ such that
 \begin{align} \label{eq:dualKorn}
  \norm[L^2(\Omega)]{\nabla u}^2 \leq C\left[1+ \mathcal{F}(u)\right] \text{ for all } u\in W^{1,2}_0(\Omega;\R^d).
 \end{align}
 Then the Legendre transform $\mathcal{F}^*:W^{-1,2}(\Omega;\R^d) \to \R$ is given by
 \begin{align*}
  \mathcal{F}^*(\xi) = \int_\Omega f^*(x,A_\xi) \mathrm{d}x
 \end{align*}
 where $f^*$ is the Legendre transform of $f$ with respect to the second argument and $A_\xi$ is a representative of $\xi$ for which $A_\xi \in \operatorname{Range}(D_2f(x,\cdot))$.
\end{lemma}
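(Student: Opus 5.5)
The plan is to prove the two inequalities $\mathcal{F}^*(\xi) \le \int_\Omega f^*(x,A_\xi)\,\mathrm{d}x$ and $\mathcal{F}^*(\xi) \ge \int_\Omega f^*(x,A_\xi)\,\mathrm{d}x$ separately. The assumption that some representative $A_\xi$ lies pointwise in $\operatorname{Range}(D_2 f(x,\cdot))$ will close the gap through the equality case of Theorem \ref{thm:Fenchel}.

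\textbf{Upper bound.} For any representative $A$ of $\xi$ and any $u\in W^{1,2}_0$, we have pointwise, by the Fenchel--Young inequality on $\R^{d\times d}$,
\[
A(x):\nabla u(x) - f(x,\nabla u(x)) \le f^*(x,A(x)).
\]
Integrating gives
\[
\langle \xi,u\rangle - \mathcal{F}(u) \le \int_\Omega f^*(x,A)\,\mathrm{d}x ,
\]
and taking the supremum over $u$ yields $\mathcal{F}^*(\xi)\le \inf_A \int_\Omega f^*(x,A)\,\mathrm{d}x$. This direction holds for every representative, in particular for $A_\xi$.

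\textbf{Lower bound.} Fix the representative $A_\xi$ with $A_\xi(x) = D_2 f(x,B(x))$ for some $B(x)$. By the equality case of Theorem \ref{thm:Fenchel},
\[
f^*(x,A_\xi(x)) = A_\xi(x):B(x) - f(x,B(x)).
\]
If $B = \nabla u$ for some $u\in W^{1,2}_0$, then this $u$ realizes the supremum and we are done. In general $B$ is only a measurable matrix field, not a gradient, so the realization must come from the variational problem itself. I would therefore minimize $u\mapsto \mathcal{F}(u)-\langle\xi,u\rangle$ over $W^{1,2}_0$.

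\begin{itemize}
\item \emph{Coercivity.} By \eqref{eq:dualKorn} and $|\langle\xi,u\rangle|\le \|A_\xi\|_{L^2}\|\nabla u\|_{L^2}$, the functional is bounded below and minimizing sequences are bounded in $W^{1,2}_0$.
\item \emph{Lower semicontinuity.} $\mathcal{F}$ is convex and continuous (by the quadratic growth bound $f\le c|A|^2$), hence weakly lower semicontinuous.
\item \emph{Euler--Lagrange equation.} A minimizer $\bar u$ exists. Since $f(x,\cdot)$ is $C^1$ with quadratic growth, convexity gives $|D_2 f(x,A)|\le C|A|$, so $\mathcal{F}$ is G\^ateaux differentiable. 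The Euler--Lagrange equation reads $\int_\Omega D_2 f(x,\nabla\bar u):\nabla\varphi\,\mathrm{d}x = \langle \xi,\varphi\rangle$. Thus $D_2 f(x,\nabla \bar u)$ is itself a representative of $\xi$ lying in the range of $D_2 f$.
\end{itemize}

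Applying the pointwise equality case with $B=\nabla\bar u$ gives
\[
\mathcal{F}^*(\xi) = \langle\xi,\bar u\rangle - \mathcal{F}(\bar u) = \int_\Omega f^*\bigl(x,D_2 f(x,\nabla\bar u)\bigr)\,\mathrm{d}x .
\]

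\textbf{Main obstacle: independence of the representative.} The remaining task is to show that $\int_\Omega f^*(x,A)\,\mathrm{d}x$ takes the same value for every representative $A$ of $\xi$ lying in the range of $D_2 f$. Without this, the statement is ambiguous.

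Let $A = D_2 f(\cdot,B)$ be another such representative, so $A - D_2 f(\cdot,\nabla\bar u)$ is weakly divergence free. By the upper bound, $\int_\Omega f^*(x,A)\,\mathrm{d}x \ge \mathcal{F}^*(\xi)$, so only the reverse inequality is needed. Writing
\[
\int_\Omega f^*(x,A)\,\mathrm{d}x = \int_\Omega A:B - f(x,B)\,\mathrm{d}x ,
\]
I would compare this with the value at $\bar u$ using convexity of $f^*(x,\cdot)$ together with the subgradient relation $B\in\partial f^*(x,A)$:
\[
f^*(x,A) \le f^*\bigl(x,D_2 f(x,\nabla\bar u)\bigr) + \bigl(A - D_2 f(x,\nabla \bar u)\bigr):B .
\]
The difficulty is that the last term integrates to zero only when $B$ is a gradient, and $B$ need not be one. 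So the first thing I would try is the opposite subgradient inequality, which uses $\nabla\bar u\in\partial f^*(x,D_2 f(x,\nabla \bar u))$:
\[
f^*\bigl(x,D_2 f(x,\nabla \bar u)\bigr) \ge f^*(x,A) - \bigl(A - D_2 f(x,\nabla\bar u)\bigr):\nabla\bar u .
\]
Here the correction term pairs a divergence-free field with a gradient of a function vanishing on $\partial\Omega$, so it integrates to $0$. This yields $\int_\Omega f^*(x,A)\,\mathrm{d}x \le \mathcal{F}^*(\xi)$ and hence equality.

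Integrability of all terms follows from the quadratic growth of $f$ and the resulting linear growth of $D_2 f$. The only delicate point is that $A_\xi\in L^2$ is needed to make the pairing meaningful, which is the standing convention.
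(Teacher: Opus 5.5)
Your upper bound and your construction of $\bar u$ follow the paper's proof together with the Remark after it, and that part is correct. In that construction you maximize $\langle\xi,u\rangle-\mathcal{F}(u)$, read off the new representative $D_2f(\cdot,\nabla\bar u)$ from the Euler--Lagrange equation, and apply the pointwise equality case of Fenchel--Young. The final step, on independence of the representative, is wrong. Since $\nabla\bar u\in\partial f^*(x,A_0)$ for $A_0:=D_2f(x,\nabla\bar u)$, the subgradient inequality reads $f^*(x,A)\ge f^*(x,A_0)+(A-A_0):\nabla\bar u$. You wrote it with the inequality sign reversed. Integrated, the correct inequality only gives $\int_\Omega f^*(x,A)\,\mathrm{d}x\ge\mathcal{F}^*(\xi)$, which you already had from the upper bound.

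The independence you want is in fact false, so this step cannot be repaired. Take $f(x,F)=\tfrac12|F|^2$, which satisfies all hypotheses and has $\operatorname{Range}(D_2f(x,\cdot))=\R^{d\times d}$. Let $\xi=0$ and let $A=D$ be a nonzero constant matrix. Then $D$ represents $0$, because $\int_\Omega D:\nabla\varphi\,\mathrm{d}x=0$ for all $\varphi\in W^{1,2}_0(\Omega;\R^d)$. Yet $\int_\Omega f^*(x,D)\,\mathrm{d}x=\tfrac12|D|^2|\Omega|>0=\mathcal{F}^*(0)$. The same happens with a constant symmetric $D$ for the symmetric-gradient densities the lemma is designed for.

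So the lemma must be read with the particular representative $A_\xi=D_2f(\cdot,\nabla u_\xi)$ produced by the maximizer. Equivalently, $\mathcal{F}^*(\xi)$ is the minimum of $\int_\Omega f^*(x,A)\,\mathrm{d}x$ over all representatives $A$. That is exactly what your lower-bound argument already proves. Your first subgradient inequality does show the value is the same for every representative of the form $D_2f(\cdot,\nabla v)$ with $v\in W^{1,2}_0(\Omega;\R^d)$. In that case the correction term pairs a divergence-free field with a gradient of a function vanishing on the boundary, so it integrates to zero. State the result that way and drop the reversed inequality.
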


\begin{proof}
 Fix $\xi \in W^{-1,2}(\Omega;\R^d)$ and an arbitrary associated $\tilde{A}_\xi\in L^2(\Omega;\R^{d\times d})$. Consider the maximization problem
 \begin{align*}
  \max_{u\in W^{1,2}_0(\Omega;\R^d)} \int_\Omega \tilde{A}_\xi : \nabla u \,\mathrm{d} x - \mathcal{F}(u).
 \end{align*}
 Using \eqref{eq:dualKorn} for coercivity and the convexity of $f$ for semicontinuity, we can infer that the maximum is attained for some $u_\xi \in W^{1,2}_0$. The corresponding Euler-Lagrange equation is then given by
 \begin{align*}
  \int_\Omega \tilde{A}_\xi : \nabla \varphi - D_2f(x,\nabla u_\xi): \nabla \varphi \mathrm{d} x = 0.
 \end{align*}

 In other words, we can set $A_\xi := D_2f(x,\nabla u_\xi)$ as a new representative of $\xi$, as it only differs from $\tilde{A}_\xi$ by a weakly divergence free field. But then by \eqref{eq:FenchelEquality}
 \begin{align*}
  \mathcal{F}(u)+ \mathcal{F}^*(\xi)  &= \int_\Omega D_2f(x,\nabla u) : \nabla u \,\mathrm{d} x = \int_\Omega f(\nabla u) + f^*(x,D_2f(x,\nabla u)) \mathrm{d} x = \mathcal{F}(u) + \int_\Omega f^*(x,A_\xi) \mathrm{d}x. \qedhere
 \end{align*}
\end{proof}

\begin{remark}
 Note that independent of the choice of representative $A_\xi$, a pointwise application of the Fenchel-Young inequality implies
 \begin{align*}
  \mathcal{F}(u)+ \mathcal{F}^*(\xi)  &\leq \int_\Omega f(\nabla u) + f^*(x,A_\xi) \mathrm{d} x.
 \end{align*}
 Then the taking the minimum and using the previous Lemma yields the alternative characterization
 \begin{align*}
  \mathcal{F}^*(\xi) &= \min\left\{ \int_\Omega f^*(x,A_\xi) \mathrm{d} x : A_\xi \text{ represents } \xi\right\}. \qedhere
 \end{align*}
\end{remark}

Next we recall the following result on the Legendre transform of (semi-)definite quadratic forms.
\begin{lemma}
 Let $S \in \R^{n\times n}$ be symmetric positive semi-definite\footnote{Note that for us later $n= d^2$, as $S$ will be replaced by the fourth-order tensor $\mathcal{D}$.} and let $f: \R^n \to \R; v \mapsto v^T S v$ be the associated quadratic form. Then
 \begin{align*}
  f^* : \R^n \to \R \cup \{\infty\}; w \mapsto \begin{cases} w S^{-1} w &\text{ if } w\in \operatorname{Range}(S) = \operatorname{Ker}(S)^\bot \\ \infty &\text{ otherwise} \end{cases}
 \end{align*}
 where $S^{-1}$ is understood as a map $\operatorname{Range}(S) \to \R^n$, or alternatively as a pseudoinverse.
\end{lemma}

\begin{proof}
 Using the spectral theorem, we can diagonalize $A$ and treat all eigenspaces separately. Now if $w$ is in the zero-eigenspace $(\lambda w) \cdot w - f(\lambda w) = \lambda \norm{w}^2$ is unbounded and thus $f^*(w) = \infty$. For all non-zero eigenspaces, the statement is equivalent to a weighted Young's inequality.
\end{proof}

For our situation, this allows us to conclude the following:
\begin{proposition} \label{prop:dualDissipation}
 Let $\mathcal{R}$ be defined as in \eqref{eq:dissipationIntegrand}. Then for any $y\in W^{1,2}(\Omega;\R^d)$ its Legendre-transform (with respect to the second argument) is given by
 \begin{align*}
  \mathcal{R}^*(y,\cdot) : W^{-1,2}(\Omega;\R^d) &\to \R \cup \{\infty\};\\
  \xi &\mapsto \int_\Omega \frac{1}{4} (\nabla y^{-\top} A_\xi + A_\xi^{\top} \nabla y^{-1} )\mathbb{D}(\nabla y)^{-1} (\nabla y^{-\top} A_\xi + A_\xi^{\top} \nabla y^{-1} )
 \end{align*}
 where $A_\xi$ is chosen such that $\nabla y A_\xi$ is symmetric almost everywhere.
\end{proposition}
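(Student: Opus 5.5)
The plan is to apply the preceding lemma on the Legendre transform of integral functionals with $f(x,A) := R(\nabla y(x), A) = \frac12 \dot E(A) : \mathbb{D}(C)\dot E(A)$. Here $C = \nabla y^\top \nabla y$ and $\dot E(A) = \frac12(A^\top \nabla y + \nabla y^\top A)$. The transform is then computed pointwise via the lemma on semi-definite quadratic forms.

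\textbf{Checking the hypotheses.} The function $f(x,\cdot)$ is a convex quadratic form, hence continuously differentiable. For bounded $\nabla y$ it satisfies $f(x,A)\le c\abs{A}^2$, and the Korn-type bound \eqref{eq:dualKorn} is supplied by Lemma \ref{lem:KornGeneral}. This requires $\nabla y\in L^\infty$ with $\det\nabla y$ bounded below, which is the regime where the paper actually uses the proposition: $y\in W^{2,p}$ with finite energy, using Lemma \ref{lem:HK}. I would state this implicit restriction explicitly rather than work for general $y\in W^{1,2}$.

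\textbf{Structure of the quadratic form.} Define $L_y:\R^{d\times d}\to\Rsym^{d\times d}$ by $L_y(A)=\frac12(A^\top\nabla y+\nabla y^\top A)$, so that $f = \frac12 L_y(A):\mathbb{D}L_y(A)$. Then
\[
D_2 f(x,A)=L_y^*\mathbb{D}L_y(A)=\nabla y\,\mathbb{D}L_y(A),
\]
using the symmetry of $\mathbb{D}L_y(A)$. Consequently $\operatorname{Range}(D_2f)=\{\nabla y\, S : S\in\Rsym^{d\times d}\}$, i.e.\ exactly those $A$ with $\nabla y^{-1}A$ symmetric. This is the admissibility condition on the representative $A_\xi$, possibly up to the paper's convention of writing it as $\nabla y A_\xi$ symmetric. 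By the general lemma, such a representative exists and $\mathcal{R}^*(y,\xi)=\int_\Omega f^*(x,A_\xi)\,\mathrm{d}x$.

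\textbf{Pointwise transform.} For $A=\nabla y\, S$ with $S$ symmetric, solve $D_2f(B)=A$, i.e.\ $\mathbb{D}L_y(B)=S$, which gives $L_y(B)=\mathbb{D}^{-1}S$. Then
\[
f^*(A)=A:B-f(B)=S:L_y(B)-\tfrac12 S:\mathbb{D}^{-1}S=\tfrac12 S:\mathbb{D}^{-1}S.
\]
Expressing $S$ in terms of $A$, one has $S=\frac12(\nabla y^{-1}A+A^\top\nabla y^{-\top})$ (the symmetrization of $\nabla y^{-1}A$). This yields the stated formula up to the exact form of the transposes and the normalizing constant, which I would reconcile with the paper's conventions, including reading $\mathbb{D}(\nabla y)$ as $\mathbb{D}(C)$.

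\textbf{Main obstacle.} The main obstacle is bookkeeping, not analysis. One must make the range characterization precise, including checking that $L_y$ is surjective onto symmetric matrices when $\nabla y$ is invertible, and track the transposes and factors consistently. The one genuine analytic input is the Korn inequality with constants controlled by $\inf\det\nabla y$.
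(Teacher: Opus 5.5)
Your proposal is correct and follows essentially the paper's route: both reduce, via the characterization lemma for integral functionals, to the pointwise conjugate of $A\mapsto R(\nabla y,A)$ on its range $\{\nabla y\,S : S \text{ symmetric}\}$. The paper gets this conjugate from Young's inequality and its equality case, while you solve $D_2f(B)=A$ explicitly.

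Your bookkeeping is also the consistent one. \eqref{eq:dissipationIntegrand} gives $\int_\Omega \tfrac12(\nabla y^{-1}A_\xi):\mathbb{D}(\nabla y^\top\nabla y)^{-1}(\nabla y^{-1}A_\xi)\,\mathrm{d}x$ for the representative furnished by that lemma. It does not hold for an arbitrary symmetric representative, since these are not unique. So the transposes and the factor $\tfrac14$ in the statement are slips rather than conventions you need to reconcile. Your restriction to finite-energy $y\in W^{2,p}$, which makes Lemma \ref{lem:KornGeneral} and the growth bound available, is a hypothesis the paper uses tacitly.
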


\begin{proof}
 As $\nabla y$ is invertible, we first note that we can choose a representation of $\xi$ such that $\nabla y^{-1} A_\xi$ is a symmetric matrix almost everywhere. We then rewrite using the same invertibility, as well as the previous lemma and the symmetry properties of $\mathbb{D}$
 \begin{align*}
  \int_\Omega A_\xi : \nabla \varphi \mathrm{d}x &= \int_\Omega (\nabla y^{-\top} A_\xi) : \nabla y^\top \nabla \varphi \mathrm{d}x\\
  &\leq \int_\Omega (\nabla y^\top \nabla \varphi) \mathbb{D}(\nabla y) (\nabla y^\top \nabla \varphi) + \int_\Omega (\nabla y^{-\top} A_\xi )\mathbb{D}(\nabla y)^{-1} (\nabla y^{-\top} A_\xi )\\
  &= \mathcal{R}(y,\varphi) + \int_\Omega \frac{1}{4} (\nabla y^{-\top} A_\xi + A_\xi^{\top} \nabla y^{-1} )\mathbb{D}(\nabla y)^{-1} (\nabla y^{-\top} A_\xi + A_\xi^{\top} \nabla y^{-1} )
 \end{align*}
 where equality holds precisely if
 \begin{align*}
  \nabla y^{-\top} A_\xi = \mathbb{D}(\nabla y) \nabla y^\top \nabla \varphi.
 \end{align*}
 Multipliying with $\nabla y^\top$, this is equivalent to $\xi = D_2\mathcal{R}(y,\varphi)$, which finishes the proof by the characterization.
\end{proof}

\begin{remark}
 The role of the symmetric gradient in linear elasticity and its properties even in Sobolev-spaces of negative order has been studied in \cite{amrouche}. Similarly the representation of operators in $W^{-1,2}$ by $L^2$-functions acting on the derivative is textbook knowledge. However to the authors' knowledge, the curious fact that this allows a symmetric representation in the case of vector-valued spaces, has so far not been noticed.
\end{remark}

 \section{\texorpdfstring{$\Gamma$}{Gamma}-convergence of the underlying functionals }\label{Sec:Gamma_conv_energy}

As we highlighted in the introduction, the functional convergence of the energy and the dissipation functional as $\delta \to 0$ are fundamental to all other results. Therefore, we recall them in this section while mostly relying on the works \cite{DalMasoNegriPercivale:02,friedrichPassageNonlinearLinearized2018}.

The proper notion of functional convergence that we shall use is the so-called $\Gamma$-convergence of functionals $\mathcal{F}_\delta$ to $\mathcal{F}_0$.
The general idea behind $\Gamma$-convergence of can be reduced to three parts:
 \begin{enumerate}
  \item Equi-coercivity: For any sequence $(y_\delta)_\delta$ such that $\sup_\delta \mathcal{F}_\delta(y_\delta) < \infty$, there exists some converging subsequence (possibly after rescaling), with a limit $y$.
  \item Lower semicontinuity: For any sequence $(y_\delta)_\delta$ converging in the above (possibly rescaled) sense to $y$, we have $\liminf_{\delta \to 0}\mathcal{F}_\delta(y_\delta) \geq \mathcal{F}(y)$
  \item Recovery sequence: For any $y$, there exists an approximating sequence $(y_\delta)_\delta$ converging to $y$ such that $\limsup_{\delta \to 0}\mathcal{F}_\delta(y_\delta) \leq \mathcal{F}(y)$
 \end{enumerate}
 Using these properties, one can then relatively quickly relate (almost)minimizers of $\mathcal{F}_\delta$ to those of $\mathcal{F}$, as we will see in the following.

 As the definition of $\Gamma$-convergence is highly dependent on the underlying spaces, we will always explicitly give the respective notion of convergence. However, we will always be in the same rescaled case used for linearization, where we actually want the displacements $u_\delta := \delta^{-1}(y_\delta -\id)$ to converge to $u$ in some appropriate space.

  \begin{remark}
     In the following, we will not relabel subsequences to reduce the complexity of the notation. In fact, since all solutions to the linearized systems we study are unique and we will show that any bounded sequence has a converging subsequence, this directly implies convergence for any sequence and thus for $\delta \to 0$ in general.
 \end{remark}

 \subsection{\texorpdfstring{$\Gamma$}{Gamma}-convergence of the energy}

 We will now study the energy functional. Although this result can essentially be deduced from the seminal work \cite{DalMasoNegriPercivale:02}, we still choose to present it here in full. We begin with equi-coercivity, which is a direct consequence of Lemma \ref{lem:rigidity} and the weak compactness.

 \begin{corollary}
  Let $(y_\delta)_\delta$ such that $\sup_{\delta} \mathcal{E}_\delta(y_\delta) < \infty$. Then there exists a of subsequence $(y_{\delta})_{\delta}$ and a function $u\in W^{1,2}_0(\Omega;\R^d)$ such that for $u_{\delta} := \delta^{-1} (y_{\delta}-\id)$ we have
  \begin{align*}
    u_{\delta} \rightharpoonup u \text{ in } W^{1,2}(\Omega;\R^d)
  \end{align*}
  and $\norm[L^\infty(\Omega)]{\nabla y_{\delta}} < C\delta^\alpha$ for some $C>0$.
 \end{corollary}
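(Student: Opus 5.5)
The plan is to read the statement off Lemma \ref{lem:rigidity} and then use weak compactness in the reflexive space $W^{1,2}(\Omega;\R^d)$. As in that lemma, I take the admissible $y_\delta$ to lie in $W^{2,p}(\Omega;\R^d)$ with $y_\delta|_{\partial\Omega}=\id$. The $L^\infty$ claim has to be read as $\norm[L^\infty(\Omega)]{\nabla y_\delta - \Id} = \norm[L^\infty(\Omega)]{\delta\nabla u_\delta} \leq C\delta^\alpha$, which is what Lemma \ref{lem:rigidity} provides. Taken literally, $\nabla y_\delta$ is close to $\Id$, so its own norm cannot vanish.

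First, set $E_{\max} := \sup_\delta \mathcal{E}_\delta(y_\delta) + 1 < \infty$, so that $\mathcal{E}_\delta(y_\delta) < E_{\max}$ for every $\delta$. Lemma \ref{lem:rigidity} applies for all $\delta$ small enough, and we only care about $\delta \to 0$. It gives a constant $C$ independent of $\delta$ with
\begin{align*}
\norm[W^{1,2}(\Omega)]{u_\delta} \leq C \quad\text{and}\quad \norm[L^\infty(\Omega)]{\delta\nabla u_\delta} \leq C\delta^\alpha .
\end{align*}
The second inequality is already the asserted uniform $L^\infty$ estimate.

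Second, $W^{1,2}(\Omega;\R^d)$ is reflexive, so the bounded family $(u_\delta)_\delta$ has a subsequence with $u_\delta \rightharpoonup u$ weakly in $W^{1,2}(\Omega;\R^d)$. By Rellich's theorem the convergence is also strong in $L^2$, although this is not needed here.

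Third, we check that $u \in W^{1,2}_0(\Omega;\R^d)$. Since $y_\delta|_{\partial\Omega}=\id$, each $u_\delta = \delta^{-1}(y_\delta-\id)$ has zero trace. The space $W^{1,2}_0(\Omega;\R^d)$ is the kernel of the continuous linear trace operator on a Lipschitz domain, hence a closed convex subspace. It is therefore weakly closed, so the weak limit $u$ also lies in $W^{1,2}_0(\Omega;\R^d)$.

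None of these steps is a real obstacle, since all the analytic content sits in Lemma \ref{lem:rigidity}, that is, in the Friesecke--James--M\"uller rigidity estimate together with the Morrey-type bound coming from the $P$-term. The only points needing care are choosing $E_{\max}$ as a strict upper bound and the correct reading of the $L^\infty$ estimate discussed above.
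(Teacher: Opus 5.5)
Your proposal is correct and follows the paper's route. The paper treats this corollary as a direct consequence of Lemma \ref{lem:rigidity} together with weak compactness in $W^{1,2}(\Omega;\R^d)$. Your extra step, that the weak limit keeps zero trace, and your reading of the $L^\infty$ bound as $\norm[L^\infty(\Omega)]{\nabla y_\delta - \Id} = \norm[L^\infty(\Omega)]{\delta \nabla u_\delta} \leq C\delta^\alpha$, correctly fill in the detail the paper leaves implicit and fix the typo in the statement.
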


 Next we deal with lower-semicontinuity and the recovery condition.

 \begin{lemma}\label{lem:energyLSC}
  Let $(y_\delta)_\delta$ be such that the sequence $(u_{\delta})_\delta$ with $u_{\delta} := \delta^{-1} (y_{\delta}-\id)$ converges weakly to $u \in W^{1,2}(\Omega;\R^n)$. Then
  \begin{align*}
   \liminf_{\delta \to 0} \mathcal{E}_\delta(y_\delta) \geq \int_\Omega  \frac{1}{2} e( u) \cdot \mathbb{C} e ( u).
  \end{align*}
 \end{lemma}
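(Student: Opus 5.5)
We follow the classical strategy of Dal Maso--Negri--Percivale, truncating where $\nabla u_\delta$ is large. First, we may assume $\liminf_{\delta\to0}\mathcal{E}_\delta(y_\delta)<\infty$ and pass to a subsequence realizing the liminf with $\sup_\delta \mathcal{E}_\delta(y_\delta)<E_{\max}$; otherwise there is nothing to prove. The second-order term is nonnegative, so we drop it and prove
\begin{align*}
\liminf_{\delta\to 0}\int_\Omega \delta^{-2}W(\Id+\delta\nabla u_\delta)\,\mathrm{d}x \ \geq\ \int_\Omega \tfrac12 e(u)\cdot\mathbb{C}e(u)\,\mathrm{d}x .
\end{align*}

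The key step is a pointwise Taylor expansion. By frame indifference and $\partial_F W(\Id)=0$, $W(\Id)=0$, $W\in C^3$ near $\mathrm{SO}(d)$, we have for $|G|\le r$ small
\begin{align*}
W(\Id+G)\ \geq\ \tfrac12 G\cdot\mathbb{C}G - \omega(|G|)|G|^2 ,
\end{align*}
with $\omega(s)\to0$ as $s\to0$. Using $W\geq 0$ everywhere, we introduce the good set $\Omega_\delta:=\{x: |\nabla u_\delta(x)|\le \delta^{-1/2}\}$ and its indicator $\chi_\delta$. Then
\begin{align*}
\int_\Omega \delta^{-2}W(\Id+\delta\nabla u_\delta) \ \geq\ \int_\Omega \chi_\delta\Big(\tfrac12 \nabla u_\delta\cdot\mathbb{C}\nabla u_\delta - \omega(\delta^{1/2})|\nabla u_\delta|^2\Big)\mathrm{d}x .
\end{align*}
Since $\nabla u_\delta$ is bounded in $L^2$ by Lemma~\ref{lem:rigidity}, the error term is at most $C\omega(\delta^{1/2})\to0$. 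Alternatively, Lemma~\ref{lem:rigidity} gives $\|\delta\nabla u_\delta\|_{L^\infty}\le C\delta^\alpha\to0$, so one could take $\chi_\delta\equiv1$ directly. This stronger $L^\infty$ bound makes the truncation unnecessary, and the expansion holds on all of $\Omega$ with error $\omega(C\delta^\alpha)\|\nabla u_\delta\|_{L^2}^2\to0$. We will use this simpler route.

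It remains to pass to the limit in $\int_\Omega\tfrac12 \nabla u_\delta\cdot \mathbb{C}\nabla u_\delta$. By the minor symmetries of $\mathbb{C}$, we have $G\cdot\mathbb{C}G = e\cdot\mathbb{C}e$ with $e=\mathrm{sym}\,G$, and $\mathbb{C}$ is positive semidefinite on symmetric matrices because $\Id$ minimizes $W$. Hence $v\mapsto\int_\Omega \tfrac12 e(v)\cdot\mathbb{C}e(v)$ is a convex continuous quadratic functional on $W^{1,2}$, and therefore weakly lower semicontinuous. Since $u_\delta\rightharpoonup u$ in $W^{1,2}$, the claim follows.

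The main obstacle is the Taylor step: we must make sure the second-order expansion is uniform. $W$ is $C^3$ only near $\mathrm{SO}(d)$, so we need $\nabla y_\delta$ to lie uniformly in a fixed neighborhood of $\Id$. This is exactly what the $L^\infty$ estimate of Lemma~\ref{lem:rigidity} provides, and it also makes the remainder quantitative, of order $\delta\|\nabla u_\delta\|_{L^3}^3\le C\delta^\alpha\|\nabla u_\delta\|_{L^2}^2$. If that bound were unavailable, we would fall back on the truncation with $\chi_\delta$ described above, which uses only $W\ge0$ and $\chi_\delta\to1$ in measure: since $\chi_\delta\nabla u_\delta\rightharpoonup\nabla u$, lower semicontinuity still applies.
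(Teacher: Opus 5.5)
Your proposal is correct and follows essentially the same route as the paper: you use the bound $\|\delta\nabla u_\delta\|_{L^\infty}\le C\delta^\alpha$ from Lemma~\ref{lem:rigidity} to keep $\nabla y_\delta$ in the neighbourhood of $\Id$ where $W$ is $C^3$, Taylor-expand $W$ around $\Id$, discard the nonnegative $P$-term, and conclude by weak lower semicontinuity of the convex quadratic form $v\mapsto\int_\Omega \frac12 e(v)\cdot\mathbb{C}e(v)$. Your remainder bound $C\delta^\alpha\|\nabla u_\delta\|_{L^2}^2$ is actually more accurate than the paper's stated $C\delta$, and the truncation fallback is valid but not needed here.
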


 \begin{proof}
  We can assume that $\mathcal{E}_\delta(y_\delta)$ is bounded, as otherwise there is nothing to prove. Moreover, due to Lemmas \ref{lem:rigidity} and \ref{lem:HK}, we can find a  $\delta_0$ such that for all ${\delta} \leq \delta_0$ we have that uniform bounds
\begin{equation}
\|\Id + \delta \nabla u_{\delta}\|_{L^\infty(\Omega)} \leq 1/2 \quad \text{ as well as } \quad
 \det (\Id + {\delta} \nabla u_{\delta}) \geq 1/2.
 \label{Bound}
\end{equation}
Therefore appealing to the $C^3$-smoothness of the function $W$ in a neighborhood of the identity, we may employ the Taylor expansion \eqref{Taylor-E} relying on \eqref{assumptions-W}(iii-iv), as well as the symmetry assumptions on $\mathbb{C}$ to observe that
\begin{equation}
 \frac{1}{\delta^2} \int_\Omega W(\nabla y_{\delta}) \mathrm{d} x \geq \int_\Omega \frac12 e(u_{\delta}) \mathbb{C} e( u_{\delta}) - C \delta,
\label{Taylor-W}
\end{equation}
where $C$ is independent of $\delta$. Since this is a quadratic form, it is then lower semi-continuous under weak convergence. As $\int_\Omega P(\nabla^2 y_{\delta})\mathrm{d} x \geq 0$, this is all we need to deal with the elastic energy.
\end{proof}

\begin{lemma}\label{lem:energyRecovery}
 Let $u \in W^{1,2}(\Omega;\R^n)$. For any sequence $(y_\delta)_\delta \subset W^{2,p}(\Omega;\R^d)$ such that the sequence $(u_{\delta})_\delta$ with $u_{\delta} := \delta^{-1} (y_{\delta}-\id)$ converges strongly to $u$ in $W^{1,2}(\Omega;\R^d)$ and $\int_\Omega \frac{1}{\delta^{\alpha p}} P(\nabla^2y_\delta) \mathrm{d}x \to 0$ we have
 \begin{align*}
  \limsup_{\delta \to 0}\mathcal{E}_\delta(y_\delta) \leq \int_\Omega  \frac{1}{2} e( u) \cdot \mathbb{C} e ( u).
 \end{align*}
 Such a sequence exists for any $u\in W^{1,2}(\Omega;\R^n)$.
\end{lemma}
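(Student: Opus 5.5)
The statement has two parts: the limsup inequality along any admissible sequence, and the existence of such a sequence. For the first part I will reuse the Taylor expansion \eqref{Taylor-E}, now as an upper bound. The starting point is that strong convergence in $W^{1,2}$ does not by itself control $\delta\nabla u_\delta$ in $L^\infty$. I will therefore use the assumption $\delta^{-\alpha p}\int_\Omega P(\nabla^2 y_\delta)\,\mathrm{d}x\to 0$ together with the lower bound in \eqref{assumptions-P}(iii). This gives $\|\nabla^2 y_\delta\|_{L^p}=o(\delta^\alpha)$. Arguing as in the proof of Lemma \ref{lem:rigidity}, I combine the Poincaré inequality (the embedding $W^{1,p}\hookrightarrow L^\infty$, since $p>d$) with the $L^2$ bound $\|\nabla y_\delta-\Id\|_{L^2}=\delta\|\nabla u_\delta\|_{L^2}\le C\delta$. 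The result is $\|\delta\nabla u_\delta\|_{L^\infty}\le C\delta^\alpha+C\delta\to 0$.

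Consequently, for $\delta$ small, $\nabla y_\delta$ lies uniformly in a small neighborhood of $\Id$ where $W$ is $C^3$. Using $W(\Id)=0$, $\partial_F W(\Id)=0$ and the second-order Taylor formula with third-order remainder, I get pointwise
\begin{align*}
\Big|\delta^{-2}W(\Id+\delta\nabla u_\delta)-\tfrac12\nabla u_\delta:\mathbb{C}\nabla u_\delta\Big|\le C\delta|\nabla u_\delta|^3\le C\|\delta\nabla u_\delta\|_{L^\infty}|\nabla u_\delta|^2 .
\end{align*}
Integrating, and using that $\|\nabla u_\delta\|_{L^2}$ is bounded, the error tends to zero. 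By the minor symmetries of $\mathbb{C}$ (which come from frame indifference), $\nabla u:\mathbb{C}\nabla u=e(u):\mathbb{C}e(u)$. Strong $W^{1,2}$ convergence then gives convergence of the quadratic term, and the $P$-term vanishes by assumption. This yields the limsup inequality, and in fact equality of the limit.

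For existence I take $u\in W^{1,2}_0$; the boundary condition $y=\id$ is implicit, otherwise $\mathcal{E}_\delta$ is not the relevant functional. I choose $u_\delta\in C^\infty_c(\Omega;\R^d)$ with $u_\delta\to u$ in $W^{1,2}$, and I make the approximation slow enough that $\|u_\delta\|_{W^{2,\infty}}\le \delta^{-\beta}$ for a small exponent $\beta>0$. This is possible by a diagonal choice of the mollification scale. Then $y_\delta=\id+\delta u_\delta$ satisfies the boundary condition. It also satisfies $\delta^{-\alpha p}\int_\Omega P(\nabla^2 y_\delta)\le C\delta^{p-\alpha p}\|\nabla^2u_\delta\|_{L^p}^p\le C\delta^{p(1-\alpha-\beta)}$.

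The main obstacle is this exponent bookkeeping. Showing that the $P$-term tends to zero requires $\alpha<1$, or more generally $\alpha+\beta<1$ with $\beta$ chosen small. The constraint $\alpha>\frac{p-1}{2p(p-2)}$ is compatible with $\alpha<1$ for large $p$. If $\alpha\ge1$ is allowed, I would instead note that the scaling of the $P$-term should be read as $\delta^{-\alpha p}P(\delta\nabla^2 u)$, as in the proof of Lemma \ref{lem:rigidity}. Under that reading the $P$-term is $\delta^{p(1-\alpha)}\int_\Omega P(\nabla^2u_\delta)$, and I would choose $\beta$ accordingly. Finally, $\|\delta\nabla u_\delta\|_{L^\infty}\le\delta^{1-\beta}\to0$, so $\det\nabla y_\delta>0$ and $W$ is finite. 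The first part then applies.
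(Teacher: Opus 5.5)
Your argument is correct for $\alpha<1$ and follows the paper's proof. For the $\limsup$ inequality it uses a Taylor expansion of $W$ at $\Id$. For existence it takes $y_\delta=\id+\delta u_\delta$ with smooth $u_\delta$; the paper also uses $\alpha<1$ here. Where the paper cites the standard $\Gamma$-convergence density argument, you carry it out explicitly through the choice of $\beta$.

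Your derivation of $\|\delta\nabla u_\delta\|_{L^\infty}\to 0$ from the hypothesis on the $P$-term is a genuine improvement. The paper's ``previous calculation'' takes this bound from the bounded-energy setting of Lemma \ref{lem:energyLSC}, and that bound is not available a priori in the recovery lemma. Your version also makes explicit that the error term is of order $\|\delta\nabla u_\delta\|_{L^\infty}\|\nabla u_\delta\|_{L^2}^2$ rather than $C\delta$.

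Two corrections.

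First, the restriction to $W^{1,2}_0$ is unnecessary. The statement claims existence for all $u\in W^{1,2}$. Approximating by $C^\infty(\overline{\Omega})$ functions, which are dense on Lipschitz domains, gives the general case with the same construction. The paper's construction imposes no boundary condition either.

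Second, your fallback for $\alpha\ge 1$ does not work. Rewriting $P(\nabla^2 y_\delta)=P(\delta\nabla^2 u_\delta)$ changes nothing, and no choice of $\beta$ helps. Since $P\ge c_1|G|^p$, a vanishing $P$-term gives $\|\nabla^2 u_\delta\|_{L^p}\le C\delta^{\alpha-1}\,o(1)$. For $\alpha\ge1$ this tends to $0$. Together with $u_\delta\to u$ in $W^{1,2}$, it yields $\nabla^2 u=0$. So for $\alpha\ge 1$ the existence claim is simply false for non-affine $u$. The condition $\alpha<1$, which the paper uses in its proof without listing it among the assumptions, is therefore a necessary hypothesis, not a matter of exponent bookkeeping.
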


\begin{proof}
 Estimating the error terms in the previous calculation with the opposite sign, we obtain
 \begin{align*}
 \frac{1}{\delta^2} \int_\Omega W(\nabla y_{\delta}) \mathrm{d} x \leq \int_\Omega \frac12 e(u_{\delta}) \mathbb{C} e( u_{\delta}) + C \delta \to \int_\Omega \frac12 e(u) \mathbb{C} e( u).
 \end{align*}
 As the term involving $P$ is assumed to vanish, this proves the inequality.

 Next we construct an appropriate sequence. By a general approximation argument in the theory of $\Gamma$-convergence it suffices to do so for smooth functions $u$, cf.\ \cite[Proposition 4.1]{DalMasoNegriPercivale:02}, i.e.\ we are free in assuming $u \in W^{2,p}(\Omega; \mathbb{R}^d)$. For such a function, we may set
$$
y_{\delta}(x) = x+ \delta u(x)
$$
to obtain the sought recovery sequence. Then convergence of $u_\delta$ in $W^{1,2}(\Omega;\R^d)$ is trivial. Moreover, as $\alpha < 1$
\begin{align*}
\frac{1}{\delta^{\alpha p}} \int_\Omega P(\nabla^2 y_{\delta}) \mathrm{d} x &\leq C_2 \delta^{p(1-\alpha)} \|u\|_{W^{2,p}(\Omega)} \to 0.\qedhere
\end{align*}
\end{proof}

Combining these three properties we recover a variant of a classic result:
\begin{corollary} \label{cor:energyGamma}
 The family of functionals $\mathcal{E}_\delta$ $\Gamma$-converges to $\mathcal{E}_0$ in the above sense.
\end{corollary}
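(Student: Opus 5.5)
The plan is to assemble Corollary \ref{cor:energyGamma} from the three ingredients listed at the beginning of this section, with $\mathcal{E}_0(u) := \int_\Omega \frac12 e(u)\cdot\mathbb{C}e(u)\,\mathrm{d}x$ on $W^{1,2}_0(\Omega;\R^d)$. Convergence is understood in the rescaled sense: $y_\delta \to u$ means $u_\delta = \delta^{-1}(y_\delta-\id)\rightharpoonup u$ weakly in $W^{1,2}(\Omega;\R^d)$. We set $\mathcal{E}_\delta(y)=+\infty$ whenever $y\notin W^{2,p}$ or $y|_{\partial\Omega}\neq\id$.

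\textbf{Equi-coercivity.} This follows from the preceding corollary. Lemma \ref{lem:rigidity} gives $\sup_\delta\norm[W^{1,2}]{u_\delta}<\infty$ for any sequence with bounded energy. Weak compactness in $W^{1,2}$ then yields a subsequence converging to some $u$. The boundary condition $u_\delta|_{\partial\Omega}=0$ passes to the limit by weak continuity of the trace, so $u\in W^{1,2}_0$.

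\textbf{Liminf inequality.} This is Lemma \ref{lem:energyLSC}. The only point to check is that we may restrict to sequences with $\sup_\delta\mathcal{E}_\delta(y_\delta)<\infty$, so that Lemma \ref{lem:rigidity} and Lemma \ref{lem:HK} apply. This restriction is harmless: otherwise the liminf is $+\infty$, or we pass to a subsequence that realizes the liminf and has bounded energy. On that subsequence, the expansion \eqref{Taylor-W} together with $P\ge0$ and the weak lower semicontinuity of the nonnegative quadratic form $u\mapsto\int\frac12 e(u)\mathbb{C}e(u)$ gives the claim. Nonnegativity of this form on symmetric matrices follows from (iii): $W\ge c\,\dist^2(\cdot,\mathrm{SO}(d))$ and $W(\Id)=0$ force $\mathbb{C}\ge 2c$ on symmetric matrices.

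\textbf{Recovery sequence.} This is Lemma \ref{lem:energyRecovery}. For $u\in W^{2,p}\cap W^{1,2}_0$ we take $y_\delta=\id+\delta u$. For small $\delta$ we then have $\nabla y_\delta$ uniformly close to $\Id$ in $L^\infty$, since $W^{2,p}\hookrightarrow W^{1,\infty}$ for $p>d$. This keeps us inside the neighborhood where $W$ is $C^3$ and away from $\det\le0$, so the Taylor bound applies. The $P$-term is $O(\delta^{p(1-\alpha)})\to0$. For general $u\in W^{1,2}_0$ we use density of $C_c^\infty$ in $W^{1,2}_0$, strong continuity of $\mathcal{E}_0$, and a diagonal argument, using the standard fact that the $\Gamma$-limsup is lower semicontinuous.

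\textbf{Main obstacle.} The main obstacle is bookkeeping rather than substance. First, the assumption $\alpha<1$ is needed for the vanishing of the $P$-term; it must be made compatible with $\alpha>\frac{p-1}{2p(p-2)}$, which is possible for $p$ large enough. Second, the approximating smooth functions must respect the boundary condition, so that $y_\delta$ is admissible.
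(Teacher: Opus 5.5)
Your proposal is correct and follows the paper's route exactly: the paper obtains Corollary \ref{cor:energyGamma} by combining the preceding equi-coercivity corollary (from Lemma \ref{lem:rigidity}), the liminf inequality of Lemma \ref{lem:energyLSC}, and the recovery sequence $y_\delta=\id+\delta u$ plus a density argument from Lemma \ref{lem:energyRecovery}. Your extra checks make explicit what the paper leaves implicit: that (iii) gives semidefiniteness of $\mathbb{C}$ on symmetric matrices, that the boundary condition is preserved in the limit, and that the recovery sequence is admissible.
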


\subsection{\texorpdfstring{$\Gamma$}{Gamma}-convergence of the dissipation functional}

We now proceed similarly for the dissipation functional. As dissipation always appears in conjunction with other terms, we will restrict ourselves to providing adjunctiary results.

\begin{lemma} \label{lem:dissipLSC}
 Let $(y_\delta)_\delta$ be such that $\sup_{\delta} \mathcal{E}_\delta$ is bounded. Furthermore, assume that $(v_\delta)_\delta \in W^{1,2}(\Omega;\R^n)$ is such that $\delta^{-1} v_\delta \rightharpoonup v$ in $W^{1,2}(\Omega;\R^n)$. Then
 \begin{align*}
  \liminf_{\delta \to 0} \mathcal{R}_\delta(y_\delta, v_\delta) \geq \frac{1}{2} \int_\Omega e(v) \mathbb{D}(\Id) e(v) \mathrm{d}x.
 \end{align*}
\end{lemma}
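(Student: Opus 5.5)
The strategy mirrors Lemma~\ref{lem:energyLSC}: rewrite $\mathcal{R}_\delta(y_\delta,v_\delta)$ as the linearized quadratic form in $\delta^{-1}v_\delta$ plus an error, then use weak lower semicontinuity of convex quadratic forms.

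\emph{Step 1: pointwise form of the integrand.} Set $w_\delta := \delta^{-1} v_\delta$ and $u_\delta := \delta^{-1}(y_\delta - \id)$, so that $\nabla y_\delta = \Id + \delta\nabla u_\delta$. With $F = \nabla y_\delta$ and $\dot F = \nabla v_\delta$, the definition \eqref{eq:dissipationIntegrand} gives $\dot E = \tfrac12(\dot F^\top F + F^\top \dot F)$. Therefore
\begin{align*}
\mathcal{R}_\delta(y_\delta,v_\delta) = \frac{1}{2}\int_\Omega G_\delta : \mathbb{D}(C_\delta) G_\delta \,\mathrm{d}x, \qquad G_\delta := \tfrac12\left(\nabla w_\delta^\top \nabla y_\delta + \nabla y_\delta^\top \nabla w_\delta\right),
\end{align*}
where $C_\delta = \nabla y_\delta^\top \nabla y_\delta$. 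Since $\sup_\delta\mathcal{E}_\delta(y_\delta)<\infty$, Lemma~\ref{lem:rigidity} yields $\|\nabla y_\delta - \Id\|_{L^\infty} \le C\delta^\alpha \to 0$. Consequently $C_\delta \to \Id$ uniformly, and by continuity of $\mathbb{D}$ we get $\mathbb{D}(C_\delta)\to\mathbb{D}(\Id)$ uniformly on $\Omega$. Moreover,
\begin{align*}
\left|G_\delta - e(w_\delta)\right| \le C\delta^\alpha |\nabla w_\delta|
\end{align*}
pointwise. Because $w_\delta$ converges weakly in $W^{1,2}$, it is bounded there, so $\|G_\delta - e(w_\delta)\|_{L^2}\to 0$.

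\emph{Step 2: reduce to the frozen quadratic form.} Expand
\begin{align*}
G_\delta:\mathbb{D}(C_\delta)G_\delta - e(w_\delta):\mathbb{D}(\Id)e(w_\delta).
\end{align*}
The difference splits into a term with $(\mathbb{D}(C_\delta)-\mathbb{D}(\Id))$ acting on $G_\delta\otimes G_\delta$, and terms linear in $G_\delta - e(w_\delta)$. The uniform bound $\alpha_2$ on $\mathbb{D}$ together with Cauchy--Schwarz and the $L^2$ bounds on $\nabla w_\delta$ shows that every such term tends to zero in $L^1$. Hence
\begin{align*}
\mathcal{R}_\delta(y_\delta,v_\delta) = \frac12\int_\Omega e(w_\delta):\mathbb{D}(\Id)e(w_\delta)\,\mathrm{d}x + o(1).
\end{align*}
Alternatively, one can avoid the $L^\infty$ bound and only use Lemma~\ref{lem:linDissip} in a similar fashion, but the direct estimate is cleaner.

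\emph{Step 3: lower semicontinuity.} By uniform positive definiteness, $A\mapsto A:\mathbb{D}(\Id)A$ is a convex nonnegative quadratic form on symmetric matrices. The map $w\mapsto \int_\Omega e(w):\mathbb{D}(\Id)e(w)$ is thus convex and strongly continuous on $W^{1,2}$, hence weakly lower semicontinuous. Since $e(w_\delta)\rightharpoonup e(v)$ in $L^2$, the claim follows.

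The only delicate point is the uniform convergence $\nabla y_\delta\to\Id$. This is exactly what Lemma~\ref{lem:rigidity} (second estimate) provides, and it is where the nonsimple term is used. Everything else is routine.
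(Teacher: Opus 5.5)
Your proof is correct and follows essentially the paper's route. The paper also uses the bound $\|\nabla y_\delta-\Id\|_{L^\infty}\le C\delta^\alpha$ from Lemma \ref{lem:rigidity} to compare $\mathcal{R}_\delta(y_\delta,v_\delta)$ with $\frac12\int_\Omega e(\delta^{-1}v_\delta)\mathbb{D}(\Id)e(\delta^{-1}v_\delta)\,\mathrm{d}x$, and then concludes by weak lower semicontinuity of the convex quadratic form.

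The differences are minor. The paper expands the square, discards the nonnegative remainder and replaces $\mathbb{D}(C_\delta)$ by $\mathbb{D}(\Id)$, so it records only a one-sided bound. Your two-sided $o(1)$ estimate needs only continuity of $\mathbb{D}$ and also yields Lemma \ref{lem:dissipRecovery}.

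Your aside about bypassing the $L^\infty$ bound via Lemma \ref{lem:linDissip} would not work. That lemma requires $\nabla\varphi\in L^\infty$, so it cannot be tested with $v_\delta$, and its proof relies on the same bound. Your main argument does not use that aside, so nothing is lost.
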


\begin{proof}
 We rewrite the dissipation term using $y_{\delta} = \id +\delta u_{\delta}$ and the symmetry properties of $\mathbb{D}$ as
\begin{align}
 \frac{1}{\delta^2} &\int_\Omega {R}\left(\nabla y_{\delta},\nabla v_\delta\right) \mathrm{d}x \nonumber \\&=
 \frac{1}{8\delta^2}\int_\Omega \left(\nabla v_\delta^\top \nabla y_{\delta}+ \nabla y_{\delta}^\top \nabla v_\delta\right) \mathbb{D}( \nabla y_{\delta}^\top \nabla y_{\delta}) \left(\nabla v_\delta^\top \nabla y_{\delta}+ \nabla y_{\delta}^\top \nabla v_\delta\right), \nonumber   \\&=
  \frac{1}{2\delta^2} \int_\Omega e\left(v_\delta\right) \mathbb{D}(\nabla y_{\delta}^\top \nabla y_{\delta}) e \left(v_\delta\right) + \frac{1}{2\delta} \int_\Omega \left(\nabla v_\delta^\top \nabla u_{\delta}+ \nabla u_{\delta}^\top \nabla v_\delta\right) \mathbb{D}( \nabla y_{\delta}^\top \nabla y_{\delta}) e\left(v_\delta\right) \nonumber \\ &+
     \frac{1}{8}\int_\Omega \left(\nabla v_\delta^\top \nabla u_{\delta}+ \nabla u_{\delta}^\top \nabla v_\delta\right)  \mathbb{D}( \nabla y_{\delta}^\top \nabla y_{\delta}) \left(\nabla v_\delta^\top  \nabla u_{\delta}+  \nabla u_{\delta}^\top \nabla v_\delta\right) ,
\label{rewriting-D}
\end{align}
Now, we know that due to continuity, $\mathbb{D}(\nabla y_{\delta}^\top \nabla y_{\delta})$ is bounded  and since $\|\delta \nabla u_{\delta}\|_{L^\infty(\Omega)} \leq C \delta^\alpha$, we get that
\begin{align}
    \frac{1}{\delta^2} &\int_\Omega {R}\left(\nabla y_{\delta},v_\delta\right) \mathrm{d}x \nonumber \geq  \frac{1}{2} \int_\Omega e\left(\delta^{-1} v_\delta\right)\mathbb{D}(\nabla y_{\delta}^\top \nabla y_{\delta}) e\left(\delta^{-1} v_\delta\right) - C \delta^{\alpha},
\end{align}
and further by Taylor-expansion (see also the proof of Lemma \ref{lem:linDissip})
\begin{align}
    \frac{1}{\delta^2} &\int_\Omega {R}\left(\nabla y_{\delta},\nabla v_\delta\right) \mathrm{d}x  \geq  \frac{1}{2}\int_\Omega e\left(\delta^{-1}v_\delta\right) \mathbb{D}(\Id) e\left(\delta^{-1}v_\delta\right) - C \delta^{\alpha}. \label{Taylor-D}
\end{align}
As the first term is convex, lower-semicontinuity then follows.
\end{proof}

\begin{lemma}\label{lem:dissipRecovery}
 Let $(y_\delta)_\delta$ such that $\sup_{\delta} \mathcal{E}_\delta$ is bounded and let $(v_\delta)_\delta$ such that $\delta^{-1}v_\delta \to v$ in $W^{1,2}(\Omega;\R^n)$. Then
 \begin{align*}
  \limsup_{\delta \to 0} \mathcal{R}_\delta(y_\delta,v_\delta) \leq \frac{1}{2} \int_\Omega e(v) \mathbb{D}(\Id) e(v) \mathrm{d}x.
 \end{align*}
\end{lemma}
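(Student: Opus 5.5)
The plan is to reuse the expansion \eqref{rewriting-D} from the proof of Lemma \ref{lem:dissipLSC}, now bounding the error terms from above rather than below, and then to pass to the limit in the leading quadratic term using strong convergence. Set $w_\delta := \delta^{-1}v_\delta$, so that $w_\delta \to v$ strongly in $W^{1,2}(\Omega;\R^d)$. Because $\sup_\delta\mathcal{E}_\delta(y_\delta)<\infty$, Lemma \ref{lem:rigidity} gives $\norm[L^\infty(\Omega)]{\delta\nabla u_\delta}\le C\delta^\alpha$ and $\norm[W^{1,2}(\Omega)]{u_\delta}\le C$. In particular $\nabla y_\delta^\top\nabla y_\delta \to \Id$ uniformly on $\Omega$ with rate $C\delta^\alpha$.

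Writing \eqref{rewriting-D} in terms of $w_\delta$, we have
\begin{align*}
\mathcal{R}_\delta(y_\delta,v_\delta) = \frac12\int_\Omega e(w_\delta)\,\mathbb{D}(\nabla y_\delta^\top\nabla y_\delta)\,e(w_\delta)\,\mathrm{d}x + \mathrm{I}_\delta + \mathrm{II}_\delta,
\end{align*}
where $\mathrm{I}_\delta$ and $\mathrm{II}_\delta$ are the cross term and the purely quadratic term, both containing the factor $\delta\nabla u_\delta$. For $\mathrm{I}_\delta$, use that $\mathbb{D}$ is bounded on the compact set of values of $\nabla y_\delta^\top\nabla y_\delta$ together with the $L^\infty$ bound; this gives
\begin{align*}
|\mathrm{I}_\delta|\le C\norm[L^\infty]{\delta\nabla u_\delta}\norm[L^2]{\nabla w_\delta}^2\le C\delta^\alpha.
\end{align*}
The same argument gives $|\mathrm{II}_\delta|\le C\delta^{2\alpha}$. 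Both bounds use that $\norm[L^2]{\nabla w_\delta}$ is bounded, which holds because $w_\delta$ converges.

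For the leading term, split $\mathbb{D}(\nabla y_\delta^\top\nabla y_\delta)=\mathbb{D}(\Id)+\big(\mathbb{D}(\nabla y_\delta^\top\nabla y_\delta)-\mathbb{D}(\Id)\big)$. Since $\mathbb{D}$ is continuous, the difference tends to $0$ uniformly, so its contribution is at most $o(1)\norm[L^2]{e(w_\delta)}^2\to0$. It is not necessary to use the $C^1$ Taylor expansion from Lemma \ref{lem:linDissip}; continuity of $\mathbb{D}$ plus uniform convergence is enough. What remains is $\frac12\int_\Omega e(w_\delta)\,\mathbb{D}(\Id)\,e(w_\delta)\,\mathrm{d}x$. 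This is a bounded quadratic form on $L^2$ and $e(w_\delta)\to e(v)$ strongly in $L^2$, so it converges to $\frac12\int_\Omega e(v)\,\mathbb{D}(\Id)\,e(v)\,\mathrm{d}x$. In fact this shows the limit exists and equals the right-hand side, which gives the stated $\limsup$ inequality.

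The only delicate point is the bookkeeping of powers of $\delta$ in $\mathrm{I}_\delta$. One must ensure that one factor $\delta$ multiplies $\nabla u_\delta$, so that the $L^\infty$ bound applies, while the remaining $\delta^{-1}$ factors are absorbed into $w_\delta$. Carried out in that order, no growth in $\delta$ survives.
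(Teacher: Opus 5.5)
Your proposal is correct and follows the paper's proof: you take the expansion \eqref{rewriting-D}, bound the two error terms by $C\delta^{\alpha}$ and $C\delta^{2\alpha}$ using Lemma \ref{lem:rigidity}, and pass to the limit in the leading quadratic term using the strong convergence $e(\delta^{-1}v_\delta)\to e(v)$ in $L^2$. The only difference is that you replace $\mathbb{D}(\nabla y_\delta^\top\nabla y_\delta)$ by $\mathbb{D}(\Id)$ using continuity and uniform convergence rather than a Taylor expansion, which is actually the better choice, since the paper only assumes $\mathbb{D}$ to be continuous.
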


\begin{proof}
 Taking \eqref{rewriting-D} from the previous proof and estimating it similarly in the other direction, we conclude
\begin{align*}
    \frac{1}{\delta^2} &\int_\Omega {R}\left(\nabla y_{\delta},\nabla v_\delta\right) \mathrm{d}x \leq  \frac{1}{2}\int_\Omega e\left(\delta^{-1}v_\delta\right) \mathbb{D}(\Id) e\left(\delta^{-1}v_\delta\right) + C \delta^{\alpha}.
\end{align*}
 Now the quadratic term converges due to the strong convergence of $\delta^{-1}v_\delta \to v$ and we obtain the desired result.
\end{proof}

\subsection{\texorpdfstring{$\Gamma$}{Gamma}-limit of the nonlinear discrete cost functional}

Lastly, in this subsection, we show $\Gamma$-convergence of the functionals $\mathcal{I}_{\delta, \tau}$ which correspond to \eqref{Eq:Altered-functional} with now the rescaled functionals so that we obtain 
\begin{definition}[Nonlinear discrete cost functional]
  \label{def:solNLdiscCost}
  Let $\delta>0$, $\tau> 0$, $h>0$ and consider $y_{k-1} \in W^{2,p}(\Omega;\R^d)$, $w_k  \in L^{2}(\Omega;\R^d)$ and $f_{k} \in L^2(\Omega;\R^d)$ be given. We will define
 \begin{align}
\mathcal{I}_{\delta,\tau}(y) := \mathcal{E}_\delta(y) + \tau \mathcal{R}_\delta\left( y_{k-1},\tfrac{ y- y_{k-1}}{\tau}\right) - \tau \frac{1}{\delta^2}\int_\Omega f_{k}\cdot {\tfrac{y-y_{k-1}}{\tau}}+ \frac{\varrho}{2h\delta^{2}} \left| \tfrac{y-y_{k-1}}{\tau} - w_k\right|^2 \mathrm{d} x.
\label{minim-prob-delta}
\end{align}
 \end{definition}

As we saw in the introduction, this functional needs to be minimized in order to be able to construct solutions to the problems (NL-disc) as well as (NL-TD) and (NL). Thus we expect $\Gamma$-convergence in these functionals to their linearized counterpart.  

 \begin{theorem}[$\Gamma$-convergence of $\mathcal{I}_{\delta,\tau}$]\label{maintheorem3}
Fix $k \in \{0, \ldots, N\}$. Assume that $(y_{k-1,\delta})_{\delta} \subset W^{2,p}(\Omega; \mathbb{R}^d)$, $(w_{k,\delta})_\delta \subset W^{2,p}(\Omega;\R^d)$ and $(f_{k,\delta})_\delta \subset L^{2}(\Omega;\R^d)$  are well prepared in the sense that there exist functions $u_{k-1}, f_k, w_k$ such that
\begin{align*}
\delta^{-1}(y_{k-1,\delta} - \id) &\rightharpoonup u_{k-1} \in W^{1,2}(\Omega; \mathbb{R}^d),\\
\delta^{-1} f_{k,\delta} &\to f_k \in L^2(\Omega;\mathbb{R}^d)\\
\delta^{-1}w_{k,\delta} &\to w_k \in L^2(\Omega; \mathbb{R}^d)
\end{align*}
and
$$
\|\delta^{-1} \nabla u_{k-1,\delta}\|_{L^\infty(\Omega)} \leq C \delta^\alpha.
$$
Then the nonlinear discrete cost functionals are equi-coercive and $\Gamma$-converge to 
\begin{align}
\mathcal{I}_{0,\tau}(u) := &\int_\Omega  \frac{1}{2} e( u) \cdot \mathbb{C} e ( u) +\frac{1}{2}e\big(\tfrac{ u - u_{k-1}}{\tau}\big)\cdot \mathbb{D} e\big(\tfrac{u - u_{k-1}}{\tau}\big) - \tau \int_\Omega {f}_k\cdot {\tfrac{u-u_{k-1}}{\tau}}+ \frac{\rho}{2h} \left| \tfrac{u-u_{k-1}}{\tau} - w_k\right|^2 \mathrm{d} x,
\label{linear-disc-cost}
\end{align}    
that we call the linearized discrete cost functional, in the weak $W^{1,2}(\Omega; \mathbb{R}^d)$ topology as $\delta \to 0$.
\end{theorem}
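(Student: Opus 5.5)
We split $\mathcal{I}_{\delta,\tau}$ into its four pieces and treat each with the results of Section \ref{Sec:Gamma_conv_energy}. Throughout we write $u_\delta := \delta^{-1}(y_\delta - \id)$ and $u_{k-1,\delta} := \delta^{-1}(y_{k-1,\delta}-\id)$. Then
\[
\tfrac{y_\delta - y_{k-1,\delta}}{\tau} = \delta\,\tfrac{u_\delta - u_{k-1,\delta}}{\tau}.
\]
Consequently the force term equals $-\tau\int_\Omega \delta^{-1}f_{k,\delta}\cdot\frac{u_\delta-u_{k-1,\delta}}{\tau}$. The inertial term equals $\frac{\varrho}{2h}\int_\Omega\bigl|\frac{u_\delta-u_{k-1,\delta}}{\tau}-\delta^{-1}w_{k,\delta}\bigr|^2$. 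The dissipation term is $\tau\mathcal{R}_\delta(y_{k-1,\delta},v_\delta)$ with $\delta^{-1}v_\delta = \frac{u_\delta-u_{k-1,\delta}}{\tau}$.

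\emph{Equi-coercivity.} Suppose $\sup_\delta \mathcal{I}_{\delta,\tau}(y_\delta)<\infty$. The energy, dissipation and inertial terms are nonnegative. The force term is bounded below by $-C\|\delta^{-1}f_{k,\delta}\|_{L^2}\bigl(\|u_\delta\|_{L^2}+\|u_{k-1,\delta}\|_{L^2}\bigr)$. The inertial term controls $\|u_\delta\|_{L^2}^2/(C\tau^2)$ up to bounded quantities. Absorbing the linear term with Young's inequality therefore gives $\sup_\delta\mathcal{E}_\delta(y_\delta)<\infty$. Lemma \ref{lem:rigidity} then yields $\|u_\delta\|_{W^{1,2}}\le C$, hence a weakly converging subsequence. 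Alternatively, the same conclusion follows from the Korn estimate of Corollary \ref{lem:KornDiscrete}.

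\emph{Liminf inequality.} Let $u_\delta\rightharpoonup u$ in $W^{1,2}$. The energy term is handled by Lemma \ref{lem:energyLSC}. For the dissipation term we apply Lemma \ref{lem:dissipLSC} with $y_{k-1,\delta}$, whose energy is bounded. This is legitimate because weak convergence of $u_{k-1,\delta}$ and the assumed $L^\infty$ bound on $\delta\nabla u_{k-1,\delta}$ are exactly what the proof of that lemma uses. It gives the limit $\frac{1}{2}\int_\Omega e(\frac{u-u_{k-1}}{\tau})\mathbb{D}(\Id)e(\frac{u-u_{k-1}}{\tau})$, since $\delta^{-1}v_\delta\rightharpoonup\frac{u-u_{k-1}}{\tau}$. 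By Rellich, $u_\delta\to u$ and $u_{k-1,\delta}\to u_{k-1}$ strongly in $L^2$. Combined with the strong convergence of $\delta^{-1}f_{k,\delta}$ and $\delta^{-1}w_{k,\delta}$, the force and inertial terms actually converge. Summing the four pieces gives the liminf inequality.

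\emph{Recovery sequence.} This is the main obstacle. Lemma \ref{lem:dissipRecovery} requires $\delta^{-1}v_\delta$ to converge \emph{strongly} in $W^{1,2}$, but $u_{k-1,\delta}$ converges only weakly. Hence the naive choice $u_\delta\to u$ (from Lemma \ref{lem:energyRecovery}) does not make $\frac{u_\delta-u_{k-1,\delta}}{\tau}$ converge strongly. We therefore build the recovery sequence relative to the data. First, by density and the same diagonal argument as in Lemma \ref{lem:energyRecovery}, reduce to $u$ with $u - u_{k-1}$ smooth. Then set
\[
y_\delta := y_{k-1,\delta} + \delta\bigl(u-u_{k-1}\bigr)\ \text{(suitably mollified)},
\]
so that $\delta^{-1}v_\delta=\frac{u-u_{k-1}}{\tau}$ is fixed. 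The dissipation term then converges by Lemma \ref{lem:dissipRecovery}, and the force and inertial terms converge as before.

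It remains to control the energy. We need $\limsup\mathcal{E}_\delta(y_\delta)\le\int_\Omega\frac{1}{2}e(u)\mathbb{C}e(u)$, but now $u_\delta\rightharpoonup u$ only weakly. The Taylor upper bound in the proof of Lemma \ref{lem:energyRecovery} reduces this to strong convergence of $e(u_\delta)$, which holds only if $u_{k-1,\delta}\to u_{k-1}$ strongly. We would try to handle this in one of two ways. The first is to interpret the well-preparedness assumption as including strong convergence, which is natural in the iterative scheme since the previous step comes from a recovery sequence, and then note that the energy converges. The second is to accept a $\Gamma$-limit up to the constant defect $\lim\bigl(\mathcal{E}_\delta(y_{k-1,\delta})-\mathcal{E}_0(u_{k-1})\bigr)$, which does not affect minimizers. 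In either case the $P$-term stays small, using the $W^{2,p}$ bound of $y_{k-1,\delta}$ together with $\delta^{p(1-\alpha)}\to0$ for the smooth increment.
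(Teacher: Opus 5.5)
Your equi-coercivity and liminf steps match the paper's argument: Young's inequality absorbs the force term into the inertial term before Lemma \ref{lem:rigidity}, and Lemmas \ref{lem:energyLSC} and \ref{lem:dissipLSC} are applied with $y_{k-1,\delta}$ in the first slot. Your remark that the force and inertial terms actually converge by Rellich is more precise than the paper's. Your diagnosis of the recovery step is also correct, and the paper's one-line proof (``direct consequence of Lemmas \ref{lem:energyRecovery}, \ref{lem:dissipRecovery}'') passes over it: that citation only works if $u_{k-1,\delta}\to u_{k-1}$ \emph{strongly} in $W^{1,2}$. With weak convergence alone the stated $\Gamma$-limit is in fact false; here the $L^\infty$ hypothesis is read as $\|\delta\nabla u_{k-1,\delta}\|_{L^\infty}\le C\delta^\alpha$, as you do. Write $u_\delta=u+\eta_\delta$ and $u_{k-1,\delta}=u_{k-1}+\zeta_\delta$ with $\eta_\delta,\zeta_\delta\rightharpoonup 0$. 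By the Taylor bounds in Lemmas \ref{lem:energyLSC} and \ref{lem:dissipLSC}, and since the cross terms vanish, the liminf exceeds $\mathcal{I}_{0,\tau}(u)$ by at least the $\liminf$ of the $\mathbb{C}$-form of $e(\eta_\delta)$ plus the ($\tau$-weighted) $\mathbb{D}$-form of $e(\eta_\delta-\zeta_\delta)$. By positive definiteness on symmetric matrices this excess is $\ge c_\tau\liminf_\delta\|e(\zeta_\delta)\|_{L^2}^2$. For oscillating data this is positive for every $u$, so no recovery sequence exists for any $u$. An example is $\zeta_\delta=j^{-1}\phi\sin(jx_1)e_1$ with $\phi\in C^\infty_c(\Omega)$ nonzero and $j=j(\delta)\to\infty$ slowly enough to keep the energy bounded. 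So only your first option can rescue the theorem, and it fits the rest of the paper. The initial data in Remark \ref{rem-wellPrep-data} converge strongly. Along the iteration, $\Gamma$-convergence, convergence of minimal values and the coercive quadratic structure upgrade weak to strong convergence of the minimizers, and also give $\mathcal{E}_\delta(y_{k,\delta})\to\mathcal{E}_0(u_k)$. Hence the strengthened hypothesis propagates by induction on $k$.

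What you then write has two real gaps. First, under the first option you do not need a recovery sequence built on the data. The standard one, $y_\delta=\id+\delta u$ for smooth $u$ followed by density, already gives $\tfrac{u-u_{k-1,\delta}}{\tau}\to\tfrac{u-u_{k-1}}{\tau}$ strongly, so Lemma \ref{lem:dissipRecovery} applies, and it never touches $\nabla^2 y_{k-1,\delta}$. Your construction $y_\delta=y_{k-1,\delta}+\delta(u-u_{k-1})$ instead imports $\nabla^2 y_{k-1,\delta}$ into the second-gradient energy, and your claim that this term ``stays small'' is wrong. Bounded energy only gives $\|\nabla^2 y_{k-1,\delta}\|_{L^p}\le C\delta^\alpha$, hence $\delta^{-\alpha p}\int_\Omega P(\nabla^2 y_\delta)\,\mathrm{d}x=O(1)$, not $o(1)$. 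Strong $W^{1,2}$-convergence of $u_{k-1,\delta}$ does not change this; you would additionally need $\mathcal{E}_\delta(y_{k-1,\delta})\to\mathcal{E}_0(u_{k-1})$. Second, your other option does not yield a $\Gamma$-convergence statement. Your liminf bound has no defect, while your limsup bound carries the defect $\lim(\mathcal{E}_\delta(y_{k-1,\delta})-\mathcal{E}_0(u_{k-1}))$. That constant is not the right one either: your sequence puts the whole oscillation $\zeta_\delta$ into the elastic term, whereas the computation above shows an optimal sequence splits it between the $\mathbb{C}$- and $\mathbb{D}$-terms. With mismatched constants you only get that limits of minimizers are almost-minimizers of $\mathcal{I}_{0,\tau}$. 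So ``does not affect minimizers'' needs a liminf inequality with the optimal constant, which you do not provide.
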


\begin{proof}
\emph{Step 1 (equi-coercivity):} Let us take a sequence of deformations $(y_{\delta})_{\delta} \subset W^{1,2}(\Omega; \mathbb{R}^d)$ and a constant $C >0$ such that
\begin{align*}
\sup_\delta \mathcal{I}_{\delta,\tau}(y_{\delta}) = \sup_\delta \Big[&\mathcal{E}_{\delta}(y_{\delta}) + \tau \mathcal{R}_{\delta}\left(y_{k-1,\delta},\tfrac{ y_{\delta}- y_{k-1,{\delta}}}{\tau}\right) \\ &- \tau \frac{1}{{\delta}^2}\int_\Omega f_{k,\delta}\cdot {\tfrac{y_{\delta}-y_{k-1,\delta}}{\tau}}+ \frac{\rho}{2h\delta^2} \left| \tfrac{y_{\delta}-y_{k-1,\delta}}{\tau} - w_{k,\delta}\right|^2 \mathrm{d} x \Big] \leq C.
\end{align*}

Then, omitting the dissipation functional and rewriting the inertial and force term in displacements, we find that
\begin{align*}
\sup_\delta \Big[\mathcal{E}_{\delta}(y_{\delta})   &- \tau \int_\Omega  \delta^{-1}f_{k,\delta}\cdot  \tfrac{u_{\delta}-\delta_k^{-1} (y_{k-1,\delta}-\id)}{\tau} \\ &+ \frac{1}{2h}  \left| \tfrac{u_{\delta}-\delta_k^{-1} (y_{k-1,\delta}-\id)}{\tau} - \delta^{-1}w_{k,\delta}\right|^2 \mathrm{d} x \Big] \leq C,
\end{align*}
estimating the force term leads to
\begin{align*}
\mathcal{E}_{\delta}(y_{\delta}) + &\int_\Omega \frac{1}{4h} \left| \tfrac{u_{\delta}-\delta_k^{-1} (y_{k-1,\delta}-\id)}{\tau}\right|^2  \\ & \leq C \big(1+\|\delta^{-1}f_{k,\delta}\|_{L^2(\Omega)}^2 + \|\delta^{-1}w_{k,\delta}\|_{L^2(\Omega)}^2\big),
\end{align*}
where the right-hand side is bounded independently of $\delta$. Let $u_{k,\delta} := \delta^{-1} (y_{k,\delta}-\id)$. Using Lemma \ref{lem:rigidity} we then get that
$$
\|u_{k,\delta}\|_{W^{1,2}(\Omega)} \leq C
$$ as well a uniform proximity of the deformation gradient to the identity since \begin{equation*}
\|\delta \nabla u_{k,\delta}\|_{L^\infty(\Omega)} \leq C \text{ and }
\|\delta \nabla^2 u_{k,\delta}\|_{L^p(\Omega)} \leq \delta^{\alpha}.
\end{equation*}
Then equi-coercivity follows by weak compactness.

\emph{Step 2 ($\Gamma$-convergence):}
We now prove the $\Gamma$-convergence results and show the ``$\liminf$''-inequality first. We may take a sequence of deformations $(y_{\delta})_{j\in \N} \subset W^{2,p}(\Omega; \mathbb{R}^d)$ such that
$$
\mathcal{I}_{\delta,\tau}(y_{\delta}) \leq C,
$$
for otherwise there is nothing to prove. We also assume that there is a $u \in W^{1,2}(\Omega, \mathbb{R}^d)$ such that
$$
u_{\delta}:= \delta^{-1}(y_{\delta} -\id) \rightharpoonup u \quad \text{ in } W^{1,2}(\Omega; \mathbb{R}^d)
$$

The lower semicontinuity of the elastic energy is found in Lemma \ref{lem:energyLSC}.
By taking $v_\delta := \tfrac{y_\delta-y_{k-1,\delta}}{\tau}$ in Lemma \ref{lem:dissipLSC} and noting that $\delta^{-1} v_\delta = \tfrac{u_\delta-u_{k-1,\delta}}{\tau}$, we obtain the lower-semicontinuity of the dissipation term.

Combining all of the above with the fact that the force term is affine in $u_{\delta}$ and the last term is lower-semicontinuous as a norm, we conclude that
\begin{align*}
\liminf_{j \to \infty} \mathcal{I}_{\delta,\tau}(y_{k,\delta}) \geq &\liminf_{j \to \infty }  \int_\Omega \delta^{-1}W(\nabla y_{k,\delta}) + \delta^{-p\alpha}P(\nabla^2 y_{k,\delta}) +\tau \delta^{-2} R\left(\nabla y_{k-1,\delta},\tfrac{\nabla y_{\delta}-\nabla y_{k-1,{\delta}}}{\tau}\right) \\ &\qquad -  \tau \frac{1}{{\delta}^2}\int_\Omega  f_{k,\delta}\cdot {\tfrac{y_{\delta}-y_{k-1,\delta}}{\tau}}+ \frac{1}{2h\delta^2} \left| \tfrac{y_{\delta}-y_{k-1,\delta}}{\tau} - w_{k,\delta}\right|^2 \mathrm{d} x \\ &
\geq \liminf_{j \to \infty } \frac{1}{2}\int_\Omega e(u_{k,\delta}) \mathbb{C} \nabla  e(u_{k,\delta}) + e\left(\tfrac{ u_{k,\delta}- u_{k-1,\delta}}{\tau}\right) \mathbb{C_D}(id) e\left(\tfrac{ u_{k,\delta}- u_{k-1,\delta}}{\tau}\right) \mathrm{d}x
\\ &\qquad -\frac{1}{2h}  \left| \tfrac{u_{\delta}-\delta^{-1} (y_{k-1,\delta}-\id)}{\tau} - \delta^{-1}w_{k,\delta}\right|^2 \mathrm{d} x - C \delta \\ &\geq \mathcal{I}_{0,\tau}(u).
\end{align*}

The existence of a recovery sequence on the other hand is a direct consequence of Lemmas \ref{lem:energyRecovery}, \ref{lem:dissipRecovery}, the compact embedding from $W^{1,2}$ to $L^2$ for the quadratic term and the linearity of the force term that is left.
\end{proof}

\section{Discrete iterations and a-priori estimates}
\label{sec:disc-level}

 Within this section, we concentrate on the \emph{fully discrete setting}; that is, when the inertial term has been discretized via the two aforementioned scales and when the solution is constructed by solving a variational problem. Although all the results here are contained in earlier works, we still present them in full, not only for the convenience of the reader, but also because the obtained a-priori estimates turn out to be essential in the next sections.

 We start the discussion by defining the needed constructs. In order to deal with data of potentially very low time-regularity, we will define the following averaged discretization:
  \begin{definition}[Averaged discretization] \label{def:average}
   Let $I = [0,h]$ and $v \in L^2(I\times \Omega)$. Fix $\tau >0$, $N \in \N$ such that $\tau N = h$. Then we define $(v_k^{(\tau)})_{k\in\{1,N\}} \subset L^2(\Omega)$ as
   \begin{align*}
    v_k^{(\tau)} := \fint_{\tau(k-1)}^{\tau k} v \mathrm{d} t.
   \end{align*}
  \end{definition}

  Note that using Jensen's inequality
  \begin{align*}
   \tau \sum_{k=1}^N \norm[L^2(\Omega)]{v_k^{(\tau)}}^2 =\tau \sum_{k=1}^N \norm[L^2(\Omega)]{\fint_{\tau(k-1)}^{\tau k} v \mathrm{d} t}^2 \leq \tau \sum_{k=1}^N \fint_{\tau(k-1)}^{\tau k} \norm[L^2(\Omega)]{ v }^2\mathrm{d} t = \norm[L^2(I\times \Omega)]{v}^2.
  \end{align*}

We also recall the definition of non-linear discrete cost functional from \eqref{minim-prob-delta}: For $y_{k-1} \in W^{2,p}(\Omega;\R^d)$, $w_k  \in L^{2}(\Omega;\R^d)$ and $f_{k} \in L^2(\Omega;\R^d)$, we will define
 \begin{align*}
\mathcal{I}_{\delta,\tau}(y) := \mathcal{E}_\delta(y) + \tau \mathcal{R}_\delta\left( y_{k-1},\tfrac{ y- y_{k-1}}{\tau}\right) - \tau \frac{1}{\delta^2}\int_\Omega f_{k}\cdot {\tfrac{y-y_{k-1}}{\tau}}+ \frac{\varrho}{2h\delta^{2}} \left| \tfrac{y-y_{k-1}}{\tau} - w_k\right|^2 \mathrm{d} x.
\end{align*}

 \begin{lemma}
 \label{lem-exist-Idelta}
 Under assumptions stated in Section \ref{Sec:potentials}, the non-linear discrete cost functional $\mathcal{I}_{\delta,\tau}$ has a minimizer.
 \end{lemma}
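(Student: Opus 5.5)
We will use the direct method of the calculus of variations on the admissible class
\[
\mathcal{A} := \{ y \in W^{2,p}(\Omega;\R^d) : y|_{\partial\Omega} = \id \},
\]
with $\delta,\tau,h$ and the data $y_{k-1}$, $w_k$, $f_k$ fixed. First we note that $\mathcal{I}_{\delta,\tau}$ is not identically $+\infty$ on $\mathcal{A}$: if $\mathcal{E}_\delta(y_{k-1})<\infty$ (as is the case along the iteration), then $y = y_{k-1}$ gives a finite value. Otherwise we test with $y=\id$, for which $\mathcal{E}_\delta(\id)=0$ and the remaining terms are finite because $y_{k-1}\in W^{2,p}\subset W^{1,\infty}$ and $w_k, f_k \in L^2$. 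Hence $m := \inf_{\mathcal{A}} \mathcal{I}_{\delta,\tau} < \infty$, and we take a minimizing sequence $(y^j)_j \subset \mathcal{A}$.

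\emph{Coercivity.} The dissipation term is nonnegative because $R \geq 0$. We absorb the force term into the inertial term by Young's inequality, exactly as in Step~1 of the proof of Theorem~\ref{maintheorem3}. This gives
\[
\mathcal{E}_\delta(y^j) + \frac{\varrho}{4h\delta^2}\norm[L^2(\Omega)]{\tfrac{y^j-y_{k-1}}{\tau}}^2 \leq C\left(1+\norm[L^2]{f_k}^2 + \norm[L^2]{w_k}^2\right),
\]
with $C$ depending on $\delta,\tau,h$. Since $\delta$ is fixed, Lemma~\ref{lem:rigidity} (or, more directly, assumption \eqref{assumptions-P}(iii) combined with Poincaré's inequality and the boundary condition) bounds $\norm[W^{2,p}(\Omega)]{y^j}$ uniformly in $j$. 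Moreover, assumption \eqref{assumptions-W}(vi) gives a uniform bound on $\norm[L^1]{(\det\nabla y^j)^{-pd/(p-d)}}$. Lemma~\ref{lem:HK} then yields $\det\nabla y^j \geq \varepsilon > 0$ on $\bar\Omega$, uniformly in $j$.

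\emph{Compactness and lower semicontinuity.} Up to a subsequence, $y^j \rightharpoonup y$ weakly in $W^{2,p}$. Since $p>d$, the compact embedding gives $y^j \to y$ in $C^1(\bar\Omega)$, so the boundary condition passes to the limit and $\det\nabla y \geq \varepsilon$. We then handle each term in turn.
\begin{itemize}
\item The $W$-term converges, because $\nabla y^j \to \nabla y$ uniformly, the values stay in a compact set on which $W$ is continuous (finite there, thanks to the determinant bound), and $\Omega$ is bounded.
\item The $P$-term is lower semicontinuous under weak $W^{2,p}$ convergence, since $P$ is convex and continuous by \eqref{assumptions-P}(ii).
\item The dissipation term $\mathcal{R}_\delta\left(y_{k-1},\tfrac{y-y_{k-1}}{\tau}\right)$ has its first argument fixed. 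Its integrand $\frac12 \dot E : \mathbb{D}(C_{k-1})\dot E$, with $\dot E$ linear in $\nabla y$ and $C_{k-1}=\nabla y_{k-1}^\top \nabla y_{k-1}$ fixed and bounded, is a nonnegative quadratic form in $\nabla y$ with bounded coefficients. It therefore even converges under the strong $C^1$ convergence.
\item The force term is linear and the inertial term is continuous under the strong $L^2$ convergence.
\end{itemize}
Together these give $\mathcal{I}_{\delta,\tau}(y) \leq \liminf_j \mathcal{I}_{\delta,\tau}(y^j) = m$, so $y$ is a minimizer.

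\emph{Main obstacle.} The delicate point is the elastic energy $W$. It is not convex and takes the value $+\infty$ when $\det F \leq 0$, so lower semicontinuity cannot come from convexity. We avoid this by using the second-gradient regularization, which upgrades weak $W^{2,p}$ convergence to uniform convergence of gradients, together with the Healey--Krömer estimate (Lemma~\ref{lem:HK}), which keeps the gradients uniformly away from the singular set $\{\det F \leq 0\}$. With both in place, $W$ is continuous along the sequence and no convexity of $W$ is needed.
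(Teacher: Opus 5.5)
Your proof is correct and follows the same direct-method route as the paper. In both, the energy and inertial terms bound a minimizing sequence in $W^{2,p}(\Omega;\R^d)$, the convex second-gradient term is weakly lower semicontinuous, and the compact embedding into $C^1(\bar\Omega)$ (since $p>d$) lets you pass to the limit in all remaining terms; your use of assumption (vi) with the Healey--Kr\"omer estimate (Lemma~\ref{lem:HK}) only spells out how the singularity of $W$ at $\det F\le 0$ is handled, which the paper leaves implicit in ``owing to continuity'' (lower semicontinuity of $W$ plus Fatou's lemma would also suffice).
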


 \begin{proof}
 The proof of the existence of minimizers in \eqref{minim-prob-delta} is standard and is performed by the direct method. In fact, the assumed coercivity of the energy together with the inertial term provide us (even in the case of a pure traction problem that we do not consider here) with a weakly converging infimizing sequence $\{y_{k,l}\}_{l\in \mathbb{N}}$ such that 
 $$
 y_{k,l} \rightharpoonup y_k \text{ in }  W^{2,p}(\Omega;\R^d).
 $$

 Now, the second-grade contribution to the energy is convex, which yields its lower semicontinuity with respect to the convergence above. Moreover, as $p > d$, we have uniform convergence in the functions themselves as well as in the first gradient. Thus, owing to continuity, we get convergence in all other terms.
 \end{proof}

 We can define a similar functional for the linear problem and note that existence of minimizers follows by the same arguments.

\begin{definition}
  \label{def:solLdiscCost}
    Let $\delta>0$, $\tau> 0$, $h>0$ and consider $u_{k-1} \in W^{1,2}(\Omega;\R^d)$, $w_{k}  \in L^2(\Omega;\R^d)$ and $f_{k}\in L^2(\Omega;\R^d)$. We say that
  \begin{align}
  \nonumber
\mathcal{I}_{0,\tau}(u) := &\int_\Omega  \frac{1}{2} e( u) \cdot \mathbb{C} e ( u) +\frac{1}{2}e\big(\tfrac{ u - u_{k-1}}{\tau}\big)\cdot \mathbb{D} e\big(\tfrac{u - u_{k-1}}{\tau}\big) - \tau \int_\Omega {f}_k\cdot {\tfrac{u-u_{k-1}}{\tau}}+ \frac{\rho}{2h} \left| \tfrac{u-u_{k-1}}{\tau} - w_k\right|^2 \mathrm{d} x
\end{align}    
is the linearized discrete cost functional. 
\end{definition}

 \begin{corollary}
 Under assumptions stated in Section \ref{Sec:potentials}, the linear discrete cost functional has a minimizer.
 \end{corollary}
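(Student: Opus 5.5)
The plan is to apply the direct method of the calculus of variations to $\mathcal{I}_{0,\tau}$ on the affine space of admissible displacements. These are $u \in W^{1,2}_0(\Omega;\R^d)$, since the boundary condition $y=\id$ on $\partial\Omega$ translates into $u=0$ there. The data $u_{k-1}\in W^{1,2}(\Omega;\R^d)$, $w_k, f_k \in L^2(\Omega;\R^d)$ and the time steps $\tau, h>0$ are fixed. Alternatively, one could note that minimizers exist as limits of minimizers of $\mathcal{I}_{\delta,\tau}$ by Theorem \ref{maintheorem3}. That would require well-prepared data, so the direct argument is cleaner.

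\emph{Step 1 (boundedness from below and coercivity).} The elastic term $\frac12 e(u)\cdot\mathbb{C}e(u)$ is nonnegative: $\mathbb{C}=\partial_F^2W(\Id)$ is positive semidefinite because $\Id$ minimizes $W$ by \eqref{assumptions-W}(iii),(iv). The viscous term is bounded below by $\frac{\alpha_1}{2}\|e(\tfrac{u-u_{k-1}}{\tau})\|_{L^2}^2$ by the uniform positive definiteness of $\mathbb{D}$. For the force term, Young's inequality gives
\[
\tau\int_\Omega f_k\cdot\tfrac{u-u_{k-1}}{\tau} \le \frac{\rho}{4h}\Big\|\tfrac{u-u_{k-1}}{\tau}\Big\|_{L^2}^2 + C(h,\rho,\tau)\|f_k\|_{L^2}^2 ,
\]
and the inertial term controls $\frac{\rho}{4h}\|\tfrac{u-u_{k-1}}{\tau}\|_{L^2}^2 - C\|w_k\|_{L^2}^2$. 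Hence
\[
\mathcal{I}_{0,\tau}(u) \ge c\,\|e(u-u_{k-1})\|_{L^2}^2 + c\,\|u-u_{k-1}\|_{L^2}^2 - C .
\]
Since $u-u_{k-1}$ vanishes on $\partial\Omega$ (assuming $u_{k-1}$ is admissible, as it is in the iteration), Korn's inequality in $W^{1,2}_0$ converts this into a bound on $\|u-u_{k-1}\|_{W^{1,2}}$, and thus on $\|u\|_{W^{1,2}}$. Therefore the functional is bounded below, and any minimizing sequence is bounded in $W^{1,2}$.

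\emph{Step 2 (compactness and lower semicontinuity).} Extract a subsequence of the minimizing sequence converging weakly in $W^{1,2}_0$. Since $W^{1,2}_0$ is weakly closed, the limit is admissible. The two quadratic forms in $e(\cdot)$ are convex and continuous, hence weakly lower semicontinuous. The force term is linear and continuous, hence weakly continuous. The $L^2$ inertial term is convex and continuous, and in fact converges strongly by Rellich's compactness theorem. So the limit is a minimizer.

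The only mild obstacle is coercivity, because $\mathbb{C}$ is only semidefinite and acts on symmetric parts. This is why the argument relies on the viscous term, via $\mathbb{D}$ and Korn's inequality, rather than on the elastic term. Strict convexity from $\mathbb{D}$ together with the $L^2$ term additionally gives uniqueness of the minimizer.
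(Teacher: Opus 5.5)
Your direct-method proof is correct and is essentially the paper's argument: the paper gives no separate proof and simply refers back to the direct method used for $\mathcal{I}_{\delta,\tau}$ in Lemma \ref{lem-exist-Idelta}. There are two harmless slips. First, with your Young constants the two $\frac{\rho}{4h}$ terms cancel exactly, so the claimed $c\|u-u_{k-1}\|_{L^2}^2$ gain is not obtained; it is not needed, since Korn and Poincar\'e on $W^{1,2}_0$ only use $\|e(\cdot)\|_{L^2}$. Second, $\mathbb{C}$ is in fact positive definite on symmetric matrices by the growth condition \eqref{assumptions-W}(iii), so the elastic term alone would also give coercivity, as the paper's appeal to the ``coercivity of the energy'' indicates.
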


 Iterating over $k$ for both of these functionals is how we will obtain the discrete approximations to our problem. To begin with this, we first define precisely what we mean by a solution. Note that while the main approximation used in the equation is discrete in time, the optimal handling of the energy inequality already requires us to work with a time-continuous quantity.

 \begin{definition}[Weak solution to the nonlinear discrete problem (NL-disc)]
  \label{def:solNLdisc}
  Let $\tau> 0$ and $h=N\tau$ for some $N \in \mathbb{N}$, so that we have the partition $0 < \tau < 2\tau < \ldots <N\tau = h$ of the interval $[0,h]$. Consider the initial data
  $$
  y^{0,\delta} \in W^{1,2}(\Omega;\mathbb{R}) \quad \text{ with } \quad \mathcal{E}_\delta(y^{0,\delta} ) < +\infty,
  $$
  and  $v^{0,\delta}  \in L^2(\Omega; \R^d)$ as well as a discretized force $(f_{k,\delta})_{k\in \{0, \ldots, N\}} \subset L^2(\Omega; \R^d)$. Moreover, assume that $(w_{k,\delta})_{k\in \{0, \ldots, N\}} \subset L^2(\Omega;\R^d)$ is given.\footnote{The $w_{k,\delta}$ will later be an averaged discretization of $\partial_t y_{\delta}(\cdot-h)$. However at this stage, this uses values out of the interval $[-h,0]$, which is outside of our considerations. So effectively it performs the role of arbitrary given data.}

  We say that the collection $(y_{k,\delta})_{k\in \{0, \ldots, N\}} \subset W^{1,2}(\Omega;\R^d)$ is a weak solution to the nonlinear discrete problem (NL-disc) if $y_{0,\delta} = y^{0,\delta}$,
  \begin{align}
   &\int_\Omega  \frac{\delta^{-1} \varrho}{h}\big(\tfrac{y_{k,\delta}-y_{k-1,\delta}}{\tau}- w_{k,\delta}\big) \cdot \varphi + \delta^{-1} \partial_F W(\nabla y_{k,\delta}) \cdot \nabla \varphi  + \delta^{-\alpha p+1}\partial_G P(\nabla^2 y_{k,\delta}) \cdot \nabla^2 \varphi \, \mathrm{d} x  \nonumber\\
   &+ \int_\Omega \delta^{-1} \partial_{\dot{F}}  R\left(\nabla y_{k-1,\delta}, \nabla \tfrac{y_{k,\delta}-y_{k-1,\delta}}{\tau}\right)\cdot \nabla \varphi  \, \mathrm{d} x = \int_\Omega \delta^{-1}f_{k,\delta} \cdot \varphi \mathrm{d} x
   \label{weakFormNl}
  \end{align}
  holds for all $\varphi \in C^\infty_0(\Omega;\R^d)$ and the energy inequality
  \begin{align}
   &\phantom{{}={}}\mathcal{E}_\delta(y_{k,\delta}) +  \sum_{l=1}^k\int_{(l-1)\tau}^{l\tau} \int_\Omega \frac{\varrho}{2h\delta^2}  \abs{\tfrac{\tilde{y}_{\delta}(t)- y_{l-1,\delta}}{t-(l-1)\tau}}^2  \mathrm{d}x \mathrm{d}t + \sum_{l=1}^k \tau \int_\Omega \frac{\varrho}{2h\delta^2} \abs{\tfrac{y_{l,\delta}-y_{l-1,\delta}}{\tau}-w_{l,\delta}}^2 \mathrm{d} x \nonumber\\
   &+\sum_{l=1}^k \tau \mathcal{R}_\delta\left(y_{l-1,\delta}, \tfrac{y_{l,\delta}-y_{l-1,\delta}}{\tau}\right) + \sum_{l=1}^k\int_{(l-1)\tau}^{l\tau} \mathcal{R}_\delta^*\left( y_{l-1,\delta},D_2\mathcal{R}_\delta\left(y_{l-1,\delta},\tfrac{\tilde{y}_{\delta}(t)- y_{l-1,\delta}}{t-(l-1)\tau}\right)\right)  \,\mathrm{d}t\nonumber \\
   &\leq \mathcal{E}(y_{0,\delta}) + \sum_{l=1}^k \tau \int_\Omega \frac{\varrho}{2h\delta^2} \abs{w_{l,\delta}}^2 \mathrm{d}x +  \sum_{l=1}^k \tau \delta^{-2} \int_\Omega {f_{l,\delta}}\cdot \tfrac{y_{l,\delta}-y_{l-1,\delta}}{\tau} \, \mathrm{d} x
   \label{EnergyIneq-Disc}
  \end{align}
  is satisfied for all $k \in\{ 1, \ldots ,N\}$. Here, $\tilde{y}_\delta(t) \in W^{1,2}(\Omega;\R^n)$ is the so called DeGiorgi-interpolant, which is the minimizer of the interpolated cost functional
 \begin{align*}
\mathcal{I}_{\delta,\sigma}(y) := \mathcal{E}_\delta(y) + &\sigma \mathcal{R}_\delta\left( y_{k-1,\delta},\tfrac{ y- y_{k-1,\delta}}{\sigma}\right) \\ &- \sigma \frac{1}{\delta^2}\int_\Omega f_{k,\delta}\cdot {\tfrac{y-y_{k-1,\delta}}{\sigma}}+ \frac{\varrho}{2h\delta^2} \left| \tfrac{y-y_{k-1,\delta}}{\sigma} - w_{k,\delta}\right|^2 \mathrm{d} x
\end{align*}
for $\sigma = t-(k-1)\tau \in (0,\tau]$.
 \end{definition}

 \begin{remark}
  Note that all terms in \eqref{EnergyIneq-Disc} involving the DeGiorgi-interpolant are non-negative and on the left hand side of the inequality. In fact dropping them, one has
  \begin{align}
   &\phantom{{}={}}\mathcal{E}_\delta(y_{k,\delta}) + \sum_{l=1}^k \tau \int_\Omega \frac{\varrho}{2h\delta^2} \abs{\tfrac{y_{l,\delta}-y_{l-1,\delta}}{\tau}-w_{k,\delta}}^2 \mathrm{d}x + \mathcal{R}_\delta\left( y_{l-1,\delta}, \tfrac{y_{l,\delta}-y_{l-1,\delta}}{\tau}\right) \nonumber \\
   &\leq \mathcal{E}_\delta(y_{0,\delta}) + \sum_{l=1}^k \tau \int_\Omega \frac{\varrho}{2h\delta^2} \abs{w_{l,\delta}}^2 +  \delta^{-2} {f_{l,\delta}}\cdot \tfrac{y_{l,\delta}-y_{l-1,\delta}}{\tau} \, \mathrm{d} x
   \label{energy-est-simplified}
  \end{align}
  which is enough of an estimate to proceed with the limit $\tau\to0$. Estimate \eqref{energy-est-simplified} can be obtained by comparing the minimizer $y_{k,\delta}$ with the competitor $y_{k-1,\delta}$ in the minimization of $\mathcal{I}_{\delta,\tau}$. However, in terms of the $h$-dependency, this simplified estimate no longer telescopes when appending intervals $[0,h]$, $[h,2h]$ and so on. Therefore, the refined estimate \eqref{EnergyIneq-Disc} will turn out to be necessary. 
 \end{remark}

  \begin{proposition}\label{prop:NL-discExistence}
 For any $h>0$ the problem (NL-disc) has a weak solution in the sense of Definition \ref{def:solNLdisc}.
  \end{proposition}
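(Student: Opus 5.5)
We construct the solution by iterated minimization. Starting from $y_{0,\delta}=y^{0,\delta}$, for $k=1,\dots,N$ we let $y_{k,\delta}$ be a minimizer of $\mathcal{I}_{\delta,\tau}$ with data $y_{k-1,\delta}$, $w_{k,\delta}$, $f_{k,\delta}$; such a minimizer exists by Lemma \ref{lem-exist-Idelta}. The remaining work is to verify the weak formulation \eqref{weakFormNl} and the refined energy inequality \eqref{EnergyIneq-Disc}.

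\emph{Step 1: uniform bounds and admissibility of variations.} Comparing $y_{k,\delta}$ with the competitor $y_{k-1,\delta}$ gives \eqref{energy-est-simplified}. The force term on the right-hand side is absorbed with Young's inequality into the inertial term. This yields $\mathcal{E}_\delta(y_{k,\delta})\le E_{\max}$, with $E_{\max}$ depending on the data and on $h,\tau,\delta$, which is harmless here. By assumption \eqref{assumptions-W}(vi) and Lemma \ref{lem:HK}, we then get $\det\nabla y_{k,\delta}\ge\varepsilon>0$ on $\bar\Omega$, and $\nabla y_{k,\delta}$ is bounded in $L^\infty$ since $p>d$. Hence for $\varphi\in C_0^\infty$ and $|s|$ small, $y_{k,\delta}+s\varphi$ has gradient in a compact set where $W$ is $C^1$ and finite. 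Differentiating $s\mapsto\mathcal{I}_{\delta,\tau}(y_{k,\delta}+s\varphi)$ at $s=0$ is then justified by dominated convergence, using the growth bound on $\partial_GP$ from \eqref{assumptions-P}(iii) and the quadratic structure of $R$ in its second argument. The resulting Euler--Lagrange equation, multiplied by $\delta$, is exactly \eqref{weakFormNl}.

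\emph{Step 2: the energy inequality via the De Giorgi interpolant.} On each step, for $\sigma\in(0,\tau]$, we let $\tilde y_\delta$ be a minimizer of $\mathcal{I}_{\delta,\sigma}$ chosen as a measurable selection; at $\sigma=\tau$ we take $\tilde y_\delta=y_{k,\delta}$. We define the value function $\phi(\sigma):=\min\mathcal{I}_{\delta,\sigma}$ and split it as $\phi(\sigma)=\mathcal{E}_\delta(\tilde y)+\sigma\,\Psi_\sigma\bigl(\tfrac{\tilde y-y_{k-1}}{\sigma}\bigr)$. Here
\[
\Psi_\sigma(v):=\mathcal{R}_\delta(y_{k-1},v)-\delta^{-2}\int_\Omega f_k\cdot v\,\mathrm{d}x+\frac{\varrho}{2h\delta^2}\int_\Omega|v-w_k|^2\,\mathrm{d}x.
\]
The function $\phi$ is locally Lipschitz, and at almost every $\sigma$ its derivative is obtained by freezing the minimizer:
\[
\phi'(\sigma)=\Psi(v_\sigma)-D\Psi(v_\sigma)[v_\sigma],\qquad v_\sigma:=\tfrac{\tilde y_\delta(\sigma)-y_{k-1}}{\sigma}.
\]
Since $\mathcal{R}_\delta$ is quadratic in its second argument, the Fenchel equality of Theorem \ref{thm:Fenchel} gives
\[
\mathcal{R}_\delta(v)-D\mathcal{R}_\delta(v)[v]=-\mathcal{R}_\delta^*\bigl(y_{k-1},D_2\mathcal{R}_\delta(y_{k-1},v)\bigr).
\]
The inertial part contributes
\[
\frac{\varrho}{2h\delta^2}\bigl(|v-w|^2-2(v-w)\cdot v\bigr)=\frac{\varrho}{2h\delta^2}\bigl(|w|^2-|v|^2\bigr),
\]
and the linear force term cancels. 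Integrating $\phi'$ over $(0,\tau]$, using $\phi(\tau)=\mathcal{I}_{\delta,\tau}(y_{k,\delta})$ and $\lim_{\sigma\to0}\phi(\sigma)\le\mathcal{E}_\delta(y_{k-1,\delta})$, then gives the one-step version of \eqref{EnergyIneq-Disc}. It contains exactly the interpolant terms $\frac{\varrho}{2h\delta^2}|v_\sigma|^2$ and $\mathcal{R}^*_\delta$, the added $\tau\frac{\varrho}{2h\delta^2}|w_k|^2$, and the force term. Moving $\tau\Psi_\tau(v_\tau)$ to the left and summing over $l=1,\dots,k$ telescopes the energies, which gives \eqref{EnergyIneq-Disc}.

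\emph{Main obstacle.} The delicate point is Step 2: justifying the a.e.\ differentiability of $\phi$ and the formula for $\phi'$ without uniqueness of minimizers. I would first prove the two-sided comparison estimates $\phi(\sigma_2)-\phi(\sigma_1)\lessgtr(\sigma_2-\sigma_1)(\dots)$, obtained by plugging the minimizer at one parameter into the functional at the other. These estimates rest on the explicit $\sigma$-dependence $\sigma\Psi((y-y_{k-1})/\sigma)$ with $y$ fixed and are monotone in $\sigma$, as in the standard De Giorgi variational-interpolation argument. We also need measurability of $t\mapsto\tilde y_\delta(t)$, obtained by a measurable selection or by an a.e.\ argument using monotonicity of $\|v_\sigma\|$. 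Finally, the bound $\phi(\sigma)\le\mathcal{E}_\delta(y_{k-1})+\sigma\Psi(0)$ handles the limit $\sigma\to0$.
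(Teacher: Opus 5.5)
Your proposal follows the paper's own proof. Both construct the solution by iterated minimization of $\mathcal{I}_{\delta,\tau}$ (Lemma \ref{lem-exist-Idelta}), take the Euler--Lagrange equation as the weak form, and obtain the energy inequality from the De Giorgi variational interpolant: two-sided comparison estimates for $\sigma\mapsto\min\mathcal{I}_{\delta,\sigma}$, the Fenchel equality for the quadratic $\mathcal{R}_\delta$, then integration in $\sigma$ and telescoping over the steps. Two of your steps are slightly cleaner than the paper's: you handle $\sigma\to0$ with the competitor bound $\phi(\sigma)\le\mathcal{E}_\delta(y_{k-1,\delta})+\sigma\Psi(0)$ instead of proving $y_\sigma\to y_{k-1,\delta}$ via Korn, and your inertial term $\frac{\varrho}{2h\delta^2}(|w_k|^2-|v_\sigma|^2)$ is computed with a consistent sign.
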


 \begin{proof}
  As both the equation and the energy inequality are stated for each $k$ separately, we can assume that the result is true for $k-1$ and prove the result by induction. In order to prove \eqref{weakFormNl}, we minimize the non-linear discrete cost functional $\mathcal{I}_{\delta,\tau}$ from \eqref{minim-prob-delta}. We know from Lemma \ref{lem-exist-Idelta} that minimizers of $\mathcal{I}_{\delta,\tau}$ exist. It is also easy to see that the resulting Euler-Lagrange equations are precisely the correct weak equations.


  What is left is the energy-inequality. To prove this, we rely on a rather general argument due to De Giorgi and introduce an additional interpolant. For a classic treatment of this approach see also \cite{AGS} and for a similar application in the continuum mechanics of solids compare also \cite{CGKLipschitz}.

  Fix $\tau$ and $k$. We consider an interpolated cost functional
 \begin{align*}
\mathcal{I}_{\delta,\sigma}(y) := \mathcal{E}_\delta(y) + &\sigma \mathcal{R}_\delta\left( y_{k-1,\delta},\tfrac{ y- y_{k-1,\delta}}{\sigma}\right) - \sigma \frac{1}{\delta^2}\int_\Omega f_{k}\cdot {\tfrac{y-y_{k-1,\delta}}{\sigma}}+ \frac{\varrho}{2h\delta^2} \left| \tfrac{y-y_{k-1,\delta}}{\sigma} - w_{k,\delta}\right|^2 \mathrm{d} x
\end{align*}
where $\sigma \in (0,\tau]$. Note that we are varying in a single time step, $y_{k-1,\delta}$ and $f_{k,\delta}$ stay the same discrete approximation for a fixed timestep $\tau$.  By Lemma \ref{lem-exist-Idelta}, $\mathcal{I}_{\delta,\sigma}$ has a minimizer $y_{\sigma}$ for any such $\sigma$.

Next we want to prove that $\sigma \mapsto \mathcal{I}_{\delta,\sigma}(y_\sigma)$ is absolutely continuous and we want to determine its derivative (Note that we require no continuity of $y_\sigma$ itself). For this consider $\sigma_1, \sigma_2 \in (\sigma_0,\tau]$ for some $\sigma_0>0$ and calculate
\begin{align*}
 &\phantom{{}={}} \mathcal{I}_{\delta,\sigma_1}(y_{\sigma_1}) - \mathcal{I}_{\delta,\sigma_2}(y_{\sigma_2}) \leq \mathcal{I}_{\delta,\sigma_1}(y_{\sigma_2}) - \mathcal{I}_{\delta,\sigma_2}(y_{\sigma_2}) \\
 &= \sigma_1 \mathcal{R}_\delta\left( y_{k-1,\delta},\tfrac{ y_{\sigma_2}- y_{k-1,\delta}}{\sigma_1}\right) - \sigma_2 \mathcal{R}_\delta\left( y_{k-1,\delta},\tfrac{ y_{\sigma_2}- y_{k-1,\delta}}{\sigma_2}\right) \\
 &\quad + \int_\Omega \tfrac{\sigma_1\varrho}{2h\delta^2} \left| \tfrac{y_{\sigma_2}-y_{k-1,\delta}}{\sigma_1} - w_{k,\delta}\right|^2 \mathrm{d} x - \int_\Omega \tfrac{\sigma_2\varrho}{2h\delta^2} \left| \tfrac{y_{\sigma_2}-y_{k-1,\delta}}{\sigma_2} - w_{k,\delta}\right|^2 \mathrm{d} x
\end{align*}
Now for the first two terms we use the convexity of $\mathcal{R}_\delta$ and the equality case of Fenchel-Young to estimate
 \begin{align*}
  &\phantom{{}={}} \sigma_1 \mathcal{R}_\delta\left(y_k,\tfrac{y_{\sigma_2}-y_k}{\sigma_1}\right) - \sigma_2 \mathcal{R}_\delta\left(y_k,\tfrac{y_{\sigma_2}-y_k}{\sigma_2}\right) \\
  &= (\sigma_1-\sigma_2) \mathcal{R}_\delta\left(y_k,\tfrac{y_{\sigma_2}-y_k}{\sigma_1}\right) - \sigma_2 \left[\mathcal{R}_\delta\left(y_k,\tfrac{y_{\sigma_2}-y_k}{\sigma_2}\right)- \mathcal{R}_\delta\left(y_k,\tfrac{y_{\sigma_2}-y_k}{\sigma_1}\right)\right] \\
  &\leq (\sigma_1-\sigma_2) \mathcal{R}_\delta\left(y_k,\tfrac{y_{\sigma_2}-y_k}{\sigma_1}\right) - \sigma_2\inner{D_2\mathcal{R}_\delta\left(y_k,\tfrac{y_{\sigma_2}-y_k}{\sigma_1}\right)}{\tfrac{y_{\sigma_2}-y_k}{\sigma_2}- \tfrac{y_{\sigma_2}-y_k}{\sigma_1}} \\
  &\leq (\sigma_1-\sigma_2) \mathcal{R}_\delta\left(y_k,\tfrac{y_{\sigma_2}-y_k}{\sigma_1}\right) - (\sigma_1-\sigma_2)\inner{D_2\mathcal{R}_\delta\left(y_k,\tfrac{y_{\sigma_2}-y_k}{\sigma_1}\right)}{\tfrac{y_{\sigma_2}-y_k}{\sigma_1}} \\
  &\leq -(\sigma_1-\sigma_2) \mathcal{R}_\delta^*\left(y_k,D_2\mathcal{R}_\delta\left(y_k,\tfrac{y_{\sigma_2}-y_k}{\sigma_1}\right)\right)
 \end{align*}
 while for the last two we have by direct calculation that
 \begin{align*}
   \tfrac{\sigma_1\varrho}{2h\delta^2} \left| \tfrac{y_{\sigma_2}-y_{k-1,\delta}}{\sigma_1} - w_{k,\delta}\right|^2 - \tfrac{\sigma_2\varrho}{2h\delta^2} \left| \tfrac{y_{\sigma_2}-y_{k-1,\delta}}{\sigma_2} - w_{k,\delta}\right|^2 =  (\sigma_2-\sigma_1) \left[ \tfrac{\varrho}{2h\delta^2}\tfrac{\left| y_{\sigma_2}-y_{k-1,\delta} \right|^ 2}{\sigma_1\sigma_2} +  \tfrac{\varrho}{2h\delta^2}\abs{w_{k,\delta}}^2 \right].
 \end{align*}
 We thus have
 \begin{align*}
  &\phantom{{}={}} \mathcal{I}_{\delta,\sigma_1}(y_{\sigma_1}) - \mathcal{I}_{\delta,\sigma_2}(y_{\sigma_2}) \leq (\sigma_2-\sigma_1) \left[ \mathcal{R}_\delta^*\left(y_k,D_2\mathcal{R}_\delta\left(y_k,\tfrac{y_{\sigma_2}-y_k}{\sigma_1}\right)\right) + \tfrac{\varrho}{2h\delta^2}\tfrac{\left| y_{\sigma_2}-y_{k-1,\delta} \right|^ 2}{\sigma_1\sigma_2} +  \int_\Omega \tfrac{\varrho}{2h\delta^2}\abs{w_{k,\delta}}^2\mathrm{d}x \right]
 \end{align*}
and similarly
\begin{align*}
 &\phantom{{}={}} \mathcal{I}_{\delta,\sigma_1}(y_{\sigma_1}) - \mathcal{I}_{\delta,\sigma_2}(y_{\sigma_2}) \geq \mathcal{I}_{\delta,\sigma_1}(y_{\sigma_1}) - \mathcal{I}_{\delta,\sigma_2}(y_{\sigma_1}) \\
 &= (\sigma_2-\sigma_1) \left[ \mathcal{R}_\delta^*\left(y_k,D_2\mathcal{R}_\delta\left(y_k,\tfrac{y_{\sigma_1}-y_k}{\sigma_2}\right)\right) + \tfrac{\varrho}{2h\delta^2}\tfrac{\left| y_{\sigma_1}-y_{k-1,\delta} \right|^ 2}{\sigma_1\sigma_2} +  \int_\Omega \tfrac{\varrho}{2h\delta^2}\abs{w_{k,\delta}}^2\mathrm{d}x \right].
\end{align*}

Dividing by $\sigma_2-\sigma_1$ gives an upper and a lower bound on the Lipschitz-constant of $\sigma \mapsto \mathcal{I}_{\delta,\sigma}(y_\sigma)$ in the interval $(\sigma_0,\tau]$, which proves the absolute continuity. Now fixing  $\sigma = \sigma_1$ in the last equation and sending $\sigma_2 \searrow \sigma_1$, gives
\begin{align*}
 - \tfrac{d}{d\sigma} \mathcal{I}_{\delta,\sigma}(y_{\sigma}) &\geq \mathcal{R}_\delta^*\left(y_k,D_2\mathcal{R}_\delta\left(y_k,\tfrac{y_{\sigma}-y_k}{\sigma}\right)\right) + \int_\Omega \tfrac{\varrho}{2\sigma^2h\delta^2}\left| y_{\sigma}-y_{k-1,\delta} \right|^ 2 +  \tfrac{\varrho}{2h\delta^2}\abs{w_{k,\delta}}^2 \mathrm{d} x
\end{align*}
for a.a.\ $\sigma \in (0,\tau]$.

Additionally we have the following uniform estimate
\begin{align*}
\mathcal{I}_{\delta,\sigma}(y_\sigma) \leq \mathcal{I}_{\delta,\sigma}(y_{k-1,\delta}) = \mathcal{E}_\delta(y_{k-1,\delta}) + \int_\Omega \tfrac{\sigma\varrho}{2h\delta^2} \left| w_{k,\delta}\right|^2 \mathrm{d} x
\end{align*}
If we multiply this with $\sigma$, we obtain that
\begin{align*}
 &\phantom{{}={}}\int_\Omega \frac{1}{\delta^2} R\left(\nabla y_{k-1,\delta},\nabla y_\sigma-\nabla y_{k-1,\delta}\right) \mathrm{d}x -\sigma \frac{1}{\delta^2}\int_\Omega f_{k}\cdot (y_\sigma-y_{k-1,\delta})  \\&= \frac{\sigma^2}{\delta^2} \int_\Omega R\left(\nabla y_{k-1,\delta},\tfrac{\nabla y-\nabla y_{k-1,\delta}}{\sigma}\right) \mathrm{d}x -\sigma^2 \frac{1}{\delta^2}\int_\Omega f_{k}\cdot \frac{y_\sigma-y_{k-1,\delta}}{\sigma}
 \leq \sigma \mathcal{I}_{\delta,\sigma}(y_\sigma) \to 0
\end{align*}
for $\sigma \to 0$. Thus, using Lemma \ref{lem:KornGeneral} and Young's inequality, we have $\lim_{\sigma \to 0} y_\sigma = y_{k-1,\delta}$ in $W^{1,2}(\Omega;\R^d)$.

With this in hand, we have
\begin{align*}
 &\phantom{{}={}}-\int_0^\tau \mathcal{R}_\delta^*\left(y_k,D_2\mathcal{R}_\delta\left(y_k,\tfrac{y_{\sigma}-y_k}{\sigma}\right)\right) + \int_\Omega \tfrac{\varrho}{2\sigma^2h\delta^2}\left| y_{\sigma}-y_{k-1,\delta} \right|^ 2 - \tfrac{\rho}{2h\delta^2}\abs{  w_{k,\delta}}^2 \mathrm{d}x \mathrm{d} \sigma \\
 &\geq \int_0^\tau  \tfrac{d}{d\sigma} \mathcal{I}_{\delta,\sigma}(y_{\sigma})\, \mathrm{d}t = \mathcal{I}_{\delta,\tau}(y_\tau) - \lim_{\sigma \to 0} \mathcal{I}_{\delta,\sigma}(y_\sigma) \\
 &\geq \mathcal{E}_\delta(y_{\tau}) + \tau \mathcal{R}_\delta\left(\nabla y_{k-1,\delta},\tfrac{\nabla y_\tau-\nabla y_{k-1,\delta}}{\tau}\right) \\ &- \tau \tfrac{1}{\delta^2}\int_\Omega f_{k}\cdot {\tfrac{y_\tau-y_{k-1,\delta}}{\tau}}+ \tfrac{\tau\varrho}{2h\delta^2} \left| \tfrac{y_\tau-y_{k-1,\delta}}{\tau} - w_{k,\delta}\right|^2 \mathrm{d} x - \mathcal{E}_\delta(y_{k-1,\delta})
\end{align*}
  which can be rewritten into the discrete energy-inquality:
\begin{align} \label{eq:discrEnergyIneq}
 &\phantom{{}={}}  \mathcal{E}_\delta(y_{\tau}) + \tfrac{\varrho}{2h\delta^2}\int_0^\tau \left| \tfrac{y_{\sigma}-y_{k-1,\delta}}{\sigma} \right|^ 2 \mathrm{d} \sigma \nonumber \\
 &+\int_0^\tau \mathcal{R}_\delta^*\left(y_k,D_2\mathcal{R}_\delta\left(y_k,\tfrac{y_{\sigma}-y_k}{\sigma}\right)\right) + \mathcal{R}_\delta\left( y_{k-1,\delta},\tfrac{ y_\tau- y_{k-1,\delta}}{\tau}\right)\mathrm{d} \sigma \nonumber \\ &+ \int_\Omega \tfrac{\tau\varrho}{2h\delta^2} \left| \tfrac{y_\tau-y_{k-1,\delta}}{\tau} - w_{k,\delta}\right|^2 \mathrm{d} x \nonumber \\
 &\leq \mathcal{E}_\delta(y_{k-1,\delta}) + \int_\Omega\tfrac{\tau\varrho}{2h\delta^2} \abs{w_{k,\delta}}^2 + \tau \tfrac{1}{\delta^2} f_{k,\delta}\cdot {\tfrac{y_\tau-y_{k-1,\delta}}{\tau}} \mathrm{d} x
\end{align}

  Identifying $y_\tau$ with $y_{k,\delta}$ and summing over $k$ this then yields \eqref{EnergyIneq-Disc}.
 \end{proof}

 In the same way, we define the solution to the linear discrete problem.

  \begin{definition}[Weak solution to the linear discrete problem (L-disc)]
  \label{def:solLdisc}
  Let $\tau> 0$ and $h=N\tau$ for some $N \in \mathbb{N}$, so that we have the partition $0 < \tau < 2\tau < \ldots < N\tau = h$ of the interval $[0,h]$. Consider the initial data
  $$
  u^{0} \in W^{1,2}(\Omega;\mathbb{R}) \text{ with } \mathcal{E}_0(u_0) < +\infty,
  $$
  and  $v_0 \in L^2(\Omega; \R^d)$ as well as a discretized force $(f_k)_{k\in \{0 ,\dots, N\}} \subset L^2(\Omega)$. Moreover, assume that $(w_k)_{k\in\{1,\ldots, N\}} \subset L^2(\Omega; \R^d)$ is given.

  We say that the collection $(u_{k})_{k\in \{0 ,\dots, N\}} \subset W^{1,2}(\Omega;\R^d)$ is a weak solution to the linear discrete problem (NL-disc) if $u_{0} = u^{0}$ and
  \begin{align*}
   &\int_\Omega \frac{\varrho}{h} \big(\tfrac{u_{k}-u_{k-1}}{\tau}- w_k \big) \cdot \varphi + \frac{1}{2}\mathbb{C} e( u_k) \cdot e(\varphi)  + \frac{1}{2}\mathbb{D} e\big(\tfrac{u_{k}-u_{k-1}}{\tau}\big)\cdot e(\varphi)   \, \mathrm{d} x = \int_\Omega f_k  \cdot \varphi \, \mathrm{d} x
  \end{align*}
  for all $\varphi \in C^\infty_0(\Omega;\R^d)$, as well as the energy inequality
    \begin{align}
   &\phantom{{}={}}\mathcal{E}_0(u_{k}) +  \sum_{l=1}^k\int_{(l-1)\tau}^{l\tau} \int_\Omega \frac{\varrho}{2h} \abs{\tfrac{\tilde{u}(t)- u_{l-1}}{t-(l-1)\tau}}^2 \mathrm{d}x \mathrm{d}t + \sum_{l=1}^k \tau \int_\Omega \frac{\varrho}{2h} \abs{\tfrac{u_{l}-u_{l-1}}{\tau}-w_l}^2 \mathrm{d} x \nonumber\\
   &+\sum_{l=1}^k \tau \int_\Omega \mathbb{D} e\big(\tfrac{u_{l}-u_{l-1}}{\tau}\big)\cdot  e\left(\tfrac{u_{l}-u_{l-1}}{\tau} \right)\, \mathrm{d} x + \sum_{l=1}^k\int_{(l-1)\tau}^{l\tau} \int_\Omega \mathbb{D} e\big(\tfrac{\tilde{u}(t)-u_{l-1}}{t-(l-1)\tau}\big)\cdot  e\left(\tfrac{\tilde{u}(t)-u_{l-1}}{t-(l-1)\tau}\right) \mathrm{d}x \mathrm{d}t\nonumber \\
   &\leq \mathcal{E}_0(u_0) + \frac{\tau\varrho}{2h}\sum_{l=1}^k \norm[L^2(\Omega)]{w_l}^2 + \tau \sum_{l=1}^k \int_\Omega {f_k}\cdot \tfrac{u_{l}-u_{l-1}}{\tau} \, \mathrm{d} x
   \label{EnergyIneq-LDisc}
  \end{align}
  for all $k \in \{0, \ldots, N\}$, where $\tilde{u}(t) \in W^{1,2}(\Omega;\R^n)$ is again a DeGiorgi-interpolant, in this case the minimizer of the interpolated cost functional
 \begin{align*}
\mathcal{I}_{0,\sigma}(u) := \mathcal{E}_0(u) + &\sigma \int_\Omega \mathbb{D} e\big(\tfrac{u-u_{k-1}}{\sigma\tau}\big)\cdot  e\left(\tfrac{u-u_{k-1}}{\sigma}\right) \mathrm{d}x \\ &- \sigma \int_\Omega  f_{k}\cdot {\tfrac{u-u_{k-1}}{\sigma}}+ \frac{\varrho}{2h} \left| \tfrac{u-u_{k-1}}{\sigma} - w_k\right|^2 \mathrm{d} x
\end{align*}
for $\sigma = t-(k-1)\tau \in (0,\tau]$.
 \end{definition}

 \begin{corollary} \label{cor:L-discExistence}
 For any $h>0$ the problem (L-disc) has a unique weak solution in the sense of Definition \ref{def:solNLdisc}.
 \end{corollary}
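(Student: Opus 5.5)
The argument runs parallel to Proposition \ref{prop:NL-discExistence}, proceeding by induction over $k$; the linear structure simplifies every step and additionally gives uniqueness.

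\emph{Existence of each step.} Assume $u_0,\dots,u_{k-1}$ have been constructed. The functional $\mathcal{I}_{0,\tau}$ from Definition \ref{def:solLdiscCost} is a continuous quadratic functional on $W^{1,2}_0(\Omega;\R^d)$ (shifted by the boundary datum). It is coercive: the $\mathbb{D}$-term dominates $\tau^{-1}\norm[L^2]{e(u-u_{k-1})}^2$ up to constants, and the classical Korn inequality on $W^{1,2}_0$ turns this into control of $\norm[W^{1,2}]{u}^2$. The force term is absorbed by Young's inequality into the inertial $L^2$ term. Moreover $\mathcal{I}_{0,\tau}$ is \emph{strictly} convex, because the inertial term $\frac{\varrho}{2h\tau^2}\norm[L^2]{u}^2$ is strictly convex and the other two quadratic terms are convex ($\mathbb{C}$ is positive semidefinite on symmetric matrices as the Hessian of $W$ at its minimum $\Id$, and $\mathbb{D}$ is positive definite). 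The direct method therefore yields a unique minimizer $u_k$. Its Euler--Lagrange equation, obtained by differentiating in directions $\varphi\in C_0^\infty$, is exactly the weak equation in Definition \ref{def:solLdisc} (modulo the normalisation of the constants $\frac12$ used there).

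\emph{Uniqueness.} Any weak solution of the linear discrete equation at step $k$ is a critical point of the strictly convex functional $\mathcal{I}_{0,\tau}$, hence coincides with the unique minimizer. Induction over $k$ then gives uniqueness of the whole collection $(u_k)_k$.

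\emph{Energy inequality.} Here I repeat the De Giorgi interpolation argument verbatim, with $\mathcal{E}_\delta,\mathcal{R}_\delta$ replaced by $\mathcal{E}_0$ and the quadratic form $v\mapsto\frac12\int e(v)\mathbb{D}e(v)$. This form has no state dependence, so the Fenchel--Young step applies directly. For the quadratic form, $\mathcal{R}^*(D\mathcal{R}(v))=\mathcal{R}(v)$, which produces the $\mathbb{D}$-terms with the interpolant $\tilde u$ in \eqref{EnergyIneq-LDisc}.

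The steps in order are as follows.
\begin{enumerate}
\item Let $u_\sigma$ be the minimizer of $\mathcal{I}_{0,\sigma}$ for $\sigma\in(0,\tau]$.
\item Show that $\sigma\mapsto\mathcal{I}_{0,\sigma}(u_\sigma)$ is locally Lipschitz, using the two-sided comparison with the competitors $u_{\sigma_1}$ and $u_{\sigma_2}$.
\item Derive the lower bound on $-\tfrac{d}{d\sigma}\mathcal{I}_{0,\sigma}(u_\sigma)$.
\item Show that $u_\sigma\to u_{k-1}$ in $W^{1,2}$ as $\sigma\to0$, via the bound $\sigma\,\mathcal{I}_{0,\sigma}(u_\sigma)\le \sigma\,\mathcal{I}_{0,\sigma}(u_{k-1})\to 0$ and Korn's inequality, in place of Lemma \ref{lem:KornGeneral}.
\item Integrate over $(0,\tau]$ to obtain the one-step estimate.
\item Sum over $l$ to obtain \eqref{EnergyIneq-LDisc}.
\end{enumerate}

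The main obstacle is step 4, the limit $\sigma\to0$, because it requires $\mathcal{E}_0(u_\sigma)\to\mathcal{E}_0(u_{k-1})$. Convergence of $u_\sigma$ in $W^{1,2}$ suffices for this, since $\mathcal{E}_0$ is continuous in the strong topology. In the linear setting, lower semicontinuity alone would also be enough for the inequality direction needed here.
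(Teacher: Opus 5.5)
Your proposal is correct and follows the paper's route: existence is exactly ``the same arguments as in Proposition \ref{prop:NL-discExistence}'' (minimization, Euler--Lagrange equation, De Giorgi interpolation), and your strict-convexity uniqueness argument, applied inductively in $k$, is equivalent to the paper's testing of the equation for the difference $u_k-v_k$ with that difference. One minor correction: step 4 is not really an obstacle, since the energy inequality only needs $\limsup_{\sigma\to0}\mathcal{I}_{0,\sigma}(u_\sigma)\le\mathcal{E}_0(u_{k-1})$, which follows directly from comparing with the competitor $u_{k-1}$, whereas lower semicontinuity of $\mathcal{E}_0$ would give the opposite, unneeded, inequality.
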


 \begin{proof}
  With the same arguments as in Proposition \ref{prop:NL-discExistence} we get existence of solutions. For the uniqueness we proceed by a standard argument using the linearity of the equation. Specifically, the difference of two solutions $(u_k)_k$ and $(v_k)_k$ for the same force and initial data will satisfy the weak equation for zero force and initial data. Testing this equation with the difference then gives that $e(u_k-v_k) = 0$ almost everywhere for all $k$, which using the boundary data implies $u_k = v_k$.
 \end{proof}

\section{Passing to the limit from (NL-disc) or (L-disc)}
\label{sec:disc-limits}

Next we will use the energy inequality inherent to the discrete problem in order to obtain the limits for $\tau \to 0$ and $\delta \to 0$ directly on the level of equations.

\subsection{The limit \texorpdfstring{$\tau \to 0$}{tau to zero} from (NL-disc) and (L-disc)}
 We pass to the limit $\tau \to 0$ to arrive at the time-delayed problem (NL-TD), which is time-continuous but where the inertial term is discretized via the $h$-scale through a difference quotient. We start with a definition of the weak solution of the non-linear time-delayed problem:

 \begin{definition}[Weak solution to the nonlinear time-delayed problem (NL-TD)]
  \label{def:solNLTD}
  Let $h>0$, $I = [0,T]$ and consider the initial data
$$
y_{0,\delta} \in W^{1,2}(\Omega; \R^d) \text{ with }\mathcal{E}_\delta(y_{0,\delta}) < +\infty \text{ and } v_{0,\delta} \in L^2(\Omega)
$$
and a force $f_\delta \in L^2(I \times \Omega)$. We say that $y_{h,\delta} \in W^{1,2}(I;W^{1,2}(\Omega;\R^d)) \cap L^\infty(I;W^{2,p}(\Omega;\R^d))$ is a weak solution to the nonlinear time-delayed problem (NL-TD) if $y_{h,\delta}(0) = y_{0,\delta}$ and $y_{h,\delta}$ satisfies
  \begin{align*}
   0 &= \int_0^T \int_\Omega \frac{\delta^{-1}\varrho}{h}\big(\partial_t y_{h,\delta}(t)-\partial_t y_{h,\delta}(t-h)\big)\cdot \varphi + \delta^{-1} \partial_F W(\nabla y_{h,\delta}) \cdot \nabla \varphi + \delta^{-\alpha p-1} \partial_G P(\nabla^2 y_{h,\delta}) \cdot \nabla^2 \varphi \\
   & \qquad + \delta^{-1} \partial_{\dot{F}} R(\nabla y_{h,\delta}, \nabla \partial_t y_{h,\delta}) \cdot \nabla \varphi - \delta^{-1} f_\delta \cdot \varphi \mathrm{d}x \mathrm{d}t
  \end{align*}
  for all $\varphi \in C^\infty_c((0,T) \times \Omega)$ where we define by abuse of notation $\partial_t y_{h,\delta}(t) := v_{0,\delta}$ for $t<0$, as well as the energy inequality
  \begin{align*}
   &\phantom{{}={}}\mathcal{E}_\delta(y_{h,\delta}(s)) + \frac{\varrho}{\delta^2 2} \fint_{s-h}^s \norm[L^2(\Omega)]{\partial_t y_{h,\delta}(t)}^2 \mathrm{d}t + \int_0^s \frac{1}{\delta^2} \int_\Omega \partial_{\dot{F}} R(\nabla y_{h,\delta}, \partial_t \nabla y_{h,\delta}) \cdot\partial_t \nabla y_{h,\delta} \mathrm{d} t \\
   &+ \int_0^s \frac{\rho}{2h\delta^2} \norm[L^2(\Omega)]{\partial_t y_{h,\delta}(t)-\partial_t y_{h,\delta}(t-h)}^2 \mathrm{d}t \leq \mathcal{E}_\delta(y_0) + \frac{\varrho}{2 \delta^2}\norm[L^2(\Omega)]{v_0}^2 + \int_0^s \frac{1}{\delta^2}\inner[L^2]{f_\delta}{\partial_t y_{h,\delta}} \mathrm{d}t
  \end{align*}
  for almost all $s \in [0,T]$.
 \end{definition}

 The first main result for this section is the following existence result.
 \begin{proposition} \label{prop:NLTDexistence}
  The problem (NL-TD) has a weak solution for any $T \in (0,\infty)$ and initial data/forces as specified in Definition \ref{def:solNLTD}.
 \end{proposition}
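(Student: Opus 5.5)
We construct the solution interval by interval on $[0,h],[h,2h],\dots$ and obtain it on each interval as the limit $\tau\to0$ of the discrete solutions from Proposition \ref{prop:NL-discExistence}.

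\emph{Discrete scheme on the first interval.} On $[0,h]$ we fix $\tau=h/N$ and take as data $w_{k,\delta}:=v_{0,\delta}$ for all $k$ and $f_{k,\delta}:=(f_\delta)_k^{(\tau)}$, the averaged discretization of Definition \ref{def:average}. Proposition \ref{prop:NL-discExistence} gives a discrete solution $(y_{k,\delta})_k$. We form the piecewise affine interpolant $\hat y_\tau$ and the piecewise constant interpolants $\bar y_\tau$ (right endpoint) and $\underline y_\tau$ (left endpoint).

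\emph{A priori bounds.} The simplified estimate \eqref{energy-est-simplified} controls the force term. By Young's inequality and the rigidity Lemma \ref{lem:rigidity} we get $\delta^{-2}\tau\sum|\int f\cdot(y_k-y_{k-1})/\tau|\le C\|\delta^{-1}f_\delta\|_{L^2}^2+\tfrac{\varrho}{4h\delta^2}\tau\sum\|\cdot\|^2$. Combined with \eqref{energy-est-simplified}, this yields $\tau$-uniform bounds on $\sup_k\mathcal{E}_\delta(y_{k,\delta})$, hence on $\bar y_\tau$ in $L^\infty(0,h;W^{2,p})$, and on $\tau\sum_k\mathcal{R}_\delta(\dots)$. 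By Lemma \ref{lem:HK} and assumption (vi), $\det\nabla y_{k,\delta}\ge\varepsilon>0$ uniformly. The Korn inequality, Lemma \ref{lem:KornGeneral} (in the form of Corollary \ref{lem:KornDiscrete}), then bounds $\partial_t\hat y_\tau$ in $L^2(0,h;W^{1,2})$.

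\emph{Compactness and the limit $\tau\to0$.} Aubin–Lions gives $\hat y_\tau\to y$ strongly in $C([0,h];C^{1}(\bar\Omega))$, using $W^{2,p}\Subset C^1$ for $p>d$, and $\partial_t\hat y_\tau\rightharpoonup\partial_t y$ in $L^2(W^{1,2})$. The interpolants $\bar y_\tau,\underline y_\tau$ have the same limit. This lets us pass to the limit in the $W$-term and in the coefficient $\partial_{\dot F}R(\nabla\underline y_\tau,\cdot)$, which is linear in the rate with coefficients converging uniformly. The inertial term is linear, and the force converges strongly.

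\textbf{The main obstacle} is the hyperstress term $\partial_GP(\nabla^2\bar y_\tau)$, which is only weakly convergent. Weak convergence of $\partial_G P(\nabla^2 \bar y_\tau)$ to some $\chi$ in $L^{p'}$ follows from the growth bound; identifying $\chi=\partial_GP(\nabla^2y)$ is the problem. I would use Minty's trick, exploiting convexity of $P$. Testing the discrete equation with $\bar y_\tau-y$ (suitably mollified in time) gives $\limsup\int\partial_GP(\nabla^2\bar y_\tau)\cdot\nabla^2(\bar y_\tau-y)\le0$. The remaining terms vanish by the strong $C^1$ convergence and the weak convergence of the rates. For the viscous term this uses $\int\partial_{\dot F}R\cdot\nabla(\bar y_\tau-y)\to0$, since $\nabla(\bar y_\tau-y)\to0$ uniformly while $\partial_{\dot F}R$ is bounded in $L^2$. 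Monotonicity of $\partial_GP$ then identifies the limit, and we even obtain strong $L^p$ convergence of $\nabla^2\bar y_\tau$.

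\emph{Energy inequality.} We pass to the limit in the refined inequality \eqref{EnergyIneq-Disc}, using weak lower semicontinuity of $\mathcal{E}_\delta$ and of the convex quadratic rate terms. The De Giorgi-interpolant terms converge to $\int_0^s\mathcal{R}^*_\delta(\dots)$ and to $\tfrac{\varrho}{2h\delta^2}\int_0^s\|\partial_ty\|^2$. By Fenchel–Young equality (Theorem \ref{thm:Fenchel}), $\mathcal{R}+\mathcal{R}^*$ of the pairing gives exactly the dissipation term $\partial_{\dot F}R\cdot\partial_t\nabla y$ in Definition \ref{def:solNLTD}.

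\emph{Iteration and telescoping.} On $[h,2h]$ we repeat the construction with $w_{k,\delta}:=(\partial_t y_{h,\delta}(\cdot-h))_k^{(\tau)}$, using Jensen's inequality for the averaged data. The key telescoping point is that the term $\tfrac{\varrho}{2h\delta^2}\int\|\partial_ty\|^2$ produced on interval $[jh,(j+1)h]$ cancels the $\tfrac{\varrho}{2h\delta^2}\tau\sum\|w_l\|^2$ on the right-hand side of interval $j+1$. After summation this leaves the averaged kinetic term $\tfrac{\varrho}{2\delta^2}\fint_{s-h}^s\|\partial_ty\|^2$, and on the first interval it reduces to $\tfrac{\varrho}{2\delta^2}\|v_0\|^2$. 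Because this bound on the averaged kinetic term and on $\mathcal{E}_\delta$ is uniform in the interval index, the determinant lower bound and Korn constants persist on every interval. Gluing finitely many intervals then reaches any $T<\infty$.
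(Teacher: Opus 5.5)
Your route matches the paper's: interval-by-interval construction, the limit $\tau\to0$ of (NL-disc) with averaged data, Aubin--Lions compactness, Minty's trick for the hyperstress by testing with $\bar y_\tau-y$, the refined De Giorgi inequality, and telescoping. The passage to the limit in the weak equation is fine, and your compactness in $C([0,h];C^1(\bar\Omega))$ is even slightly stronger than the paper's.

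\textbf{The gap: identifying the De Giorgi limit in the energy inequality.} You assert that the De Giorgi terms in \eqref{EnergyIneq-Disc} converge to $\int_0^s\mathcal{R}^*_\delta(y,D_2\mathcal{R}_\delta(y,\partial_t y))$ and $\tfrac{\varrho}{2h\delta^2}\int_0^s\|\partial_t y\|^2$. These terms are built from the difference quotient $\beta^{(\tau)}(t)=\tfrac{\tilde y_\delta(t)-y_{l-1,\delta}}{t-(l-1)\tau}$ of the variational interpolant, not from $\partial_t\hat y_\tau$.
\begin{itemize}
\item The bounds only give $\beta^{(\tau)}\rightharpoonup\beta$ in $L^2(0,h;W^{1,2})$ for some unknown $\beta$.
\item Since $\mathcal{R}^*_\delta(z,D_2\mathcal{R}_\delta(z,v))=\mathcal{R}_\delta(z,v)$ for the quadratic $\mathcal{R}_\delta$, lower semicontinuity yields only $\int_0^s\mathcal{R}_\delta(y,\beta)+\tfrac{\varrho}{2h\delta^2}\|\beta\|_{L^2}^2$.
\item Nothing in your argument makes this dominate the same expression with $\partial_t y$.
\end{itemize}
You cannot do without these terms. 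The rest of the discrete inequality, essentially \eqref{energy-est-simplified}, contains one copy of $\mathcal{R}_\delta$ instead of $\delta^{-2}\int\partial_{\dot F}R\cdot\partial_t\nabla y=2\mathcal{R}_\delta$. It also contains no term $\tfrac{\varrho}{2h\delta^2}\int\|\partial_t y\|^2$. So it neither gives the inequality of Definition \ref{def:solNLTD} nor telescopes. Testing the limit equation with $\partial_t y$ is not available either, since $\partial_t y\notin L^2(0,h;W^{2,p})$, unless you prove a separate chain rule for $t\mapsto\int_\Omega P(\nabla^2 y)$.

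\textbf{The missing step, as the paper does it.} Identify $\beta=\partial_t y$ through the Euler--Lagrange equation of the interpolant.
\begin{itemize}
\item $\tilde y_\delta(t)$ minimizes $\mathcal{I}_{\delta,\sigma}$, so it satisfies the same weak equation as $y_{k,\delta}$, with rate $\beta^{(\tau)}$ in the inertial and viscous terms.
\item The bound $\mathcal{I}_{\delta,\sigma}(\tilde y)\le\mathcal{I}_{\delta,\sigma}(y_{k-1,\delta})$ gives uniform $W^{2,p}$-bounds, and $\|\tilde y^{(\tau)}-\underline y^{(\tau)}\|_{W^{1,2}}\le\tau\|\beta^{(\tau)}\|_{W^{1,2}}$. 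Hence $\tilde y^{(\tau)}\to y$.
\item By Minty once more, the elastic and hyperstress terms have the same limits as for $\bar y_\tau$.
\item Subtracting the two limit equations gives
\begin{align*}
D_2\mathcal{R}_\delta(y,\beta)+\tfrac{\varrho}{h\delta^2}\beta=D_2\mathcal{R}_\delta(y,\partial_t y)+\tfrac{\varrho}{h\delta^2}\partial_t y .
\end{align*}
\item Strict convexity of $v\mapsto\mathcal{R}_\delta(y,v)+\tfrac{\varrho}{2h\delta^2}\|v\|_{L^2}^2$ then forces $\beta=\partial_t y$.
\end{itemize}
Only after this do lower semicontinuity and the Fenchel--Young equality give the dissipation term of Definition \ref{def:solNLTD}.

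\textbf{Two smaller points.}
\begin{itemize}
\item Minty's trick does not give strong $L^p$-convergence of $\nabla^2\bar y_\tau$ when $P$ is merely convex; that needs strict or uniform convexity. You do not need it anyway.
\item Do not mollify $\bar y_\tau-y$ in time. You have no uniform time-regularity of $\nabla^2\bar y_\tau$ with which to remove the mollification afterwards. Test the discrete Euler--Lagrange equation step by step with $\bar y_\tau(t)-y(t)$ instead; this is admissible for a.e.\ $t$.
\end{itemize}
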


 We will prove the result iteratively on intervals of length $h$. Proving this also involves some definitions and estimates that will be of independent use later on. We thus state them separately. The following definition will be our standard way to construct continuous approximations out of discrete counterparts.
 \begin{definition}[Piecewise constant and affine interpolants] \label{def:interpolants}
  Fix $\tau >0$ and let $(v_k)_{k\in\{0,\dots,N\}} \subset X$, where $X$ is a given Banach space. Then we define the upper and lower piecewise constant as well as the piecewise affine interpolations as functions from $I = (0,\tau N]$ to $X$ by
  \begin{align*}
   \overline{v}^{(\tau)}(t) &:= v_k  &\text{ for } & (k-1)\tau < t \leq k\tau, \\
   \underline{v}^{(\tau)}(t) &:= v_{k-1}  &\text{ for } & (k-1)\tau < t \leq k\tau, \\
   \hat{v}^{(\tau)}(t) &:= (1-\tfrac{t-k\tau}{\tau}) v_{k-1} + \tfrac{t-k\tau}{\tau} v_{k} &\text{ for } & (k-1)\tau < t \leq k\tau. \\
  \end{align*}
 \end{definition}

 Note that
  \begin{align*}
   \partial_t \hat{v}^{(\tau)}(t) &= \tfrac{ v_{k, \delta} -v_{k-1, \delta}}{\tau}  &\text{ for } & (k-1)\tau < t < k\tau
  \end{align*}
  and we can conclude directly from the definition that
  \begin{align*}
   \norm[L^\infty(I;X)]{\overline{v}^{(\tau)}} = \sup_{k\in\{1,\dots,N\}} \norm[X]{v_k}, \quad \norm[L^\infty(I;X)]{\underline{v}^{(\tau)}} = \sup_{k\in\{0,\dots,N-1\}} \norm[X]{v_k}\\
   \norm[L^\infty(I;X)]{\hat{v}^{(\tau)}} \leq \sup_{t\in I} \max\left( \norm[X]{\overline{v}^{(\tau)}(t)},\norm[X]{\underline{v}^{(\tau)}(t)} \right) \leq \sup_{k\in\{0,\dots,N\}} \norm[X]{v_k}
  \end{align*}
  as well as
  \begin{align*}
   \norm[L^2(I;X)]{\partial_t \hat{v}^{(\tau)}}^2 = \tau \sum_{k=1}^N \norm[X]{\tfrac{ v_{k, \delta} -v_{k-1, \delta}}{\tau}}^2.
  \end{align*}
  Note also that this forms in some sense the inverse to the averaged discretization from Definition \ref{def:average}.

  \begin{lemma}[Equiboundedness of the interpolants] \label{lem:interpolantBounds}
  Fix $I=[0,h]$. Let $\delta > 0$ and $\tau >0$, as well as $N$ such that $\tau N = h$. Then there exists a constant $C$ independent of $\delta$ and $\tau$ such that for every weak solution $(y_{k,\delta})_{k \in \{0, \ldots, N\}} \subset W^{1,2}(\Omega; \mathbb{R}^d)$ to (NL-disc) with initial data $y_0$, force $(f_k^{(\tau)})_k$ and $(w^{(\tau)}_k)_k$ we have
  \begin{align*}
   &\norm[L^\infty(I;W^{1,2}(\Omega))]{\overline{y_{\delta}}^{(\tau)}-\id}^2 +\norm[L^\infty(I;W^{1,2}(\Omega))]{\underline{y_{\delta}}^{(\tau)}-\id}^2 +\norm[L^\infty(I;W^{1,2}(\Omega))]{\hat{y}_{\delta}^{(\tau)}-\id}^2 + \norm[L^2(I;W^{1,2}(\Omega))]{\partial_t \hat{y_{\delta}}^{(\tau)}}^2 \\
   &\leq C \left(\norm[L^2(I\times \Omega)]{f}^2+ \norm[L^2(I\times \Omega)]{w}^2 + \mathcal{E}_{\delta}(y_0)\right)
  \end{align*}
  for the respective interpolants as well as
  \begin{align*}
  \norm[L^\infty(I\times \Omega)]{\delta^{-1} \nabla \underline{u}_\delta(t)} + \sup_{t\in I}\norm[W^{2,p}(\Omega)]{\delta^{-1}\overline{u}_\delta(t)} \leq C \sqrt{\norm[L^2(I\times \Omega)]{f}^2+ \norm[L^2(I\times \Omega)]{w}^2 + \mathcal{E}_{\delta}(y_0)}
  \end{align*}
  \end{lemma}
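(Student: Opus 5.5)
The starting point is the simplified discrete energy estimate \eqref{energy-est-simplified}, which holds for every $k\in\{1,\dots,N\}$. All scalings are already built into $\mathcal{E}_\delta$ and $\mathcal{R}_\delta$. Rewriting the right-hand side in terms of $u_{l,\delta}=\delta^{-1}(y_{l,\delta}-\id)$, the force term becomes $\tau\sum_l\int_\Omega \delta^{-1}f_{l,\delta}\cdot\tfrac{u_{l,\delta}-u_{l-1,\delta}}{\tau}$, and the inertial data term becomes $\tfrac{\varrho\tau}{2h}\sum_l\|\delta^{-1}w_{l,\delta}\|_{L^2}^2$. The latter is bounded by $\tfrac{\varrho}{2h}\|w\|_{L^2(I\times\Omega)}^2$ thanks to the Jensen estimate following Definition \ref{def:average}. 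Here $f,w$ are understood as the rescaled data $\delta^{-1}f_\delta,\delta^{-1}w_\delta$, and $h$ is fixed, so it may enter the constant.

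The force term is treated by Young's inequality:
\[
\tau\sum_l\int_\Omega \delta^{-1}f_{l,\delta}\cdot\tfrac{u_{l,\delta}-u_{l-1,\delta}}{\tau}\le \tfrac{1}{2\eta}\|f\|^2_{L^2(I\times\Omega)}+\tfrac{\eta}{2}\,\tau\sum_l\big\|\tfrac{u_{l,\delta}-u_{l-1,\delta}}{\tau}\big\|_{L^2}^2 .
\]
The last sum must be absorbed into the left-hand side, and I would do this via the dissipation. The left side contains $\tau\sum_l\mathcal{R}_\delta(y_{l-1,\delta},\tfrac{y_{l,\delta}-y_{l-1,\delta}}{\tau})$, and the discrete Korn estimate, Corollary \ref{lem:KornDiscrete}, controls by it $\tau\sum_l\|\tfrac{u_{l,\delta}-u_{l-1,\delta}}{\tau}\|_{W^{1,2}}^2$. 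Choosing $\eta$ small then absorbs the force term.

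There is a circularity here: Corollary \ref{lem:KornDiscrete} requires an a priori bound $\sup_k\mathcal{E}_\delta(y_{k,\delta})<E_{\max}$, which is exactly what we are trying to prove. I expect this to be the main obstacle. I would resolve it by induction over $k$ using a fixed threshold $E_{\max}$, chosen as, say, twice the final bound. If the energies up to step $k-1$ lie below $E_{\max}$, then the Korn constant for the increment at step $k$ depends only on $y_{k-1,\delta}$, which is admissible. Lemma \ref{lem:KornGeneral} needs only $\|y_{k-1,\delta}\|_{W^{2,p}}$ and $\inf\det\nabla y_{k-1,\delta}$, and both are controlled via Lemma \ref{lem:rigidity} and Lemma \ref{lem:HK}. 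The absorbed estimate then gives $\mathcal{E}_\delta(y_{k,\delta})\le C_0(\|f\|^2+\|w\|^2+\mathcal{E}_\delta(y_0))$ with a constant $C_0$ independent of $E_{\max}$, which closes the induction.

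As an alternative that avoids Korn for the step-$k$ bound, one can absorb using the inertial term on the left. Since $|a-w|^2\ge\tfrac12|a|^2-|w|^2$, this gives an $L^2$ bound on velocities with an $h$-dependent constant. That suffices to bound $\mathcal{E}_\delta$, and afterwards Korn gives the $W^{1,2}$-in-time bound.

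Having $\sup_k\mathcal{E}_\delta(y_{k,\delta})\le C(\dots)$, Lemma \ref{lem:rigidity} yields $\|u_{k,\delta}\|_{W^{1,2}}\le C$ for all $k$. By the interpolant identities following Definition \ref{def:interpolants}, this gives the $L^\infty(I;W^{1,2})$ bounds for $\overline{\cdot}$, $\underline{\cdot}$ and $\hat\cdot$. The bound on $\|\partial_t\hat u^{(\tau)}_\delta\|_{L^2(I;W^{1,2})}^2=\tau\sum_k\|\tfrac{u_{k,\delta}-u_{k-1,\delta}}{\tau}\|_{W^{1,2}}^2$ follows from Corollary \ref{lem:KornDiscrete} and the dissipation bound just obtained.

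The $L^\infty$ and $W^{2,p}$ statements come from the second part of Lemma \ref{lem:rigidity} and from the $P$-term in $\mathcal{E}_\delta$. The $P$-term together with \eqref{assumptions-P}(iii) gives $\|\nabla^2 y_{k,\delta}\|_{L^p}\le C\delta^{\alpha}$. Combining this with Poincaré and the $W^{1,2}$ bound controls the $W^{2,p}$ norm, and Morrey's embedding ($p>d$) gives the $L^\infty$ bound on the gradients. Taking square roots of the energy bound then gives the stated form.
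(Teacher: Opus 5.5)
Your proposal is correct and is essentially the paper's argument (energy inequality, then Lemma~\ref{lem:rigidity} and Korn). The differences are that you use \eqref{energy-est-simplified} and discrete Korn instead of the full DeGiorgi inequality and Corollary~\ref{lem:KornCont}, and that you make explicit the force-term absorption the paper leaves implicit. Your inertial-term absorption is the clean way to do this. Your Korn induction additionally needs the unstated observation that, for $\delta$ below a threshold depending on $E_{\max}$, $\nabla y_{k-1,\delta}$ is uniformly close to $\Id$ and $\nabla^2 y_{k-1,\delta}$ is small in $L^p$; only then are the Korn constant, $\eta$ and $C_0$ independent of $E_{\max}$.

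The second estimate as stated is a typo and only holds with $\delta$ in place of $\delta^{-1}$. What you (and the paper) actually bound is $\nabla y-\Id$ in $L^\infty$ and $\nabla^2 y$ in $L^p$, which is how the estimate is used later, so your last step does not literally give the stated form.
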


  \begin{proof} 
Recall that we have the energy inequality \eqref{EnergyIneq-Disc}, which we now rewrite in term of the interpolation-functions above
  \begin{align*}
   &\phantom{{}={}}\mathcal{E}_\delta(y_{k,\delta}) + \frac{\varrho}{2h\delta^2}  \sum_{l=1}^k\int_{(l-1)\tau}^{l\tau} \norm[L^2(\Omega)]{\tfrac{\tilde{y}_{\delta}(t)- y_{l-1,\delta}}{t-(l-1)\tau}}^2 \,\mathrm{d}t + \frac{\tau\varrho}{2h\delta^2}\sum_{l=1}^k \norm[L^2(\Omega)]{\tfrac{y_{l,\delta}-y_{l-1,\delta}}{\tau}-w^{(\tau)}_k}^2 \nonumber\\
   &+\tau\delta^{-2}\sum_{l=1}^k \int_\Omega R\left(\nabla y_{l-1,\delta}, \nabla \tfrac{y_{l,\delta}-y_{l-1,\delta}}{\tau}\right)\, \mathrm{d} x + \delta^{-2}\sum_{l=1}^k\int_{(l-1)\tau}^{l\tau} \int_\Omega R\left(\nabla y_{l-1,\delta},\tfrac{\nabla\tilde{y}_{\delta}(t)-\nabla y_{l-1,\delta}}{t-(l-1)\tau}\right) \,\mathrm{d}x \,\mathrm{d}t\nonumber \\
   &\leq \mathcal{E}_\delta(y_{0,\delta}) + \frac{\tau\varrho}{2h\delta^2}\sum_{l=1}^k \norm[L^2(\Omega)]{w^{(\tau)}_k}^2 + \delta^{-2}\tau \sum_{l=1}^k \int_\Omega {f_k}\cdot \tfrac{y_{l,\delta}-y_{l-1,\delta}}{\tau} \, \mathrm{d} x.
  \end{align*}
  We can then use the energy inequalities to get bounds on
  \begin{align*}
   \sup_{t \in I} \mathcal{E}_\delta(\bar{y}^{(\tau)}), \quad \sup_{t\in I} \frac{1}{h}\int_{t-h}^t \norm{\partial_t \hat{y}^{(\tau)}}^2 \mathrm{d}t \quad \text{ and } \quad \int_0^T R(\nabla \underline{y}^{(\tau)}, \nabla \partial_t \hat{y}^{(\tau)}) \mathrm{d}t
  \end{align*}
  which only depend on the initial data. From this we can derive the uniform bounds using Lemma \ref{lem:rigidity} and Lemma \ref{lem:KornCont} as well as the previous remarks.
  \end{proof}

\begin{lemma}[Minty trick] \label{lem:minty}
 Let $(y_k)_{k\in\N} \subset W^{2,p}(\Omega;\R^d)$ with $y_k \rightharpoonup y$ in $W^{2,p}(\Omega;\R^d)$ and assume that
 \begin{align*}
  \lim_{k \to \infty} \inner{\partial_G P(\nabla^2 y_k)}{\nabla^2(y-y_k)} = 0.
 \end{align*}
 Then $\partial_G P(\nabla^2 y_k) \rightharpoonup \partial_G P(\nabla^2 y)$ in $L^{p'}(\Omega;\R^{d\times d\times d})$.
\end{lemma}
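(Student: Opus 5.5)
This is the classical Minty argument for monotone operators. It relies only on the convexity of $P$ from \eqref{assumptions-P}(ii) and the growth bound $|\partial_G P(G)|\le c_2|G|^{p-1}$ from \eqref{assumptions-P}(iii).

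\emph{Step 1 (compactness of the stresses).} Since $y_k \rightharpoonup y$ in $W^{2,p}$, the sequence $\nabla^2 y_k$ is bounded in $L^p$. The growth bound then gives
\begin{align*}
 \norm[L^{p'}]{\partial_G P(\nabla^2 y_k)}^{p'} \leq c_2^{p'} \norm[L^p]{\nabla^2 y_k}^{p} \leq C .
\end{align*}
Because $L^{p'}$ is reflexive, a subsequence satisfies $\partial_G P(\nabla^2 y_k) \rightharpoonup \chi$ in $L^{p'}(\Omega;\R^{d\times d\times d})$ for some $\chi$. It then suffices to show $\chi = \partial_G P(\nabla^2 y)$. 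Once this holds, the limit is independent of the subsequence, so the whole sequence converges.

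\emph{Step 2 (monotonicity inequality).} Convexity of $P$ makes $\partial_G P$ monotone pointwise. Hence for every $Z \in L^p(\Omega;\R^{d\times d\times d})$,
\begin{align*}
 0 \leq \int_\Omega \big(\partial_G P(\nabla^2 y_k) - \partial_G P(Z)\big) \cdot (\nabla^2 y_k - Z)\,\mathrm{d}x .
\end{align*}
Split the right-hand side as
\begin{align*}
 \inner{\partial_G P(\nabla^2 y_k)}{\nabla^2 y_k - \nabla^2 y} + \inner{\partial_G P(\nabla^2 y_k)}{\nabla^2 y - Z} - \inner{\partial_G P(Z)}{\nabla^2 y_k - Z}.
\end{align*}
The first term tends to $0$ by the hypothesis of the lemma. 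The second tends to $\inner{\chi}{\nabla^2 y - Z}$, because $\nabla^2 y - Z$ is a fixed element of $L^p$. The third tends to $\inner{\partial_G P(Z)}{\nabla^2 y - Z}$: here $\partial_G P(Z) \in L^{p'}$ is fixed and $\nabla^2 y_k \rightharpoonup \nabla^2 y$ in $L^p$. Passing to the limit therefore gives
\begin{align*}
 0 \leq \inner{\chi - \partial_G P(Z)}{\nabla^2 y - Z} \quad \text{for all } Z\in L^p .
\end{align*}
The main care point is this step. The product $\partial_G P(\nabla^2 y_k)\cdot\nabla^2 y_k$ pairs two weakly convergent sequences and cannot be passed to the limit directly. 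That is exactly why it is split off and handled by the assumed vanishing of $\inner{\partial_G P(\nabla^2 y_k)}{\nabla^2(y-y_k)}$.

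\emph{Step 3 (Minty's trick proper).} Choose $Z = \nabla^2 y - s\Psi$ with $s>0$ and arbitrary $\Psi \in L^p$. Dividing the inequality by $s$ yields
\begin{align*}
 \inner{\chi - \partial_G P(\nabla^2 y - s\Psi)}{\Psi} \geq 0 .
\end{align*}
Now let $s \searrow 0$. Since $P \in C^1$, the map $\partial_G P$ is continuous, so $\partial_G P(\nabla^2 y - s\Psi) \to \partial_G P(\nabla^2 y)$ pointwise. The growth bound supplies the $L^{p'}$-integrable majorant $C(|\nabla^2 y|+|\Psi|)^{p-1}|\Psi|$, so dominated convergence applies and we obtain $\inner{\chi - \partial_G P(\nabla^2 y)}{\Psi} \geq 0$. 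Replacing $\Psi$ by $-\Psi$ gives equality for all $\Psi \in L^p$. Hence $\chi = \partial_G P(\nabla^2 y)$, which proves the lemma.
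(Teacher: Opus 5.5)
Your proof is correct. It follows the same Minty strategy as the paper: bound the stresses in $L^{p'}$, extract a weak limit, use monotonicity plus the hypothesis to handle the product of two weakly convergent sequences, then identify the limit. The difference is which operator the argument runs on, and that difference matters for the conclusion.

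\textbf{The paper's route.} The paper treats the Fr\'echet derivative $w\mapsto\langle\partial_G P(\nabla^2 y),\nabla^2 w\rangle$ of $y\mapsto\int_\Omega P(\nabla^2 y)\,\mathrm{d}x$ as a maximal monotone operator $W^{2,p}\to(W^{2,p})^*$. It tests only with competitors $y'\in W^{2,p}$ and then cites maximal monotonicity. Read literally, this identifies the weak limit $\alpha$ only as a functional on second gradients:
\begin{align*}
\int_\Omega\bigl(\alpha-\partial_G P(\nabla^2 y)\bigr)\cdot\nabla^2 w\,\mathrm{d}x=0 \quad\text{for all } w\in W^{2,p}.
\end{align*}
This is strictly weaker than $\alpha=\partial_G P(\nabla^2 y)$ in $L^{p'}$. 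Any $L^{p'}$-field antisymmetric in its last two indices annihilates every $\nabla^2 w$. The weaker statement is all the later applications use, since there the limit is only tested against $\nabla^2\varphi$.

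\textbf{Your route.} You use the pointwise monotonicity of the Nemytskii map $Z\mapsto\partial_G P(Z)$ for arbitrary $Z\in L^p(\Omega;\R^{d\times d\times d})$. This is legitimate because the hypothesis enters only through the single term $\langle\partial_G P(\nabla^2 y_k),\nabla^2 y_k-\nabla^2 y\rangle$, so $Z$ need not be a Hessian. You also replace the appeal to abstract maximal monotonicity with an explicit hemicontinuity step, using the growth bound and dominated convergence.

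The gain is that you obtain exactly the stated conclusion, weak convergence in $L^{p'}$, and the argument is self-contained. Both routes rely on the growth bound for $\partial_G P$; yours makes its role explicit.

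One small slip: the majorant $C(|\nabla^2 y|+|\Psi|)^{p-1}|\Psi|$ lies in $L^1$ by H\"older, not in $L^{p'}$. That is what dominated convergence needs for the scalar integrand, so nothing breaks.
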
 

\begin{proof}
 Note that we required $P$ to be convex in our assumptions. This means that $y \mapsto \int_\Omega P(\nabla^2 y) \mathrm{d}x$ is convex and thus that its Frechet-derivative $w \mapsto \inner{\partial_G P(\nabla^2 y)}{\nabla^2 w}$ is a maximal monotone operator. Since $y_k$ converges, it is bounded in $W^{2,p}(\Omega;\R^d)$. But then $\partial_G P(\nabla^2 y_k)$ is bounded in $L^{p'}(\Omega;\R^{d\times d\times d})$ and (after a choosing a subsequence) has a weak limit $\alpha$. But then we have for any $y' \in W^{2,p}(\Omega;\R^d)$
 \begin{align*}
  &\phantom{{}={}}\inner{\partial_G P(\nabla^2 y')-\alpha}{y'-y} = \lim_{k \to \infty} \inner{\partial_G P(\nabla^2 y')-\partial_G P(\nabla^2 y_k)}{y'-y} \\
  &= \lim_{k \to \infty} \inner{\partial_G P(\nabla^2 y')-\partial_G P(\nabla^2 y_k)}{y'-y_k} - \lim_{k \to \infty} \inner{\partial_G P(\nabla^2 y_k)}{y-y_k} \geq 0
 \end{align*}
 where the first term is non-negative by monotonicity and the second as per our assumption. But then maximal monotonicity implies $\alpha = \partial_G P(\nabla^2 y)$. As the same holds for any subsequence we thus have $\partial_G P(\nabla^2 y_k) \rightharpoonup \partial_G P(\nabla^2 y)$.
\end{proof}

 \begin{proof}[Proof of Proposition \ref{prop:NLTDexistence}]
 We begin by first constructing a solution on the interval $[0,h]$.
We recall that in Section \ref{sec:disc-level}, in Proposition \ref{prop:NL-discExistence} for any $h>0$ and any $\tau>0$ such that $N \tau = h$, we constructed a collection of functions $(y_{k, \delta}^{(\tau)})_{k} \subset W^{1,2}(\Omega; \mathbb{R}^d)$ that is a weak solution to (NL-disc) according to Definition \ref{def:solNLdisc}. Notice, that in this proof we also include the additional superscript "$\tau$" in order to stress the dependence of the solution on this variable. Indeed, it becomes crucial in this section as we wish to pass to the limit $\tau \to 0$.

For this solution, we set $(f_{k,\delta})_k$ as an averaged discretization of $f_{\delta}$ and $(w_{k,\delta})_k$ as an averaged discretization of $\partial_t y_\delta(\cdot -h)$ (possibly extended by initial velocity). We then define the piecewise constant and piecewise affine approximations as in Definition \ref{def:interpolants}. Using the uniform bounds obtained using Lemma \ref{lem:interpolantBounds} we can then find a compact sub-sequence (not relabeled) and a limit $y \in W^{1,2}([0,h];W^{1,2}(\Omega;\R^d)) \cap L^\infty([0,h];W^{2,p}(\Omega;\R^d))$ for which
  \begin{align*}
   \bar{y}^{(\tau)},\underline{y}^{(\tau)},\hat{y}^{(\tau)} &\rightharpoonup y & \text{ in } & L^2([0,h];W^{2,p}(\Omega;\R^d)) \\
   \hat{y}^{(\tau)} &\rightharpoonup y & \text{ in } & W^{1,2}([0,h];W^{1,2}(\Omega;\R^d)),
   \intertext{as well as using Aubin-Lions theorem,}
   \bar{y}^{(\tau)},\underline{y}^{(\tau)},\hat{y}^{(\tau)} &\to y & \text{ in } & L^2([0,h];W^{1,2}(\Omega;\R^d)).
  \end{align*}

  Note that the respective limits need to coincide since
  \begin{align*}
   \norm[W^{1,2}(\Omega)]{y_k^{(\tau)}-y_{k-1}^{(\tau)}}^2 \leq \tau  \int_{(k-1)\tau}^{k\tau} \norm[W^{1,2}(\Omega)]{\partial_t\hat{y}^{(\tau)}}^2 \mathrm{d}t \leq \tau  \int_I \norm[W^{1,2}(\Omega)]{\partial_t\hat{y}^{(\tau)}}^2 \mathrm{d}t \leq C \tau \to 0
  \end{align*}
  as the latter integral is uniformly bounded.

  If we now take a time-dependent test-function $\varphi$ and integrate over the respective weak equations of (NL-disc) for $k$ such that $t \in [\tau k,\tau(k+1))$ we obtain
   \begin{align} \nonumber
   0 &= \int_0^h \frac{\varrho}{h\delta}\inner[L^2]{\partial_t \hat{y}^{(\tau)}(t)-\partial_t y^{(\tau)}(t-h)}{\varphi} + \inner[L^2]{\delta^{-1}\partial_F W(\nabla \bar{y}^{(\tau)})}{\nabla \varphi} + \inner[L^2]{\delta^{-p\alpha+1}\partial_G P(\nabla^2 \bar{y}^{(\tau)})}{\nabla^2 \varphi} \\ \label{eq:rewrittenDiscWeak}
   &+ \inner[L^2]{\delta^{-1}\partial_{\dot{F}} R(\nabla \underline{y}^{(\tau)}, \nabla \partial_t \hat{y}^{(\tau)})}{\nabla \varphi} - \inner[L^2]{\delta^{-1}f_\delta}{\varphi} \mathrm{d}t.
  \end{align}
  Using the above limits and the properties of $W$ and $R$, most of the terms above converge to their corresponding counterpart in the weak equation for (NL-TD).

  The only term for which this does not immediately follow is $\inner[L^2]{\partial_G P(\nabla^2 \bar{y}^{(\tau)})}{\nabla^2 \varphi}$. For this, we note that setting $\varphi = y-\bar{y}^{(\tau)}$ in \eqref{eq:rewrittenDiscWeak} and letting $\tau \to 0$
  \begin{align*}
   &\phantom{{}={}}\lim_{\tau \to 0} \int_0^h \inner{\delta^{-p\alpha+1}\partial_G P(\nabla^2 \bar{y}^{(\tau)})}{\nabla^2 (y-\bar{y}^{(\tau)})}\mathrm{d}t \\
   &= \lim_{\tau \to 0} -\int_0^h \frac{\varrho}{h\delta}\inner[L^2]{\partial_t \hat{y}^{(\tau)}(t)-\partial_t \hat{y}^{(\tau)}(t-h)}{y-\bar{y}^{(\tau)}} + \inner[L^2]{\delta^{-1}\partial_F W(\nabla \bar{y}^{(\tau)})}{\nabla (y-\bar{y}^{(\tau)})} \\
   &\quad + \inner[L^2]{\delta^{-1}\partial_{\dot{F}} R(\nabla \underline{y}^{(\tau)}, \nabla \partial_t \hat{y}^{(\tau)})}{\nabla (y-\bar{y}^{(\tau)})} - \inner[L^2]{\delta^{-1}f_\delta}{y-\bar{y}^{(\tau)}} \mathrm{d}t \to 0.
  \end{align*}
  Then the same limit needs to hold for almost every $t\in[0,h]$ and thus Lemma \ref{lem:minty} implies $\inner[L^2]{\partial_G P(\nabla^2 \bar{y}^{(\tau)})}{\nabla^2 \varphi} \to \inner[L^2]{\partial_G P(\nabla^2 y)}{\nabla^2 \varphi}$. Thus we have obtained the weak equation on $[0,h]$:
 \begin{align} \label{eq:weakh}
   0 &= \int_0^h \frac{\varrho}{h\delta}\inner[L^2]{\partial_t y(t)-\partial_t y(t-h)}{\varphi} + \inner[L^2]{\delta^{-1}\partial_F W(\nabla y)}{\nabla \varphi} + \inner[L^2]{\delta^{-p\alpha+1}\partial_G P(\nabla^2 y)}{\nabla^2 \varphi} \\
   &+ \inner[L^2]{\delta^{-1}\partial_{\dot{F}} R(\nabla y, \nabla \partial_t y)}{\nabla \varphi} - \inner[L^2]{\delta^{-1}f_\delta}{\varphi} \mathrm{d}t. \nonumber
 \end{align}
 for all $\phi \in L^2([0,h];W^{2,q}_0(\Omega;\R^d))$

  The last step is the energy inequality. For this we have a closer look at \eqref{eq:discrEnergyIneq}. Summing up over $k$ and rewriting the result in terms of approximations we get
\begin{align} \label{eq:rewrittenDiscreteIneq}
 &\phantom{{}={}}  \mathcal{E}_\delta(\bar{y}^{(\tau)}(t_0)) + \tfrac{\varrho}{2h\delta^2}\int_0^{t_0} \int_\Omega \left| \beta^{(\tau)}(t) \right|^ 2 \mathrm{d} x\, \mathrm{d} t \\ \nonumber
 &+\int_0^{t_0} \mathcal{R}_\delta^*\left(\underline{y}^{(\tau)},D_2\mathcal{R}_\delta\left(\underline{y}^{(\tau)}, \beta^{(\tau)}\right)\right)  + \mathcal{R}_\delta\left( \underline{y}^{(\tau)},\partial_t \hat{y}^{(\tau)}\right) \mathrm{d} t \\ \nonumber
 &+ \int_0^{t_0} \int_\Omega \tfrac{\varrho}{2h\delta^2} \left| \partial_t  \hat{y}^{(\tau)} - \partial_t y_\delta(\tau k -h)\right|^2 \mathrm{d} x \mathrm{d} t \\ \nonumber
 &\leq \mathcal{E}_\delta(y_0) + \tfrac{\varrho}{2h\delta^2} \int_0^{t_0} \int_\Omega \abs{\partial_t y_\delta(\tau k -h)}^2 \mathrm{d} x  + \tfrac{1}{\delta^2}\int_\Omega  f\cdot {\tfrac{y_\tau-y_{k-1,\delta}}{\tau}} \mathrm{d} x \,\mathrm{d} t
\end{align}
where
\begin{align*}
 \beta^{(\tau)}(t) := \tfrac{y_{\sigma}-y_{l-1,\delta}}{\sigma} \text{ for } t = l\tau +\sigma
\end{align*}
 is an additional approximation. By the above estimate and the properties of $\mathcal{R}_\delta$, we also know that $\beta^{(\tau)}$ is uniformly bounded in $L^2(I;W^{1,2}(\Omega;\R^d))$ and we can thus extract a weakly converging subsequence $\beta^{(\tau)} \rightharpoonup \beta$. Note that this also implies $D_2\mathcal{R}_\delta\left(\underline{y}^{(\tau)},\beta^{(\tau)}\right) \rightharpoonup D_2\mathcal{R}_\delta\left(y,\beta\right)$ in $L^2(I;W^{-1,2}(\Omega;\R^d))$.

 Since it was defined via minimization, $y_\sigma$ fulfills a similar Euler-Lagrange equation to that of $y_k$, specifically
 \begin{align*}
  D\mathcal{E}_\delta(y_\sigma) + D_2\mathcal{R}_\delta\left(y_{k-1,\delta},\tfrac{y_\sigma-y_{k-1,\delta}}{\sigma} \right)  + \frac{\rho}{h\delta^2} \left(\tfrac{y_\sigma-y_{k-1,\delta}}{\sigma} - w_k\right) = - \delta^{-2} f_k
 \end{align*}
 Similarly as we did with the actual equation, we can now integrate this in time and rewrite it using our new variables (where we set $\tilde{y}^{(\tau)}(t) := y_{\sigma}$ for  $t = k\tau +\sigma$ correspondingly) to obtain
 \begin{align*}
   0 &= \int_0^h \frac{\varrho}{h\delta}\inner[L^2]{\beta^{(\tau)}(t)-\partial_t y^{(\tau)}(t-h)}{\varphi} + \inner[L^2]{\delta^{-1}\partial_F W(\nabla \tilde{y}^{(\tau)})}{\nabla \varphi} + \inner[L^2]{\delta^{-p\alpha+1}\partial_G P(\nabla^2 \tilde{y}^{(\tau)})}{\nabla^2 \varphi} \\
   &+ \inner[L^2]{\delta^{-1}\partial_{\dot{F}} R(\nabla \underline{y}^{(\tau)}, \nabla \beta^{(\tau)})}{\nabla \varphi} - \inner[L^2]{\delta^{-1}f_\delta}{\varphi} \mathrm{d}t.
 \end{align*}
 Then sending $\tau \to 0$, we note that $\tilde{y}^{(\tau)} \to y$ as well, so we obtain
 \begin{align*}
   0 &= \int_0^h \frac{\varrho}{h\delta}\inner[L^2]{\beta(t)-\partial_t y(t-h)}{\varphi} + \inner[L^2]{\delta^{-1}\partial_F W(\nabla y)}{\nabla \varphi} + \inner[L^2]{\delta^{-p\alpha+1}\partial_G P(\nabla^2 y)}{\nabla^2 \varphi} \\
   &+ \inner[L^2]{\delta^{-1}\partial_{\dot{F}} R(\nabla y, \nabla \beta)}{\nabla \varphi} - \inner[L^2]{\delta^{-1}f_\delta}{\varphi} \mathrm{d}t.
 \end{align*}
 Comparing this with the weak equation \eqref{eq:weakh} we obtain
 \begin{align*}
  D_2\mathcal{R}_\delta\left(y,\partial_t y \right)  + \frac{\rho}{h\delta^2} \partial_t y  = D_2\mathcal{R}_\delta\left(y,\beta \right)  + \frac{\rho}{h\delta^2} \beta.
 \end{align*}
 Now we note that each side is the derivative of $\mathcal{R}_\delta\left(y,\cdot\right) + \frac{\rho}{2h\delta^2} \norm[L^2(\Omega)]{\cdot}^2$ at $\partial_t y$ and $\beta$ respectively. Since this expression is strictly convex, this implies $\beta = \partial_t y$.

 Then finally, we can apply the lower semicontinuity of $\mathcal{E}$ and $\mathcal{R}$ respectively, as well as that of the $L^2$ norm to the terms on the LHS of \eqref{eq:rewrittenDiscreteIneq}. With this, it converges to the limit inequality:
 \begin{align} \label{eq:TDRineq}
 &\phantom{{}={}}  \mathcal{E}_\delta(y_\delta(t_0)) + \tfrac{\varrho}{2h\delta^2}\int_0^{t_0} \int_\Omega\left| \partial_t y_\delta \right|^ 2 \mathrm{d} x\, \mathrm{d} t +\int_0^{t_0} \mathcal{R}_\delta^*\left(y_\delta,D_2\mathcal{R}_\delta(y_\delta,\partial_t y_\delta)\right)+ \mathcal{R}_\delta\left( y_\delta,\partial_t  y_\delta\right) \mathrm{d} t \\ \nonumber
 &+ \int_0^{t_0} \int_\Omega \tfrac{\varrho}{2h\delta^2} \left| \partial_t  y_\delta - \partial_t y_\delta(\tau k -h)\right|^2 \mathrm{d} x \mathrm{d} t
 \leq \mathcal{E}_\delta(y_0) + \tfrac{\varrho}{2h\delta^2} \int_0^{t_0} \int_\Omega \abs{\partial_t y_\delta(\tau k -h)}^2 \mathrm{d} x  + \tfrac{1}{\delta^2}\int_\Omega f\cdot \partial_t y_\delta \mathrm{d} x \mathrm{d} t
\end{align}
Noting that $\mathcal{R}_\delta^*\left(y_\delta,D_2\mathcal{R}_\delta(y_\delta,\partial_t y_\delta)\right)+ \mathcal{R}_\delta\left( y_\delta,\partial_t  y_\delta\right) = \inner{D\mathcal{R}_\delta(y_\delta,\partial_t y_\delta)}{\partial_t y_\delta}$ via the Fenchel-Young inequality, this is our desired energy inequality.

 Having thus constructed a solution on $[0,h]$, we now look at the next interval. Note that the energy estimate for $[0,h]$ gives us bounds on $\mathcal{E}_\delta(y(h))$ and $\sup_{t\in[0,h]}\norm[L^2(\Omega)]{\partial_t y(t)} = \sup_{t\in[h,2h]}\norm[L^2(\Omega)]{\partial_t y(t-h)}$, which we precisely take as data for a similar solution on $[h,2h]$.

 Specifically, we set $y(h)$ as initial data and $w(t) = \partial_t y(t-h)$ for the delayed velocity on $[h,2h]$. With this we then repeat the previous argument constructing $y$ on $[h,2h]$. We then simply iterate the procedure until the final time $T$. By the choice of $w$, we then obtain the correct time-delayed equation and we also immediately note that the energy estimate telescopes.
 \end{proof}

 We can also obtain a similar result for the corresponding linearized problem:

 \begin{definition}[Weak solution to the linear time-delayed problem (L-TD)]
  \label{def:solLTD}
  Let $h>0$, $I = [0,T]$ and consider initial data $u_0 \in \mathcal{E}$, $v_0 \in L^2(\Omega)$ and a force $f \in L^2(I \times \Omega)$. We say that $u \in W^{1,2}(I;W^{1,2}(\Omega;\R^d)) \cap L^\infty(I;W^{2,p}(\Omega;\R^d))$ is a weak solution to the linear time-delayed problem (L-TD) if $u(0) = u_0$ and $u$ satisfies
  \begin{align*}
   0 &= \int_0^T \frac{\varrho}{h}\inner[L^2]{\partial_t u(t)-\partial_t u(t-h)}{\varphi} + \inner[L^2]{\C_W e(u)}{\nabla \varphi}\\
   &+ \inner[L^2]{\C_D e(\partial_t u)}{\nabla \varphi}  - \inner[L^2]{f}{ \varphi} \mathrm{d}t
  \end{align*}
  for all $\varphi \in C^\infty_c((0,T) \times \Omega)$ where we define by abuse of notation $\partial_t u(t) := v_0$ for $t<0$, as well as the energy inequality
  \begin{align*}
   &\phantom{{}={}}\mathcal{E}_0(u(s)) + \frac{\varrho}{2} \int_{s-h}^s \norm[L^2(\Omega)]{\partial_t u(t)}^2 \mathrm{d}t +  \int_0^s \inner[L^2]{\C_D e(\partial_t u)}{\nabla u} \mathrm{d}t +\int_0^s \frac{\rho}{2h} \norm[L^2(\Omega)]{\partial_t u(t)-\partial_t u(t-h)}^2 \mathrm{d}t \\
   &\leq \mathcal{E}_0(u_0) + \frac{\varrho}{2}\norm[L^2(\Omega)]{v_0}^2 + \int_0^s \inner[L^2]{f}{\partial_t u} \mathrm{d}t
  \end{align*}
  for almost all $s \in [0,T]$.
 \end{definition}

\begin{corollary} \label{cor:LTDexistence}
 For all $T \in (0,\infty)$ and given initial data/forces, the linearized time-delayed problem (L-TD) has a unique weak solution.
\end{corollary}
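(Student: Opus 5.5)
Existence will follow the proof of Proposition \ref{prop:NLTDexistence}, now with the linearized discrete scheme; uniqueness will come from linearity and an energy argument run successively on the intervals $[nh,(n+1)h]$.

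\emph{Existence on $[0,h]$.} For $\tau = h/N$, Corollary \ref{cor:L-discExistence} gives the unique weak solution $(u_k^{(\tau)})_k$ of (L-disc). The force data $(f_k)_k$ are the averaged discretization of $f$, and $(w_k)_k$ is the averaged discretization of the delayed velocity (equal to $v_0$ on $[0,h]$). The energy inequality \eqref{EnergyIneq-LDisc} bounds $\mathcal{E}_0(u_k)$ and $\tau\sum_k\int_\Omega \mathbb{D}e(\tfrac{u_k-u_{k-1}}{\tau}):e(\tfrac{u_k-u_{k-1}}{\tau})$ uniformly in $\tau$. Here we use the Jensen bound for averaged data and Young's inequality on the force term, absorbing it into the inertial term $\frac{\varrho}{2h}\tau\sum_k\|\cdot\|^2$. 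Korn's inequality with zero boundary data and the positive definiteness of $\mathbb{C}$ on symmetric matrices (implicit in $\mathcal{E}_0$ being coercive) then bound the interpolants of Definition \ref{def:interpolants}: $\bar u^{(\tau)},\underline u^{(\tau)},\hat u^{(\tau)}$ in $L^\infty(0,h;W^{1,2})$ and $\partial_t\hat u^{(\tau)}$ in $L^2(0,h;W^{1,2})$. As in the nonlinear case, the three interpolants have a common weak limit $u$.

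\emph{Passage to the limit $\tau\to0$.} Every term in the time-integrated weak equation is linear in the unknowns, so weak convergence passes to the limit directly. No Minty argument (Lemma \ref{lem:minty}) and no Aubin--Lions strong convergence is needed. This yields the weak equation of (L-TD) on $[0,h]$.

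\emph{Energy inequality.} I would avoid the De Giorgi interpolant. In the linear setting one can test the limit equation with $\partial_t u\in L^2(0,s;W^{1,2}_0)$, which is admissible by density. The chain rule $\int_0^s\langle \mathbb{C}e(u),e(\partial_t u)\rangle = \mathcal{E}_0(u(s))-\mathcal{E}_0(u_0)$ holds since $u\in W^{1,2}(0,s;W^{1,2})$. The delay term is handled by the identity $a\cdot(a-b)=\tfrac12|a|^2-\tfrac12|b|^2+\tfrac12|a-b|^2$ with $a=\partial_t u(t)$, $b=\partial_t u(t-h)$. Integrating, the terms $\frac{\varrho}{2h}\int_0^s(|a|^2-|b|^2)$ telescope to $\frac{\varrho}{2h}\int_{s-h}^s|\partial_t u|^2 - \frac{\varrho}{2h}\int_{-h}^0|v_0|^2$. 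This gives the stated inequality, in fact with equality. Alternatively one can pass to the limit in \eqref{EnergyIneq-LDisc} by weak lower semicontinuity, as in \eqref{eq:TDRineq}.

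\emph{Iteration and uniqueness.} The next interval is solved with initial datum $u(h)$ and delayed velocity $\partial_t u(\cdot-h)$ taken from the previous interval. Iterating up to $T$, the energy estimates telescope. For uniqueness, the difference $z$ of two solutions solves (L-TD) with $f=0$, $u_0=0$, $v_0=0$. On $[0,h]$ the delayed term vanishes, so testing with $\partial_t z$ gives $\int_0^s\int_\Omega \mathbb{D}e(\partial_t z):e(\partial_t z)=0$ and $\mathcal{E}_0(z(s))=0$. Korn's inequality then gives $z=0$ on $[0,h]$, and induction over the intervals $[nh,(n+1)h]$ gives $z=0$ on $[0,T]$.

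\emph{Main obstacle.} The delicate point is justifying $\partial_t u$ as a test function and the chain rule, given that the weak formulation is stated only for smooth test functions. This requires $\partial_t u\in L^2(W^{1,2}_0)$ and a density argument. Since every term in the equation is in $L^2(0,T;W^{-1,2})$, density should suffice.
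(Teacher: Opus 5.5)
Your proposal is correct and follows essentially the same route as the paper. You obtain existence by passing $\tau\to 0$ in (L-disc) using only weak convergence, since every term is linear, and you iterate over intervals of length $h$; the energy inequality comes from testing with $\partial_t u$; uniqueness follows from the resulting energy identity for the difference of two solutions together with Korn's inequality. You are somewhat more explicit than the paper in three places: the density justification for the test function $\partial_t u$, the telescoping of the delay term, and deriving the identity for the difference directly from the linear equation rather than invoking an energy inequality for it. These are refinements of the same argument.
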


\begin{proof}
 The existence of weak solutions follows by showing convergence of solutions to the corresponding linearized discrete problem (L-disc), using the same arguments as in the last proof. The only slight difference ist that there is no higher order term $P$. Thus the relevant space will be $u\in W^{1,2}(I;W^{1,2}(\Omega;\R^d)) \cap L^\infty(I;W^{1,2}(\Omega;\R^d))$ instead. This gives only weak convergence of the first spatial derivatives. However as now all terms in the equation are linear, this is still enough to obtain the limit equation.

 In contrast to the non-linear case, the energy inequality can simply be derived by testing the equation with the time derivative of the solution $\varphi = \partial_t u$ itself, as the lack of a second order term reduces the regularity required of test-functions.

 Uniqueness of solutions then follows from a standard argument. Assume that there are two solutions $u_1,u_2$ for the same initial data. Then their difference $w=u_1-u_2$ will be a solution for zero initial data and right hand side. But then by the energy-inequality
 \begin{align*}
  \mathcal{E}_0(w(t)) + \int_0^t \int_\Omega\mathbb{D}e(\partial_t w) \partial_t w \mathrm{d}x \mathrm{d}t = 0 \text{ for all times }t\in (0,T)
 \end{align*}
 which using Korn's inequality implies $w=0$ and thus $u_1=u_2$.
\end{proof}

\subsection{The limits \texorpdfstring{$\delta \to 0$}{delta to zero} and \texorpdfstring{$(\tau,\delta) \to 0$}{(tau,delta) to zero} from (NL-disc)}

 To relate non-linear and linear problem, we need to consider the limit $\delta \to 0$. As stated in the introduction, this can be done on all levels. We begin with the limit on the fully discrete level. We have already seen how this is obtained on the level of functionals using $\Gamma$-convergence. We will now repeat this on the level of equations. While both approaches seem to result in a similar statement, there are some subtle differences between the results. The $\Gamma$-limit gives us additional information about the energetical structure of the problem. On the other hand, the limit on the level of equation is not restricted to minimizers and as we will see also allows for combined limits, which do not fit into the framework of $\Gamma$-convergence.

Before we start, we will define the concept of well-prepared data.
\begin{remark}[Well prepared data]
\label{rem-wellPrep-data}
 Here and in the following, we will always assume ``well prepared data'' in the following sense: As $\delta \to 0$, the initial deformation, the initial velocity and the forces (as well as previous velocity for the time-delayed setting) scale in such a way that the rescaled energy of the problem stays finite. In practice, that means that as $\delta \to 0$ we in particular always have
 \begin{align*}
  \delta^{-1}(y_{\delta,0} - \id) &\to u_0 & \text{ in }& W^{1,2}_0(\Omega;\R^d) \\
  \delta^{-1}v_{\delta,0} &\to v_{0,0} & \text{ in }& L^2(\Omega;\R^d)\\
  \delta^{-1} f_\delta &\to f & \text{ in }& L^2(I\times \Omega;\R^d).
 \end{align*}
 The same notions also immediately transfer to the discretizations.
\end{remark}

 \begin{proposition} \label{prop:discDelta}
 Assume that for each $\delta >0$ small enough, the collection $(y_{k,\delta})_{k} \subset W^{1,2}(\Omega; \mathbb{R}^d)$ is a weak solution of the nonlinear discrete problem (NL-disc) for well prepared data. Assume moreover that $\delta^{-1}w_{k,\delta} \rightharpoonup w_k$ in $L^2(\Omega)$ for all $k \in \{0 \ldots N\}$. Then, there exists a collection $(u_{k})_{k} \subset W^{1,2}(\Omega; \mathbb{R}^d)$ such that for $\delta \to 0$
 $$
 \delta^{-1}(y_{k,\delta}-\id) \rightharpoonup u_{k} \quad \text{in} \quad W^{1,2}(\Omega; \mathbb{R}^d)
 $$
 and $(u_{k})_{k}$ is a solution of the linear discrete problem (L-disc). Additionally if $(\tilde{y}_\delta)_\delta$ denote the corresponding DeGiorgi-interpolants, then
 $$ \delta^{-1} (\tilde{y}_{\delta}(t) - \id) \rightharpoonup \tilde{u}(t) \quad \text{in} \quad W^{1,2}(\Omega; \mathbb{R}^d)$$
 for almost all $t$, where $\tilde{u}$ is the DeGiorgi-interpolant of the linearized problem.
 \end{proposition}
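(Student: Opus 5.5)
The plan is to argue by induction over $k\in\{0,\ldots,N\}$ with $\tau$ and $h$ fixed, passing to the limit $\delta\to0$ in the weak equation \eqref{weakFormNl} and then separately in the energy inequality \eqref{EnergyIneq-Disc}.

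\emph{Compactness.} For $k=0$ the convergence $u_{0,\delta}:=\delta^{-1}(y_{0,\delta}-\id)\to u_0$ is part of the well-prepared data. The simplified energy estimate \eqref{energy-est-simplified} bounds $\mathcal{E}_\delta(y_{k,\delta})$ uniformly in $\delta$. To close it, I absorb the force term by Young's inequality into the inertial term: both carry the factor $\delta^{-2}$, so after rescaling everything is expressed in $\delta^{-1}f_{k,\delta}$, $\delta^{-1}w_{k,\delta}$ and $u_{k,\delta}$. Lemma \ref{lem:rigidity} then gives $\norm[W^{1,2}]{u_{k,\delta}}\le C$ and $\norm[L^\infty]{\delta\nabla u_{k,\delta}}\le C\delta^\alpha$, together with $\delta^{-\alpha p}\int P(\nabla^2y_{k,\delta})\le C$. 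Corollary \ref{lem:KornDiscrete} bounds the rescaled discrete velocities $\frac{u_{k,\delta}-u_{k-1,\delta}}{\tau}$ in $W^{1,2}$. Passing to a subsequence (unique limits later give convergence of the full sequence), I obtain $u_{k,\delta}\rightharpoonup u_k$ weakly in $W^{1,2}$ and strongly in $L^2$.

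\emph{Limit in the equation.} I take $\varphi\in C^\infty_0$ in \eqref{weakFormNl}.
\begin{itemize}
\item The inertial and force terms are linear in $u_{k,\delta}$, $\delta^{-1}w_{k,\delta}$ and $\delta^{-1}f_{k,\delta}$, so they converge directly.
\item Lemma \ref{lem:linEnergy} replaces $\delta^{-1}\partial_FW(\nabla y_{k,\delta})$ by $\mathbb{C}e(u_{k,\delta})$ up to an error $O(\delta)$, which converges weakly.
\item Lemma \ref{lem:linDissip}, applied with $v=\frac{y_{k,\delta}-y_{k-1,\delta}}{\tau}$ and $y=y_{k-1,\delta}$, linearizes the viscous term up to $O(\delta)$.
\item For the hyperstress term, $\delta^{1-\alpha p}\int|\partial_GP(\nabla^2y_{k,\delta})||\nabla^2\varphi|$ is bounded by $C\delta^{1-\alpha p}\norm[L^p]{\nabla^2y_{k,\delta}}^{p-1}\le C\delta^{1-\alpha p}\delta^{\alpha(p-1)}=C\delta^{1-\alpha}\to0$, using $\alpha<1$.
\end{itemize}
So $u_k$ satisfies the weak equation of (L-disc) with data $w_k$, $f_k$. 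Corollary \ref{cor:L-discExistence} gives uniqueness, so the whole sequence converges and $(u_k)$ is the solution.

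\emph{DeGiorgi interpolants.} For fixed $t$, i.e.\ fixed $\sigma\in(0,\tau]$, the interpolant $\tilde y_\delta(t)$ minimizes $\mathcal{I}_{\delta,\sigma}$ with data $y_{k-1,\delta}$, $w_{k,\delta}$, $f_{k,\delta}$. The hypotheses of Theorem \ref{maintheorem3} hold with $\tau$ replaced by $\sigma$, and the $L^\infty$ bound comes from the compactness step; only weak convergence of $w$ is available, and I expect this to suffice in the liminf and recovery inequalities because the previous step converges strongly in $L^2$. That theorem gives equi-coercivity and $\Gamma$-convergence to $\mathcal{I}_{0,\sigma}$. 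By the fundamental theorem of $\Gamma$-convergence, minimizers converge to minimizers. The limit functional is strictly convex: the inertial term is strictly convex in $L^2$, and Korn's inequality makes the quadratic forms coercive. Hence the minimizer is unique and equals the linear DeGiorgi interpolant $\tilde u(t)$.

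\emph{Main obstacle: the energy inequality.} The convergence of $\tilde u_\delta(t)$ holds only pointwise in $t$, so I will use Fatou's lemma in $t$. Each integrand on the left of \eqref{EnergyIneq-Disc} is weakly lower semicontinuous by Lemma \ref{lem:energyLSC} and Lemma \ref{lem:dissipLSC}. For the $\mathcal{R}_\delta^*$ term, I plan to rewrite it via the Fenchel equality as $\langle D_2\mathcal{R}_\delta(\cdot,v),v\rangle-\mathcal{R}_\delta(\cdot,v)$. The resulting pairing should be handled through the interpolant's Euler--Lagrange equation rather than by semicontinuity alone. Alternatively, I would use the explicit dual from Proposition \ref{prop:dualDissipation} together with the lower semicontinuity of its quadratic structure, with $\nabla y_{k-1,\delta}\to\Id$ uniformly.

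On the right-hand side, the force term needs strong convergence of $\delta^{-1}f_{k,\delta}$ and weak convergence of the velocities, which is available. The $|w_{l,\delta}|^2$ term only passes to the limit with an inequality when $w$ converges merely weakly. If this breaks the estimate, I will either strengthen the assumption to strong convergence, which is what the well-prepared setting supplies, or derive the linear energy inequality directly: test the limit equation and repeat the argument of Proposition \ref{prop:NL-discExistence} for $\mathcal{I}_{0,\sigma}$, now that $\tilde u$ has been identified.
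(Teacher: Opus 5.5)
Your proposal is correct and follows the paper's proof essentially step by step: compactness from the energy inequality and Lemma \ref{lem:rigidity}, term-by-term linearization of \eqref{weakFormNl} via Lemmas \ref{lem:linEnergy} and \ref{lem:linDissip} and the growth of $\partial_G P$, uniqueness for (L-disc), Theorem \ref{maintheorem3} with $\sigma$ in place of $\tau$ for the DeGiorgi interpolants, and lower semicontinuity for the energy inequality; your hyperstress bound $C\delta^{1-\alpha}$ is the clean version of the paper's estimate. Your ``main obstacle'' is milder than you fear: $\|\delta^{-1}w_{k,\delta}\|_{L^2}^2$ enters $\mathcal{I}_{\delta,\sigma}$ only as a $u$-independent constant, so minimizers still converge, and in \eqref{EnergyIneq-Disc} it cancels against the identical term on the right once the square is expanded, leaving a cross term that converges (strong $L^2$ times weak); moreover $\mathcal{R}_\delta^*(y,D_2\mathcal{R}_\delta(y,v))=\mathcal{R}_\delta(y,v)$ because $\mathcal{R}_\delta(y,\cdot)$ is quadratic, so Lemma \ref{lem:dissipLSC} with Fatou's lemma in $t$ suffices (given $\mathcal{E}_\delta(y_{0,\delta})\to\mathcal{E}_0(u_0)$), although your fallback of rerunning the DeGiorgi argument for $\mathcal{I}_{0,\sigma}$ is equally valid.
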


 \begin{proof}
 Due to the energy inequality \eqref{EnergyIneq-Disc}, we find that for every fixed $k$, the equi-coercivity step from the proof of Proposition \ref{maintheorem3} stands in place in the very same manner as before. Thus, we may find a collection $(u_k)_{k} \subset W^{1,2}(\Omega; \mathbb{R}^d)$ such that
  $$
 \delta^{-1}(y_{k,\delta}(x)-\id) \rightharpoonup u_{k}(x) \quad \text{in} \quad W^{1,2}(\Omega; \mathbb{R}^d).
 $$
 Moreover, we may deduce from Lemma \ref{lem:rigidity} that
 $$
 \|\delta \nabla u_k\|_{L^\infty(\Omega)} \leq \delta^\alpha \quad \text{ and } \quad  \|\Id + \delta \nabla u_k\|_{L^\infty(\Omega)} \leq 1/2
 $$

For passing to the limit in the weak formulation we need to identify the limit of
  \begin{align*}
   &\int_\Omega \frac{\varrho}{h} \delta^{-1 } \big(\tfrac{y_{k,\delta}-y_{k-1,\delta}}{\tau}- w_{k,\delta} \big) \cdot \varphi + \delta^{-1 } \partial_F W(\nabla y_{k,\delta}) \cdot \nabla \varphi  + \delta^{-\alpha p +1 } \partial_G P(\nabla^2 y_k) \cdot \nabla^2 \varphi \, \mathrm{d} x  \\
   &+ \delta^{-1 } \int_\Omega \partial_{\dot{F}} R\left(\nabla y_{k-1,\delta}, \nabla \tfrac{y_{k,\delta}-y_{k-1,\delta}}{\tau}\right)\cdot \nabla \varphi  \, \mathrm{d} x = \int_\Omega f \cdot \varphi \mathrm{d} x
  \end{align*}
For the first term, we see that it can be rewritten as
$$
\int_\Omega  \frac{\varrho}{h} \big(\tfrac{u_{k,\delta}-u_{k-1,\delta}}{\tau}- \delta^{-1}w_{k,\delta},\cdot) \big) \cdot \varphi\, \mathrm{d} x,
$$
which means that we pass to the limit just by the assumption on $w_{k,\delta}$ and the weak convergence.

For the second term, Lemma \ref{lem:linEnergy} implies
\begin{align*}
\abs{ \int_\Omega \left(\delta^{-1}\partial_F W(\nabla y_{k,\delta}) -  \mathbb{C} e(u_{k,\delta})\right) \cdot \nabla \varphi \mathrm{d}x } \leq \delta C \norm[L^2(\Omega)]{\nabla u_{k,\delta}}^2\norm[L^\infty(\Omega)]{\varphi} \to 0
\end{align*}
as $u_{k,\delta}$ is uniformly bounded for all $k$ and $\delta$.

For the next term, we use that due to \eqref{assumptions-P}, we have that
$$
\|\delta^{-\alpha p + 1}\partial_P(\delta \nabla^2 u_{k,\delta})\|_{L^\frac{p}{p-1}(\Omega)} \leq C
\delta^{-\alpha p + 1}\|\delta \nabla^2 u_{k,\delta}\|_{L^{p}(\Omega)}^\frac{p}{p-1} \leq C \delta^{-\alpha p + 1} \delta^\frac{\alpha p}{p-1} \leq C \delta^{1/2}
$$
if $\alpha \leq \frac{p-1}{2p(p-2)}$. Thus, this term vanishes via the H\"older inequality. For the last term on the left-hand side we apply Lemma \ref{lem:linDissip} to obtain
\begin{align*}
&\int_\Omega \left| \delta^{-1} \partial_{\dot{F}} R\left(\nabla y_{k-1,\delta}, \nabla \tfrac{y_{k,\delta}-y_{k-1,\delta}}{\tau}\right)\cdot  \nabla \varphi- e\left(\tfrac{ u_{k,\delta}- u_{k-1,\delta}}{\tau}\right) \mathbb{D}(\id) e(\nabla \varphi)  \right| \mathrm{d}x \\
&\leq C \delta  \norm[L^2(\Omega)]{ \tfrac{ \nabla u_{k,\delta}- \nabla u_{k-1,\delta}}{\tau}} \norm[L^2(\Omega)]{\nabla u_{k,\delta}} \left(1  + \delta \norm[L^\infty(\Omega)]{\nabla u_{k,\delta}} \right) \norm[L^\infty(\Omega)]{\nabla \varphi} \to 0
\end{align*}
where we use the energy estimate and that $ \|\delta \nabla u_{k-1,\delta}\|_{L^\infty(\Omega)}$ is uniformly bounded.

To pass to the limit with the DeGiorgi-interpolant, first we note that similar to Lemma \ref{lem:interpolantBounds} the energy-inequality implies uniform bounds on $\delta^{-1}(\tilde{y}_{k,\delta}-\id)$. Thus for every subsequence of $\delta^{-1}(\tilde{y}_{k,\delta}-\id)$ we can extract another weakly converging subsequence. By applying the $\Gamma$-convergence proved in Proposition \ref{maintheorem3}, for each fixed $\sigma$, the resulting limit then has to be a DeGiorgi-interpolant $\tilde{u}_k$ for the linearized problem. Now the argument of Corollary \ref{cor:L-discExistence} also applies to the interpolant of the linearized problem and this proves that this limit is unique and thus the limit of the whole sequence.

Finally with the convergence all the rescaled deformations, the convergence of the energy inequality is a direct consequence of the $\Gamma$-convergence or respectively lower-semicontinuity of the terms involved.
 \end{proof}

 With a combination of the previous arguments, we can also show simultaneous convergence to the linearized time-delayed problem independent of the individual rates of convergence.

 \begin{proposition} \label{prop:TDDiagonal}
 Let $(\tau_l)_l$ and $(\delta_l)_l$ be two sequences such that $\tau_l \searrow 0$ and $\delta_l \searrow 0$. For each $l$ let $(y_{k,l})_{k} \subset W^{1,2}(\Omega; \mathbb{R}^d)$ be a weak solution of the nonlinear discrete problem (NL-disc) for the step size $\tau_l$ and the parameter $\delta_l$, and assume that the initial data and forces are well prepared in the sense of Remark \ref{rem-wellPrep-data}. In particular, for the forces, let the well-preparedness for the corresponding piecewise constant interpolations (as in Definition \ref{def:interpolants}). Moreover, assume that $(w_{k,l})_{k} \subset L^2(\Omega;\R^d)$ are such that $\delta_l^{-1}\overline{w_l} \rightharpoonup v_0$ in $L^2(I\times \Omega;\R^d)$ if $I\subset [0,h]$ and $\delta_l^{-1}\overline{w_l} \rightharpoonup \partial_t u(\cdot-h)$ in $L^2(I\times \Omega;\R^d)$ otherwise. Then the previously defined interpolations converge to the weak solution of (L-TD).
 \end{proposition}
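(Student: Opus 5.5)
We combine the uniform bounds of Lemma \ref{lem:interpolantBounds}, which are independent of both $\tau$ and $\delta$, with the quantified linearization estimates of Lemmas \ref{lem:linEnergy} and \ref{lem:linDissip}. The plan is to pass to the limit in the time-integrated discrete weak equation \eqref{eq:rewrittenDiscWeak} along the joint sequence $(\tau_l,\delta_l)$. We work first on $[0,h]$ and then iterate over the intervals $[jh,(j+1)h]$, exactly as in the proof of Proposition \ref{prop:NLTDexistence}.

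\textbf{Step 1 (compactness).} Set $u_{k,l}:=\delta_l^{-1}(y_{k,l}-\id)$ and form the interpolants $\overline{u}_l,\underline{u}_l,\hat{u}_l$ of Definition \ref{def:interpolants}. By the energy inequality \eqref{EnergyIneq-Disc}, well-preparedness and Lemma \ref{lem:interpolantBounds} (using Corollary \ref{lem:KornDiscrete} for the time derivative), these interpolants are bounded in $L^\infty(I;W^{1,2})$, and $\partial_t\hat{u}_l$ is bounded in $L^2(I;W^{1,2})$, uniformly in $l$. We also get $\|\delta_l\nabla\underline{u}_l\|_{L^\infty}\le C\delta_l^{\alpha}$ and the bound $\|\nabla^2 y_{k,l}\|_{L^p}\le C\delta_l^\alpha$. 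Hence a subsequence satisfies $\hat u_l\rightharpoonup u$ in $W^{1,2}(I;W^{1,2})$. Since $\|u_{k,l}-u_{k-1,l}\|_{W^{1,2}}^2\le C\tau_l$, all three interpolants have the same weak limit. By Aubin--Lions they also converge strongly in $L^2(I;L^2)$, though we will only need weak convergence.

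\textbf{Step 2 (limit in the equation).} We take $\varphi\in C_c^\infty((0,h)\times\Omega)$ and treat the terms of \eqref{eq:rewrittenDiscWeak}, divided appropriately by $\delta_l$, one at a time.

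The inertial term equals $\frac{\varrho}{h}\langle\partial_t\hat u_l-\delta_l^{-1}\overline{w_l},\varphi\rangle$. It converges to $\frac{\varrho}{h}\langle\partial_t u-\partial_t u(\cdot-h),\varphi\rangle$ by hypothesis on $w_l$; on later intervals the delayed velocity comes from the previous step, where the convergence was already established.

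For the elastic term, Lemma \ref{lem:linEnergy} applied pointwise in $t$ gives an error of at most $C\delta_l\|\nabla\overline u_l\|_{L^2}^2\|\nabla\varphi\|_{L^\infty}\to0$. The remaining term $\mathbb{C}e(\overline u_l)$ converges weakly.

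For the hyperstress term, the H\"older estimate from the proof of Proposition \ref{prop:discDelta} shows it is $O(\delta_l^{1/2})$, so no Minty argument is needed.

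For the dissipation term, Lemma \ref{lem:linDissip} with $y=\underline{y}_l$ and $v=\partial_t\hat y_l$ gives an error of at most $C\delta_l\|\partial_t\nabla\hat u_l\|_{L^2}\|\nabla\underline u_l\|_{L^2}\|\nabla\varphi\|_{L^\infty}$. After integrating in time and applying Cauchy--Schwarz this tends to $0$, while $\mathbb{D}e(\partial_t\hat u_l)$ converges weakly. The force term converges by well-preparedness.

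Altogether the limit $u$ satisfies the weak equation of (L-TD) on $[0,h]$, and $u(0)=u_0$ holds by weak continuity in time.

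\textbf{Step 3 (energy inequality, uniqueness, iteration).} The limit energy inequality for (L-TD) follows by lower semicontinuity. We use Lemma \ref{lem:energyLSC} for $\mathcal E_{\delta_l}(\overline y_l(t))$, applied for a.e.\ $t$ together with Fatou's lemma, and Lemma \ref{lem:dissipLSC} in its time-integrated form for the dissipation terms, dropping the nonnegative DeGiorgi terms. Weak lower semicontinuity handles the $L^2$ norms. The right-hand side converges because the force term is a product of a strong and a weak limit.

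Alternatively, as in Corollary \ref{cor:LTDexistence}, in the linear setting the energy inequality can be obtained directly by testing with $\partial_t u$. By uniqueness from Corollary \ref{cor:LTDexistence} the whole sequence converges. We then iterate over the next intervals, using the convergence of $\partial_t\hat u_l$ on $[jh,(j+1)h]$ to provide the hypothesis on $w_l$ for the following interval.

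\textbf{Main obstacle.} The delicate point is that the discrete velocities converge only weakly, while the dissipation linearization involves the product of $\nabla\underline y_l$ and $\partial_t\nabla\hat y_l$. This is handled by the quantitative $\delta$-error of Lemma \ref{lem:linDissip}. It is also essential that every bound is uniform in $\tau$; this is why we use the estimate \eqref{EnergyIneq-Disc}, which telescopes across intervals, rather than the simplified one.
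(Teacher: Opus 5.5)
Your Steps 1 and 2 are the paper's proof. The paper uses the same ingredients: the bounds of Lemma \ref{lem:interpolantBounds}, weak compactness with coinciding limits of the three interpolants, and a term-by-term limit in the rescaled form of \eqref{eq:rewrittenDiscWeak}. There the elastic term is handled by Lemma \ref{lem:linEnergy}, the hyperstress term vanishes by the H\"older estimate, and the dissipation term is handled by Lemma \ref{lem:linDissip} integrated in time. The paper stops there. Your Step 3 (initial condition, energy inequality, uniqueness, iteration) goes further, but the first route you give for the energy inequality does not work as stated.

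\textbf{Why the lower-semicontinuity route fails.} If you drop the DeGiorgi terms of \eqref{EnergyIneq-Disc}, you are left with the simplified estimate \eqref{energy-est-simplified}. Its limit lacks two things that the (L-TD) inequality contains:
\begin{itemize}
\item the term $\frac{\varrho}{2h}\int_0^s\|\partial_t u\|_{L^2}^2$, which is the limit of the DeGiorgi kinetic term and is what produces the delayed kinetic energy $\frac{\varrho}{2h}\int_{s-h}^s\|\partial_t u\|_{L^2}^2$ on the left;
\item the contribution of the $\mathcal{R}_\delta^*$-terms to the dissipation.
\end{itemize}
So you only obtain a strictly weaker inequality. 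In addition, your hypotheses give only weak convergence of $\delta_l^{-1}\overline{w_l}$. The right-hand term $\frac{\varrho}{2h}\int_0^s\|\delta_l^{-1}\overline{w_l}\|_{L^2}^2$ is then merely lower semicontinuous, which is the wrong direction for a term on the right. Repairing this would require three things: keeping the DeGiorgi terms, identifying the limit of the DeGiorgi velocities with $\partial_t u$ as in the proof of Proposition \ref{prop:NLTDexistence}, and strong convergence of the delayed velocities.

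\textbf{The alternative route is correct.} All terms of the limit linear equation are continuous in $\varphi$ with respect to $L^2(I;W^{1,2}_0)$, so by density you may test with $\chi_{[0,s]}\partial_t u$. The identity $(a-b)\cdot a=\frac12|a|^2-\frac12|b|^2+\frac12|a-b|^2$, with $\partial_t u=v_0$ on $(-h,0)$, together with the chain rule for $\mathcal{E}_0$, then gives the (L-TD) energy inequality, even as an equality. After that, uniqueness and the interval-by-interval iteration go through as you describe. Keep this route and discard the first.

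\textbf{Minor point.} In your ``main obstacle'' remark, for fixed $h$ the simplified estimate already gives bounds uniform in $\tau$ and $\delta$, by iterating over the finitely many intervals. The refined estimate matters for uniformity in $h$, not in $\tau$.
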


 \begin{proof}
We define the piecewise constant and piecewise affine approximations as in Definition \ref{def:interpolants}. Using the uniform bounds obtained in Lemma \ref{lem:interpolantBounds}, we see that the respective rescaled displacements defined through
\begin{align*}
 \overline{u}_l := \delta_l^{-1}(\id-\overline{y}_l), \quad \underline{u}_l := \delta_l^{-1}(\id-\underline{y}_l), \quad \text{ and } \quad \hat{u}_l := \delta_l^{-1}(\id-\hat{y}_l), \quad
\end{align*}
are bounded in $L^\infty(I;W^{1,2}(\Omega;\R^d))$ and also $W^{1,2}(I;W^{1,2}(\Omega;\R^d))$ for the last one. Additionally we note that
\begin{align*}\sup_{t\in I}\norm[W^{2,p}(\Omega)]{\delta^{-1}\overline{u}_l(t)} \leq C \quad \text{ and } \quad \norm[L^\infty(I\times \Omega)]{\delta^{-1} \nabla \underline{u}_l(t)} \leq C  \quad \text{ for all } l\in \N
\end{align*}
as well.

We can then find a sub-sequence (not relabeled) and a limit $u \in W^{1,2}(I;W^{1,2}(\Omega;\R^d)) \cap L^\infty(I;W^{1,2}(\Omega;\R^d))$ for which
  \begin{align*}
   &&\bar{u}_l,\underline{u}_l,\hat{u}_l &\rightharpoonup u & \text{ in } & L^2(I;W^{1,2}(\Omega;\R^d)) \\
  &\text{and }&  \hat{u}_l &\rightharpoonup u & \text{ in } & W^{1,2}(I;W^{1,2}(\Omega;\R^d)).
  \end{align*}
  Note that the fact that the two obtained limits coincide by the same arguments as used in the proof of Proposition \ref{prop:discDelta}.

  Rescaling the rewritten weak equation \eqref{eq:rewrittenDiscWeak} for $(y_{k,l})_k$ we obtain
  \begin{align*}
   0 &= \int_0^h \frac{\varrho}{h}\inner[L^2]{\partial_t \hat{u}_l(t)-\delta_l^{-1}\overline{w}_l}{\varphi} + \delta_l^{-1}\inner[L^2]{\partial_F W(\id +\delta_l\nabla \bar{u}_l)}{\nabla \varphi} + \inner[L^2]{\partial_G P(\delta\nabla^2 \bar{u}_l)}{\nabla^2 \varphi} \\
   &+ \delta_l^{-1}\inner[L^2]{\partial_{\dot{F}} R(\id+\delta_l\nabla \underline{u}_l, \nabla \partial_t \hat{u}_l)}{\nabla \varphi} - \inner[L^2]{\delta_l^{-1}\overline{f}_l}{\varphi} \mathrm{d}t.
  \end{align*}

  The first and the final term converge to their limit by weak convergence. Regarding the elastic energy, Lemma \ref{lem:linEnergy} again implies
\begin{align*}
\abs{ \int_\Omega \left(\delta_l^{-1}\partial_F W(\nabla \overline{y}_{l}) -  \mathbb{C} e(\overline{u}_{l})\right) \cdot \nabla \varphi \mathrm{d}x } \leq \delta_l C \norm[L^2(\Omega)]{\nabla \overline{u}_{l}}^2\norm[L^\infty(\Omega)]{\varphi} \to 0
\end{align*}
for all times $t\in I$, where then linearity allows us to conclude $\int_\Omega \mathbb{C} e(u_{l}) \cdot \nabla \varphi \mathrm{d} x \to \int_\Omega \mathbb{C} e(u) \cdot \nabla \varphi \mathrm{d} x$.

Similarly we can again use \eqref{assumptions-P} to conclude that
$$
\|\delta_l^{-\alpha p + 1}\partial_P(\delta \nabla^2 \overline{u}_{l})\|_{L^\frac{p}{p-1}(\Omega)} \leq C
\delta_l^{-\alpha p + 1}\|\delta \nabla^2 \overline{u}_{l}\|_{L^{p}(\Omega)}^\frac{p}{p-1} \leq C \delta_l^{-\alpha p + 1} \delta_l^\frac{\alpha p}{p-1} \leq C \delta_l^{1/2} \to 0
$$
which removes the respective term.

Finally for the dissipation term we apply Lemma \ref{lem:linDissip} to obtain
\begin{align*}
&\int_\Omega \left| \delta_l^{-1} \partial_{\dot{F}} R\left(\nabla \underline{y}_{l}, \partial_t \nabla \hat{y}_l\right)\cdot  \nabla \varphi- e\left(\partial_t \hat{u}_l\right) \mathbb{D}(\id) e(\nabla \varphi)  \right| \mathrm{d}x \\
&\leq C \delta  \norm[L^2(\Omega)]{ \partial_t \nabla \hat{u}_l} \norm[L^2(\Omega)]{\nabla \underline{u}_{l}} \left(1  + \delta \norm[L^\infty(\Omega)]{\nabla \underline{u}_{l}} \right) \norm[L^\infty(\Omega)]{\nabla \varphi}.
\end{align*}
Integrating in time and using Hölder's inquality again, we can then estimate
\begin{align*}
&\int_0^h \int_\Omega \left| \delta_l^{-1} \partial_{\dot{F}} R\left(\nabla \underline{y}_{l}, \partial_t \nabla \hat{y}_l\right)\cdot  \nabla \varphi- e\left(\partial_t \hat{u}_l\right) \mathbb{D}(\id) e(\nabla \varphi)  \right| \mathrm{d}x \mathrm{d} t\\
&\leq C \delta  \norm[L^2(I\times\Omega)]{ \partial_t \nabla \hat{u}_l} \norm[L^\infty(I;L^2(\Omega))]{\nabla \underline{u}_{l}} \left(1  + \delta \norm[L^\infty(I\times\Omega)]{\nabla \underline{u}_{l}} \right) \norm[L^2(I;L^\infty(\Omega))]{\nabla \varphi} \to 0
\end{align*}
by the energy estimate an the fact that $ \|\delta \nabla u_{l}\|_{L^\infty(\Omega)}$ is uniformly bounded.
 \end{proof}

\section{Passing to the limit from (NL-TD), (NL) or (L-TD)}

 Similar to the last section we now proceed with the limits for $h\to 0$ and $\delta \to 0$ starting from the respective time-delayed equations.

 \subsection{The limit \texorpdfstring{$h \to 0$}{h to zero} from (NL-TD) and (L-TD)}

We begin by defining the notion of weak solutions solutions to the fully dynamic problem, both in the non-linear and linear case.

 \begin{definition}[Weak solution to the nonlinear dynamic problem (NL)] \label{def:solNL}
  Let $I = [0,T]$, $\delta >0$ and consider initial data $y_0 \in \mathcal{E}$, $v_0 \in L^2(\Omega)$ and a force $f \in L^2(I \times \Omega)$. We say that $y \in W^{1,2}(I;W^{1,2}(\Omega;\R^d)) \cap L^\infty(I;W^{2,p}(\Omega;\R^d))$ is a weak solution to the nonlinear dynamic problem (NL) if $y(0) = y_0$ and $y$ satisfies
  \begin{align*}
   0 &= \int_0^T \varrho\delta^{-1}\inner[L^2]{\partial_t y}{\partial_t \varphi} + \delta^{-1}\inner[L^2]{\partial_F W(\nabla y)}{\nabla \varphi} + \delta^{-p\alpha+1}\inner[L^2]{\partial_G P(\nabla^2 y)}{\nabla^2 \varphi} \\
   &+ \delta^{-1}\inner[L^2]{\partial_{\dot{F}} R(\nabla y, \nabla \partial_t y)}{\nabla \varphi} - \inner[L^2]{\delta^{-1}f}{ \varphi} \mathrm{d}t - \varrho\delta^{-1} \inner[L^2]{v_0}{\varphi(0)}
  \end{align*}
  for all $\varphi \in C^\infty_c([0,T) \times \Omega)$, as well as the energy inequality
  \begin{align*}
   &\phantom{{}={}}\mathcal{E}_\delta(y(s)) + \frac{\varrho}{2\delta^2} \norm[L^2(\Omega)]{\partial_t y(s)}^2 + \int_0^s \delta^{-2}\inner[L^2]{\partial_{\dot{F}} R(\nabla y, \nabla \partial_t y)}{\partial_t \nabla y} \mathrm{d}t \\
   &\leq \mathcal{E}_\delta(y_0) + \frac{\varrho}{2\delta^2}\norm[L^2(\Omega)]{v_0}^2 + \delta^{-2}\int_0^s \inner[L^2]{f}{\partial_t y} \mathrm{d}t
  \end{align*}
  for almost all $s \in [0,T]$.
 \end{definition}

 \begin{definition}[Weak solution to the linear dynamic problem (L)] \label{def:solL}
  Let $I = [0,T]$ and consider initial data $u_0 \in \mathcal{E}$, $v_0 \in L^2(\Omega)$ and a force $f \in L^2(I \times \Omega)$. We say that $u \in W^{1,2}(I;W^{1,2}(\Omega;\R^d))$ is a weak solution to the linear dynamic problem (L) if $u(0) = u_0$ and $u$ satisfies
  \begin{align*}
   0 &= \int_0^T \varrho\inner[L^2]{\partial_t u}{\partial_t \varphi} + \inner[L^2]{\C e(u)}{\nabla \varphi}
   + \inner[L^2]{\mathbb{D} e(\partial_t u)}{\nabla \varphi} - \inner[L^2]{f}{ \varphi} \mathrm{d}t - \varrho\inner[L^2]{u_0}{\varphi(0)}
  \end{align*}
  for all $\varphi \in C^\infty_c([0,T) \times \Omega)$, as well as the energy inequality
  \begin{align*}
   &\phantom{{}={}}\mathcal{E}_0(u(s)) + \frac{\varrho}{2} \norm[L^2(\Omega)]{\partial_t u(s)}^2 + \int_0^s \inner[L^2]{\mathbb{D} e(\partial_t u)}{\partial_t \nabla u} \mathrm{d}t 
   \leq \mathcal{E}_0(u_0) + \frac{\varrho}{2}\norm[L^2(\Omega)]{v_0}^2 + \int_0^s \inner[L^2]{f}{\partial_t u} \mathrm{d}t
  \end{align*}
  for almost all $s \in [0,T]$.
 \end{definition}

We first need the following preliminary result that can be basically derived from earlier results:
\begin{lemma}[Equiboundedness for the time delayed problem] \label{lem:TDBounds}
  Fix $I=[0,T]$. Let $\delta > 0$ and $h >0$. Then there exists a constant $C$ independent of $\delta$ and $h$ such that for every weak solution $y_{h,\delta} \in L^\infty(I;W^{2,p}(\Omega;\R^d)) \cap W^{1,2}(I;W^{1,2}(\Omega; \mathbb{R}^d))$ to (NL-TD) with initial data $y_0$, $v_0$ and force $f_\delta \in L^2(I\times \Omega)$  we have
  \begin{align*}
   &\sup_{t\in[h,T]} \fint_{t-h}^t \norm[L^2(\Omega)]{\partial_t y_{h,\delta}}^2 \mathrm{d} t +\norm[L^\infty(I;W^{1,2}(\Omega))]{y_{h,\delta}-\id}^2 +  \norm[L^2(I;W^{1,2}(\Omega))]{y_{h,\delta}-\id}^2 \\
   &\leq C \left( \norm[L^2(\Omega)]{v_0}^2 + \norm[L^2(I\times \Omega)]{f_\delta}^2+  \delta^2\mathcal{E}_{\delta}(y_0)\right)
  \end{align*}
  for the respective interpolants as well as
  \begin{align*}
  \norm[L^\infty(I\times \Omega)]{\delta^{-1} \nabla u_{h,\delta}} + \sup_{t\in I}\norm[W^{2,p}(\Omega)]{\delta^{-1}u_{h,\delta}} \leq C \sqrt{\norm[L^2(I\times \Omega)]{\delta^{-1}f}^2+  \delta^{-2} \norm[L^2(\Omega)]{v_0}^2  + \mathcal{E}_{\delta}(y_0)}
  \end{align*}
  \end{lemma}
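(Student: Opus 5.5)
The plan is to derive every bound from the energy inequality in Definition \ref{def:solNLTD}, evaluated at almost every $s\in I$, and then convert the energy bound into the stated norms via Lemma \ref{lem:rigidity} and Corollary \ref{lem:KornCont}. This is the time-continuous analogue of the proof of Lemma \ref{lem:interpolantBounds}.

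First, I control the force term on the right-hand side. The only term not bounded by data is $\delta^{-2}\int_0^s \inner[L^2]{f_\delta}{\partial_t y_{h,\delta}}\,\mathrm{d}t$. I split it with Young's inequality,
\begin{align*}
\delta^{-2}\int_0^s \inner[L^2]{f_\delta}{\partial_t y_{h,\delta}}\,\mathrm{d}t \leq \frac{C_\eta}{\delta^{2}}\norm[L^2(I\times\Omega)]{f_\delta}^2 + \eta\,\delta^{-2}\int_0^s \norm[W^{1,2}(\Omega)]{\partial_t y_{h,\delta}}^2\,\mathrm{d}t .
\end{align*}
The second piece is absorbed into the dissipation term on the left. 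Indeed, the energy inequality gives $\sup_s \mathcal{E}_\delta(y_{h,\delta}(s))<E_{\max}$ a priori (by a continuity/bootstrap argument in $s$, or by first using the cruder $L^2$ bound on the inertial term). Corollary \ref{lem:KornCont} then yields
\begin{align*}
\delta^{-2}\int_0^s \norm[W^{1,2}]{\partial_t y_{h,\delta}}^2 \leq C\int_0^s \delta^{-2}\inner{\partial_{\dot F}R}{\partial_t\nabla y_{h,\delta}},
\end{align*}
using $\partial_{\dot F}R\cdot \dot F = 2R$ for the quadratic $R$. Choosing $\eta$ small, I obtain for a.e.\ $s$
\begin{align*}
\mathcal{E}_\delta(y_{h,\delta}(s)) + \frac{\varrho}{2\delta^2}\fint_{s-h}^s \norm[L^2(\Omega)]{\partial_t y_{h,\delta}}^2 + c\,\delta^{-2}\int_0^s \norm[W^{1,2}(\Omega)]{\partial_t y_{h,\delta}}^2 \leq C\Big(\mathcal{E}_\delta(y_0) + \delta^{-2}\norm[L^2]{v_0}^2 + \delta^{-2}\norm[L^2]{f_\delta}^2\Big).
\end{align*}
Multiplying by $\delta^2$ gives the first displayed estimate. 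The $L^\infty$ and $L^2$ bounds on $y_{h,\delta}-\id$ in $W^{1,2}$ follow because $\norm[W^{1,2}]{y_{h,\delta}(s)-\id}=\delta\norm[W^{1,2}]{u_{h,\delta}(s)}$, and Lemma \ref{lem:rigidity} bounds $\norm[W^{1,2}]{u_{h,\delta}}^2$ by a constant multiple of the energy. Here I use its quadratic form: FJM together with Poincar\'e gives $\norm{u}^2\le C\mathcal{E}_\delta$. The integral in time of the $L^\infty$ bound gives the $L^2(I)$ bound, with $T$ entering the constant.

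The second estimate follows from the same energy bound before multiplying by $\delta^2$. The $P$-part gives $\norm[L^p]{\nabla^2 y_{h,\delta}}\leq C\delta^\alpha \mathcal{E}_\delta^{1/p}$, which controls the $W^{2,p}$ seminorm. The bound $\norm[L^\infty]{\delta\nabla u}\leq C\delta^\alpha$ is the second part of Lemma \ref{lem:rigidity}. Combined with $\norm[W^{1,2}]{u}\le C\sqrt{\mathcal{E}_\delta}$, these give the right-hand side in square-root form, after adjusting powers of $\delta$ with $\delta<1$.

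The main obstacle is the absorption step. Corollary \ref{lem:KornCont} requires an a priori uniform energy bound, whose constant depends on $\inf\det\nabla y$ and the $W^{2,p}$ norm through Lemma \ref{lem:KornGeneral} and Lemma \ref{lem:HK}. I would break this circularity as follows. First, bound $\int_0^s\inner{f}{\partial_t y}$ using only the inertial term, via $\fint_{t-h}^t\norm{\partial_t y}^2$ and a Gronwall-type argument over windows of length $h$. This gives $\sup_s\mathcal{E}_\delta\le E_{\max}$ independently of $h$, and only afterwards do I invoke Korn to get the full bounds.
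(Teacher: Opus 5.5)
Your route is the paper's: the energy inequality of Definition \ref{def:solNLTD} gives $\delta$- and $h$-independent bounds on $\sup_t \mathcal{E}_\delta(y_{h,\delta}(t))$, on the windowed kinetic energy and on $\int_0^T \mathcal{R}_\delta$. Lemma \ref{lem:rigidity} and Corollary \ref{lem:KornCont} then turn these into norm bounds. The paper never says how the force term is handled, and your final fix is the right one. It is simpler than a Gronwall argument: covering $[0,s]$ by the windows ending at $s, s-h, s-2h,\dots$ gives
\[
\int_0^s \norm[L^2(\Omega)]{\partial_t u_{h,\delta}(t)}^2\,\mathrm{d}t \le (s+h)\sup_{t\in[0,s]} \fint_{t-h}^{t} \norm[L^2(\Omega)]{\partial_t u_{h,\delta}(\sigma)}^2\,\mathrm{d}\sigma .
\]
Cauchy--Schwarz and Young then close the bound on the energy and the windowed kinetic energy, uniformly for $h\le T$. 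The dissipation integral is bounded directly by the energy inequality, and Korn is only applied afterwards. You can therefore drop both the Young/absorption step and the bootstrap alternative, and no circularity arises. Your quadratic use of FJM together with the trace argument is also what makes the first estimate linear in the data, which the paper leaves implicit.

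The step that does not work is the last one: no adjustment of powers of $\delta<1$ turns what you have into the second display as written.

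\begin{itemize}
\item With $u_{h,\delta}=\delta^{-1}(y_{h,\delta}-\id)$ one has $\delta^{-1}\nabla u_{h,\delta}=\delta^{-2}(\nabla y_{h,\delta}-\Id)$. Lemma \ref{lem:rigidity} bounds this only by $C\delta^{\alpha-2}$, and it is genuinely unbounded.
\item For a concrete case, take $f_\delta=0$, $v_0=0$ and $y_0=\id+\delta u_0$ with $u_0\neq 0$ smooth and compactly supported. Continuity of $t\mapsto u_{h,\delta}(t)$ in $W^{1,2}$ gives $\norm[L^\infty(I\times\Omega)]{\delta^{-1}\nabla u_{h,\delta}}\ge c\,\delta^{-1}\norm[L^2(\Omega)]{\nabla u_0}$, while the right-hand side stays bounded.
\item The same happens for $\sup_t\norm[W^{2,p}(\Omega)]{\delta^{-1}u_{h,\delta}}$.
\end{itemize}

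So that line of the statement is misprinted, and the paper's two-line proof does not justify it either. What your argument actually yields, and what is used later, is
\[
\norm[L^\infty(\Omega)]{\nabla y_{h,\delta}(t)-\Id}+\norm[L^p(\Omega)]{\nabla^2 y_{h,\delta}(t)}\le C\delta^\alpha \quad \text{for a.e.\ } t,
\]
with $C$ depending on the energy bound. This is not in square-root form in the data either, since the $P$-part only gives $\mathcal{E}_\delta^{1/p}$. You should state and prove that version rather than assert the literal inequality.
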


  \begin{proof}
Any weak solution to (NL-TD) according to Definition \ref{def:solNLTD} satisfies an energy inequality from which we conclude uniform bounds on
  \begin{align*}
   \sup_{t \in I} \mathcal{E}_\delta(y_{h,\delta}), \quad \sup_{t\in I} \frac{1}{h}\int_{t-h}^t \norm{\partial_t y_{h,\delta}}^2 \mathrm{d}t \quad \text{ and } \quad \int_0^T \!\! R(\nabla y_{h,\delta}, \nabla \partial_t y_{h,\delta}) \mathrm{d}t
  \end{align*}
  that only depend on the initial data. The proof is then closed by using Lemma \ref{lem:rigidity} and Lemma \ref{lem:KornCont}.
  \end{proof}

 Next we prove the existence of solutions to (NL) and (L) by continuing the strategy outlined in the introduction and showing that both can be seen as the $h \to 0$ limits of (NL-TD) and (L-TD), respectively (cmp. also \cite[Sec. 3]{benesovaVariationalApproachHyperbolic2020}).

 \begin{theorem} \label{thm:existence}
  The problems (NL) and (L) have weak solutions for any time $T$ and any valid initial data/forces. The solution to (L) is unique.
 \end{theorem}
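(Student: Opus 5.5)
We plan to obtain solutions of (NL) and (L) as limits $h\to 0$ of the time-delayed solutions from Proposition \ref{prop:NLTDexistence} and Corollary \ref{cor:LTDexistence}, with $\delta>0$ fixed in the nonlinear case. For each $h$ (chosen so that $T/h\in\N$) let $y_h$ denote a solution of (NL-TD). Lemma \ref{lem:TDBounds} gives bounds independent of $h$ on
\begin{align*}
\sup_t \mathcal{E}_\delta(y_h(t)), \qquad \sup_t \fint_{t-h}^t \norm[L^2(\Omega)]{\partial_t y_h}^2\,\mathrm{d}s, \qquad \int_0^T \mathcal{R}_\delta(y_h,\partial_t y_h)\,\mathrm{d}t .
\end{align*}
Lemma \ref{lem:rigidity}, Lemma \ref{lem:HK} and Corollary \ref{lem:KornCont} then yield uniform bounds in $L^\infty(I;W^{2,p})\cap W^{1,2}(I;W^{1,2})$, together with a uniform lower bound on $\det\nabla y_h$. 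By Aubin--Lions we extract a subsequence with $y_h\rightharpoonup^* y$ in $L^\infty(I;W^{2,p})$, $\partial_t y_h\rightharpoonup\partial_t y$ in $L^2(I;W^{1,2})$ and $y_h\to y$ in $C(I;W^{1,\infty})$, the last because $p>d$. Consequently $\partial_F W(\nabla y_h)$ converges strongly. Since $R$ is quadratic in $\dot F$ with coefficients continuous in $\nabla y_h$, and $\nabla y_h$ converges uniformly, $\partial_{\dot F}R(\nabla y_h,\nabla\partial_t y_h)\rightharpoonup \partial_{\dot F}R(\nabla y,\nabla\partial_t y)$ weakly.

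For the inertial term we test with $\varphi\in C_c^\infty([0,T)\times\Omega)$ and apply a discrete integration by parts in time:
\begin{align*}
\int_0^T \inner{\tfrac{\partial_t y_h(t)-\partial_t y_h(t-h)}{h}}{\varphi(t)}\,\mathrm{d}t = -\int_0^{T} \inner{\partial_t y_h(t)}{\tfrac{\varphi(t+h)-\varphi(t)}{h}}\,\mathrm{d}t - \fint_{-h}^0 \inner{v_0}{\varphi(t+h)}\,\mathrm{d}t .
\end{align*}
As $h\to 0$ this converges to $-\int_0^T\inner{\partial_t y}{\partial_t\varphi}-\inner{v_0}{\varphi(0)}$, which gives the weak form in Definition \ref{def:solNL}. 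For the hyperstress term we repeat the Minty argument of Lemma \ref{lem:minty}: test the (NL-TD) equation with $\varphi=(y-y_h)\psi$ for a cutoff $\psi$ in time, and show that all the other terms vanish. The inertial contribution must be handled with the same shift identity, using strong $L^2$ convergence of $y_h$ and the $h$-uniform bound on $\partial_t y_h$ in $L^2(I;L^2)$, which follows from the averaged bound.

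The energy inequality is the main obstacle. In (NL-TD) the kinetic term appears as $\fint_{s-h}^s\norm{\partial_t y_h}^2$ rather than pointwise in time. We pass to the limit using lower semicontinuity of $\mathcal{E}_\delta$ (which holds by weak $W^{2,p}$ convergence and uniform convergence of gradients) and of the dissipation integral. We also drop the nonnegative term $\frac{\varrho}{2h}\norm{\partial_t y_h(t)-\partial_t y_h(t-h)}^2$. For the kinetic part we integrate the inequality against an arbitrary nonnegative weight in $s$, and observe that $\int\eta(s)\fint_{s-h}^s\norm{\partial_t y_h}^2\,\mathrm{d}t\,\mathrm{d}s=\int \norm{\partial_t y_h}^2(\eta * \chi_h)$. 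Weak lower semicontinuity then yields $\int\eta\norm{\partial_t y}^2\le\liminf$, and a Lebesgue point argument recovers the inequality for a.e.\ $s$. The force term converges by weak convergence of $\partial_t y_h$. It also has to be checked that $\inner{\partial_{\dot F}R}{\partial_t\nabla y}=2R$ passes to the limit only through a liminf, which convexity provides.

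For (L) the same scheme applies verbatim starting from Corollary \ref{cor:LTDexistence}, and is simpler since all terms are linear and no Minty step is needed; the compactness comes from the uniform $W^{1,2}(I;W^{1,2})$ bound via Korn. Uniqueness follows as in Corollary \ref{cor:LTDexistence}. The difference $w$ of two solutions solves the homogeneous linear problem with zero data. The linear equation permits testing with $\partial_t w$ (after a standard time-mollification, since $w\in W^{1,2}(I;W^{1,2}_0)$ and the equation then gives $\varrho\partial_{tt}w\in L^2(I;W^{-1,2})$), which gives
\begin{align*}
\mathcal{E}_0(w(s))+\tfrac{\varrho}{2}\norm{\partial_t w(s)}^2+\int_0^s\inner{\mathbb{D}e(\partial_t w)}{e(\partial_t w)}\,\mathrm{d}t=0 .
\end{align*}
Hence $e(w)=0$, and Korn's inequality with zero boundary values yields $w=0$.
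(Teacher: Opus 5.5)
The overall route is the paper's: pass $h\to 0$ in (NL-TD) solutions, use Aubin--Lions, a Minty argument for the hyperstress, and a discrete integration by parts for the inertia. Some points you handle more carefully than the paper does: keeping $\varphi(0)\neq 0$ so that $v_0$ is recovered, the weighted lower semicontinuity argument for the averaged kinetic energy, and uniqueness for (L) by testing the linear equation.

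\textbf{The gap: the inertial term in the Minty step for (NL).} With $\varphi_h=\psi(y-y_h)$, your shift identity leaves $-\int_0^T\langle \partial_t y_h(t),\tfrac{\varphi_h(t+h)-\varphi_h(t)}{h}\rangle\,\mathrm{d}t$. Up to a strongly null term, the difference quotient equals $\psi(t+h)\fint_t^{t+h}\partial_t(y-y_h)\,\mathrm{d}s$. This is bounded in $L^2(I;W^{1,2})$ but only weakly null. Strong convergence of $y_h$, even in $C(I;W^{1,\infty})$, says nothing about time difference quotients, and the $L^2(I;L^2)$ bound on $\partial_t y_h$ only gives boundedness.

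Up to $o(1)$, the term equals $\int_0^T\psi\langle\partial_t y_h,\fint_t^{t+h}\partial_t y_h\,\mathrm{d}s\rangle\,\mathrm{d}t-\int_0^T\psi\|\partial_t y\|_{L^2(\Omega)}^2\,\mathrm{d}t$. The first, quadratic expression converges to $\int_0^T\psi\|\partial_t y\|^2_{L^2(\Omega)}\,\mathrm{d}t$ only if the velocities converge strongly. With the bounds you invoke, nothing rules out a nonvanishing defect from time oscillations of $\partial_t y_h$. So you cannot conclude $\limsup_{h}\int_0^T\psi\langle\partial_G P(\nabla^2 y_h),\nabla^2(y_h-y)\rangle\,\mathrm{d}t\le 0$.

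Replacing the inertial term by the other terms of the equation is circular. The inertial difference quotient lies only in a $W^{-2,p'}$-type space precisely because of the hyperstress you are trying to identify.

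\textbf{The missing ingredient is strong compactness of the velocities.} The paper gets it from the rolling average $m^{(h)}(t)=\fint_{t-h}^{t}\partial_t y_h\,\mathrm{d}s$.
\begin{itemize}
\item Its time derivative is exactly the inertial difference quotient, so the (NL-TD) equation bounds $\partial_t m^{(h)}$ in $L^2(I;W^{-2,p'}(\Omega;\R^d))$.
\item The averaged kinetic energy bounds $m^{(h)}$ in $L^\infty(I;L^2(\Omega;\R^d))$.
\item Aubin--Lions then gives $m^{(h)}\to\partial_t y$ strongly, at least in $L^2(I;W^{-1,2}(\Omega;\R^d))$.
\end{itemize}
Now integrate by parts exactly in time: $\int_0^T\psi\langle\partial_t m^{(h)},y_h-y\rangle\,\mathrm{d}t=-\int_0^T\langle m^{(h)},\partial_t[\psi(y_h-y)]\rangle\,\mathrm{d}t$. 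There are no boundary terms, since $y_h(0)=y(0)$ and $\psi$ vanishes near $T$. The right-hand side pairs a strongly convergent sequence with a weakly null one, so it tends to zero, and only then does Lemma \ref{lem:minty} apply.

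With this added, the rest of your argument goes through. Your treatment of (L), where no Minty step is needed, is unaffected.
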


 \begin{proof}
 We restrict ourselves to the proof for (NL), as the proof for (L) has the same structure except that there is no need to deal with nonlinear terms. In the same way as before, we thus take solutions $y^{(h)}$ to (NL-TD) for given $h$ and then study the limit $h\to 0$.

  Lemma \ref{lem:TDBounds} provides us with uniform bounds so that we find a compact sub-sequence (not relabeled) and a limit $y \in W^{1,2}(I;W^{1,2}(\Omega;\R^d)) \cap L^\infty(I;W^{2,p}(\Omega;\R^d))$ for which
  \begin{align*}
   y^{(h)} &\rightharpoonup y & \text{ in } & L^2(I;W^{2,p}(\Omega;\R^d)) \\
   y^{(h)} &\rightharpoonup y & \text{ in } & W^{1,2}(I;W^{1,2}(\Omega;\R^d)),
   \intertext{as well as using the Aubin-Lions theorem,}
   y^{(h)} &\to y & \text{ in } & L^2(I;W^{1,2}(\Omega;\R^d)).
  \end{align*}
Additionally, it should be noted that for the rolling average $m^{(h)}(t) = \fint_{t-h}^t \partial_t y^{(h)} \mathrm{d}s$ we have
  \begin{align*}
   \partial_t m^{(h)}(t) = \tfrac{\partial_t y^{(h)}(t)- \partial_t y^{(h)}(t-h)}{h}
  \end{align*}
  which is a term that occurs in the weak equation for (NL-TD). Using the weak equation itself, we can get a uniform estimate on $\partial_t m \in L^2(I;W^{-2,p}(\Omega;\R^d))$. With this, noting that Lemma \ref{lem:TDBounds} implies $m^{(h)} \in L^\infty(I;L^2(\Omega;\R^d))$ and a trivial identification of the limit, the Aubin-Lions theorem also implies.
  \begin{align*}
   m^{(h)} \to \partial_t y &\text{ in } L^2(I\times \Omega;\R^d).
  \end{align*}

  With all this in hand, we can pass to the limit both in the equation and the energy-inequality. Most of the terms in (NL-TD) converge to their counterpart in (NL) as a direct consequence of the above convergences. The only exceptions are the second order and the inertial term.

  For the former, we test the weak form of (NL-TD) by $y^{(h)}-y$ to conclude that for $h \to 0$
  \begin{align*}
   &\phantom{{}={}} - \int_0^T \delta^{-\alpha p-1} \inner{\partial_G P(\nabla^2 y^{(h)})}{\nabla^2(y^{(h)}-y)} \mathrm{d}t \\
   &= \int_0^T \int_\Omega \delta^{-1}\varrho\partial_t m^{(h)}(t)\cdot (y^{(h)}-y) + \delta^{-1} \partial_F W(\nabla y^{(h)}) \cdot \nabla (y^{(h)}-y) \\
   &+ \delta^{-1} \partial_{\dot{F}} R(\nabla y^{(h)}, \nabla \partial_t ^{(h)}) \cdot \nabla (y^{(h)}-y) - \delta^{-1} f_\delta \cdot (y^{(h)}-y) \mathrm{d}x \mathrm{d}t \to 0,
  \end{align*}
  Here convergence for all but the first term follows directly from the strong convergence of $y^{(h)}$. For the first term, we perform one more partial integration in time and then note that $m^{(h)}$ converges strongly in $L^2$ to $\partial_t y$ and $\partial_t (y^{(h)}-y)$ converges weakly in $L^2$. Then convergence of $\inner{\partial_G P(y^{(h)})}{\nabla^2 \varphi} \to \inner{\partial_G P(y)}{\nabla^2 \varphi}$ follows again by Lemma \ref{lem:minty}.

  For the inertial term we restrict ourselves to test-functions $\varphi$ supported in $(0,T-h_0)\times \Omega$ and rewrite for $h<h_0$ by shifting half of the integral as a sort of discrete partial integration
  \begin{align*}
   \int_0^T \int_\Omega \frac{\varrho}{h}\big(\partial_t y^{(h)}(t)-\partial_t y^{(h)}(t-h)\big)\cdot \varphi \mathrm{d} x \mathrm{d} t = - \int_0^T \int_\Omega \frac{\varrho}{h}\partial_t y^{(h)}(t)\cdot \left(\varphi(t+h) -\varphi(t)\right) \mathrm{d} x \mathrm{d} t.
  \end{align*}
 
  Then the difference quotient $\frac{\varphi(t+h) -\varphi(t)}{h}$ converges strongly to $\partial_t \varphi$, so weak convergence of $\partial_t y^{(h)}$ is enough to arrive at the desired limit.
 \end{proof}

 \begin{remark}[Other solution methods for the linear problem]
  Note that since the problems (L), (L-TD) and (L-disc) are all linear, involving well behaved operators, there is a plethora of more conventional ways for proving the existence of solutions. There is also plenty of associated regularity-theory which suggests that stronger concepts of solutions might apply.

  As we are, however, mainly interested in the convergence between solution concepts, it is much more advantageous to consider the convergence of weak solutions to weak solutions, even if the weak solutions in the limit later turn out to be strong.
 \end{remark}

 \subsection{The limits \texorpdfstring{$\delta \to 0$}{delta to zero} and \texorpdfstring{$(h,\delta) \to 0$}{(h,delta) to zero} for (NL-TD)}

 We now additionally show that we can pass to the limit $\delta \to 0$ in the time-delayed equation alone.

\begin{proposition} \label{prop:TDDelta}
 The time-delayed problem (NL-TD) converges to (L-TD) as $\delta \to 0$   
\end{proposition}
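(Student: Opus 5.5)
We fix $h>0$ and a family $(y_{h,\delta})_\delta$ of weak solutions of (NL-TD) in the sense of Definition \ref{def:solNLTD}, with well prepared data as in Remark \ref{rem-wellPrep-data}. Set $u_\delta := \delta^{-1}(y_{h,\delta}-\id)$. We will show that $u_\delta$ converges to the unique weak solution $u$ of (L-TD). Uniqueness comes from Corollary \ref{cor:LTDexistence}, so identifying the limit along subsequences is enough.

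\textbf{Step 1: compactness.} By Lemma \ref{lem:TDBounds} (the energy inequality together with Lemma \ref{lem:rigidity} and Lemma \ref{lem:KornCont}), $u_\delta$ is bounded in $L^\infty(I;W^{1,2}(\Omega;\R^d))\cap W^{1,2}(I;W^{1,2}(\Omega;\R^d))$. Moreover,
\[
\norm[L^\infty(I\times\Omega)]{\delta\nabla u_\delta}\le C\delta^\alpha, \qquad \sup_t\norm[L^p(\Omega)]{\delta\nabla^2 u_\delta}\le C\delta^\alpha .
\]
We extract $u_\delta\rightharpoonup u$ in $W^{1,2}(I;W^{1,2})$, weakly-$*$ in $L^\infty(I;W^{1,2})$. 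With the convention $\partial_t u_\delta(t)=\delta^{-1}v_{0,\delta}\to v_0$ for $t<0$, the shifted velocities satisfy $\partial_t u_\delta(\cdot-h)\rightharpoonup\partial_t u(\cdot-h)$ in $L^2(I\times\Omega)$. The time-delayed inertial term is linear in $\partial_t u_\delta$, so it passes to the limit directly, as does the force term.

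\textbf{Step 2: the weak equation.} We divide the weak formulation of (NL-TD) by $\delta$ so that it is written in $u_\delta$, and treat the nonlinear terms one at a time, pointwise in $t$ and then integrated.
\begin{itemize}
\item Elastic term: Lemma \ref{lem:linEnergy} gives an error $\le C\delta\norm[L^2]{\nabla u_\delta}^2\norm[L^\infty]{\nabla\varphi}$, which tends to $0$ uniformly in $t$. The remaining term $\int\mathbb{C}e(u_\delta):\nabla\varphi$ converges by linearity.
\item Hyperstress term: the growth bound in \eqref{assumptions-P} gives an $L^{p'}$-norm of order $C\delta^{1/2}$, exactly as in the proof of Proposition \ref{prop:discDelta}, so this term vanishes. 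No Minty argument is needed here.
\item Viscous term: Lemma \ref{lem:linDissip} with $v=\partial_t y_{h,\delta}$ gives a pointwise-in-time error $C\delta\norm[L^2]{\nabla\partial_t u_\delta}\norm[L^2]{\nabla u_\delta}\norm[L^\infty]{\nabla\varphi}$. Integrating in time with Hölder as in Proposition \ref{prop:TDDiagonal}, this is bounded by $C\delta\norm[L^2(I;W^{1,2})]{\partial_tu_\delta}\norm[L^\infty(I;W^{1,2})]{u_\delta}\to0$. The term $\int\mathbb{D}e(\partial_tu_\delta):e(\varphi)$ then converges weakly.
\end{itemize}
This yields the weak equation of (L-TD) with $u(0)=u_0$. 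The initial value is preserved because $u_\delta(0)=\delta^{-1}(y_{0,\delta}-\id)\to u_0$ and $u_\delta\rightharpoonup u$ in $W^{1,2}(I;W^{1,2})\subset C(I;W^{1,2})$.

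\textbf{Step 3: the energy inequality (main obstacle).} Two routes are available.
\begin{itemize}
\item Via lower semicontinuity: apply Lemma \ref{lem:energyLSC} to $\mathcal{E}_\delta(y_{h,\delta}(s))$ for a.e.\ $s$ (after passing to a further subsequence with pointwise weak convergence, which holds for all $s$ by the $C(I;W^{1,2})$ bound). Use the time-integrated form of Lemma \ref{lem:dissipLSC} for the dissipation term, which is convex in the weakly converging $\partial_tu_\delta$, and weak lower semicontinuity of the $L^2$ norms for the two inertial terms. On the right-hand side, $\mathcal{E}_\delta(y_{0,\delta})\to\mathcal{E}_0(u_0)$ requires the initial data to be a recovery sequence; this is part of what "well prepared" must mean, in line with Lemma \ref{lem:energyRecovery}. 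The force term converges because $\delta^{-1}f_\delta\to f$ strongly while $\partial_tu_\delta\rightharpoonup\partial_tu$ weakly.
\item Via the equation: as noted in the proof of Corollary \ref{cor:LTDexistence}, the energy inequality for (L-TD) can be derived directly by testing the limit equation with $\partial_tu$. This is legitimate since $\partial_tu\in L^2(I;W^{1,2}_0)$ and no second-order term is present.
\end{itemize}
I would use the second route, since it avoids any assumption on energy convergence of the initial data. I would keep the first route as the remark that the inequality is also inherited from the nonlinear one.

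By uniqueness, the whole family converges, not just a subsequence.
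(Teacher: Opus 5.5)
Your proposal is correct and follows essentially the same route as the paper: compactness from Lemma \ref{lem:TDBounds}, linearization of the elastic and viscous terms via Lemmas \ref{lem:linEnergy} and \ref{lem:linDissip} (integrated in time with H\"older), the hyperstress term removed by the growth bound on $\partial_G P$, and weak convergence for the remaining linear terms. You also make explicit the initial condition and the energy inequality (by testing the limit equation with $\partial_t u$, as in Corollary \ref{cor:LTDexistence}), which the paper's proof leaves implicit.

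One small correction: for the hyperstress term, the growth bound gives $\delta^{1-\alpha p}\norm[L^{p'}(\Omega)]{\partial_G P(\nabla^2 y_{h,\delta})}\le C\delta^{1-\alpha p}\delta^{\alpha(p-1)}=C\delta^{1-\alpha}$, which vanishes since $\alpha<1$. You should state this bound directly instead of citing the $\delta^{1/2}$ computation of Proposition \ref{prop:discDelta}, because that computation is stated under a condition on $\alpha$ that contradicts the standing assumption.
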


\begin{proof}
Using the estimates from Lemma \ref{lem:TDBounds} and standard compactness theorems, we can select a suitable subsequence of $\delta$'s (not relabeled) and a function
$$
u \in W^{1,\infty}([0,T]; L^2(\Omega; \mathbb{R}^d)) \cap  W^{1,2}([0,T]; W^{1,2}(\Omega; \mathbb{R}^d)),
$$
so that 
\begin{align*}
 u_{\delta} & \rightharpoonup u && \text{ in } L^\infty([0,T];W^{1,2}(\Omega;\R^d))\\
 \partial_t u_\delta & \rightharpoonup \partial_t u    & &\text{ in } L^2([0,T];W^{1,2}( \Omega; \mathbb{R}^d))
\end{align*}
We now want to show convergence of the weak equation. For elastic energy and dissipation functionals we argue similarly as in Proposition \ref{prop:TDDiagonal}.

Namely, Lemma \ref{lem:linEnergy} implies
\begin{align*}
\abs{ \int_\Omega \left(\delta^{-1}\partial_F W(\nabla y_{\delta}) -  \mathbb{C} e(u_\delta)\right) \cdot \nabla \varphi \mathrm{d}x } \leq \delta C \norm[L^2(\Omega)]{\nabla u_\delta}^2\norm[L^\infty(\Omega)]{\varphi} \to 0
\end{align*}
for all times $t\in I$.

Similarly we can again use \eqref{assumptions-P} to conclude that
$$
\|\delta^{-\alpha p + 1}\partial_G P(\delta \nabla^2 u_{\delta})\|_{L^\frac{p}{p-1}(\Omega)} \leq C
\delta^{-\alpha p + 1}\|\delta \nabla^2 u_{\delta}\|_{L^{p}(\Omega)}^\frac{p}{p-1} \leq C \delta^{-\alpha p + 1} \delta^\frac{\alpha p}{p-1} \leq C \delta^{1/2} \to 0
$$
which removes the nonlinear second order term.

For the dissipation term we apply Lemma \ref{lem:linDissip} to obtain
\begin{align*}
&\int_\Omega \left| \delta^{-1} \partial_{\dot{F}} R\left(\nabla y_{\delta}, \partial_t \nabla y_\delta\right)\cdot  \nabla \varphi- e\left(\partial_t u_\delta\right) \mathbb{D}(\id) e(\nabla \varphi)  \right| \mathrm{d}x \\
&\leq C \delta  \norm[L^2(\Omega)]{ \partial_t \nabla u_\delta} \norm[L^2(\Omega)]{\nabla u_{\delta}} \left(1  + \delta \norm[L^\infty(\Omega)]{\nabla u_{\delta}} \right) \norm[L^\infty(\Omega)]{\nabla \varphi}.
\end{align*}
Integrating in time and using Hölder's inquality again, we can then estimate
\begin{align*}
&\int_0^h \int_\Omega \left| \delta^{-1} \partial_{\dot{F}} R\left(\nabla y_{\delta}, \partial_t \nabla y_\delta\right)\cdot  \nabla \varphi- e\left(\partial_t u_\delta\right) \mathbb{D}(\id) e(\nabla \varphi)  \right| \mathrm{d}x \mathrm{d} t\\
&\leq C \delta  \norm[L^2(I\times\Omega)]{ \partial_t \nabla u_\delta} \norm[L^\infty(I;L^2(\Omega))]{\nabla u_{\delta}} \left(1  + \delta \norm[L^\infty(I\times\Omega)]{\nabla u_{\delta}} \right) \norm[L^2(I;L^\infty(\Omega))]{\nabla \varphi} \to 0
\end{align*}
by the energy estimate an the fact that $ \|\delta \nabla u_{\delta}\|_{L^\infty(\Omega)}$ is uniformly bounded.

Now all the terms left over are linear in $u_\delta$, $\nabla u_\delta$ or $\partial_t \nabla u_\delta$, so by weak convergence we reach the desired limit.
\end{proof}

This limit procedure is again compatible with $h \to 0$.

\begin{theorem} \label{thm:Diagonal}
 Let $(h_l)_l$ and $(\delta_l)_l$ be two sequences such that $h_l \searrow 0$ and $\delta_l \searrow 0$. For each $l$ let $y_{l} \in L^\infty(I;W^{2,p}(\Omega; \mathbb{R}^d)) \cap W^{1,2}(I;W^{1,2}(\Omega;\R^d))$ be a weak solution of the nonlinear time-delayed problem (NL-TD) for delay $h_l$ and parameter $\delta_l$, with well prepared initial data $y_0^l$ such that $u_0^l := \delta^{-1}(y_0^l-\id) \to u_0$ in $W^{1,2}(\Omega;\R^d)$, well prepared right hand side data $f_l$ and initial velocity $v_l$ such that $\delta_l^{-1} (v_l-\id)$ converges weakly in $L^2(I\times\Omega)$. Then the previously defined interpolations converge to the weak solution of problem (L-TD).
\end{theorem}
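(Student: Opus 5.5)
The plan is to combine the argument of Theorem \ref{thm:existence} (the limit $h\to 0$) with that of Proposition \ref{prop:TDDelta} (the limit $\delta\to0$). The key point is that every estimate used in those two proofs is uniform in \emph{both} parameters. Since $h_l\to 0$, I read the limit problem as the linear dynamic problem (L) of Definition \ref{def:solL}, which is the diagonal arrow in the introductory diagram. The argument below yields this; for fixed positive $h$ one would instead simply invoke Proposition \ref{prop:TDDelta}.

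\emph{Step 1: uniform bounds and compactness.} Set $u_l:=\delta_l^{-1}(y_l-\id)$. By the energy inequality of Definition \ref{def:solNLTD} and well-preparedness, Lemma \ref{lem:TDBounds} gives the following bounds, independent of $l$:
\begin{itemize}
\item $u_l$ is bounded in $L^\infty(I;W^{1,2}(\Omega))$ and $\partial_t u_l$ is bounded in $L^2(I;W^{1,2}(\Omega))$ (via Corollary \ref{lem:KornCont});
\item the rolling averages $m_l(t):=\fint_{t-h_l}^t\partial_t u_l\,\mathrm{d}s$ are bounded in $L^\infty(I;L^2(\Omega))$;
\item $\norm[L^\infty(I\times\Omega)]{\delta_l\nabla u_l}\le C\delta_l^\alpha$ and $\delta_l\norm[W^{2,p}]{\nabla^2 u_l}\le C\delta_l^\alpha$.
\end{itemize}
Extracting a subsequence, I get $u_l\rightharpoonup u$ weakly-$*$ in $L^\infty(I;W^{1,2})$ and weakly in $W^{1,2}(I;W^{1,2})$.

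Next I test the rescaled weak equation of (NL-TD) with fixed smooth $\varphi$ to bound $\partial_t m_l=\frac{\partial_t u_l(t)-\partial_t u_l(t-h_l)}{h_l}$ in $L^2(I;W^{-2,p'})$ uniformly in $l$. The right-hand side terms are controlled using Lemmas \ref{lem:linEnergy} and \ref{lem:linDissip} together with the estimate on the $P$-term. Aubin--Lions then gives $m_l\to m$ strongly in $L^2(I\times\Omega)$. To identify $m=\partial_t u$, note that $m_l-\partial_t u_l\rightharpoonup0$ in the sense of distributions, because averaging over windows of width $h_l$ commutes with weak limits when tested against smooth functions.

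\emph{Step 2: passing to the limit in the equation.} The elastic term is handled by Lemma \ref{lem:linEnergy}: it differs from $\int\mathbb{C}e(u_l):\nabla\varphi$ by $C\delta_l\norm{\nabla u_l}^2_{L^2}\norm{\nabla\varphi}_{L^\infty}\to0$. The dissipative term is handled by Lemma \ref{lem:linDissip}: it differs from $\int\mathbb{D}e(\partial_t u_l):e(\varphi)$ by $C\delta_l\norm{\partial_t\nabla u_l}_{L^2(I\times\Omega)}\norm{\nabla u_l}_{L^\infty L^2}\norm{\nabla\varphi}_{L^2L^\infty}\to0$. The hyperstress term is bounded by $C\delta_l^{1/2}\norm{\nabla^2\varphi}$, exactly as in Proposition \ref{prop:TDDelta}, so it vanishes. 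In particular no Minty argument (Lemma \ref{lem:minty}) is needed here. The force term converges by well-preparedness.

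For the inertial term, I take $\varphi\in C^\infty_c([0,T-h_0)\times\Omega)$ and shift in time as in the proof of Theorem \ref{thm:existence}. This rewrites the term as $-\varrho\int_0^T\inner{\partial_t u_l}{\frac{\varphi(t+h_l)-\varphi(t)}{h_l}}\mathrm{d}t$ plus a boundary contribution from $[-h_l,0]$, where $\partial_t u_l=\delta_l^{-1}v_l$. That contribution is $\varrho\fint_0^{h_l}\inner{\delta_l^{-1}v_l}{\varphi}\,\mathrm{d}t\to\varrho\inner{v_0}{\varphi(0)}$. The first part converges to $-\varrho\int\inner{\partial_t u}{\partial_t\varphi}$, since the difference quotients converge strongly and $\partial_t u_l$ converges weakly. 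Hence $u$ satisfies the weak equation of (L) with $u(0)=u_0$, using $u_l(0)=u_0^l\to u_0$ and the weak continuity in time.

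\emph{Step 3: energy inequality and full convergence.} I pass to the limit in the (NL-TD) energy inequality using lower semicontinuity. For $\mathcal{E}_{\delta_l}$ this is Lemma \ref{lem:energyLSC}, applied for a.e.\ $s$. For the dissipation it is Lemma \ref{lem:dissipLSC} in integrated form. For the kinetic term $\frac{\varrho}{2}\fint_{s-h_l}^s\norm{\partial_t u_l}^2$, Jensen gives a lower bound by $\frac\varrho2\norm{m_l(s)}^2$, and the strong convergence of $m_l$ yields $\frac\varrho2\norm{\partial_t u(s)}^2$ for a.e.\ $s$. The nonnegative term $\frac{\varrho}{2h}\norm{\partial_t u_l(t)-\partial_t u_l(t-h_l)}^2$ is simply dropped. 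The right-hand side converges by well-preparedness and the weak convergence of $\partial_t u_l$ tested against the strongly converging force. Finally, uniqueness of solutions to (L) (Theorem \ref{thm:existence}) upgrades subsequential convergence to convergence of the whole sequence.

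The main obstacle I expect is Step 1's identification of the strong limit of $m_l$ and its use in the kinetic energy term. Here the delay and the linearization act simultaneously, and the $W^{-2,p'}$ bound on $\partial_t m_l$ must be derived with constants independent of both $h_l$ and $\delta_l$. This is exactly where the quantified linearization lemmas are essential.
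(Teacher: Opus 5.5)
Your proposal is correct and follows essentially the paper's route. The ingredients match: uniform bounds from Lemma \ref{lem:TDBounds}, Lemmas \ref{lem:linEnergy} and \ref{lem:linDissip} for the elastic and viscous terms, the vanishing hyperstress term, and shifting the time difference in the inertial term onto the test function as in Theorem \ref{thm:existence}; this yields the dynamic problem (L), so your reading of the statement's ``(L-TD)'' as (L) is the right one. Your Step 3 on the energy inequality and your explicit handling of the initial-velocity contribution go beyond what the paper writes out. One small correction there: the Aubin--Lions step for $m_l$ also needs $\norm[L^2(h_l,T;W^{1,2}(\Omega))]{m_l} \leq \norm[L^2(I;W^{1,2}(\Omega))]{\partial_t u_l}$ (by Jensen), with the initial layer $[0,h_l]$ treated separately, and not only the $L^\infty(I;L^2(\Omega))$ bound you cite.
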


\begin{proof}
 We again rely on Lemma \ref{lem:TDBounds} to derive uniform bounds on $u_l := \delta^{-1}(y_l -\id)$. From these we then find a limit and a subsequence such that
\begin{align*}
  u_l & \rightharpoonup u    & &\text{ in } W^{1,2}([0,T] \times \Omega; \mathbb{R}^d) \\
 \partial_t u_l & \rightharpoonup \partial_t u    & &\text{ in } L^2([0,T] \times \Omega; \mathbb{R}^d)
\end{align*}
By the definition of $P$ and H\"older's inequality with powers $\frac{p}{p-1}$ and $p$ we derive that
  \begin{align}
    \frac{1}{\delta} \Bigg|
      \int_0^{T} \int_\Omega \partial_G P(\nabla^2 y_\delta) \cdot \nabla^2  \mathrm{d} x \mathrm{d} t
    \Bigg|
    &\leq \frac{C}{\delta} \int_0^{T} \int_\Omega \abs{\delta \nabla^2 u_\delta}^{p-1}
      \abs{\nabla^2 \varphi}\mathrm{d} x \mathrm{d} t\\
    &\leq  \frac{C}{\delta} \int_{0}^T \| \delta \nabla^2 u_\delta \|^{p-1}_{L^p(\Omega)}
      \|\nabla^2 \varphi\|_{L^p(\Omega)}\mathrm{d} t
      \leq C \delta^{\alpha(p-1) - 1}  \to 0,
\end{align}
provided $\alpha > \frac{1}{p-1}$.

Using Lemma \ref{lem:linEnergy} we have
\begin{align*}
 \abs{ \int_\Omega \left( \delta^{-1}\partial_F W(\nabla y_\delta) - \mathbb{C} e(u_\delta)\right) \cdot \nabla \varphi \mathrm{d}x } \leq C \delta \norm[L^2(\Omega)]{\nabla u_\delta}^2 \norm[L^\infty(\Omega)]{\nabla \varphi} \to 0
\end{align*}
pointwise.

For the dissipation term we apply Lemma \ref{lem:linDissip} to obtain
\begin{align*}
&\int_\Omega \left| \delta_l^{-1} \partial_{\dot{F}} R\left(\nabla y_{\delta}, \partial_t \nabla y_\delta\right)\cdot  \nabla \varphi- e\left(\partial_t u_\delta\right) \mathbb{D}(\id) e(\nabla \varphi)  \right| \mathrm{d}x \\
&\leq C \delta  \norm[L^2(\Omega)]{ \partial_t \nabla u_\delta} \norm[L^2(\Omega)]{\nabla u_{\delta}} \left(1  + \delta \norm[L^\infty(\Omega)]{\nabla u_{\delta}} \right) \norm[L^\infty(\Omega)]{\nabla \varphi}.
\end{align*}
Integrating in time and using Hölder's inquality again, we can then estimate
\begin{align*}
&\int_0^h \int_\Omega \left| \delta_l^{-1} \partial_{\dot{F}} R\left(\nabla y_{\delta}, \partial_t \nabla y_\delta\right)\cdot  \nabla \varphi- e\left(\partial_t u_\delta\right) \mathbb{D}(\id) e(\nabla \varphi)  \right| \mathrm{d}x \mathrm{d} t\\
&\leq C \delta  \norm[L^2(I\times\Omega)]{ \partial_t \nabla u_\delta} \norm[L^\infty(I;L^2(\Omega))]{\nabla u_{\delta}} \left(1  + \delta \norm[L^\infty(I\times\Omega)]{\nabla u_{\delta}} \right) \norm[L^2(I;L^\infty(\Omega))]{\nabla \varphi} \to 0
\end{align*}
by the energy estimate an the fact that $ \|\delta \nabla u_{\delta}\|_{L^\infty(\Omega)}$ is uniformly bounded.

This leaves us with the inertial term. This is dealt with in the same way as in Theorem \ref{thm:existence}.
\end{proof}

\subsection{Convergence of the (NL) as \texorpdfstring{$\delta \to 0$}{delta to zero}}
 
 Finally we show that taking the limit $\delta \to 0$ is also possible directly for solutions to dynamic problems, showing the final convergence in our diagram on the level of equations.

\begin{theorem} \label{thm:Delta}
 Let $(y_\delta)_\delta \subset W^{1,2}(I;W^{1,2}(\Omega;\R^d)) \cap L^\infty(I;W^{2,p}(\Omega;\R^d))$ be a family of weak solutions to (NL) with given well prepared initial data and denote $u_\delta := \delta^{-1} (y_\delta - \id)$. Then there exists a subsequence (not relabeled) and a limit $u \in W^{1,2}(I;W^{1,2}(\Omega;\R^d)) \cap L^\infty(I;W^{1,2}(\Omega;\R^d))$ such that $u_\delta \to u$ in $W^{1,2}(I;W^{1,2}(\Omega;\R^d)) \cap L^\infty(I;W^{1,2}(\Omega;\R^d))$ and $u$ is the (unique) weak solution of (L).
\end{theorem}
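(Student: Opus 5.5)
We follow the pattern of Proposition \ref{prop:TDDelta} and Theorem \ref{thm:Diagonal}: uniform bounds from the energy inequality, extraction of a limit, identification of the limit equation term by term using the quantified linearization lemmas, and finally uniqueness of (L) together with the energy inequalities to upgrade to convergence of the whole family.

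\emph{Step 1 (bounds).} From the energy inequality in Definition \ref{def:solNL} with well prepared data, using Young's inequality on the force term $\delta^{-2}\inner{f_\delta}{\partial_t y_\delta}=\inner{\delta^{-1}f_\delta}{\partial_t u_\delta}$ and Grönwall, we obtain uniform bounds on $\sup_t \mathcal{E}_\delta(y_\delta(t))$, on $\sup_t\norm[L^2]{\partial_t u_\delta}$ and on $\int_0^T \mathcal{R}_\delta(y_\delta,\partial_t y_\delta)\,\mathrm{d}t$. Lemma \ref{lem:rigidity} gives $u_\delta$ bounded in $L^\infty(I;W^{1,2})$ and $\norm[L^\infty]{\delta\nabla u_\delta}\le C\delta^\alpha$, $\norm[L^p]{\delta\nabla^2u_\delta}\le C\delta^\alpha$. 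Corollary \ref{lem:KornCont} gives $\partial_t u_\delta$ bounded in $L^2(I;W^{1,2})$. We extract a subsequence with $u_\delta\rightharpoonup u$ weakly-$*$ in $L^\infty(I;W^{1,2})$ and weakly in $W^{1,2}(I;W^{1,2})$.

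\emph{Step 2 (limit equation).} We rescale the weak equation of (NL). The inertial term $\varrho\inner{\partial_t u_\delta}{\partial_t\varphi}$, the force term and the initial term $\inner{\delta^{-1}v_{0,\delta}}{\varphi(0)}$ are linear and pass by weak convergence and well-preparedness. The elastic term is handled by Lemma \ref{lem:linEnergy}: the error is bounded by $C\delta\norm[L^\infty(I;L^2)]{\nabla u_\delta}^2\norm[L^1(I;L^\infty)]{\nabla\varphi}\to0$. The hyperstress term vanishes by the growth bound in \eqref{assumptions-P}, which gives $\delta^{-\alpha p+1}\norm[L^{p'}]{\partial_GP(\nabla^2 y_\delta)}\to0$ exactly as in Proposition \ref{prop:TDDelta}. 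The viscous term is handled by Lemma \ref{lem:linDissip} integrated in time, with error $C\delta\norm[L^2(I;W^{1,2})]{\partial_t u_\delta}\norm[L^\infty(I;L^2)]{\nabla u_\delta}\norm[L^2(I;L^\infty)]{\nabla\varphi}\to0$. Hence $u$ satisfies the weak equation of (L).

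\emph{Step 3 (energy inequality, uniqueness, strong convergence).} For the energy inequality of (L), we pass to the limit using Lemma \ref{lem:energyLSC} for $\mathcal{E}_\delta(y_\delta(s))$, since weak convergence for a.e. $s$ follows from the $W^{1,2}(I;W^{1,2})$ bound. For the dissipation integral we use the lower-semicontinuity estimate \eqref{Taylor-D} integrated in time, and weak lower semicontinuity for the kinetic term. On the right-hand side, $\mathcal{E}_\delta(y_{0,\delta})\to\mathcal{E}_0(u_0)$ by Lemma \ref{lem:energyRecovery} under well-preparedness, and the force term is a strong-weak product. Since the solution of (L) is unique by Theorem \ref{thm:existence}, the whole family converges.

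The claimed \emph{strong} convergence is the main obstacle. Our plan is to compare energies. The linear solution satisfies the energy \emph{equality}, since it can be tested with $\partial_t u$. Combined with the limsup of the nonlinear energy inequality, this gives
\begin{align*}
\limsup_{\delta\to0}\Big(\mathcal{E}_\delta(y_\delta(s))+\tfrac{\varrho}{2}\norm[L^2]{\partial_tu_\delta(s)}^2+\int_0^s\mathcal{R}\text{-terms}\Big)\le \mathcal{E}_0(u(s))+\tfrac{\varrho}{2}\norm[L^2]{\partial_t u(s)}^2+\int_0^s\inner{\mathbb{D}e(\partial_tu)}{e(\partial_tu)}.
\end{align*}
This requires convergence of the force work $\int_0^s\inner{\delta^{-1}f_\delta}{\partial_tu_\delta}$, which holds since $f_\delta$ converges strongly. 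Together with the liminf inequalities, each term converges. By \eqref{Taylor-W} and \eqref{Taylor-D} the errors are $O(\delta^\alpha)$, so $\int e(u_\delta)\mathbb{C}e(u_\delta)\to\int e(u)\mathbb{C}e(u)$ and $\int_0^s\int e(\partial_tu_\delta)\mathbb{D}e(\partial_tu_\delta)$ converges as well. Uniform convexity on symmetric matrices plus Korn's inequality then upgrade weak to strong convergence in $W^{1,2}(I;W^{1,2})$ and, for every $s$, in $W^{1,2}$. To obtain uniformity in $s$ for the $L^\infty(I;W^{1,2})$ convergence, we will use equicontinuity in time coming from the $W^{1,2}(I;W^{1,2})$ convergence.
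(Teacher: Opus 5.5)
Your Steps 1 and 2 coincide with the paper's proof. The paper derives the same bounds via Lemma~\ref{lem:rigidity} and Corollary~\ref{lem:KornCont}, extracts weak limits $u_\delta \rightharpoonup u$ and $\partial_t u_\delta \rightharpoonup \partial_t u$, and passes to the limit term by term with Lemmas~\ref{lem:linEnergy}, \ref{lem:linDissip} and the growth of $\partial_G P$. It then stops, so it only establishes weak convergence and the weak equation of (L). It neither passes to the limit in the energy inequality nor proves the strong convergence that the ``$\to$'' in the statement suggests.

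Your Step 3 is therefore a genuine addition. The energy \emph{equality} for (L) is legitimate, since the equation gives $\partial_t^2 u\in L^2(I;W^{-1,2})$, so $\partial_t u$ is an admissible test function. Combined with the $\limsup$ of the nonlinear energy inequality and the three $\liminf$ inequalities, it forces convergence of each term. Positive definiteness of $\mathbb{C}$ and $\mathbb{D}(\Id)$ on symmetric matrices plus Korn then turn this into strong convergence, which the paper's route cannot give. The energy inequality for (L) itself needs no limit passage; only the strong-convergence part uses it. Your hyperstress prefactor $\delta^{1-\alpha p}$ is the one consistent with Definition~\ref{def:solNL}. The clean bound is $\delta^{1-\alpha p}\norm[L^{p'}]{\partial_G P(\nabla^2 y_\delta)}\le C\delta^{1-\alpha p}\norm[L^p]{\nabla^2 y_\delta}^{p-1}\le C\delta^{1-\alpha}$, which is better than citing the computation in Proposition~\ref{prop:TDDelta}.

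Step 3 is not yet complete; two points need fixing.

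\emph{Initial energy.} $\mathcal{E}_\delta(y_{0,\delta})\to\mathcal{E}_0(u_0)$ does \emph{not} follow from Lemma~\ref{lem:energyRecovery} under the well-preparedness of Remark~\ref{rem-wellPrep-data}. That lemma also assumes $\delta^{-\alpha p}\int_\Omega P(\nabla^2 y_{0,\delta})\,\mathrm{d}x\to 0$, whereas the remark only gives strong convergence of $\delta^{-1}(y_{0,\delta}-\id)$ and bounded energy. If a residual hyperstress energy $c>0$ survives, your $\limsup$ bound is off by $c$ and nothing forces strong convergence. You must add convergence of the initial energies as a hypothesis; this is the notion of well-preparedness the paper itself invokes in its last section.

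\emph{Kinetic term.} The $\liminf$ at fixed $s$ requires $\partial_t u_\delta(s)\rightharpoonup\partial_t u(s)$ in $L^2$, which your Step 1 bounds do not provide. Obtain it from the equation:
\begin{itemize}
\item $\delta^{-1}\partial_F W(\nabla y_\delta)$ is bounded in $L^\infty(I;L^2)$;
\item the rescaled viscous stress is bounded in $L^2(I\times\Omega)$;
\item the hyperstress contribution is $O(\delta^{1-\alpha})$.
\end{itemize}
Hence $\partial_t^2 u_\delta$ is bounded in $L^2(I;(W^{2,p}_0)^*)$. Together with the $L^\infty(I;L^2)$ bound, this yields weak continuity in time and weak convergence for every $s$.

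Relatedly, the nonlinear energy inequality holds only for a.e.\ $s$. Extend it to $s=T$ by lower semicontinuity in $s$ before concluding strong convergence on all of $I$. Finally, once $\partial_t u_\delta\to\partial_t u$ in $L^2(I;W^{1,2})$, the $L^\infty(I;W^{1,2})$ convergence is immediate from $u_\delta(t)=u_\delta(0)+\int_0^t\partial_t u_\delta$. The equicontinuity argument is unnecessary.
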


\begin{proof}
 We begin as in Lemma \ref{lem:TDBounds} and from the energy-inequalities for the individual $y_\delta$, we deduce $\delta$-independent bounds on
 \begin{align*}
  \sup_{t\in I}\mathcal{E}_\delta(u_\delta(t)) \quad \text{ and } \quad \int_I \mathcal{R}_\delta(\nabla u_\delta,\partial_t \nabla u_\delta) \mathrm{d}t.
 \end{align*}
 Thus using Lemma \ref{lem:rigidity} and Lemma \ref{lem:KornCont}, we obtain
$$
\norm[L^2(\Omega)]{\partial_t u_\delta(t)}^2  \leq C,
$$
as well as
$$
\|\partial_t \nabla u_\delta \|^2_{L^2([0,T]\times \Omega; \mathbb{R}^{d \times d})} \leq C.
$$
In addition the control of the kinetic energy gives us the estimate
$$
\|\partial_t y_\delta \|^2_{L^\infty([0,T]; L^2(\Omega; \mathbb{R}^{d \times d}))} \leq C \delta^{-2},
$$
so that we have that
$$
\|\partial_t u_\delta \|^2_{L^\infty([0,T]; L^2(\Omega; \mathbb{R}^{d \times d}))} \leq C ,
$$
In order to pass to the limit, we now select a suitable subsequence of $\delta$'s (not relabeled) and a function
$$
u \in W^{1,\infty}([0,T]; L^2(\Omega; \mathbb{R}^d)) \cap  W^{1,2}([0,T]; W^{1,2}(\Omega; \mathbb{R}^d)),
$$
so that
\begin{align*}
  u_\delta & \rightharpoonup u    & &\text{ in } W^{1,2}([0,T] \times \Omega; \mathbb{R}^d) \\
 \partial_t u_\delta & \rightharpoonup \partial_t u    & &\text{ in } L^2([0,T] \times \Omega; \mathbb{R}^d)
\end{align*}

From this we immediately see that for any $\varphi \in C_c^\infty([0,T) \times \Omega)$
$$
\frac{1}{\delta} \int_0^T \int_\Omega \varrho \partial_t y_\delta \cdot \partial_t \, \varphi \mathrm{d} x \mathrm{d} t = \int_0^T \int_\Omega \varrho \partial_t u_\delta \cdot \partial_t \, \varphi \mathrm{d} x \mathrm{d} t \to \int_0^T \int_\Omega \varrho \partial_t u \cdot \partial_t \, \varphi \mathrm{d} x \mathrm{d} t
$$
By the definition of $P$ and H\"older's inequality with powers $\frac{p}{p-1}$ and $p$ we derive that
  \begin{align*}
    \frac{1}{\delta} \Bigg|
      \int_0^{T} \int_\Omega \partial_G P(\nabla^2 y_\delta) \cdot \nabla^2  \mathrm{d} x \mathrm{d} t
    \Bigg|
    &\leq \frac{C}{\delta} \int_0^{T} \int_\Omega \abs{\delta \nabla^2 u_\delta}^{p-1}
      \abs{\nabla^2 \varphi}\mathrm{d} x \mathrm{d} t\\
    &\leq  \frac{C}{\delta} \int_{0}^T \| \delta \nabla^2 u_\delta \|^{p-1}_{L^p(\Omega)}
      \|\nabla^2 \varphi\|_{L^p(\Omega)}\mathrm{d} t 
      \leq C \delta^{\alpha(p-1) - 1}  \to 0, 
\end{align*}
provided $\alpha > \frac{1}{p-1}$.

Using Lemma \ref{lem:linEnergy} we have
\begin{align*}
 \abs{ \int_\Omega \left( \delta^{-1}\partial_F W(\nabla y_\delta) - \mathbb{C} e(u_\delta)\right) \cdot \nabla \varphi \mathrm{d}x } \leq C \delta \norm[L^2(\Omega)]{\nabla u_\delta}^2 \norm[L^\infty(\Omega)]{\nabla \varphi} \to 0
\end{align*}
pointwise.

For the dissipation term we apply Lemma \ref{lem:linDissip} to obtain
\begin{align*}
&\int_\Omega \left| \delta_l^{-1} \partial_{\dot{F}} R\left(\nabla y_{\delta}, \partial_t \nabla y_\delta\right)\cdot  \nabla \varphi- e\left(\partial_t u_\delta\right) \mathbb{D}(\id) e(\nabla \varphi)  \right| \mathrm{d}x \\
&\leq C \delta  \norm[L^2(\Omega)]{ \partial_t \nabla u_\delta} \norm[L^2(\Omega)]{\nabla u_{\delta}} \left(1  + \delta \norm[L^\infty(\Omega)]{\nabla u_{\delta}} \right) \norm[L^\infty(\Omega)]{\nabla \varphi}.
\end{align*}
Integrating in time and using Hölder's inquality again, we can then estimate
\begin{align*}
&\int_0^h \int_\Omega \left| \delta_l^{-1} \partial_{\dot{F}} R\left(\nabla y_{\delta}, \partial_t \nabla y_\delta\right)\cdot  \nabla \varphi- e\left(\partial_t u_\delta\right) \mathbb{D}(\id) e(\nabla \varphi)  \right| \mathrm{d}x \mathrm{d} t\\
&\leq C \delta  \norm[L^2(I\times\Omega)]{ \partial_t \nabla u_\delta} \norm[L^\infty(I;L^2(\Omega))]{\nabla u_{\delta}} \left(1  + \delta \norm[L^\infty(I\times\Omega)]{\nabla u_{\delta}} \right) \norm[L^2(I;L^\infty(\Omega))]{\nabla \varphi} \to 0
\end{align*}
by the energy estimate an the fact that $ \|\delta \nabla u_{\delta}\|_{L^\infty(\Omega)}$ is uniformly bounded.
\end{proof}
 
\section{Energetical notions of convergence along the limit process}

Finally we will try and re-contextualize the previous limit processes in terms of the underlying functionals. We have already discussed this convergence on the discrete level in the sense of $\Gamma$-convergence. As this is ultimately a notion of convergence of minimizers, it cannot be directly applied to any dynamic problem. However, as we have seen, the energetic structure is still at the heart of both the full inertial problem and its time-delayed approximation. In particular, the latter has the structure of a quasistatic problem, for which similar approaches have long been studied.

Since a lot of this is either already well known (for the quasistatic case), or forms just an alternative way to prove results of the previous section, we will refrain from carrying out overly detailed proofs. Rather this section is intended to serve as a coda to the rest of the paper, recontextualizing some of the ideas.

We will first explain the relevant notions in abstract and then apply them to our specific problem.

\subsection{Abstract (Tilt-)EDP-convergence and EDP-convergence with forces}

The notion of EDP-convergence has been developed in the theory of gradient flows.
The fundamental idea, first observed by Sandier \& Serfaty \cite{S1} and later used and formalized by many others, relies on the following (formal) observation:

Let $X$ be a given vector space and $\mathcal{E} \in C^1(X)$, $\mathcal{R} \in C^1(X\times X)$, where $\mathcal{R}(\cdot,0) = 0$ and $\mathcal{R}$ is convex in the second argument. For any differentiable curve $y:  [0,T] \to X$ we then have using the Fenchel-Young inequality \eqref{eq:FenchelEquality}
\begin{align} \label{eq:curveIneq}
 \mathcal{E}(y(0))-\mathcal{E}(y(T)) = \int_0^T \inner{-D\mathcal{E}(y)}{\partial_t y} dt \leq \int_0^T \mathcal{R}(y,\partial_t y) + \mathcal{R}^*(y,-D\mathcal{E}(y)) dt.
\end{align}
In this, according to Theorem \ref{thm:Fenchel}, equality holds if and only if $D_2\mathcal{R}(y,\partial_t y) = -D\mathcal{E}(y)$ for almost all times, as the latter is the argument of $\mathcal{R}^*$. In other words, equality holds if and only if the curve $y$ is a solution of the gradient flow.

Furthermore, since the inequality \eqref{eq:curveIneq} holds for arbitrary curves, showing equality just means showing the opposite inequality. This we can rewrite as
\begin{align*}
 \mathcal{E}(y(T)) + \int_0^T \mathcal{R}(\partial_t y) + \mathcal{R}^*(-D\mathcal{E}(y)) dt\leq \mathcal{E}(y(0))
\end{align*}
which is nothing more than a rather specific way of writing the energy-inequality of the gradient flow.

One can easily adapt this to a $\delta$-dependent family of problems, where $y_\delta$ is now a solution to the ($\mathcal{E}_\delta$, $\mathcal{R}_\delta$)-gradient flow if
\begin{align} \label{eq:gradientFlowEnergy}
 \mathcal{E}_\delta(y_\delta(T)) + \int_0^T \mathcal{R}_\delta(y_\delta,\partial_t y_\delta) + \mathcal{R}_\delta^*(y_\delta,-D\mathcal{E}_\delta(y_\delta)) \mathrm{d}t \leq \mathcal{E}_\delta(y_\delta(0)).
\end{align}

If we now want to show that (assuming some compactness), solutions of this  $(\mathcal{E}_\delta$, $\mathcal{R}_\delta$)-gradient flow converge to solutions of a corresponding ($\mathcal{E}_0$, $\mathcal{R}_0$)-gradient flow, then it turns out to show two things.

First of all, similar as in the static problem, we expect $\mathcal{E}_\delta$ to $\Gamma$-converge to $\mathcal{E}_0$. Secondly, if we define the total dissipation as a functional of curves
\begin{align*}
 \mathcal{D}_\delta(y) := \int_0^T \mathcal{R}_\delta(y,\partial_t y) + \mathcal{R}_\delta^*(y,-D\mathcal{E}_\delta(y)) \mathrm{d}t
\end{align*}
then this should  $\Gamma$-converge to its respective limit
\begin{align*}
 \mathcal{D}_0(y) := \int_0^T \mathcal{R}_0(y,\partial_t y) + \mathcal{R}_0^*(y,-D\mathcal{E}_0(y)) \mathrm{d}t.
\end{align*}

If both of these convergences are true and a sequence of solutions $y_\delta$ satisfying \eqref{eq:gradientFlowEnergy} converges to a limit $y_0$ and assuming also that the initial data is well posed (i.e.\ $E_\delta(y_\delta(0)) \to E_\delta(y_0(0))$), then using the liminf-property of both $\Gamma$-convergences we obtain
\begin{align*}
 \mathcal{E}_0(y_0(T)) + \int_0^T \mathcal{R}_0(y_0,\partial_t y_0) + \mathcal{R}_0^*(y_0,-D\mathcal{E}_0(y_0)) \mathrm{d}t \leq \mathcal{E}_0(y_0(0)).
\end{align*}
By the initial considerations this then implies that $y_0$ solves the ($\mathcal{E}_0$, $\mathcal{R}_0$)-gradient flow.

The remarkable thing about this approach is that it purely operates on the level of functionals. Once the notion of solution is formulated into an inequality, there is no further need to consider an actual equation for the gradient flow.

One should note however that this is the procedure for an ideal gradient flow, where the evolution always goes precisely in the ``gradient''-direction of least energy and is thus less concerned with the behaviour of energy and dissipation in other directions. Clearly this is no longer true if the same evolution is perturbed using forces as is the case for our problem.

In fact a different consequence of this has been noted by the authors of \cite{mielkeExploringFamiliesEnergydissipation2020}; Since only behaviour in the ``gradient''-direction is important, one can show non-uniqueness of the limit energy-dissipation pair, by finding other solutions through artificially mixing effects of energy and dissipation in the other directions.

In order to thus obtain a more robust, canonical solution they argue roughly as follows: If the family of energies $\mathcal{E}_\delta$ is perturbed by adding a fixed, smooth enough energy $\mathcal{F}$, then this should perturb the limit problem only in the same way, i.e.\ while then $\mathcal{F}$ is added to the limit energy $\mathcal{E}_0$, the limit dissipation functional $\mathcal{R}_0$ should not change.

On the side of the energy, this does not change the required conditions as adding a regular enough, constant functional does not change the $\Gamma$-limit, but the same is not true for the total dissipation $\mathcal{D}_\delta$. When looking at \eqref{eq:gradientFlowEnergy}, if $\mathcal{E}_\delta$ is replaced by $\mathcal{E}_\delta + \mathcal{F}$, then the argument of $\mathcal{R}^*$ needs to be changed correspondingly as well. So instead of considering $\Gamma$-convergence of $\mathcal{D}_\delta$, we instead need to consider $\Gamma$-convergence of
\begin{align*}
 \mathcal{D}^{\mathcal{F}}_\delta(y) := \int_0^T \mathcal{R}_\delta(y,\partial_t y) + \mathcal{R}_\delta^*(y,-D\mathcal{E}_\delta(y)-D\mathcal{F}(y)) \mathrm{d}t
\end{align*}
for any $\mathcal{F} \in C^1(X)$, which is the definition of \emph{tilt-EDP convergence}.

However in our physical context, allowing for pertubation by a functional $\mathcal{F}$, corresponds only to pertubation by time-independent, conservative force. This is still not enough. Instead we need to allow for arbitrary forces $f_\delta$, which even can possibly even depend on $\delta$.

For this we first note that mirroring \eqref{eq:curveIneq} we have for some $f:[0,T] \to X^*$ smooth enough
\begin{align*}
 \mathcal{E}(y(0))-\mathcal{E}(y(T)) - \int_0^T \inner{f}{\partial y}dt = \int_0^T \inner{-D\mathcal{E}(y)-f}{\partial_t y} dt \leq \int_0^T \mathcal{R}(y,\partial_t y) + \mathcal{R}^*(y,-D\mathcal{E}(y)-f) dt.
\end{align*}
where equality holds again if and only if $D_2\mathcal{R}(y,\partial_t y) = - D\mathcal{E}(y)-f$. And again equality is precisely the case if
\begin{align*}
 \mathcal{E}(y(T)) + \int_0^T \mathcal{R}(y,\partial_t y) + \mathcal{R}^*(y,-D\mathcal{E}(y)-f) dt \leq \mathcal{E}(y(0)) - \int_0^T \inner{f}{\partial y}dt
\end{align*}
which also clearly identifies the extra term as the work done by the force $f$.

So a formal definition of EDP-convergence with arbitrary forces should look as follows:

\begin{definition}[Abstract EDP-convergence with forces] Let $X$ be a Banach space, families $\mathcal{E}_\delta : X \to \R$ and $\mathcal{R}_\delta: X \times X \to [0,\infty)$, convex in the last argument, given for any $\delta \leq 0$. We say the family $(\mathcal{E}_\delta,\mathcal{R}_\delta)_\delta$ \emph{EDP-converges with forces} to $(\mathcal{E}_0,\mathcal{R}_0)$ if
\begin{itemize}
 \item The family $(\mathcal{E}_\delta)_\delta$ $\Gamma$-converges to $\mathcal{E}_0$
 \item For any sufficiently regular sequence of time-dependent forces $(f_\delta)_\delta$ such that $f_\delta \to f_0$ we have that
\begin{align*}
 \mathcal{D}^{f_\delta}_\delta(y) := \int_0^T \mathcal{R}_\delta(y,\partial_t y) + \mathcal{R}_\delta^*(y,-D\mathcal{E}_\delta(y)-f_\delta) \mathrm{d}t
\end{align*}
$\Gamma$-converges to
\begin{align*}
 \mathcal{D}^{f_0}_0(y) := \int_0^T \mathcal{R}_0(y,\partial_t y) + \mathcal{R}_0^*(y,-D\mathcal{E}_0(y)-f_0) \mathrm{d}t
\end{align*}
\end{itemize}
\end{definition}

Of course in every actual application, the relevant convergences should be specified explicitly. Indeed we will do so for our problem in the next section. However by the previous considerations we can already not the following convergence ``principle'':

If $(\mathcal{E}_\delta,\mathcal{R}_\delta)_\delta$ EDP-converges with forces to $(\mathcal{E}_0,\mathcal{R}_0)$, the family of initial data $(y_\delta(0))_\delta$ is well prepared (in the sense that $\mathcal{E}_\delta(y_\delta(0)) \to \mathcal{E}_0(y_0(0))$) and the actual forces $f_\delta$ converge to the limit $f_0$ in the matching, then any converging sequences of solutions $y_\delta$ to
\begin{align*}
 D\mathcal{E}_\delta(y_\delta) + D_2\mathcal{R}_\delta(y_\delta,\partial_t y_\delta) = f_\delta
\end{align*}
converges to a solution $y_0$ of
\begin{align*}
 D\mathcal{E}_0(y_0) + D_2\mathcal{R}_0(y_0,\partial_t y_0) = f_0.
\end{align*}

It should also be clear that such a principle will transfer to the type of ``time-delayed'' equation we studied.

\subsection{EDP-type convergence for linear elasticity}

The aim of this section is to now apply the previous abstract considerations to our specific problem. Same as in our previous consideration of $\Gamma$-convergence for the elastic energy, we need to now cope with the rescaling and the change of spaces. First of all we note the following:

\begin{remark}
 In our specific situation, we have $\mathcal{R}_\delta^*(y_\delta, \cdot): W^{-1,2}(\Omega;\R^d) \to \R$, yet generically $D\mathcal{E}_\delta(y_\delta) \in W^{-2,q}(\Omega;\R^d)$ only, which already makes $\mathcal{R}_\delta^*(y_\delta,-D\mathcal{E}_\delta(y_\delta)-f_\delta)$ seem ill-defined. However, first of all we are mainly interested in solutions, for which $-D\mathcal{E}_\delta(y_\delta) = D_2\mathcal{R}_\delta(y_\delta,\partial_t y_\delta) - f_\delta \in W^{-1,2}(\Omega;\R^d)$.

 Secondly and maybe more importantly, we can simply restrict $\mathcal{R}_\delta(y_\delta,\cdot)$ to $W^{2,q}(\Omega)\subset W^{1,2}(\Omega;\R^d)$ and take the dual of this. Then one quickly finds
 \begin{align*}
  \left(\mathcal{R}_\delta(y_\delta, \cdot)|_{W^{2,q}(\Omega;\R^d)}\right)^*(\xi) = \begin{cases}
  \mathcal{R}_\delta^*(y_\delta, \xi) &\text{ if } \xi \in W^{-1,2}(\Omega;\R^d)\\
  \infty & \text{ if } \xi \in W^{-2,q}(\Omega;\R^d) \setminus W^{-1,2}(\Omega;\R^d)
  \end{cases}
 \end{align*}
 Here the first line is a direct consequence of $W^{2,q}(\Omega)\subset W^{1,2}(\Omega;\R^d)$ being a dense embedding and the second line is due to the effect that $\mathcal{R}_\delta$ has no control over the second derivatives. On the side of the dual this then translates to the above indicator-function type behavior.

 We will abuse notation slightly and take the above as definition of $\mathcal{R}_\delta^*$.
\end{remark}

We then can deal with lower-semicontinuity of the dual dissipation-potential.

\begin{lemma}
 Let $(y_\delta)_\delta \subset W^{1,2}(\Omega;\R^d)$ such that $u_\delta := \delta^{-1}(y_\delta -\id)$ converges to $u$ in $ W^{1,2}(\Omega;\R^d)$ and $\norm[L^\infty(\Omega)]{\delta^{-1}\nabla u_\delta}$ is uniformly bounded.
 Let $\xi_\delta \rightharpoonup \xi$ in $W^{-1,2}$. Then
 $\liminf_{\delta \to 0} \mathcal{R}^*_\delta(y_\delta,\xi_\delta) \geq \mathcal{R}^*_0(u,\xi)$.
\end{lemma}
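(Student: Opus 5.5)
The natural route is the sup-representation of the Legendre transform, which turns the dual into a supremum of affine functionals in $\xi$; lower semicontinuity of such suprema is automatic once each affine piece converges. For any fixed $\varphi \in W^{2,q}_0(\Omega;\R^d)$ (dense in $W^{1,2}_0$), the definition of $\mathcal{R}^*_\delta$ gives
\begin{align*}
 \mathcal{R}^*_\delta(y_\delta,\xi_\delta) \geq \inner{\xi_\delta}{\delta\varphi}\,\delta^{-2}\cdot\delta^{2}\ldots
\end{align*}
This needs the scaling fixed carefully. Since $\mathcal{R}_\delta(y,v) = \delta^{-2}\int_\Omega R(\nabla y,\nabla v)$, I will test with $v = \delta\varphi$. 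Then
\begin{align*}
 \mathcal{R}^*_\delta(y_\delta,\xi_\delta) \geq \inner{\xi_\delta}{\delta\varphi} - \mathcal{R}_\delta(y_\delta,\delta\varphi).
\end{align*}
To match the linear limit $\mathcal{R}^*_0(u,\xi) = \sup_\varphi \inner{\xi}{\varphi} - \frac12\int_\Omega e(\varphi)\mathbb{D}(\Id)e(\varphi)$, I adopt the convention (implicit in the rescaled equations, where forces appear as $\delta^{-1}$ times the nonlinear ones) that $\xi_\delta$ denotes the rescaled dual variable. Concretely, I will read the statement as $\delta^{-1}$ times the nonlinear force converging weakly to $\xi$, so that the pairing term becomes $\inner{\xi_\delta}{\varphi}$. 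Whichever convention is used, the plan is to choose the test direction $\delta\varphi$ so that the rescaled velocity $\delta^{-1}v = \varphi$ is fixed.

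The first step is the pairing term: $\inner{\xi_\delta}{\varphi} \to \inner{\xi}{\varphi}$ by weak convergence in $W^{-1,2}$, since $\varphi$ is fixed. The second step is the quadratic term. Lemma \ref{lem:dissipRecovery}, applied with the constant sequence $\delta^{-1}v_\delta = \varphi$ (trivially strongly convergent) and with the energy bound replaced by the assumed uniform $L^\infty$ control of $\delta\nabla u_\delta$, gives
\begin{align*}
 \limsup_{\delta\to0} \mathcal{R}_\delta(y_\delta,\delta\varphi) \leq \frac12\int_\Omega e(\varphi)\mathbb{D}(\Id)e(\varphi)\,\mathrm{d}x.
\end{align*}
Inspecting that proof, it only used the expansion \eqref{rewriting-D}, boundedness of $\mathbb{D}$ on bounded sets, $\|\delta\nabla u_\delta\|_{L^\infty}\to0$ (or at least bounded, combined with $u_\delta$ bounded in $W^{1,2}$ and $\varphi\in W^{1,\infty}$), and continuity of $\mathbb{D}$ at $\Id$. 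Here the strong convergence $u_\delta \to u$ together with the $L^\infty$ bound gives $\nabla y_\delta^\top\nabla y_\delta \to \Id$ in measure, boundedly. Dominated convergence then yields $\mathbb{D}(\nabla y_\delta^\top\nabla y_\delta)\to\mathbb{D}(\Id)$ in every $L^r$, which suffices against the fixed $e(\varphi)\in L^\infty$.

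Combining the two steps gives $\liminf_\delta \mathcal{R}^*_\delta(y_\delta,\xi_\delta) \geq \inner{\xi}{\varphi} - \frac12\int_\Omega e(\varphi)\mathbb{D}(\Id)e(\varphi)$ for every $\varphi\in W^{2,q}_0$. The final step is to take the supremum over $\varphi$. By density of $W^{2,q}_0$ in $W^{1,2}_0$, and continuity of the right-hand side in $W^{1,2}$, this supremum equals $\mathcal{R}^*_0(u,\xi)$; it is finite by Korn's inequality, in analogy with Proposition \ref{prop:dualDissipation}.

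I expect the main obstacle to be purely the bookkeeping of the $\delta$-scaling in the dual pairing, that is, choosing the rescaled test direction consistently with how $\xi_\delta$ is rescaled. A secondary point is the remark on restricting to $W^{2,q}$: I must check that the supremum defining $\mathcal{R}^*_\delta$ may be taken over $W^{2,q}_0$, which is fine since a lower bound only requires admissible test functions. No compactness or Fenchel equality case is needed, which is why only a liminf inequality is claimed.
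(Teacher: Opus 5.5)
Your proof is correct, but it takes a genuinely different route from the paper's.

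\textbf{The paper's route (dual side).} The paper chooses representatives $A_{\xi_\delta}$ with $\nabla y_\delta^\top A_{\xi_\delta}$ symmetric and evaluates $\mathcal{R}^*_\delta$ through the explicit formula of Proposition~\ref{prop:dualDissipation}. It extracts a weak $L^2$-limit of the symmetrized representatives and identifies it as a symmetric representative of $\xi$, using $\nabla y_\delta^{-1}\to\Id$. It concludes with the lower semicontinuity argument of Lemma~\ref{lem:dissipLSC}, applied to $\mathbb{D}(\cdot)^{-1}$.

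\textbf{Your route (primal side).} Since $\mathcal{R}^*_\delta(y_\delta,\cdot)$ is a supremum of affine functionals, you reduce the liminf inequality to two ingredients. The first is the convergence $\int_\Omega R(\nabla y_\delta,\nabla\varphi)\,\mathrm{d}x\to\frac12\int_\Omega e(\varphi)\mathbb{D}(\Id)e(\varphi)\,\mathrm{d}x$ for each fixed $\varphi$, a recovery statement along constant sequences as in Lemma~\ref{lem:dissipRecovery}. The second is a density argument. This is the general principle that a pointwise (or $\Gamma$-limsup) bound for convex primal potentials yields the $\Gamma$-liminf bound for their conjugates.

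\textbf{What your route buys.} The argument is shorter and more robust. It needs no dual formula, no invertibility of $\nabla y_\delta$, no symmetric representatives, no subsequence extraction, and no identification of a limit representative. It also avoids passing to the limit in a quadratic form with $\delta$-dependent coefficients when its argument converges only weakly. For Lipschitz $\varphi$ it needs only three things: boundedness of $u_\delta$ in $W^{1,2}$ (so $\delta\nabla u_\delta\to0$ in $L^2$), continuity of $\mathbb{D}$, and uniform boundedness of $\mathbb{D}$. The strong convergence of $u_\delta$ and the $L^\infty$ hypothesis are then not really needed.

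\textbf{What the paper's route buys.} It gives information on the dual variables themselves: the symmetrized representatives converge weakly to a symmetric representative of $\xi$. The paper reuses exactly this machinery in the recovery half of the subsequent proposition (the argument with $B_\delta$).

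\textbf{Scaling convention.} You read $\xi_\delta$ as the rescaled dual variable, paired against the fixed rescaled velocity $\varphi$. The paper tacitly uses the same convention, since it inserts $\xi_\delta$ into the $\delta$-free formula of Proposition~\ref{prop:dualDissipation}. With the literal $\delta^{-2}$-scaling one has $\mathcal{R}^*_\delta(y,\eta)=\mathcal{R}^*(y,\delta\eta)$ (unscaled conjugate on the right). So state this convention once at the outset, and drop the abandoned first display.
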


\begin{proof} Consider representatives $A_{\xi_\delta} \in L^2(\Omega;\R^{d\times d})$ of $\xi_\delta$ such that $\nabla y_\delta^\top A_{\xi_\delta}$ is symmetric. Using Proposition \ref{prop:dualDissipation} we have $\mathcal{R}^*_\delta(y_\delta,\xi_\delta) = \int_{\Omega} \nabla y_\delta A_{\xi_\delta}^\top \mathbb{D}(\nabla y_\delta)^{-1} \nabla y_\delta^\top A_{\xi_\delta} \mathrm{d} x$ which we can assume to be bounded. From the bounds on $\mathbb{D}(\nabla y_\delta)^{-1}$ we know that this also defines a family of norms for $\nabla y_\delta^\top A_\xi$ equivalent to the $L^2(\Omega;\R^{d\times d})$-norm. We can thus assume $\nabla y_\delta^\top A_{\xi_\delta}$ to converge weakly in $L^2$ to a symmetric function $A_0$.

 Furthermore, as $\nabla y_\delta$ and $\nabla y_\delta^{-1}$ converge strongly to $\Id$, we have
 \begin{align*}
  \inner{\xi_\delta}{\varphi} = \int_\Omega \nabla \varphi : A_{\xi_\delta}\mathrm{d} x = \int_\Omega ((\nabla y_\delta)^{-1}\nabla \varphi ) : \nabla y_\delta^\top A_{\xi_\delta}\mathrm{d} x \to \int_\Omega \nabla \varphi :A \mathrm{d} x
 \end{align*}
 from which we infer that $A$ is a representative of $\xi$.

 But then by the same calculation as in Lemma \ref{lem:dissipLSC}, now applied to $\mathbb{D}(\nabla y)^{-1}$, the result follows.
\end{proof}

 Note that this allows us to infer convergence of the DeGiorgi-estimate \eqref{eq:discrEnergyIneq} when we send $\delta \to 0$.

\begin{proposition} Let $(f_\delta)_\delta \subset L^2(I\times\Omega)$ such that $\delta^{-1}f_\delta \to f$. Then
 the full dissipation $\mathcal{D}_\delta^{f_\delta}(y) := \int_0^T \mathcal{R}_\delta(y,\partial_t y) + \mathcal{R}_\delta^*(y,-D\mathcal{E}_\delta(y)-\delta^{-1}f_\delta) \mathrm{d}t$ $\Gamma$-converges to $\mathcal{D}_0(u)$ in the sense that
 \begin{enumerate}
  \item For any sequence $y_\delta \in L^\infty(I;W^{2,p}(\Omega;\R^d)) \cap W^{1,2}(I;W^{1,2}(\Omega;\R^d))$ such that $u_\delta := \delta^{-1}(y_\delta-\id) \rightharpoonup u$ in $W^{1,2}(I;W^{1,2}(\Omega;\R^d))$ and $\norm[L^\infty(\Omega)]{\nabla y_\delta}$ is uniformly bounded, we have
  \begin{align*}
   \liminf_{\delta \to 0} \mathcal{D}_\delta^{f_\delta}(y_\delta) \geq \mathcal{D}_0^{f}(u)
  \end{align*}
  \item For any $u \in W^{1,2}(I;W^{1,2}(\Omega;\R^n))$ there exists $y_\delta \in L^\infty(I;W^{2,p}(\Omega;\R^d)) \cap W^{1,2}(I;W^{1,2}(\Omega;\R^d))$ such that $u_\delta := \delta^{-1}(y_\delta-\id) \to u$ in $W^{1,2}(I;W^{1,2}(\Omega;\R^d))$ and $\norm[L^\infty(\Omega)]{\nabla y_\delta}$ is uniformly bounded, and we have
  \begin{align*}
   \limsup_{\delta \to 0} \mathcal{D}_\delta^{f_\delta}(y_\delta) \leq \mathcal{D}_0^f(u)
  \end{align*}
 \end{enumerate}
\end{proposition}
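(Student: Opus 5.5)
The plan is to treat $\mathcal{D}_\delta^{f_\delta}$ as the sum of two pieces, the primal dissipation $\int_0^T\mathcal{R}_\delta(y,\partial_t y)\,\mathrm{d}t$ and the dual part $\int_0^T\mathcal{R}^*_\delta(y,-D\mathcal{E}_\delta(y)-\delta^{-1}f_\delta)\,\mathrm{d}t$. For each piece we prove a liminf inequality and a recovery statement by integrating the pointwise-in-time results already available. Throughout, the dual pairing is rescaled as in the rest of the paper, so that the argument of $\mathcal{R}^*_\delta$ is read on the displacement level, and the $\delta^{-2}$ scaling of $\mathcal{R}_\delta$ is matched by testing with $\delta^{-1}$-rescaled functions.

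For the \emph{liminf inequality} we take $u_\delta\rightharpoonup u$ in $W^{1,2}(I;W^{1,2})$ with $\nabla y_\delta$ uniformly bounded, and we may assume $\sup_\delta\mathcal{D}_\delta^{f_\delta}(y_\delta)<\infty$.

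1. The primal part is handled by Lemma \ref{lem:dissipLSC}, or rather by its proof. The lower bound \eqref{Taylor-D} holds for a.e.\ $t$ with an error $C\delta^\alpha$. Integrating in time gives $\int_0^T\mathcal{R}_\delta\geq\int_0^T\tfrac12\int_\Omega e(\partial_t u_\delta)\mathbb{D}(\Id)e(\partial_t u_\delta)-C\delta^\alpha T$. The right-hand side is a convex quadratic functional of $\partial_t u_\delta\in L^2(I;W^{1,2})$, hence weakly lower semicontinuous.

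2. For the dual part we set $\xi_\delta:=-D\mathcal{E}_\delta(y_\delta)-\delta^{-1}f_\delta$. Finiteness of $\mathcal{D}_\delta^{f_\delta}$ forces $\xi_\delta(t)\in W^{-1,2}$ for a.e.\ $t$, and the uniform ellipticity of $\mathbb{D}^{-1}$ from Proposition \ref{prop:dualDissipation} bounds $\xi_\delta$ in $L^2(I;W^{-1,2})$. We then identify the weak limit of $\xi_\delta$ by testing with $\varphi\in L^2(I;W^{2,p}_0)$. Lemma \ref{lem:linEnergy} turns the $W$-part into $-\mathrm{div}\,\mathbb{C}e(u_\delta)$ up to $O(\delta)$. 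The $P$-part vanishes by the same Hölder estimate used in Proposition \ref{prop:TDDelta}, giving order $\delta^{1/2}$; the bound on $\delta\nabla^2u_\delta$ comes from $\sup_t\mathcal{E}_\delta$, which must be added as a hypothesis or obtained from finiteness. Hence $\xi_\delta\rightharpoonup -D\mathcal{E}_0(u)-f$.

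3. Finally we apply the preceding lemma on lower semicontinuity of $\mathcal{R}^*_\delta$. Its proof uses symmetric representatives, weak $L^2$ compactness and $\nabla y_\delta\to\Id$ uniformly. Run time-integrated, that is in $L^2(I\times\Omega)$ and with weak convergence only in time, it gives the liminf for the dual part.

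\emph{Main obstacle.} The previous lemma assumes strong convergence of $u_\delta$ in $W^{1,2}$ for fixed $t$, whereas here only weak space–time convergence is available. I would avoid this by using only $\|\nabla y_\delta-\Id\|_{L^\infty}\le C\delta^\alpha$ from Lemma \ref{lem:rigidity}. This makes $\mathbb{D}(\nabla y_\delta^\top\nabla y_\delta)^{-1}\to\mathbb{D}(\Id)^{-1}$ uniformly, so the integrand is a uniformly small perturbation of a fixed convex quadratic form in $A_{\xi_\delta}$. Weak lower semicontinuity of that fixed form then suffices.

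For the \emph{recovery sequence}, by density, as in Lemma \ref{lem:energyRecovery}, we may take $u$ smooth, say $u\in W^{1,2}(I;W^{2,p})\cap C(I;W^{2,p})$, and set $y_\delta=\id+\delta u$. Then $u_\delta=u$, $\nabla y_\delta$ is bounded, and the $P$-contribution is $O(\delta^{p(1-\alpha)})$ in energy. The primal part converges by Lemma \ref{lem:dissipRecovery} applied pointwise in time, with a uniform error $C\delta^\alpha$. For the dual part, $\xi_\delta=-D\mathcal{E}_\delta(y_\delta)-\delta^{-1}f_\delta$ converges strongly in $L^2(I;W^{-1,2})$ to $-D\mathcal{E}_0(u)-f$. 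This follows from Lemma \ref{lem:linEnergy}, whose error is $O(\delta)$ uniformly in $t$, from the hyperstress term, which is $O(\delta^{p(1-\alpha)-1})$ as a $W^{-1,2}$-distribution after using that $u$ is smooth so that its divergence lies in $L^2$, and from $\delta^{-1}f_\delta\to f$. The hyperstress estimate may require slightly more regularity of $u$, which density permits. The explicit formula of Proposition \ref{prop:dualDissipation}, with coefficients converging uniformly to $\mathbb{D}(\Id)^{-1}$, is then continuous under strong convergence of $\xi_\delta$ together with uniform convergence of $\nabla y_\delta$. This gives equality in the limit, and a diagonal argument removes the density reduction.
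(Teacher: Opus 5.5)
Your overall route is the paper's: split $\mathcal{D}^{f_\delta}_\delta$ into its primal and dual parts, take the $\liminf$ from Lemma \ref{lem:dissipLSC} and the preceding lower-semicontinuity lemma for $\mathcal{R}^*_\delta$, and recover with $y_\delta=\id+\delta u$ for smooth $u$. On the $\liminf$ side you are more careful than the paper's one-line proof. You identify the weak limit of $\xi_\delta$, which that lemma only assumes. You run the fixed-time lemma on $I\times\Omega$. You also see that an energy bound must be added for Lemmas \ref{lem:rigidity} and \ref{lem:linEnergy}. One small correction there: the hyperstress term is $O(\delta^{1-\alpha})$, coming from $\norm[L^p(\Omega)]{\nabla^2 y_\delta}\le C\delta^\alpha$, not $O(\delta^{1/2})$.

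The step that does not go through as you justify it is the last sentence of your recovery argument.

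\emph{Why the formula is not enough.} The formula of Proposition \ref{prop:dualDissipation} is valid only at the representative singled out by $y_\delta$. That is the Euler--Lagrange representative $\partial_{\dot{F}}R(\nabla y_\delta,\nabla w_\delta)$, where $w_\delta$ is the maximiser in the definition of the conjugate. It depends nonlocally on $(y_\delta,\xi_\delta)$. Strong convergence of $\xi_\delta$ in $W^{-1,2}$ only gives you \emph{some} strongly convergent representatives, e.g.\ the naive one built from $\delta^{-1}\partial_FW(\nabla y_\delta)$. Two things go wrong with such representatives.
\begin{enumerate}
\item Inserted into the symmetrised integrand, a non-admissible representative bounds $\mathcal{R}^*_\delta$ neither from above nor from below.
\item Admissible representatives converging to a symmetric representative $A$ of $\xi$ only bound the $\limsup$ by the limit formula at $A$. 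That value is strictly larger than $\mathcal{R}^*_0(\xi)$ unless $A=\mathbb{D}(\Id)e(v)$ with $-\mathrm{div}\,\mathbb{D}(\Id)e(v)=\xi$, because nonzero symmetric divergence-free fields exist (e.g.\ $\mathrm{cof}\,\nabla^2\phi$ for $d=2$).
\end{enumerate}
This is the point the paper's correction field $B_\delta$ is aimed at. For the same reason, symmetry of the limit correction alone does not force it to vanish.

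\emph{A fix that avoids representatives.} After your rescaling, $\mathcal{R}^*_\delta(y_\delta,\cdot)$ is the conjugate of $Q_\delta(w):=\int_\Omega R(\nabla y_\delta,\nabla w)\,\mathrm{d}x$. Set $Q_0(w):=\frac12\int_\Omega e(w):\mathbb{D}(\Id)e(w)\,\mathrm{d}x$, so that $Q_0^*=\mathcal{R}^*_0$.
\begin{enumerate}
\item Continuity of $\mathbb{D}$ and $\nabla y_\delta\to\Id$ uniformly give $\abs{Q_\delta(w)-Q_0(w)}\le\omega_\delta\norm[L^2(\Omega)]{\nabla w}^2$ with $\omega_\delta\to0$.
\item Korn's inequality then makes $Q_\delta$ uniformly coercive on $W^{1,2}_0$ for small $\delta$. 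Hence the maximiser satisfies $\norm[W^{1,2}(\Omega)]{w_\delta}\le C\norm[W^{-1,2}(\Omega)]{\xi_\delta}$.
\item Consequently $Q_\delta^*(\xi_\delta)\le\mathcal{R}^*_0(\xi_\delta)+C\omega_\delta\norm[W^{-1,2}(\Omega)]{\xi_\delta}^2$.
\item Integrate in time and use that $\mathcal{R}^*_0$ is a bounded quadratic form on $W^{-1,2}$; this yields the $\limsup$.
\end{enumerate}
Testing the same supremum with a fixed $w$ also gives your dual $\liminf$ without any representatives.

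\emph{A separate gap in the hyperstress claim.} The $W^{-1,2}$-smallness of the hyperstress for $y_\delta=\id+\delta u$ needs quantitative control of $\partial^2_GP$ near $0$, e.g.\ $\abs{\partial^2_GP(G)}\le C\abs{G}^{p-2}$. Assumption \eqref{assumptions-P} does not give this, and smoothness of $u$ does not supply it; the paper is silent on this point too.
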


\begin{proof}
  The $\liminf$-inequality is a direct consequence of the preceeding lemma and Lemma \ref{lem:dissipLSC}. For the recovery sequence, similar to before we can assume $u$ to be smooth by a density argument. Then we define $y_\delta := \id + \delta u$ and the convergence of the $\mathcal{R}_\delta$-term follows as in Lemma \ref{lem:dissipRecovery}.

  As $y_\delta$ is smooth, we have that
  \begin{align*}
   \tilde{A}_\delta := \frac{1}{\delta} \partial_{F} W(\nabla y_\delta) + \frac{1}{\delta^{\alpha p-1}} \mathcal{L}_P(\nabla^2 y_\delta) - F_\delta \in L^2(\Omega;\R^{d\times d})
  \end{align*}
  is a well defined representative of $-\delta D\mathcal{E}_\delta(y_\delta) - f_\delta$, where $F_\delta$ is similarly a symmetric representative of $\delta^{-1}f_\delta\in W^{-1,2}(\Omega;\R^d)$. Employing a similar reasoning to Lemma \ref{lem:linEnergy}, but testing with $\psi \in L^2(\Omega;\R^{d\times d})$ instead of $\nabla \phi$ and estimating the final product differently we have
  \begin{align*}
   \abs{\int_\Omega  (\delta^{-1}\partial_F W(\nabla y_{\delta}) - \mathbb{C}e(u)) \cdot \psi \mathrm{d}x } \leq \delta \norm[L^\infty(\Omega)]{\nabla u}^2 \norm[L^2(\Omega)]{\psi}
  \end{align*}
  which implies $\delta^{-1} \partial_F W(\nabla y_{\delta}) \to  \mathbb{C}e(u)$ in $L^2(\Omega;\R^{d\times d})$. As similarly $\frac{1}{\delta^{\alpha p}} \mathcal{L}_P(\nabla^2 y_\delta) \to 0$ we have that $\tilde{A}_\delta \to -\mathbb{C}e(u) - F$ in $L^2(\Omega;\R^{d\times d})$.

  From the construction of $\mathcal{R}_\delta^*$, we know that there exists a weakly divergence free $B_\delta \in L^2(\Omega;\R^{d\times d})$ such that $\nabla y_\delta^\top (\tilde{A}_\delta + B_\delta)$ is symmetric. Since $\nabla y_\delta$ is invertible in an uniform sense, the same argument as before also implies that after another subsequence $\delta^{-1} B_\delta \rightarrow B$ in $L^2(\Omega;\R^{d\times d})$. With this we have
  \begin{align*}
   0 &= \nabla y_\delta^\top (\tilde{A}_\delta+B_\delta) - (\tilde{A}_\delta + B_\delta)^\top \nabla y_\delta \\
   &= \tilde{A}_\delta - \tilde{A}_\delta^\top + B_\delta - B_\delta^\top + \delta \nabla u^\top (\tilde{A}_\delta + B_\delta) - \delta (\tilde{A}_\delta+B_\delta)^\top \nabla u
  \end{align*}
  where the first two terms cancel by symmetry. This implies
  \begin{align*}
   \norm[L^2(\Omega)]{\delta^{-1} (B_\delta-B_\delta^\top)} \leq 2 C \delta \norm[L^\infty(\Omega)]{\nabla u} \norm[L^2(\Omega)]{\delta^{-1}(\tilde{A}_\delta+B_\delta)}.
  \end{align*}
  So the limit $B$ has no antisymmetric part, which implies that $B=0$.  Proceeding precisely as in Lemma \ref{lem:dissipRecovery}, we then obtain the convergence of the $\mathcal{R}_\delta^*$-term in $\mathcal{D}_\delta^f$.
\end{proof}

Combining this with previous considerations we have then obtain the so called tilt-EDP convergence of the system, which can be summarized as a theorem.
\begin{theorem}[EDP convergence with forces]
 The sequence of functionals $(\mathcal{E}_\delta,\mathcal{R}_\delta)_\delta$ EDP converges with forces in the following sense:
 \begin{enumerate}
  \item The sequence $(\mathcal{E}_\delta)_\delta$ $\Gamma$-converges to $\mathcal{E}_0$ in the sense of Corollary \ref{cor:energyGamma}.
  \item For any family of forces $(f_\delta)_\delta \subset L^2([0,T]\times\Omega;\R^d)$ such that $\delta^{-1} f_\delta$ strongly converges in $L^2$, the sequence $(\mathcal{D}_\delta^{f_\delta})_\delta$ $\Gamma$-converges to $\mathcal{D}_0^f$.
 \end{enumerate}
\end{theorem}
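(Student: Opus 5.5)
The theorem just assembles results already proved, so I would prove its two items separately and cite the relevant statements.

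For item 1 there is nothing new to do. Corollary \ref{cor:energyGamma} gives the $\Gamma$-convergence of $\mathcal{E}_\delta$ to $\mathcal{E}_0$ with respect to weak convergence of the rescaled displacements $u_\delta=\delta^{-1}(y_\delta-\id)$ in $W^{1,2}(\Omega;\R^d)$. That corollary rests on Lemmas \ref{lem:energyLSC} and \ref{lem:energyRecovery}, and equi-coercivity comes from Lemma \ref{lem:rigidity}. I would only note that the recovery sequence $y_\delta=\id+\delta u$ (for smooth $u$) also satisfies $\norm[L^\infty(\Omega)]{\nabla y_\delta}\le C$. This keeps it admissible for item 2, so both items use the same convergence setting.

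For item 2 I would apply the preceding Proposition with the forces $f_\delta$. Its hypothesis $\delta^{-1}f_\delta\to f$ strongly in $L^2(I\times\Omega)$ is exactly the assumption here.

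The liminf part works as follows. Take $u_\delta\rightharpoonup u$ in $W^{1,2}(I;W^{1,2})$ with uniformly bounded $L^\infty$-gradients. The rate term is handled by Lemma \ref{lem:dissipLSC}, applied pointwise in time with $v_\delta=\partial_t y_\delta$, followed by Fatou. For the dual term, first use the Taylor estimates of Lemma \ref{lem:linEnergy}, which show $\delta D\mathcal{E}_\delta(y_\delta)+\delta^{-1}f_\delta\rightharpoonup \mathbb{C}e(u)\cdot+f$ in $L^2(I;W^{-1,2})$ along subsequences with finite dissipation. Then apply the lower semicontinuity lemma for $\mathcal{R}^*_\delta$ for a.e.\ $t$ and use Fatou again.

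The recovery part is the $y_\delta=\id+\delta u$ construction already carried out in that Proposition, after approximating $u$ by smooth functions.

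The main obstacle is where the time integral and the $\delta$-limit meet. Weak convergence in $W^{1,2}(I;W^{1,2})$ does not give weak convergence of $\xi_\delta(t)$ at each fixed $t$. So I would not argue pointwise in time. Instead I would prove joint lower semicontinuity of $(u,\xi)\mapsto\int_I\mathcal{R}^*_0(u,\xi)\,\mathrm{d}t$ under weak $L^2(I;W^{-1,2})$ convergence. The tools are the symmetric-representative argument of Proposition \ref{prop:dualDissipation} together with convexity of the quadratic form in $\mathbb{D}(\Id)^{-1}$, plus the error bounds from the lemmas, which hold uniformly in $t$.

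A second point is the density step in the recovery sequence. To pass from smooth $u$ to general $u$, I would use strong continuity of $\mathcal{D}_0^f$ in $W^{1,2}(I;W^{1,2})$, which holds because $\mathcal{D}_0^f$ is quadratic, and then take a diagonal sequence.
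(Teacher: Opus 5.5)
Your proposal is correct and follows the paper. The paper gives no separate proof of this theorem; it obtains it exactly as you do, by combining Corollary \ref{cor:energyGamma} for item 1 with the preceding Proposition on the $\Gamma$-convergence of $\mathcal{D}_\delta^{f_\delta}$ for item 2. That Proposition gets its liminf from the lower-semicontinuity lemma for $\mathcal{R}^*_\delta$ together with Lemma \ref{lem:dissipLSC}, and its recovery sequence from $y_\delta=\id+\delta u$ for smooth $u$ plus density.

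One caution on your optional re-derivation of the liminf. Lemma \ref{lem:linEnergy} only linearizes $\partial_F W$ and presupposes an energy bound. Identifying the weak limit of $-\delta D\mathcal{E}_\delta(y_\delta)-\delta^{-1}f_\delta$ therefore also requires showing that the rescaled hyperstress term $\delta^{1-\alpha p}\,\mathrm{div}\,\mathrm{div}\,\partial_G P(\nabla^2 y_\delta)$ vanishes, and this is not automatic under the bare $\Gamma$-liminf hypotheses. In addition, your pointwise-in-time use of Lemma \ref{lem:dissipLSC} with Fatou for the rate term has the same defect you correctly flag for $\xi_\delta(t)$.
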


Not only does this allow us to conclude convergence of the respective gradient flow problem, but it also allows us a different, parallel take at the convergence result for the time-delayed equation in Proposition \ref{prop:TDDelta}.

\begin{corollary}\label{cor:tilt-EDP} 
 Let $(y_\delta)_\delta$ be a sequence of solutions to (NL-TD) satisfying the energy inequality. Then there exists a subsequence of $u_\delta = \delta^{-1}(y_\delta -\id)$ which converges to a limit $u$ that solves (L-TD).
\end{corollary}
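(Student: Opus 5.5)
Following the EDP philosophy, we avoid passing to the limit term by term in the weak equation. Instead we pass to the limit in the energy inequality, written in De Giorgi form, and then use the equality case of Fenchel--Young.

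\emph{Step 1: compactness.} Lemma \ref{lem:TDBounds} gives, for fixed $h$, $\delta$-uniform bounds on $\sup_t\mathcal{E}_\delta(y_\delta(t))$, on the delayed kinetic term and on $\int_0^T\mathcal{R}_\delta$. Lemma \ref{lem:rigidity} and Corollary \ref{lem:KornCont} then bound $u_\delta$ in $L^\infty(I;W^{1,2})\cap W^{1,2}(I;W^{1,2})$ and give $\norm[L^\infty]{\delta\nabla u_\delta}\le C\delta^\alpha$. We extract $u_\delta\rightharpoonup u$ in $W^{1,2}(I;W^{1,2})$ and, by Aubin--Lions, $u_\delta(t)\rightharpoonup u(t)$ in $W^{1,2}$ for every $t$. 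The delayed velocity $\delta^{-1}\partial_t y_\delta(\cdot-h)$ converges weakly to $\partial_t u(\cdot-h)$, extended by $v_0$ for negative times.

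\emph{Step 2: reformulate (NL-TD) as a force-perturbed gradient flow.} On each interval $[(j-1)h,jh]$ treat the inertial term as part of the force. Set
\[
g_\delta:=\delta^{-1}f_\delta-\tfrac{\varrho}{h}\,\delta^{-1}\bigl(\partial_t y_\delta-\partial_t y_\delta(\cdot-h)\bigr).
\]
The energy inequality \eqref{eq:TDRineq} obtained in the proof of Proposition \ref{prop:NLTDexistence} is exactly the inequality $\mathcal{E}_\delta(y(T))+\mathcal{D}^{g_\delta}_\delta(y)\le\mathcal{E}_\delta(y(0))+\text{work}$, once the quadratic kinetic pieces are rearranged as in the abstract discussion. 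Here the kinetic part $\frac{\varrho}{2h}\abs{\cdot-w}^2$ is absorbed into a modified dissipation $\mathcal{R}_\delta+\frac{\varrho}{2h\delta^2}\norm{\cdot}^2_{L^2}$, as in the introduction. Since this added term is $\delta$-independent after rescaling and its dual is explicit, the EDP theorem above (item 2) carries over to the modified pair. The $\liminf$ part uses the lemma on lower semicontinuity of $\mathcal{R}^*_\delta$ together with Lemma \ref{lem:dissipLSC}; the $L^2$-part is convex and weakly lower semicontinuous.

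\emph{Step 3: limit inequality and identification.} For the energy terms we use Lemma \ref{lem:energyLSC} at time $T$ and well-preparedness at time $0$. The right-hand side contains the work $\int\inner{\delta^{-1}f_\delta}{\partial_t u_\delta}$ and the delayed term $\frac{\varrho}{2h}\int\abs{\delta^{-1}\partial_t y_\delta(\cdot-h)}^2$. The first converges by strong convergence of the force. The second we handle inductively over the intervals of length $h$: on $[0,h]$ it is given data, and on later intervals it involves velocities from the previous interval. These enter the left-hand side of the previous step's inequality, so the whole estimate telescopes, as in the existence proof. Taking $\liminf$ yields the De Giorgi inequality for $(\mathcal{E}_0,\mathcal{R}_0)$, with limit force $f-\frac{\varrho}{h}(\partial_t u-\partial_t u(\cdot-h))$. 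Combined with the reverse Fenchel--Young inequality \eqref{eq:curveIneq} for the limit curve, this becomes an equality. Theorem \ref{thm:Fenchel} then gives $D_2\mathcal{R}_0(\partial_t u)=-D\mathcal{E}_0(u)-f+\frac{\varrho}{h}(\partial_t u-\partial_t u(\cdot-h))$ a.e. in time, which is the weak form of (L-TD). Its energy inequality follows from the same limit. By the uniqueness in Corollary \ref{cor:LTDexistence}, the whole sequence converges.

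\emph{Main obstacle.} The delicate point is the chain-rule / reverse inequality for the limit curve. The force $g_\delta$ contains $\partial_t u_\delta$, which converges only weakly, so the term $\inner{g}{\partial_t u}$ on the right-hand side is not continuous. We first try to circumvent this by keeping the quadratic inertial terms inside the modified dissipation rather than in the force, so that only lower semicontinuity of convex quadratics is needed. The strict convexity of $\mathcal{R}_0+\frac{\varrho}{2h}\norm{\cdot}^2$ then pins down the identification, as in the argument identifying $\beta=\partial_t y$ in the proof of Proposition \ref{prop:NLTDexistence}.
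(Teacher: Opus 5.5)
Your proposal follows the paper's proof. You take $\delta$-uniform bounds from Lemma \ref{lem:TDBounds}, rewrite the telescoped energy inequality of (NL-TD) via Fenchel--Young and the equation as an energy--dissipation inequality with the inertial term acting as a tilt force, use lower semicontinuity of the left-hand side as $\delta\to0$, and conclude with the chain rule plus the equality case of Theorem \ref{thm:Fenchel}; putting the whole increment $\frac{\varrho}{h}(\partial_t u_\delta-\partial_t u_\delta(\cdot-h))$ into the force is just a cleaner bookkeeping of the paper's $\tilde f$. Your ``main obstacle'' paragraph is superfluous. After the telescoping in your Step 3, all kinetic terms except $\frac{\varrho}{2}\norm[L^2(\Omega)]{v_0}^2$ sit on the left and are weakly lower semicontinuous, and the reverse inequality for the limit curve is only the chain rule for the quadratic $\mathcal{E}_0$ plus the identity $\frac{\varrho}{h}\inner{a-b}{a}=\frac{\varrho}{2h}\bigl(\abs{a}^2-\abs{b}^2+\abs{a-b}^2\bigr)$, so no modified dissipation or strict-convexity argument is needed (and moving only $\frac{\varrho}{2h}\norm{\cdot}^2$ into the dissipation would still leave the non-continuous cross term $\frac{\varrho}{h}\inner{\partial_t u_\delta(\cdot-h)}{\partial_t u_\delta}$ in the work).
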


\begin{proof}
 For compactness we employ the uniform energy estimate from Lemma \ref{lem:TDBounds} to find a converging subsequence as before. Next we note that the Fenchel-Young inequality \eqref{eq:FenchelEquality} and (NL-TD) imply
 \begin{align*}
  \inner{D\mathcal{R}_\delta(y_\delta,\partial_t y_\delta)}{\partial_t y_\delta} &= \mathcal{R}(y_\delta,\partial_t y_\delta) + \mathcal{R}^*(y_\delta, D\mathcal{R}_\delta(y_\delta,\partial_t y_\delta)) \\&= \mathcal{R}(y_\delta,\partial_t y_\delta) + \mathcal{R}^*(y_\delta, -D\mathcal{E}_\delta(y_\delta) - f - \tfrac{\rho}{h} \partial_t y_\delta(\cdot -h) ).
 \end{align*}
 With this the energy inequality can be rewritten as
 \begin{align*}
  \mathcal{E}_\delta(y_\delta(t_0)) +\fint_{t_0-h}^{t_0} \frac{\rho}{2} \norm[L^2(\Omega)]{\partial_t y}^2 \mathrm{d}t + \mathcal{D}_\delta^{\tilde{f}}(y_\delta)  \leq \mathcal{E}_\delta(y_0) + \fint_{-h}^{0} \frac{\rho}{2} \norm[L^2(\Omega)]{\partial_t y}^2 \mathrm{d}t + \int_0^{t_0} f \cdot \partial_t y_\delta \mathrm{d}t
 \end{align*}
 for $\tilde{f} = f+ \tfrac{1}{h} \partial_t y_\delta(\cdot -h)$.

 By the lower semicontinuity of the left hand side, we can then infer that for $\delta \to 0$
 \begin{align} \label{eq:EDPlimitEnergy}
  &\mathcal{E}_0(u(t_0)) +\fint_{t_0-h}^{t_0} \frac{\rho}{2} \norm[L^2(\Omega)]{\partial_t u}^2 \mathrm{d}t+ \int_0^{t_0} \mathcal{R}_0(u,\partial_t u) + \mathcal{R}^*_0(u,-D\mathcal{E}_0(u) - f - \tfrac{1}{h} \partial_t u(\cdot -h) ) \mathrm{d}t \\
  &\leq \mathcal{E}_0(u_0)+ \fint_{-h}^{0} \frac{\rho}{2} \norm[L^2(\Omega)]{\partial_t u}^2 \mathrm{d}t+ \int_0^{t_0} f \cdot \partial_t u_\delta \mathrm{d}t.
 \end{align}

 However by the chain rule and the Fenchel-Young inequality \eqref{eq:FenchelEquality}, we obtain directly
 \begin{align*}
  & \int_0^{t_0} \mathcal{R}_0(u,\partial_t u) + \mathcal{R}^*_0(u,-D\mathcal{E}_0(u) - f - \tfrac{1}{h} \partial_t u(\cdot -h) ) \mathrm{d}t \geq \int_0^{t_0} \inner{-D\mathcal{E}_0(u) - f - \tfrac{\rho}{h} \partial_t u(\cdot -h) }{\partial_t u} \mathrm{d}t\\
  & \geq \mathcal{E}(u_0) - \mathcal{E}_0(u(t_0)) - \int_0^{t_0} \inner{f}{\partial_t u} - \inner{\partial_t u(\cdot -h)}{\partial_t u} \mathrm{d}t
 \end{align*}
 which after reordering the terms is \eqref{eq:EDPlimitEnergy} with the opposite inequality. Thus the Fenchel-Young inequality has to be an equality which implies $-D\mathcal{E}_0(u) - f - \tfrac{1}{h} \partial_t u(\cdot -h) = D\mathcal{R}_0(u,\partial_t u)$, i.e.\ that $u$ is indeed a weak solution to (L-TD).
\end{proof}

\subsection{Remarks on a dynamic \texorpdfstring{$\Gamma$}{Gamma}-convergence}

We now want to apply a similar approach to the dynamic problem. Ultimatly we will rely on a simple inequality. For any $\delta > 0$ and any sufficiently regular function $y_\delta$ and $s \in [0,T]$, we have
\begin{align*}
 &\phantom{{}={}}\mathcal{E}_\delta(y_0)  + \int_\Omega \frac{\rho}{2} \abs{v_0}^2  - \left[ \mathcal{E}_\delta(y_\delta(s)) + \int_\Omega \frac{\rho}{2} \abs{\partial_t y_\delta (s)}^2 \right] = -\int_0^s \frac{\mathrm{d}}{\mathrm{d}t} \left[\mathcal{E}_\delta(y_\delta) + \int_\Omega \frac{\rho}{2} \abs{\partial_t y_\delta}^2 \right] \mathrm{d}t \\ &= \int_0^s \inner{-D\mathcal{E}_\delta(y_\delta) - \rho\partial_t^2 y_\delta-f_\delta}{\partial_t y_\delta} + \inner{f_\delta}{\partial_t y_\delta} \mathrm{d}t\\
 &\leq \int_0^s \mathcal{R}_\delta(y_\delta, \partial_t y_\delta) + \mathcal{R}_\delta^*(y_\delta , -D\mathcal{E}_\delta(y_\delta) - \rho\partial_t^2 y_\delta-f_\delta) + \inner{f_\delta}{\partial_t y_\delta}\mathrm{d}t
\end{align*}
where equality holds if and only if $D\mathcal{R}_\delta(y_\delta,\partial_t y_\delta) = -D\mathcal{E}_\delta(y_\delta) - \partial_t^2 y_\delta-f_\delta$, i.e.\ if $y$ is a solution to our problem.

But if in fact $y_\delta$ is a solution and we apply the same compactness arguments as before, then the left hand side is upper semicontinuous (as the energy is lower semicontinuous). So if the right hand side can be shown to be lower-semicontinuous, then we again have equality in the limit. By the same argument we then obtain $D\mathcal{R}_0(u_0,\partial_t u_0) = -D\mathcal{E}_0(u_0) - \partial_t^2 u_0$, i.e.\ that the limit is indeed a solution to the limit problem.

Following our previous considerations, we have seen that tilt-EDP convergence holds in our case. Only now, instead of having to study a given generalized force $\frac{1}{h}\partial_t y_\delta(\cdot - h) + f_\delta$, we will need to study $\rho\partial_t^2 y_\delta-f_\delta$, where the first term now depends on the solution itself. While from our previous considerations, we can immediately derive that the limit inequality in the above form is true, this involves using the limit equation, which, while still illustrative, somewhat removes the point of the construction. If instead we desire a direct proof, there are two obstacles:

The first is due to the specific regularity of our problem. While modifying the previous proof to allow for forces in $L^2(I;W^{-1,2}(\Omega;\R^d))$ is straightforward, using the approximative equation we can only show that $\partial_t^2 y_\delta \in L^2(I;W^{-2,p}(\Omega;\R^d))$. Only after adding $D\mathcal{E}_\delta(y_\delta)$ to it, do we end up in $L^2(I;W^{-1,2}(\Omega;\R^d))$ again, as the sum $-D\mathcal{E}_\delta(y_\delta) - \partial_t^2 y_\delta-f_\delta$ is equal to $D_2\mathcal{R}(y_\delta;\partial_t y_\delta) \in L^2(I;W^{-1,2}(\Omega;\R^d))$. However for our purposes this is fine, as instead of showing full $\Gamma$-convergence of the dissipation, convergence along a sequence of solutions is sufficient.

The second problem is in identifying the limit. While $\delta^{-1}\partial_t^2 y_\delta$ is a linear term and thus each limit has to be equal to the distributional limit $\partial_t^2 u$, the same does not immediately follow for $D\mathcal{E}_\delta(y_\delta)$. However due to the fact that the non-linear terms vanish are we able to proceed here as well.

We are thus able to show the following without using the limit equation.

\begin{proposition} \label{prop:dynamicEDP}
 Let $(y_\delta)_\delta \subset L^\infty(I;W^{2,p}(\Omega;\R^d)) \cap W^{1,2}(I;W^{1,2}(\Omega;\R^d))$ be a sequence of solutions to (NL-TD) for well prepared forces and initial data. Then there exists a subsequence and a limit $u \in L^\infty(I;W^{1,2}(\Omega;\R^d)) \cap W^{1,2}(I;W^{1,2}(\Omega;\R^d)) \cap W^{2,2}(I;W^{-1,2}(\Omega;\R^d))$ with $u_\delta := \delta^{-1}(y_\delta-\id) \rightharpoonup u$ in that space, such that $u$ satisfies
 \begin{align} \label{eq:dynamicLimitEDP}
  \mathcal{E}_0(u(s)) + \frac{\rho}{2} \norm[L^2]{\partial_t u(s)}^2 + \int_0^s \mathcal{R}_0(u,\partial_t u) + \mathcal{R}_0^*(u,-D\mathcal{E}_0(u(s))-f-\partial_t^2 u)  +  \inner{f}{\partial_t u}\mathrm{d}t \leq E(u_0) + \frac{\rho}{2} \norm[L^2]{v_0}^2
 \end{align}
 for all $s \in I$.
\end{proposition}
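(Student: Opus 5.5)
We will prove \eqref{eq:dynamicLimitEDP} by passing to the limit in the nonlinear energy–dissipation identity, using only lower semicontinuity and not the limit equation. (The statement says ``(NL-TD)'', but the estimate concerns the dynamic problem, so we work with solutions of (NL) from Definition \ref{def:solNL}.)

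\emph{Step 1: rewrite the energy inequality.} For each $\delta$, the weak equation identifies $\xi_\delta := -D\mathcal{E}_\delta(y_\delta)-\varrho\partial_t^2 y_\delta - f_\delta$ with $D_2\mathcal{R}_\delta(y_\delta,\partial_t y_\delta)\in L^2(I;W^{-1,2})$. The equality case of Theorem \ref{thm:Fenchel} then turns the dissipation term of the (NL) energy inequality into $\int_0^s \mathcal{R}_\delta(y_\delta,\partial_t y_\delta)+\mathcal{R}_\delta^*(y_\delta,\xi_\delta)\,\mathrm{d}t$. After the usual $\delta^{-2}$ rescaling, we obtain the $\delta$-version of \eqref{eq:dynamicLimitEDP} with the work term on the left.

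\emph{Step 2: compactness.} As in Theorem \ref{thm:Delta}, Lemma \ref{lem:rigidity} and Corollary \ref{lem:KornCont} give $u_\delta$ bounded in $L^\infty(I;W^{1,2})\cap W^{1,2}(I;W^{1,2})$, with $\partial_t u_\delta$ bounded in $L^\infty(I;L^2)$ and $\|\delta\nabla u_\delta\|_{L^\infty}\le C\delta^\alpha$. For the new $W^{2,2}(I;W^{-1,2})$ bound, we decompose $\varrho\partial_t^2 u_\delta = -\delta^{-1}D\mathcal{E}_\delta(y_\delta)-\delta^{-1}D_2\mathcal{R}_\delta-\delta^{-1}f_\delta$. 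The $W$-part and the $\mathcal{R}$-part are bounded in $L^2(I;W^{-1,2})$ by Lemmas \ref{lem:linEnergy} and \ref{lem:linDissip}. The hyperstress part is only bounded in $W^{-2,p'}$ but vanishes there like $\delta^{1/2}$. Extracting subsequences, we get $u_\delta\rightharpoonup u$. Since $\partial_t^2 u_\delta$ is linear, it converges distributionally to $\partial_t^2 u$, and the same estimates show $u\in W^{2,2}(I;W^{-1,2})$.

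\emph{Step 3: liminf of each term.} The terms $\mathcal{E}_\delta(y_\delta(s))$ and $\frac{\varrho}{2}\|\partial_t u_\delta(s)\|^2$ are handled by Lemma \ref{lem:energyLSC} and weak lsc, for a.e. $s$ and then all $s$ by weak continuity. The term $\int\mathcal{R}_\delta$ is handled by Lemma \ref{lem:dissipLSC} integrated in time via Fatou/convexity. The work term $\int\inner{\delta^{-1}f_\delta}{\partial_t u_\delta}$ converges by strong–weak convergence. The right-hand side converges by well-preparedness.

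\emph{Step 4: the dual term (main obstacle).} Here we must show $\xi_\delta/\delta \rightharpoonup \xi:=-D\mathcal{E}_0(u)-f-\varrho\partial_t^2u$ weakly in $L^2(I;W^{-1,2})$, even though $\partial_t^2 u_\delta$ and $D\mathcal{E}_\delta$ are individually worse than $W^{-1,2}$. The plan is as follows.
\begin{itemize}
\item Since $\xi_\delta/\delta=\delta^{-1}D_2\mathcal{R}_\delta(y_\delta,\partial_t y_\delta)$, Lemma \ref{lem:linDissip} shows it is bounded in $L^2(I;W^{-1,2})$. Hence a weak limit $\eta$ exists.
\item Against test functions $\varphi\in C_c^\infty$, Lemma \ref{lem:linEnergy} and the $\delta^{1/2}$ decay of the $P$-term identify $\langle\xi_\delta/\delta,\varphi\rangle\to\langle\xi,\varphi\rangle$.
\item This gives $\eta=\xi$ by density.
\end{itemize}
Then the lower semicontinuity lemma for $\mathcal{R}^*_\delta$ (proved before Proposition \ref{prop:dualDissipation}'s EDP application), applied pointwise in time and integrated with Fatou, yields $\liminf\int_0^s\mathcal{R}^*_\delta \ge \int_0^s\mathcal{R}^*_0(u,\xi)$. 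The delicate point is that this lemma requires strong convergence of $u_\delta(t)$ for a.e.\ $t$. We plan to either use that only $\nabla y_\delta\to\Id$ uniformly is actually needed (true by the $L^\infty$ bound), or apply the lemma in its time-integrated form using joint convexity of $(\text{sym. representative})\mapsto \int A:\mathbb{D}^{-1}A$. Combining Steps 3 and 4 gives \eqref{eq:dynamicLimitEDP}.
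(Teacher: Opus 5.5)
Your proposal follows essentially the paper's proof: you rewrite the (NL) energy inequality via the equality case of Theorem \ref{thm:Fenchel}, bound $D_2\mathcal{R}_\delta(y_\delta,\partial_t y_\delta)$ in $L^2(I;W^{-1,2})$ by Lemma \ref{lem:linDissip}, identify its weak limit as $-D\mathcal{E}_0(u)-f-\varrho\partial_t^2u$ via Lemma \ref{lem:linEnergy}, the vanishing hyperstress and the distributional convergence of $\partial_t^2u_\delta$, and conclude by lower semicontinuity. The one point to tighten is the dual term, which the paper also passes over, and here your first option does not suffice. Uniform convergence $\nabla y_\delta\to\Id$ handles the state argument, but pointwise-in-time Fatou is unavailable, because the rescaled $\xi_\delta(t)$ need not converge weakly for a.e.\ $t$. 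You therefore need the time-integrated version, which works as follows (in rescaled variables, with $\zeta_\delta$ the rescaled dual argument and $\zeta$ its weak limit). For fixed $v\in L^2(I;W^{1,2}_0)$,
\begin{align*}
\int_0^s\mathcal{R}^*_\delta(y_\delta,\zeta_\delta)\,\mathrm{d}t\ge\int_0^s\langle\zeta_\delta,v\rangle-\mathcal{R}_\delta(y_\delta,v)\,\mathrm{d}t .
\end{align*}
The right-hand side converges by weak convergence in $L^2(I;W^{-1,2})$ together with the uniform convergence $\nabla y_\delta\to\Id$. Taking the supremum over $v$ gives $\int_0^s\mathcal{R}^*_0(\zeta)\,\mathrm{d}t$, since the pointwise maximizer $(-\mathrm{div}\,\mathbb{D}e(\cdot))^{-1}\zeta(t)$ lies in $L^2(I;W^{1,2}_0)$.
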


\begin{proof}
 As in Theorem \ref{thm:Delta}, we begin by using the bounds from the energy inequality for (NL) to extract a converging subsequence and a limit, such that
 \begin{align*}
  \delta^{-1}u_\delta &\rightharpoonup u &\text{ in }& W^{1,2}(I;W^{1,2}(\Omega;\R^d)) \\
   y_\delta & \to \id &\text{ in }& L^2(I\times \Omega;\R^d).
 \end{align*}

 Using Lemma \ref{lem:linDissip} this implies in particular that $D\mathcal{R}_\delta(y_\delta,\partial_t y_\delta)$ is uniformly bounded in $L^2(I;W^{-1,2}(\Omega;\R^d))$. Since $y_\delta$ is a weak solution and $\delta^{-1} f_\delta$ is similarly bounded, this implies, after possibly another subsequence that
 \begin{align*}
  D\mathcal{E}_\delta(y_\delta) +\partial_t^2 y_\delta & \rightharpoonup \beta &\text{ in }& L^2(I;W^{-1,2}(\Omega;\R^d)).
 \end{align*}
 Additionally we know that in a distributional sense by linearity $\delta^{-1}\partial_t^2 y_\delta \rightharpoonup \partial_t^2 u$ and by Lemma \ref{lem:linEnergy} and the scaling of the second order term $D\mathcal{E}_\delta(y_\delta) \rightharpoonup D\mathcal{E}_0(u)$. This implies that $\beta = D\mathcal{E}_0(u) + \partial_t^2 u$.

 Next we rewrite the energy inequality, by noting that due to the Fenchel-Young inequality and the weak equation itself
 \begin{align*}
  \int_0^s \inner{D\mathcal{R}_\delta(y_\delta,\partial_t y_\delta)}{\partial_t y_\delta} \mathrm{d}s 
  &= \int_0^s \mathcal{R}_\delta(y_\delta,\partial_t y_\delta) + \mathcal{R}_\delta^*(y_\delta,D\mathcal{R}_\delta(y_\delta,\partial_t y_\delta)) \mathrm{d}s \\
  &= \int_0^s \mathcal{R}_\delta(y_\delta,\partial_t y_\delta) + \mathcal{R}_\delta^*(y_\delta,-D\mathcal{E}_\delta(y_\delta) - f_\delta- \partial_t^2 y_\delta)) \mathrm{d}s
 \end{align*}
 using the previously derived convergences, as well as the lower-semicontinuity of $\mathcal{E}$, $\mathcal{R}$ and $\mathcal{R}^*$ then implies \eqref{eq:dynamicLimitEDP}.
\end{proof}

Combining this with the considerations from the beginning of this section, we can then immediately conclude the existence of a limit solution in a purely variational way.
\begin{corollary} \label{cor:dynamicEDP}
 The limit $u$ of the previous proposition is a weak solution to (L).
\end{corollary}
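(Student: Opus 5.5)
We want to show that the limit $u$ from Proposition \ref{prop:dynamicEDP} is a weak solution of (L). The plan is to turn the limiting energy--dissipation inequality \eqref{eq:dynamicLimitEDP} into an equality via the chain rule and Fenchel--Young. Its equality case then forces the linear force balance pointwise in time. This mirrors the argument in Corollary \ref{cor:tilt-EDP}, with the delayed-inertia term replaced by $\rho\partial_t^2 u$.

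\textbf{Step 1: reverse inequality along the limit curve.} By Proposition \ref{prop:dynamicEDP} we have
$$u \in W^{1,2}(I;W^{1,2}_0(\Omega;\R^d)) \cap W^{2,2}(I;W^{-1,2}(\Omega;\R^d)).$$
Hence $\xi := -D\mathcal{E}_0(u)-f-\rho\partial_t^2 u$ lies in $L^2(I;W^{-1,2}(\Omega;\R^d))$, because $D\mathcal{E}_0(u) = -\mathrm{div}\,\mathbb{C}e(u) \in L^\infty(I;W^{-1,2})$. The pairing $\inner{\xi}{\partial_t u}$ is therefore integrable in time. Applying Theorem \ref{thm:Fenchel} pointwise in $t$ to the quadratic functional $\mathcal{R}_0(u,\cdot)$ gives
$$\int_0^s \mathcal{R}_0(u,\partial_t u)+\mathcal{R}_0^*(u,\xi)\,\mathrm{d}t \ge \int_0^s \inner{\xi}{\partial_t u}\,\mathrm{d}t.$$

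\textbf{Step 2: chain rule.} Next I would justify the identities
$$\int_0^s \inner{D\mathcal{E}_0(u)}{\partial_t u}\,\mathrm{d}t = \mathcal{E}_0(u(s))-\mathcal{E}_0(u_0)$$
and
$$\int_0^s \inner{\rho \partial_t^2 u}{\partial_t u}\,\mathrm{d}t = \frac{\rho}{2}\norm[L^2]{\partial_t u(s)}^2 - \frac{\rho}{2}\norm[L^2]{v_0}^2.$$
The first holds because $\mathcal{E}_0$ is a continuous quadratic form on $W^{1,2}_0$ and $u \in W^{1,2}(I;W^{1,2}_0)$. The second is the Lions--Magenes lemma for the Gelfand triple $W^{1,2}_0\subset L^2\subset W^{-1,2}$, applied to $\partial_t u \in L^2(I;W^{1,2}_0)\cap W^{1,2}(I;W^{-1,2})$. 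It also gives $\partial_t u \in C(I;L^2)$, so the pointwise values make sense.

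For both identities I need the initial values $u(0)=u_0$ and $\partial_t u(0)=v_0$. I would pass these to the limit from the well-prepared data, using weak continuity in time of $u_\delta$ and $\partial_t u_\delta$ into $W^{-1,2}$. This identification of the initial velocity is the step I expect to need most care. The plan is to use the bound on $\partial_t^2 u_\delta$ in $L^2(I;W^{-2,p})$ and Arzel\`a--Ascoli in a weak topology.

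Combining Steps 1 and 2 shows that the left side of \eqref{eq:dynamicLimitEDP} is at least its right side, with the force work $\inner{f}{\partial_t u}$ moved to the left as in the statement.

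\textbf{Step 3: identify the equation.} Together with \eqref{eq:dynamicLimitEDP} we obtain equality. Since the Fenchel--Young gap is nonnegative for a.e.\ $t$, it must vanish a.e. The equality case of Theorem \ref{thm:Fenchel} then gives
$$D_2\mathcal{R}_0(u,\partial_t u) = -D\mathcal{E}_0(u)-f-\rho\partial_t^2 u \quad\text{in } W^{-1,2}, \text{ for a.e. } t,$$
that is, $\rho\partial_t^2 u - \mathrm{div}(\mathbb{C}e(u)+\mathbb{D}e(\partial_t u)) = f$.

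Testing with $\varphi\in C_c^\infty([0,T)\times\Omega)$ and integrating by parts in time, using $\partial_t u(0)=v_0$, yields the weak formulation of Definition \ref{def:solL}. The energy inequality required there is precisely the equality just obtained, rewritten via Fenchel--Young as $\mathcal{R}_0+\mathcal{R}_0^* = \inner{\mathbb{D}e(\partial_t u)}{e(\partial_t u)}$. Finally, uniqueness of solutions of (L) from Theorem \ref{thm:existence} shows that the whole sequence converges.
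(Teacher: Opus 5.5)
Your proposal is correct and follows the paper's own route. The paper obtains the corollary by combining \eqref{eq:dynamicLimitEDP} with the chain-rule/Fenchel--Young computation at the start of the subsection on dynamic $\Gamma$-convergence, now applied to the limit curve, and reading off the equation from the equality case; your Lions--Magenes step and the identification $u(0)=u_0$, $\partial_t u(0)=v_0$ make explicit what the paper leaves implicit.

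One sign needs tidying. With $-f$ inside $\mathcal{R}_0^*$ and $\langle f,\partial_t u\rangle$ on the left, as in \eqref{eq:dynamicLimitEDP}, the equality case actually gives $\rho\partial_t^2 u-\mathrm{div}(\mathbb{C}e(u)+\mathbb{D}e(\partial_t u))=-f$, not $=f$. This convention, inherited from the paper's formula, has to be aligned with Definition \ref{def:solL}.
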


\noindent
{\bf Acknowledgement.} This work was supported by the Czech Science Foundation  through the grant 23-04766S, the Ministry of Education, Youth and Sport of the Czech Republic through the grants LL2105 CONTACT (BB \& MKamp) and the projects Ferroic Multifunctionalities (FerrMion) [reg. no. CZ.02.01.01/00/22\_008/0004591] (BB \& MKamp) and Roboprox [reg. no. CZ.02.01.01/00/22 008/0004590] (MKr), within the Operational Programme Johannes Amos Comenius co-funded by
the European Union  as well as the Charles University through the  grant PRIMUS/24/SCI/020 and the Research Centre program No. UNCE/24/SCI/005 (MKamp).  

 \bibliographystyle{alpha}
 \bibliography{biblio,references}
\end{document}